\title[Spaces of Besov-Sobolev type] {Spaces of Besov-Sobolev  type and a problem on nonlinear approximation }
\author[ \'O. Dom\'inguez \    A. Seeger \   B. Street \  J. Van Schaftingen \  P.-L. Yung]
{  \'Oscar Dom\'inguez \ \ \ \ Andreas Seeger \ \ \ \ Brian Street \\ \\  Jean Van Schaftingen \ \ \ \   Po-Lam Yung}
 \address{\'Oscar Dom\'inguez,  Universit\'e Lyon 1, Institut Camille Jordan, 43 Blvd du 11 novembre 1918, F-69622 Villeurbanne cedex, France; Departamento de An\'alisis Matem\'atico y Matem\'atica Aplicada\\  Faculdad de Matem\'aticas\\ Universidad Complutense de Madrid \\Plaza de Ciencias 3\\28040 Madrid\\Spain.}
\email{oscar.dominguez@ucm.es; dominguez@math.univ-lyon1.fr}
\address{Andreas Seeger, Department of Mathematics \\ 
University of Wisconsin, Madison \\
480 Lincoln Drive, Madison, WI, 53706, USA}
\email{seeger@math.wisc.edu}
\address{Brian Street, Department of Mathematics \\ 
University of Wisconsin, Madison \\
480 Lincoln Drive, Madison, WI, 53706, USA}
\email{street@math.wisc.edu}
\address{Jean Van Schaftingen, Universit\'e catholique de Louvain\\ 
Institut de Recherche en Math\'ematique et Physique\\
Chemin du Cyclotron 2 bte L7.01.01\\
1348 Louvain-la-Neuve\\
Belgium}
\email{Jean.VanSchaftingen@UCLouvain.be}
\address{Po-Lam Yung, Mathematical Sciences Institute \\
Australian National University \\
Canberra ACT 2601 \\
Australia} 
\email{PoLam.Yung@anu.edu.au}
\theoremstyle{plain}
\newtheorem{theorem}{Theorem}[section]
\newtheorem{lemma}[theorem]{Lemma}
\newtheorem{proposition}[theorem]{Proposition}
\newtheorem{corollary}[theorem]{Corollary}
\newtheorem{defn}[theorem]{Definition}
\numberwithin{equation}{section}
\theoremstyle{plain}
\numberwithin{equation}{section}
\theoremstyle{remark}
\newtheorem{remark}[theorem]{Remark}
\newtheorem*{remarka}{Remark}
\theoremstyle{plain}
\def\bbone{{\mathbbm 1}}
\definecolor{ascol}{rgb}{0,0,1.} 
\definecolor{jvscol}{cmyk}{.74, 0, 1, .41} 
\definecolor{pycol}{rgb}{0.9,.5,0} 
\newcommand{\py}[1]{{\color{blue}{{[P: #1]}}}}
\newcommand{\Be}{\begin{equation}}
\newcommand{\Ee}{\end{equation}}
\newcommand{\EMap}{\mathscr{E}_M}
\newcommand{\cSt}{\widetilde{\cS}}
\newcommand{\Biginclude}{\mathcal{I}}
\newcommand{\JMap}{\mathscr{J}_M}
\newcommand{\Ball}[2]{B^{d}(#1,#2)}
\newcommand{\Indicator}{\bbone}
\def\high{{\mathrm{high}}}
\def\low{{\mathrm{low}}}
\def\intslash{\rlap{\kern  .32em $\mspace {.5mu}\backslash$ }\int}
\def\qsl{{\rlap{\kern  .32em $\mspace {.5mu}\backslash$ }\int_{Q_x}}}
\def\vth{\vartheta}
\def\cf{\emph{cf.\ }}
\def\loc{{\text{\rm loc}}}
\def\supp{{\text{\rm supp }}}
\def\inn#1#2{\langle#1,#2\rangle}
\def\meas{{\text{\rm meas}}}
\def\lc{\lesssim}
\def\gc{\gtrsim}
\def\ga{\gamma}
\def\eps{\varepsilon}
\def\ep{\epsilon}
\def\ka{\kappa}
\def\la{\lambda}
\def\Om{\Omega}
\def\fB{{\mathfrak {B}}}
\def\fN{{\mathfrak {N}}}
\def\bbC{{\mathbb {C}}}
\def\bbD{{\mathbb {D}}}
\def\bbN{{\mathbb {N}}}
\def\bbR{{\mathbb {R}}}
\def\bbZ{{\mathbb {Z}}}
\def\cA{{\mathcal {A}}}
\def\cB{{\mathcal {B}}}
\def\cI{{\mathcal {I}}}
\def\cK{{\mathcal {K}}}
\def\cL{{\mathcal {L}}}
\def\cM{{\mathcal {M}}}
\def\cP{{\mathcal {P}}}
\def\cQ{{\mathcal {Q}}}
\def\cS{{\mathcal {S}}}
\def\cT{{\mathcal {T}}}
\def\cZ{{\mathcal {Z}}}
\newcommand{\R}{\mathbb{R}}
\newcommand{\Z}{\mathbb{Z}}
\newcommand{\N}{\mathbb{N}}
\newcommand{\qu}{\mathscr D}
\newcommand{\poi}{\mathscr P}
\newcommand{\napoi}{\mathscr K}
\newcommand{\dif}{\,\mathrm{d}}
\DeclarePairedDelimiter{\abs}{\lvert}{\rvert}
\DeclarePairedDelimiter{\Norm}{\lVert}{\rVert}
\DeclarePairedDelimiterX\set[1]\{\}{%

#1
}
\newtheoremstyle{named}{}{}{\itshape}{}{\bfseries}{.}{.5em}{#1 \thmnote{#3}}
\theoremstyle{named}
\begin{document}

\begin{abstract} We study fractional variants of the quasi-norms introduced by  Brezis, Van Schaftingen, and Yung in the study of the Sobolev space $\dot W^{1,p}$.   The resulting spaces are  identified as  a special class of real  interpolation spaces of Sobolev-Slobodecki\u{\i}  spaces.
We establish the  equivalence between Fourier analytic definitions  and  definitions  via difference operators acting on measurable functions.  We  prove various  new results on  embeddings and non-embeddings, and give   applications to  harmonic and caloric
extensions. For suitable  wavelet bases  we obtain a characterization of the approximation spaces for best $n$-term approximation from a wavelet basis via smoothness conditions on the function; this extends   a classical result by DeVore, Jawerth and Popov. 

\end{abstract}
\keywords{Besov spaces, Sobolev-Slobodecki\u{\i}\, spaces, interpolation, best $n$-term approximation, wavelet basis, Lorentz spaces, embeddings,  non-embeddings, difference operators, functional equations, Poisson integral, heat equation, bounded variation}

\subjclass[2020]{46E35 (26A33, 26D10, 35J05,  35K05, 39B22,  42B35, 46B70,  46E30)}
\maketitle

\section{Introduction and statements of results}
For $d \ge 1$, $b \in \R$ and a locally integrable function $f\in L^1_\loc(\bbR^d)$ consider the difference quotient 

\Be \label{eq:Dbdef} \qu_{b} f(x,y)= \frac{f(x)-f(y)}{|x-y|^{b}}, \qquad (x,y) \in \R^d \times \R^d = \R^{2d}.\Ee
Ha\"im Brezis and two of the authors \cite{br-sch-yung1} discovered that for  $f \in C^{\infty}_c(\bbR^d)$ and $1 \le p < \infty$, the Marcinkiewicz quasi-norm $[\qu_{1+d/p} f]_{L^{p,\infty}(\bbR^{2d})} $
is comparable to the Gagliardo-seminorm $\|\nabla f\|_{L^p(\bbR^d)}$ (see also \cite{nguyen06}, \cite{bsvy} for related results).  Using this equivalence, they considered in  \cite{br-sch-yung-lor}  certain borderline Gagliardo-Nirenberg interpolation inequalities that fail, and proved substitutes such as $[\qu_{s+d/p}  f]_{L^{p,\infty}(\bbR^{2d}) } \lesssim \|f\|_{L^{\infty}(\R^d)}^{1-s} \|\nabla f\|_{L^1(\R^d)}^{s}$ for $s = 1/p$ and $1 < p < \infty$, raising the natural question of what can be said about the  class of functions for which 
$[\qu_{s+d/p}  f]_{L^{p,\infty}(\bbR^{2d}) }$ is finite for $0<s<1$. 
This class was also considered in the papers by Poliakovsky \cite{poliakovsky} who asked about a more specific relation to Besov spaces, and   in the work by Dom\'inguez 
and Milman \cite{dominguez-milman} who considered abstract versions of \cite{br-sch-yung1}. 
As a  special case of our main results we show that
the above fractional variant  arises as a real interpolation space of a family of  homogeneous  Sobolev-Slobodecki\u\i\,  spaces $\dot W^{s,p}$.
 Henceforth, for $0 < s < 1$ and $1 < p < \infty$, the space $\dot W^{s,p}$ consists of all equivalent classes of measurable, finite a.e. functions $f$ (modulo equality a.e. and additive constants) for which $\qu_{s+{d}/{p}}f \in L^p(\R^{2d})$, with semi-norm $\|f\|_{\dot W^{s,p} } = \|\qu_{s+d/p}  f\|_{L^p(\bbR^{2d})}$; this space can be naturally identified the diagonal Besov space $\dot B^s_{p,p}$ (see e.g. the case $r=p$ in Theorem~\ref{differences-thm-intro} below).
We will show that for 
$p_0,p_1\in (1,\infty)$ such that $p_0<p<p_1$ and  $0<s+\frac dp-\frac{d}{p_i}<1$ the norm on the interpolation space
$[ \dot W^{s+\frac dp-\frac d{p_0},p_0}, \dot W^{s+\frac dp-\frac d{p_1}, p_1} ]_{\theta,\infty}$ is equivalent with the quasi-norm
$\|\qu_{s+\frac dp} f\big \|_{L^{p,\infty}(\bbR^{2d})}$.

The class of functions for which $\|\qu_{s+d/p} f \|_{L^{p,\infty}(\bbR^{2d})} $ 
is finite was labelled $BSY^s_p$  in \cite{dominguez-milman}. Here we  shall denote it by $\dot\cB^s_p(d,\infty) $ as 
 it  will arise as a member of a  natural and more general scale of spaces $\dot \cB^s_{p}(\gamma,r)$. 
 We begin by  giving  a Fourier analytic definition of the spaces $\dot \cB^s_p(\ga,r)$, which extends the classical definition of the homogeneous Besov space $\dot B^s_{p,p}$; in fact $\dot \cB^s_p(\ga,r)$  all coincide with $\dot B^s_{p,p}$ when $r=p$ (regardless of the value of~$\ga$).  We have  learned in the final stage of preparation of this paper  that V.L. Krepkogorski\u\i{} had already  introduced  the inhomogeneous variants of these classes in a little noticed  paper \cite{krepkogorskii} in 1994 and proved that they occur as interpolation spaces for Sobolev and other spaces; see  Remark  \ref{rem:inhomogeneous spaces} and the comments before Theorem \ref{interpol-thm-Fourier} below.


\subsection*{Variants of \texorpdfstring{Besov-Sobolev}{Besov-Sobolev} spaces}
We let $\varphi\in C^\infty_c({\bbR}^d)$ be a radial function with 
\begin{subequations}\label{eq:varphiassumptions}
\begin{align}
    &\mathrm{supp}(\varphi) \subset \{\xi: 3/4<|\xi|<7/4\},
    \\
    &\varphi( \xi)=1 \text{  for } 7/8\le |\xi|\le 9/8,
    \\
    &\sum_{k\in \bbZ} \varphi(2^{-k}\xi)=1
    \text { for all }\xi\neq 0.
\end{align}
\end{subequations}
It is easy to check that the three requirements can be achieved.
 For a tempered distribution $f$ we define  the frequency localizations $L_k f$ via the Fourier transform by \[\widehat {L_kf}(\xi)=  \varphi(2^{-k}\xi)\widehat f(\xi). \]
We recall 
 the definition of the diagonal  homogeneous Besov spaces $\dot B^s_{p,p}$. Consider the space $\cS_\infty (\bbR^d)$ of Schwartz functions whose Fourier transforms vanish to infinite order at $0$; this space carries the natural Fr\'echet topology inherited from the space of Schwartz functions. 
 We let $\cS_\infty'(\bbR^d)$ denote the dual space; it can be identified with  the 
 space  of tempered distributions modulo polynomials. The space $\dot B^s_{p,p}$ is defined as the subspace of $f\in \cS_\infty'(\bbR^d)$ 
for which \[\|f\|_{\dot B^s_{p,p}} \coloneqq\Big(\sum_{k\in \bbZ} \int_{\R^d} \big |2^{ks}L_k f(x) \big|^p \dif  x \Big)^{1/p} \] is finite.

  We will now define various  Lorentz versions of these spaces where a Lorentz norm is taken on the space $\bbR^d\times \bbZ$.
Recall that if $(\Omega,\mu) $ is a measure space and  $0<p,r<\infty$, the Lorentz space $L^{p,r}(\Omega,\mu)$ is defined as the space of measurable functions $g$ on $\Omega$ for which 
\[
[g]_{L^{p,r} (\Omega,\mu)}= \Big(r\int_0^\infty \la^r\mu(\{x\in \Omega:|g(x)|>\la\})^{r/p}\frac{\dif \la}{\la} \Big)^{1/r}
\] is finite. For $r=\infty$  we set $[g]_{L^{p,\infty}(\Om,\mu)} =\sup_{\la>0} \la \mu(\{|g|>\la\})^{1/p}$. The space  $L^{p,r}$ is normable when $1<p<\infty$, $1\le r\le \infty$, and 
for simplicity we will only consider these parameter ranges.
The precise expression for the norm is not important for  this paper; a suitable choice (\cite{hunt}) is   \[\|g\|_{L^{p,r}} = \Big(\int_0^\infty [t^{1/p} g^{**}(t) ]^r\frac{\dif t}{t}\Big)^{1/p}\]
where  $g^{**}(t)=t^{-1}\int_0^t g^*(s) \dif s$
and $g^{*}$ denotes  the nonincreasing rearrangement of $g$.  We now give a differentiation  of the spaces $\dot B^s_p(\gamma,r)$  {\it for all  $s\in \bbR$}, which matches the usual definition in, say \cite{BL, triebel} for the case $p=r$. Following the definition we will then formulate  characterizations for positive $s$ which link the general definition to the expressions involving the generalized difference quotients in \eqref{eq:Dbdef}.

\begin{defn} \label{first-definition}
 Let $\ga \in \bbR$. 
 
 (i) For 
a  measurable subset  $E$ of $\bbR^d\times \bbZ$  let
$\bbone_E$ be the indicator function of $E$ and 
\[\mu_\ga(E) = \sum_{k\in \bbZ}  {2^{-k\ga}}  \int_{\R^d}  \bbone_{E}(x,k)    \dif x.\]

(ii)
For $b\in \bbR$ define $P^b f: \bbR^d\times \bbZ\to \bbC$ by \[P^b f(x,k)=2^{k b } L_k f(x).\] 

(iii) For $s \in \bbR$, $1<p<\infty$, $1\le r\le\infty$, 
 let  $\dot \cB^s_p (\ga,r) $ be the space of $f\in \cS_\infty'(\bbR^d)$ such that
 the  function $P^{s+\frac \ga p}f$ belongs to the Lorentz space $ L^{p,r}(\bbR^d\times\bbZ; \mu_{\ga })$  and let 
 \Be\label{Bspgar-norm}\|f\|_{\dot \cB^s_p(\ga ,r)}=  \big\|P^{s+\frac \ga p} f\big\|_{L^{p,r} (\mu_{\ga})}.
\Ee
\end{defn}

Unravelling the definition, with $\meas \,A$ denoting the Lebesgue measure of $A\subset \bbR^d$, if $1 \leq r < \infty$ we get the following equivalence
\begin{multline} \label{eq:concretedef}
\|f\|_{\dot \cB^s_p(\ga,r)}\approx \\
\Big(r\int_0^\infty \la^r\Big[\sum_{k\in\bbZ } 2^{-k\ga}
\meas\big\{ x\in \bbR^d: |L_k f(x)|>\la 2^{-k(s+\frac\ga p)} \big\}\Big]^{r/p} 
\frac{\dif \la}{\la}\Big)^{1/r}
\end{multline}
whereas
\begin{multline} \label{eq:concretedef2}
\|f\|_{\dot \cB^s_p(\ga,\infty)}\approx \sup_{\lambda > 0} \la\Big[\sum_{k\in\bbZ } 2^{-k\ga}
\meas\big\{ x\in \bbR^d: |L_k f(x)|>\la 2^{-k(s+\frac\ga p)} \big\}\Big]^{1/p}.
\end{multline}
It is easy to check that we always have $\cS_{\infty}(\R^d) \subseteq \dot \cB^s_p(\ga,r)$. 
Note that  a simple Fubini-type argument gives
\Be\label{independence-of-gamma} \dot \cB^s_p (\ga,p)= \dot B^s_{p,p}, \text{  for all $\gamma\in \bbR$.}
\Ee
In contrast,  for $r\neq p$ the spaces $\dot \cB^s_p(\ga,r)$ depend on $\ga$ (see  Theorem \ref{thm:non-emb} (ii) below). 


\begin{remark}[Inhomogeneous versions] \label{rem:inhomogeneous spaces}We may also consider inhomogeneous versions of the above spaces. Define   
\Be\label{eq:inhLP}
\text{\L}_k=L_k \text{  for } k>0,  \qquad  \text{\L}_0:=\mathrm{Id}-\sum_{k>0} \text{\L}_k. \Ee
 For $E\subset \bbR^d\times\bbN_0$ let  
$\widetilde  \mu_{\gamma}(E) =\sum_{k=0}^\infty 2^{-k\gamma} \int\bbone_E(x,k) \dif x$. Define $\varPi^b f(x,k)=2^{kb}\text{\L}_k f(x)$ for $k=0,1,2,\dotsc$  We may then define $\cB^s_p(\ga,r) $ to be the space of all tempered distributions $f \in \cS'(\R^d)$ such that \Be\label{eq:inh-def}\|f\|_{\cB^s_p(\ga,r)} \coloneqq \|\varPi^{s+\gamma/p} f\|_{ L^{p,r}(\bbR^d\times \bbN_0,\widetilde \mu_\gamma)}\Ee
is finite. These spaces have already been defined by Krepkogorski\u \i {}
 \cite{krepkogorskii}, who  used the notation $BL^{s,k}_{p,q}$. The space  $\cB^s_p(\gamma,r)$ corresponds to  $BL^{s,-\gamma}_{p,r} $   in the notation  of \cite{krepkogorskii}.
\end{remark} 

\subsection*{Characterizations via difference operators} In order to explore the relation  to  the characterization of Sobolev spaces via weak-type quasinorms for  difference operators used in \cite{br-sch-yung1, bsvy} we seek equivalent definitions of  the spaces $\dot\cB^s_p(\ga,r)$ to spaces defined via  difference operators, at least for $s>0$. 
Let \[\Delta_h f(x)= f(x+h)-f(x),\] and define for $M\ge 2$ inductively $\Delta_h^M=\Delta_h\Delta_h^{M-1}$. These operations extend to tempered distributions.
We define a measure  $\nu_\ga $ on Lebesgue measurable subsets of $\bbR^d\times (\bbR^d\setminus \{0\})$   by
\[\nu_\gamma(E) = \iint_E \dif x \frac{\dif h}{|h|^{d-\gamma}}.\]
Also  define, for any Lebesgue measurable $f$ 
and 
$h\neq 0$, 
\[\cQ_{M,b}  f(x,h)= \frac{\Delta_h^M f(x)} {|h|^{b }}. \]
We denote by $\cT$ the   space  of tempered  functions; here  $g\in \cS'$  is a {\it tempered function}  if $g\in L^1_\loc(\bbR^d)$ and  if there exists  an $N<\infty$ such that \Be\label{eq:tempered} \int_{\R^d} |g(x)|(1+|x|)^{-N} \dif x<\infty. 
\Ee

Let $\cP_{M-1}$ denote  the set of polynomials of degree less than $M$. We wish to characterize $\dot \cB^s_p(\ga,r)$ in terms of the operators  $\cQ_{M,b}$ which annihilate $\cP_{M-1}$.  As 
$\dot \cB^s_p(\ga,r) \subset \cS_{\infty}'$,  every element $f \in \dot \cB^s_p(\ga,r)$ is actually an equivalent class $[f]$ of tempered distributions modulo {\it all} polynomials. Using the following theorem, if $0 < s < M$ and $M \in \N$, we  determine, for each $f \in \dot \cB^s_p(\ga,r)$, a subset of $[f]$, so that all elements of this subset differ by a polynomial in $\cP_{M-1}$.  Each element of this subset will be called a \emph{representative} of $f$ modulo $\cP_{M-1}$. This is often useful in practice, because then it makes sense to define, for example, any derivative of $f$ of order $\geq M$, and to define the convolution of $f$ with any Schwartz function that has $M$ vanishing moments.   For the classical Besov and Triebel-Lizorkin spaces (in particular $\dot B^s_{p,p} $) this is already addressed in Bourdaud's theory of  {\it realized spaces} \cite{bourdaud-besov}, in fact for $\dot\cB^s_p(\ga,p)\equiv \dot B^s_{p,p}$ one  part of the theorem is subsumed in \cite{bourdaud-besov}.

\begin{theorem} \label{differences-thm-intro}
Let $0<s<M$, $1<p<\infty$,  \(1\le r\leq \infty\) and \(\gamma\in \R\).  There exist positive constants $C_1$, $C_2$ so that the following holds.

\begin{enumerate}[(i)]
\item Let $f\in \dot \cB^s_p(\ga ,r)$. Then there exists  
a tempered  function $f_\circ$
such that 
\begin{equation} \label{eq:f_circ_def}
\inn{f}{\phi} =\int_{\R^d} f_\circ(x) \phi(x) \dif x \text{ for all $\phi\in \cS_\infty$ }
\end{equation} 
and 
 \begin{equation} \label{eq:Thm1.2i} \|\cQ_{M,  s+\frac \ga p} f_\circ
\|_{L^{p,r}( \nu_\ga)}
\le C_1 
\|f\|_{\dot\cB^s_p(\ga,r)}.
\end{equation}
The a.e.\ equivalent class of the function $f_{\circ}$ is unique modulo $\cP_{M-1}$; we refer to  the function  $f_{\circ}$ as  a \emph{representative} of $f$ modulo $\cP_{M-1}$. 

\item\label{Item::DiffThm::Meas} 

Suppose  $f: \bbR^d\to \bbC$ is a measurable\footnote{A main novelty
of Theorem \ref{differences-thm-intro} is that $f$ is merely assumed to be measurable in \ref{Item::DiffThm::Meas}.
In previous works on homogeneous Besov spaces   there is the additional a priori assumption $f\in L^1_\loc$. 
One way in which our result differs 
is that we show this   assumption is superfluous: 
the function \(f\) in Theorem \ref{differences-thm-intro} \ref{Item::DiffThm::Meas}  
is a priori only assumed to be measurable and we conclude that it is locally
integrable. }
function
satisfying \[\cQ_{M,s+\frac \ga p} f\in L^{p,r} (\nu_\ga).\] Then $f$ is  a tempered function, and 
under the natural identification in $\cS_\infty'$,   we have $f \in \dot \cB^s_p(\ga,r)$ with 
\[\|f\|_{\dot\cB^s_p(\ga ,r)}\le C_2 \|\cQ_{M, s+\frac \ga p} f
\|_{L^{p,r}( \nu_\ga)}.\]

\end{enumerate}
\end{theorem}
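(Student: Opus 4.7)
The plan is to pass in both directions through the intermediate quantity $\|2^{kb}\mathcal{M}(L_k f)\|_{L^{p,r}(\mu_\gamma)}$, where $b = s+\gamma/p$ and $\mathcal{M}$ is a suitable maximal operator. The key geometric observation is that $L^{p,r}(\nu_\gamma)$ on $\bbR^d \times (\bbR^d\setminus\{0\})$ is equivalent, up to constants, to $L^{p,r}(\mu_\gamma)$ on $\bbR^d\times\bbZ$ via the dyadic identification $|h|\sim 2^{-j}$: each shell $\{h:|h|\sim 2^{-j}\}$ has $\nu_\gamma$-mass comparable to $2^{-j\gamma}$, matching the slice weight in $\mu_\gamma$.

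For (i), first construct $f_\circ$ following Bourdaud's theory of realizations: decompose $f=\sum_k L_k f$ in $\cS_\infty'$ and set
\begin{equation*}
f_\circ(x) = \sum_{k>0}L_k f(x) + \sum_{k\leq 0}\bigl(L_k f(x) - T_{M-1}[L_k f;0](x)\bigr),
\end{equation*}
where $T_{M-1}[g;0]$ denotes the degree-$(M-1)$ Taylor polynomial at $0$. Bernstein inequalities combined with the $\mu_\gamma$-Lorentz bound on $P^b f$ yield a.e.\ convergence to a tempered function, and \eqref{eq:f_circ_def} follows because test functions in $\cS_\infty$ annihilate polynomials. For the estimate on $\cQ_{M,b}f_\circ$, use the pointwise bound
\begin{equation*}
|\Delta_h^M L_k f(x)| \lesssim \min\{(2^k|h|)^M,1\}\,\mathcal{M}^*(L_k f)(x),
\end{equation*}
which follows from Taylor's formula plus a Bernstein--Peetre estimate when $2^k|h|\leq 1$ and from slow growth of the Peetre maximal function when $2^k|h|>1$. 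Since the Taylor corrections are in $\cP_{M-1}$ and hence annihilated by $\Delta_h^M$, this bound descends from $L_k f$ to $f_\circ$. For $|h|\sim 2^{-j}$ this gives
\begin{equation*}
|\cQ_{M,b}f_\circ(x,h)| \lesssim \sum_k g(j-k)\cdot 2^{kb}\mathcal{M}^*(L_k f)(x),\qquad g(\ell) = \min\{2^{-\ell M},1\}\,2^{\ell b},
\end{equation*}
and $g\in \ell^1(\bbZ)$ because $0<b<M$. Taking $L^{p,r}(\nu_\gamma)$ norms and using the equivalence with $L^{p,r}(\mu_\gamma)$ then reduces \eqref{eq:Thm1.2i} to the boundedness on $L^{p,r}(\mu_\gamma)$ of (a) convolution in the dyadic index by an $\ell^1$-sequence (Young's inequality in Lorentz spaces, after conjugating out the weight) and (b) fibrewise application of the Hardy--Littlewood maximal operator. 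Uniqueness modulo $\cP_{M-1}$ is obtained by noting that two realizations differ by a polynomial $P$, and $\cQ_{M,b}P\in L^{p,r}(\nu_\gamma)$ forces $\Delta_h^M P\equiv 0$, i.e.\ $\deg P < M$.

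For (ii), the temperedness of $f$ follows from rewriting the $M$-th difference identity as
\begin{equation*}
f(x+Mh) = \Delta_h^M f(x)-\sum_{j=0}^{M-1}(-1)^{M-j}\binom{M}{j}f(x+jh),
\end{equation*}
averaging over $x$ in a fixed ball and over $h$ in dyadic shells, and combining with $\cQ_{M,b}f\in L^{p,r}(\nu_\gamma)$ via Fubini; this upgrades local integrability of $f$ to a bound $\int |f(y)|(1+|y|)^{-N}\,\dif y<\infty$ for some $N$. For the norm estimate, since $\varphi$ is supported away from $0$, a Fourier-side partition of unity produces a factorization $\varphi(2^{-k}\xi)=\sum_\ell (e^{i 2^{-k}h_\ell\cdot\xi}-1)^M \psi_{k,\ell}(\xi)$ with each $\psi_{k,\ell}$ Schwartz and adapted to scale $2^{-k}$, giving the reproducing formula
\begin{equation*}
L_k f(x) = \sum_\ell \int K_{k,\ell}(y)\,\Delta_{2^{-k}h_\ell}^M f(x-y)\,\dif y.
\end{equation*}
Hence $|2^{kb}L_k f(x)|\lesssim \sum_\ell \mathcal{M}\bigl(\cQ_{M,b}f(\cdot,2^{-k}h_\ell)\bigr)(x)$, and taking $L^{p,r}(\mu_\gamma)$ norms and inverting the dyadic identification with $\nu_\gamma$ yields the reverse inequality.

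The two most delicate steps are the construction and tempered-growth verification of $f_\circ$ in (i), which requires adapting Bourdaud's realization theory to the weaker Lorentz hypothesis on $P^b f$, and the demonstration in (ii) that the hypothesis, which only controls $M$-th differences, can be bootstrapped to a pointwise or averaged growth bound on $f$ itself; the remainder reduces to standard Littlewood--Paley estimates combined with Lorentz-space boundedness of natural auxiliary operators.
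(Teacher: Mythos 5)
Your approach to part~(i) is essentially correct and close in spirit to what the paper does: the Bourdaud--style construction of $f_\circ$ by subtracting Taylor polynomials from the low-frequency pieces is the realized-space approach that the paper explicitly acknowledges (it attributes the construction to Bourdaud and Moussai, and its own Proposition~\ref{Prop::Diff::PropG} uses a split into $U_{\high}+U_{\low}$ plus Hahn--Banach and the Schwartz Paley--Wiener theorem to reach the same conclusion). Your pointwise bound via the Peetre maximal function for the estimate \eqref{eq:Thm1.2i} is a legitimate alternative to the paper's operator-norm bound $\|\Delta_h^M\tilde L_k\|_{L^p\to L^p}\lesssim\min\{1,(2^k|h|)^M\}$ in Lemma~\ref{lem:2.2}; both reduce to $r=p$ by interpolation, and your uniqueness argument for the representative is also fine.

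Part~(ii) is where there is a genuine gap, and it concerns exactly the point the paper singles out as a novelty of the theorem: the hypothesis only assumes that $f$ is \emph{measurable}, not locally integrable, and the remark after the theorem stresses that one must \emph{prove} local integrability rather than assume it. Your temperedness argument explicitly ``upgrades local integrability of $f$,'' so it presupposes $f\in L^1_\loc$. The bootstrap via the identity
\begin{equation*}
f(x+Mh)=\Delta_h^M f(x)-\sum_{j=0}^{M-1}(-1)^{M-j}\tbinom{M}{j}f(x+jh)
\end{equation*}
is self-referential: averaging in $x$ and $h$ controls $\int_{B_{cR}}|f|$ by the $j<M$ terms, but the $j=0$ term returns $\int_B|f|$ itself, so the base case of the induction --- finiteness of $\int_B|f|$ for a single ball $B$ --- is precisely what is unproved. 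Without it, the Littlewood--Paley pieces $L_k f$ in your reproducing formula are not even defined, so the norm estimate cannot be run. The paper circumvents this by never writing $\int f\phi$ or $L_k f$ directly for a general measurable $f$: it introduces the map $\JMap$ via a dual pairing \eqref{eq:JM-dualdef} that involves only $\Delta_h^M f$ tested against kernels $\Gamma^j_{m_\kappa,w_\kappa,\eps}\psi$, which is absolutely convergent under $\cQ_{M,b}f\in L^{p,r}(\nu_\gamma)$ alone (Lemmas~\ref{Lemma::Diff::IntLpr}, \ref{Lemma::Diff::BoundGamma}, \ref{Lemma::Diff::Converge}). Injectivity of $\JMap$ then hinges on a nontrivial fact (Lemma~\ref{Lemma::Diff::AlmostPoly}, drawing on results of Ger and Ciesielski from the theory of functional equations) that a measurable $f$ with $\Delta_h^M f=0$ a.e.\ in $(x,h)$ is a.e.\ equal to a polynomial of degree $<M$ --- an ingredient absent from your proposal. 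Local integrability $\fB_{M,s,p}(\ga,r)\subseteq\cT_M$ is finally deduced as a \emph{corollary} of the surjectivity of $\JMap|_{\cT_M\cap\fB}$ (inherited from $\EMap$) combined with injectivity of $\JMap$ on all of $\fB$. This indirect argument is the content you are missing; any direct attempt to prove local integrability from the $M$-th difference hypothesis alone needs to break the circularity your averaging argument runs into.
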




Theorem \ref{differences-thm-intro} will be proved in \S\ref{sec:Proof-of-equiv},
where a more abstract equivalent statement is also given (Theorem \ref{Thm::Diff::MainDiffThm}).

\subsection*{Embeddings and non-embeddings}  We  establish  various  embedding relations  which sharpen previous results. We relate our classes to standard homogeneous Besov and Triebel-Lizorkin spaces and their Lorentz-space counterparts $\dot B^s_q[L^{p,r}]$ and $\dot F^{s}_{q}[L^{p,r}]$. These are defined as the  subspaces of $f\in \cS_\infty'(\bbR^d)$  for which
\begin{align}\label{eq:Besov-Lorentz}
\|f\|_{\dot B^{s}_q[L^{p,r}]}
&= \Big(\sum_{k\in \bbZ} 2^{ksq}\|L_kf\|_{L^{p,r}(\R^d)}^q\Big)^{1/q}
\\ \label{eq:TL-Lorentz}
\|f\|_{\dot F^{s}_q[L^{p,r}]}
&= \Big\|\Big(\sum_{k\in \bbZ} 2^{ksq}|L_kf|^q\Big)^{1/q}\Big\|_{L^{p,r}}
\end{align} 
are finite, respectively. The inhomogeneous analogues $B^s_q[L^{p,r}]$, $F^s_q[L^{p,r}]$ are defined analogously using the frequency localizations $\text{\L}_k$, $k\ge 0$ in \eqref{eq:inhLP}. 

For the standard Besov and Triebel-Lizorkin spaces one  works with the underlying $L^p$ metric, i.e.\ they are recovered by setting $r=p$ and we have
$\dot B^s_{p,q} = \dot B^s_q[L^p]$,  and $\dot F^s_{p,q} = \dot F^s_q[L^p]
$. For embedding relations among them one may consult   \cite{SeTr}  (however some care is needed since  the results in \cite{SeTr} are formulated for the inhomogeneous versions $B^{s}_q[L^{p,r}] $, $F^{s}_q[L^{p,r}]$).

\begin{theorem} \label{thm:embeddings} The following statements hold for all $s\in \bbR$, $p\in [1,\infty)$.

(i) For all 
$\ga \in \bbR$,
\begin{align*} &\dot \cB^s_p(\ga ,r)\hookrightarrow \dot B^s_r[L^{p,r}],\quad  p\le r \le \infty,
\\
&\dot B^s_r[L^{p,r}] 
\hookrightarrow   \dot \cB^s_p(\ga ,r),
\quad  1\le r\le p.
\end{align*}


(ii) Let $\ga \neq 0$. Then, 
\begin{align*} 
&\dot  F^s_{p,r} \hookrightarrow  \dot   \cB^s_p(\ga , r) , \quad p\le r \le \infty,
\\
&\dot   \cB^s_p(\ga , r)
 \hookrightarrow   \dot  F^s_{p,r}, \quad 1\le r\le p.
\end{align*}
\end{theorem}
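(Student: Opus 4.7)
The plan is to treat (i) and (ii) separately. Part (i) reduces to an elementary monotonicity of power‑of‑sum versus sum‑of‑power after a change of variables. Part (ii) requires more structure: a Minkowski‑type Fubini estimate combined with a pointwise equivalence whose proof uses $\gamma\neq 0$ essentially.

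\emph{Part (i).} Set $g_k(x)=2^{ks}L_k f(x)$, $F(x,k)=2^{k\gamma/p}g_k(x)$, and $m_k(\mu)=\meas\{|g_k|>\mu\}$. Expanding the Lorentz quasi‑norm via its distribution function gives
\begin{equation*}
\|f\|_{\dot\cB^s_p(\gamma,r)}^r \approx \int_0^\infty \lambda^r \Big(\sum_k 2^{-k\gamma} m_k(2^{-k\gamma/p}\lambda)\Big)^{r/p}\frac{d\lambda}{\lambda},
\end{equation*}
while the substitution $\mu=2^{-k\gamma/p}\lambda$ inside each $\|g_k\|_{L^{p,r}}^r$ produces
\begin{equation*}
\|f\|_{\dot B^s_r[L^{p,r}]}^r \approx \int_0^\infty \lambda^r \sum_k \big(2^{-k\gamma} m_k(2^{-k\gamma/p}\lambda)\big)^{r/p}\frac{d\lambda}{\lambda}.
\end{equation*}
Both embeddings of (i) follow at once from the elementary inequality $\bigl(\sum_k a_k\bigr)^{r/p}\gtreqless \sum_k a_k^{r/p}$ ($a_k\ge 0$), valid as $r/p\gtreqless 1$; the case $r=p$ recovers \eqref{independence-of-gamma} and $r=\infty$ is handled by the sup‑form of the quasi‑norm.

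\emph{Part (ii).} Two ingredients are needed. First, a Fubini–Minkowski inequality for Lorentz quasi‑norms on a product measure: for $\sigma$‑finite $\mu_1$ on $X$ and $\omega$ on $Y$, setting $B(x,\lambda)=\lambda\,\omega(|F(x,\cdot)|>\lambda)^{1/p}$ identifies $\|F\|_{L^{p,r}(\mu_1\otimes\omega)}\approx \bigl\|\|B\|_{L^p_x}\bigr\|_{L^r(d\lambda/\lambda)}$ and $\|F\|_{L^p_x L^{p,r}_y}\approx \bigl\|\|B\|_{L^r(d\lambda/\lambda)}\bigr\|_{L^p_x}$, so Minkowski's integral inequality yields
\begin{equation*}
\|F\|_{L^{p,r}(\mu_1\otimes\omega)}\lesssim \|F\|_{L^p_x L^{p,r}_y}\ (r\ge p),\qquad \|F\|_{L^p_x L^{p,r}_y}\lesssim \|F\|_{L^{p,r}(\mu_1\otimes\omega)}\ (r\le p).
\end{equation*}
Second, and central to (ii), is the pointwise equivalence
\begin{equation*}
\|F(x,\cdot)\|_{L^{p,r}(\omega_\gamma)}\asymp_{\gamma,p,r} H_r(x):=\Big(\sum_k|g_k(x)|^r\Big)^{1/r},
\end{equation*}
where $\omega_\gamma$ is the weight on $\Z$ with $\omega_\gamma(\{k\})=2^{-k\gamma}$, so that $\mu_\gamma=dx\otimes\omega_\gamma$. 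To prove it, view the linear map $T:(g_k)\mapsto(2^{k\gamma/p}g_k)$ as acting between sequence spaces: one verifies endpoint bounds $T:\ell^1\to L^{p,1}(\omega_\gamma)$ (via subadditivity $(a+b)^{1/p}\le a^{1/p}+b^{1/p}$ and layer‑cake) and $T:\ell^\infty\to L^{p,\infty}(\omega_\gamma)$ (the geometric‑sum bound below), and similarly for the inverse $T^{-1}$ between the corresponding pairs. Real interpolation at $\theta=1-1/r$, together with the reiteration identities $(\ell^1,\ell^\infty)_{\theta,r}=\ell^r$ and $(L^{p,1}(\omega_\gamma),L^{p,\infty}(\omega_\gamma))_{\theta,r}=L^{p,r}(\omega_\gamma)$, then yields the pointwise equivalence for every $1\le r\le\infty$.

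\emph{The role of $\gamma\ne 0$ and conclusion.} The crucial $L^{p,\infty}$ endpoint for $T$ proceeds via the geometric‑tail bound: assuming $\gamma>0$ (the case $\gamma<0$ is symmetric), with $k_*(x)=(p/\gamma)\log_2(\lambda/\|g\|_{\ell^\infty})$,
\begin{equation*}
\sum_k 2^{-k\gamma}\bbone_{\{|g_k|>2^{-k\gamma/p}\lambda\}}\le \sum_{k>k_*(x)}2^{-k\gamma}\lesssim 2^{-k_*(x)\gamma}=(\|g\|_{\ell^\infty}/\lambda)^p,
\end{equation*}
whose convergence requires precisely $\gamma\neq 0$; a parallel argument gives the bound $\sum_{k\in A}2^{-k\gamma/p}\lesssim_\gamma (\sum_{k\in A}2^{-k\gamma})^{1/p}$ uniformly in $A\subseteq\Z$, which delivers the $L^{p,1}\to\ell^1$ endpoint for $T^{-1}$. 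Combining the two ingredients now proves (ii): for $r\ge p$,
\[\|f\|_{\dot\cB^s_p(\gamma,r)}=\|F\|_{L^{p,r}(\mu_\gamma)}\lesssim \|F\|_{L^p_x L^{p,r}_k(\omega_\gamma)}\lesssim \|H_r\|_{L^p}=\|f\|_{\dot F^s_{p,r}};\]
for $r\le p$ the same chain run in reverse yields the opposite embedding. The main obstacle is the $L^{p,\infty}$ endpoint of the pointwise equivalence; for $\gamma=0$ the geometric sum diverges and (ii) itself fails, as is already visible on sequences (e.g.\ $g_k=\bbone_{[1,N]}(k)$ has $\|g\|_{\ell^\infty}=1$ but $\|g\|_{\ell^{p,\infty}}\asymp N^{1/p}$).
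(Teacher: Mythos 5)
Your argument is correct, and Part (ii) takes a genuinely different route from the paper's.

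\emph{Part (i).} Your approach — expand both sides via the distribution function, change variables $\mu = 2^{-k\gamma/p}\lambda$, and invoke $(\sum a_k)^{r/p}\gtreqless\sum a_k^{r/p}$ according as $r\gtreqless p$ — matches the paper's proof of the $r\ge p$ direction (their Lemma \ref{lem:cB-B-rev-emb}(i)). For $r\le p$ the paper switches to a duality argument with $(L^{p,r}(\mu_\gamma))^*=L^{p',r'}(\mu_\gamma)$, whereas you simply apply the reverse elementary inequality. Your version is marginally more symmetric; both are fine.

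\emph{Part (ii).} Here you genuinely deviate. The paper works with the map $T_{\gamma/p}$ on the full product $\bbR^d\times\bbZ$, using \emph{complex} interpolation $[L^p(\ell^p),L^p(\ell^\infty)]_\theta=L^p(\ell^r)$ and $[L^p(\mu_\gamma),L^{p,\infty}(\mu_\gamma)]_\theta=L^{p,r}(\mu_\gamma)$ with endpoints $r=p$ (trivial) and $r=\infty$ (geometric sum, where $\gamma\ne 0$ enters), and then derives the $r\le p$ embedding by duality. You instead isolate a purely one-variable statement: the pointwise (in $x$) equivalence $\|(2^{k\gamma/p}g_k)_k\|_{L^{p,r}(\omega_\gamma)}\asymp\|(g_k)_k\|_{\ell^r}$ for the weighted sequence Lorentz space on $\bbZ$, proved by \emph{real} interpolation of the four endpoint bounds at $r=1$ and $r=\infty$ for $T$ and $T^{-1}$; you then pass to the iterated norm by the Fubini–Minkowski observation on Lorentz quasi-norms of a product measure. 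Your pointwise lemma is a cleaner and stronger intermediate statement (it is a two-sided equivalence, not merely the one-sided bound needed for each direction of the theorem), and the decomposition makes the role of $\gamma\ne 0$ and the failure at $\gamma=0$ transparent — your closing counterexample $g_k=\bbone_{[1,N]}$ pins it down at the $L^{p,\infty}$ endpoint. The trade-off is that you must verify two extra endpoints ($\ell^1\leftrightarrow L^{p,1}(\omega_\gamma)$), one of which (the $L^{p,1}\to\ell^1$ bound via $\sum_{k\in A}2^{-k\gamma/p}\lesssim(\sum_{k\in A}2^{-k\gamma})^{1/p}$) again uses $\gamma\ne 0$; but these are elementary and you state them correctly. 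Both approaches are sound; yours is more modular and makes the mechanism behind $\gamma\ne 0$ more visible, while the paper's is shorter once one is willing to invoke complex interpolation and duality as black boxes.
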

This will be proved in \S\ref{sec:embeddings}. The statements  can be extended by combining them with the three  trivial embeddings for $q_1\le q_2$, $r_1\le r_2$, namely 
$\dot \cB^s_p(\gamma, r_1) \hookrightarrow \dot \cB^s_p(\gamma, r_2)$,
$\dot B^s_{q_1}[L^{p,r_1}]\hookrightarrow \dot B^s_{q_2}[L^{p,r_2}] $ and
$\dot F^s_{q_1}[L^{p,r_1}]\hookrightarrow \dot F^s_{q_2}[L^{p,r_2}] $.
Part (ii)  of the theorem  is an improvement and generalization over  Theorem 1.3 in \cite{gu-yung} which (in conjunction with our Theorem \ref{differences-thm-intro}) 
 yields that $\dot F^s_{p,2}\hookrightarrow \dot \cB^s_p(d, \infty)$ for $0<s<1$. Part (ii) also 
 covers  
 the  embedding $\dot C^s_p \hookrightarrow BSY^s_p\equiv \dot \cB^s_p(d,\infty)$ for the homogeneous Calder\'on-Campanato (or DeVore-Sharpley) spaces in \cite{devore-sharpley}, \cite{christ} which was obtained in \cite[Theorem 4.1]{dominguez-milman} for  $0<s<1$; indeed from \cite{se-TL} we know that  $\dot C^s_p=\dot F^s_{p,\infty}$ for  $0<s<1.$   For every $p\in (1,\infty)$  Theorem  \ref{thm:embeddings} also  recovers
 the known  embeddings 
 $\dot F^s_{p,r} \hookrightarrow \dot B^s_r[L^{p,r}]$ if  
   $p\le r$, 
  and 
  $ \dot B^s_r[L^{p,r}] \hookrightarrow \dot F^s_{p,r}$ if  
  $r\le p$;   \cf \cite[Theorem 1.2(iv), Theorem 1.1(iv)]{SeTr}. 
  
 
  In view of the case $r=\infty$ of the embedding in part (ii)
  of Theorem \ref{thm:embeddings} it is natural to ask whether in the embedding
  $\dot F^s_{p,\infty} \hookrightarrow \dot B^s_p(\gamma, \infty) $
  the  Triebel-Lizorkin space $\dot F^s_{p,\infty}$ can be replaced  by the larger Besov space $\dot B^s_{p,\infty}$; this was implicitly  suggested in \cite{poliakovsky}. Part (i) of the following theorem  implies  a negative answer, and in fact a stronger result.

\begin{theorem} \label{thm:non-emb} Let $s\in \bbR$, $1 < p < r \leq \infty$.
 Then the following hold.

(i) For all $\gamma\in \bbR$,
\[  \dot B^s_{p,r} \setminus  \dot \cB^s_p(\ga , \infty) \neq \emptyset.\]

(ii) For all $\beta, \gamma\in \bbR$ with $\beta \ne \gamma$,
\[\dot \cB^s_p(\beta,r)\setminus \dot \cB^s_p (\gamma,\infty) \neq\emptyset.\]

    \end{theorem}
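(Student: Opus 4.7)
The plan is to prove both parts with a single explicit construction. Pick a Schwartz $\phi$ with $\widehat\phi$ supported in $\{7/8<|\xi|<9/8\}$ and $\phi(0)\ne 0$, so that $L_k(A\cdot 2^{kd/p}\phi(2^k\cdot-m))=A\cdot 2^{kd/p}\phi(2^k\cdot-m)$ for any $A,k,m$. Consider
\[
f(x)\;=\;\sum_{k\in S}\sum_{m\in I_k} A_k\cdot 2^{kd/p}\phi(2^k x-m),\qquad A_k\;:=\;|k|^{-1/p}(\log|k|)^{1/p}\,2^{-k(s+(\gamma+d)/p)},
\]
where $I_k\subset\bbZ^d$ is a cubical sublattice of cardinality $n_k$, and $S,n_k$ are chosen so that $\nu_k:=n_k\,2^{-k(\gamma+d)}\approx 1$ with $n_k\in\bbN$. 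The formula $n_k=\lceil 2^{k(\gamma+d)}\rceil$ is a positive integer precisely when $k(\gamma+d)\ge 0$, so I take $S=\{k\ge 2\}$ when $\gamma+d\ge 0$ and $S=\{k\le-2\}$ when $\gamma+d<0$. The key structural feature of $A_k$ is that $\tilde A_k^\gamma:=A_k\cdot 2^{k(s+(\gamma+d)/p)}=|k|^{-1/p}(\log|k|)^{1/p}$ lies in $\ell^r$ but not in $\ell^{p,\infty}$ — this is the standard sequence separating weak $\ell^p$ from $\ell^r$ when $r>p$.

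Using the $\gtrsim 2^{-k}$ separation of the $m$'s and the Schwartz decay of $\phi$, the near-disjointness of bumps within each scale gives $\|L_kf\|_p\approx A_k n_k^{1/p}\|\phi\|_p$, hence $2^{ks}\|L_kf\|_p\approx|k|^{-1/p}(\log|k|)^{1/p}$ and
$\|f\|_{\dot B^s_{p,r}}^r\approx\sum_{k\in S}|k|^{-r/p}(\log|k|)^{r/p}<\infty$ for every $r>p$. The crucial point is that the definition of $A_k$ makes $\tilde A_k^\gamma$ independent of the $\gamma$-scaling direction, producing ``coherent alignment'' across scales: a parallel computation gives
\[
\mu_\gamma\bigl(\{|P^{s+\gamma/p}f|>\lambda\}\bigr)\;\approx\;\sum_{k\in S}\nu_k\,D(\lambda/\tilde A_k^\gamma),\qquad D(\tau):=\meas\{|\phi|>\tau\}.
\]
Since $\phi(0)\ne 0$, $D(\tau)\ge c>0$ for $\tau\in(0,\tau_0]$; estimating $|\{k\in S:\tilde A_k^\gamma\ge\lambda/\tau_0\}|\approx\lambda^{-p}\log(1/\lambda)$ yields $\lambda^p\mu_\gamma\gtrsim\log(1/\lambda)\to\infty$ as $\lambda\to 0^+$. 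This shows $\|f\|_{\dot\cB^s_p(\gamma,\infty)}=\infty$ and together with the Besov bound establishes part (i).

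For part (ii), the same $f$ works. The analogous calculation gives $\lambda^p\mu_\beta\approx\sum_k\nu_k|k|^{-1}(\log|k|)\,w(\tau_k^\beta)$, where $\tau_k^\beta=\lambda/\tilde A_k^\beta$, $\tilde A_k^\beta=\tilde A_k^\gamma\cdot 2^{k(\beta-\gamma)/p}$, and $w(\tau)=\tau^p D(\tau)$. Because $\beta\ne\gamma$, the exponential factor makes $\tilde A_k^\beta$ monotone in $k$, so at each $\lambda$ only one aligned scale $k(\lambda)$ (with $|k(\lambda)|\approx\log(1/\lambda)$) has $\tau_k^\beta$ near $\tau^*=\arg\max w$; the contribution of all other scales is controlled by summability of $w$ in dyadic $\tau$-shifts, which follows from $\int w\,d\log\tau=\|\phi\|_p^p/p<\infty$. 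The upshot is $\lambda^p\mu_\beta\approx|k(\lambda)|^{-1}\log|k(\lambda)|$, and the change of variable $d\log\lambda\approx dk$ in \eqref{eq:concretedef} converts the $\dot\cB^s_p(\beta,r)$ integral into $\sum_{k\in S}|k|^{-r/p}(\log|k|)^{r/p}<\infty$ for every $r>p$ (the supremum is also bounded when $r=\infty$), so $f\in\dot\cB^s_p(\beta,r)\setminus\dot\cB^s_p(\gamma,\infty)$. The main technical obstacle is making rigorous the disjoint-support reduction $\meas\{|\sum_{m\in I_k}\phi(2^k\cdot-m)|>\tau\}\approx n_k\,2^{-kd}D(\tau)$ despite $\phi$ being only Schwartz: this is handled by using the cubic arrangement of $I_k$ to guarantee $\gtrsim 2^{-k}$-separation of the bumps and then absorbing off-diagonal interference into constants independent of $k$ via rapid Schwartz decay, which also shows $f\in\cS_\infty'$.
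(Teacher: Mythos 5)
Your construction and the paper's are close cousins: both lay down translated bump functions at scales indexed by $k$ (with $n_k \approx 2^{k(d+\gamma)}$ of them per scale, so that $\nu_k \approx 1$) and choose per-scale coefficients whose $\ell$-profile lies in $\ell^r \setminus \ell^{p,\infty}$. The route is nevertheless genuinely different. The paper proceeds in two stages: it first builds blocks $F_{\gamma,N}$ occupying scales $N+1,\dots,2N$, proves the clean dichotomy $\|F_{\gamma,N}\|_{\dot\cB^s_p(\gamma,\infty)} \gtrsim N^{1/p}$ versus $\|F_{\gamma,N}\|_{\dot\cB^s_p(\beta,r)},\|F_{\gamma,N}\|_{\dot B^s_{p,r}} \lesssim N^{1/r}$ for $\beta\ne\gamma$, and then sums the blocks with weights $\ell\, 2^{-\ell/p}$, exploiting their disjoint frequency supports. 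You collapse this into a single series with coefficients $\tilde A_k^\gamma = |k|^{-1/p}(\log|k|)^{1/p}$ (a different but equally valid log-power than the paper's effective $k^{-1/p}\log k$) and compute the distribution function globally. The conceptual observation that drives your part (ii) — that $\tilde A_k^\gamma$ is slowly varying (so many scales ``align'' simultaneously and $\lambda^p\mu_\gamma$ piles up to $\infty$), whereas $\tilde A_k^\beta$ for $\beta\ne\gamma$ carries a genuine exponential factor so at each $\lambda$ only $O(1)$ scales are near the mass of $w(\tau)=\tau^p D(\tau)$ — is a nice reformulation of what the paper's calculation with $E^{(1)}_{\gamma,\beta,k}(\lambda)$ achieves by splitting $\lambda$-ranges. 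What the paper's block structure buys is a cleaner modular argument (each $N$-block is analyzed independently, then combined via disjoint frequency support); what your direct approach buys is brevity, at the cost of a more delicate global level-set estimate.

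Two points deserve attention. First, you flag the disjoint-support reduction as the main obstacle and say a ``cubical sublattice'' with $\gtrsim 2^{-k}$ separation suffices. This needs quantitative care: the paper uses enormous separation ($|n_{i_1,k}-n_{i_2,k}|\ge 2^{10|k|}$) together with a $\vartheta_{i,k}/\varepsilon_{i,k}$ split at radius $2^k$ so that $\|\sum_i\varepsilon_{i,k}\|_p$ is tiny; with unit-spacing lattice bumps the tail interference is $O(1)$ and you must either enlarge the spacing or argue via a similar truncation with Schwartz decay $M$ large enough. Second, the statement ``$|k(\lambda)|\approx\log(1/\lambda)$'' is only accurate for $\beta<\gamma$ (and $\lambda$ small). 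For $\beta>\gamma$ with $S=\{k\ge 2\}$, $\tilde A_k^\beta$ is \emph{increasing}, so the aligned scale exists only for $\lambda$ large and there $|k(\lambda)|\approx\log\lambda$; for small $\lambda$ all $\tau_k^\beta$ are small and one instead uses the decay $w(\tau)\lesssim\tau^{p-d/M}$ to get $\lambda^p\mu_\beta\lesssim\lambda^{p-d/M}$. Both pieces still integrate to a finite $\dot\cB^s_p(\beta,r)$ quasi-norm, so the conclusion holds, but as written the formula is imprecise for half the parameter range. With these two items tightened, the argument is sound.
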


This will be proved in \S\ref{sec:non-embeddings}, along with corresponding versions for the inhomogeneous spaces.

 Since $\dot \cB^s_p(\gamma,p)=\dot B^s_{p,p}$ for all $\ga \in \R$ (see \eqref{independence-of-gamma}) it is clear that the assumption $r>p$ is necessary in Theorem \ref{thm:non-emb}.
We also address the case  $\gamma=0$ in  part (ii) of Theorem \ref{thm:embeddings}; the following result shows that the condition $\gamma\neq 0$ is necessary for those statements.


\begin{theorem} \label{thm:non-emb-gamma} Let $s\in \bbR$ and $1<p<\infty$.
For the case $\gamma=0$ the following hold.

(i)   For all $r>p$ 
\[ \dot F^s_{p,r} \setminus \dot \cB^s_p(0,\infty)\neq \emptyset.\]

(ii) For all $r<p$ 
\[ \dot \cB^s_p(0,1) \setminus  \dot F^s_{p,r} \neq \emptyset. \]
\end{theorem}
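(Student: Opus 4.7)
I would prove both parts by constructing explicit counterexamples via a modulated wavepacket ansatz; the parameters are tuned so as to exhibit the mismatch between the $L^p(\ell^r)$-type Triebel--Lizorkin norm (which takes $\ell^r$ pointwise in $k$) and the product-measure Lorentz norm defining $\dot \cB^s_p(0,q)$ (which, with $\gamma=0$, simply sums the $k$-indexed distribution functions in $x$ before taking an $L^{p,q}$ quasinorm).

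The first step is to set up the construction. Fix $\theta \in \cS(\R^d)$ with $\theta(0)=1$ and $\supp \widehat\theta \subset B(0,1/8)$; then for any $k \ge k_0$ (with $k_0$ sufficiently large) and any $x_* \in \R^d$, the function $x \mapsto \theta(x-x_*) e^{i 2^k e_1 \cdot (x-x_*)}$ has Fourier transform supported in $B(2^k e_1, 1/8) \subset \{\varphi(2^{-k}\xi)=1\}$. I will then choose pairwise disjoint, $\ge 2$-separated intervals $K_j=\{k_j,\ldots,k_j+N_j-1\}\subset \bbZ_{\ge k_0}$ and points $x_j \in \R^d$ whose separations grow rapidly, and consider
\[
f = \sum_{j \ge 1}\sum_{k \in K_j} 2^{-ks}\epsilon_j\, \theta(\cdot - x_j)\, e^{i 2^k e_1 \cdot (\cdot-x_j)}
\]
for amplitudes $\epsilon_j>0$. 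The Fourier-support conditions then yield $L_k f=$ the $(j,k)$-summand for $k\in K_j$ and $L_k f=0$ otherwise, so $|2^{ks}L_k f(x)| = \epsilon_j |\theta(x-x_j)|$ for $k\in K_j$.

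The next step is to verify, using spatial separation of the $x_j$'s to suppress cross-terms, that with $d_\theta(\mu):=|\{|\theta|>\mu\}|$,
\[
\|f\|_{\dot F^s_{p,r}}^p \asymp \sum_j N_j^{p/r}\epsilon_j^p \quad \text{and} \quad \mu_0(\{|P^s f|>\lambda\}) = \sum_j N_j\, d_\theta(\lambda/\epsilon_j);
\]
the first identity comes from the fact that at generic $x$ only the single stack $K_{j(x)}$ above the nearest $x_{j(x)}$ contributes to the $\ell^r$-sum in $k$. With this in hand, for (i) I would take $\epsilon_j=2^{-j}$ and $N_j=j\cdot 2^{jp}$: the TL-sum $\sum_j j^{p/r}\, 2^{jp(p/r-1)}$ converges since $p/r<1$, while a lower bound $d_\theta\ge c_0$ for small arguments gives $\sum_j N_j d_\theta(\lambda/\epsilon_j)\gtrsim \log(1/\lambda)\cdot \lambda^{-p}$ as $\lambda\to 0^+$, so $\|f\|_{\dot\cB^s_p(0,\infty)} \gtrsim \sup_\lambda (\log(1/\lambda))^{1/p}=\infty$. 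For (ii), I would take $\epsilon_j=2^{-j}$ and $N_j=2^{jp}/j^{1+\delta}$ with fixed $\delta>p-1$: the TL-sum $\sum_j 2^{jp(p/r-1)}/j^{(1+\delta)p/r}$ now diverges since $p/r>1$, while splitting the $j$-sum at $j\approx \log_2(1/\lambda)$ and using the Schwartz decay $d_\theta(\mu)\lesssim \mu^{-N}$ for $N>p$ yields $(\sum_j N_j d_\theta(\lambda/\epsilon_j))^{1/p}\lesssim \lambda^{-1}(\log(1/\lambda))^{-(1+\delta)/p}$ for small $\lambda$; the resulting $d\lambda$-integral converges precisely because $(1+\delta)/p>1$, so $\|f\|_{\dot\cB^s_p(0,1)}<\infty$.

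The main technical obstacle I anticipate is that $\theta$ cannot be compactly supported (its Fourier transform is), so the Schwartz tails of $\theta(\cdot-x_j)$ reach the vicinity of $x_{j'}$ for $j\neq j'$; these cross contributions must be shown not to spoil the asymptotic equalities above. I plan to handle this in the standard way by making $|x_j-x_{j'}|$ grow super-polynomially in $\max(j,j')$ and using the rapid decay of $\theta$, so that the off-diagonal contributions are absolutely summable in all three norms and negligible compared with the diagonal terms that drive the computations.
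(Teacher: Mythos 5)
Your plan is sound and the constructions you propose do work, but the approach is more elaborate than necessary, and one of the ``technical obstacles'' you flag is self-inflicted. The paper proves both parts with a \emph{single} spatial center. Concretely, it takes $\eta_\circ\in\cS$ with $|\eta_\circ|>1$ on $|x|<1$ and $\supp\widehat\eta_\circ\subset\{|\xi|\le 2^{-5}\}$, and sets $f=\sum_{k>2}2^{-ks}c_k\,\eta_\circ(x)e^{i2^kx_1}$ with $c_k=\log k\cdot k^{-1/p}$ for part (i), and $g=\sum_{k>2}2^{-ks}\tilde c_k\,\eta_\circ(x)e^{i2^kx_1}$ with $\tilde c_k=k^{-1/p}(\log k)^{-2}$ for part (ii). The frequency support gives $L_kf=2^{-ks}c_k\eta_\circ e^{i2^kx_1}$ exactly; then the $\dot F^s_{p,r}$ norm is simply $\|\eta_\circ\|_p\,\|(c_k)\|_{\ell^r}$, finite iff $r>p$, and the $\dot\cB^s_p(0,\cdot)$ quantity reduces to a one-dimensional computation on the coefficient sequence $(c_k)$ (distribution function summed over $k$), which one checks directly.

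Comparing the two routes: your multi-center construction with step-coefficients $\epsilon_j$ constant on blocks $K_j$ of length $N_j$ is a valid alternative parametrization (indeed $\epsilon_j=2^{-j}$, $N_j\approx j\,2^{jp}$ reproduces, after resummation, an effective coefficient $\asymp(\log k/k)^{1/p}$ of the same flavor as the paper's $\log k\cdot k^{-1/p}$). But note that the $\dot\cB^s_p(0,\cdot)$ distribution function $\mu_0(\{|P^sf|>\la\})=\sum_j N_j\,d_\theta(\la/\epsilon_j)$ is \emph{exact} irrespective of the positions $x_j$, since it is merely a sum of planar distribution functions and translation leaves each unchanged. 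So the spatial separation of the $x_j$'s buys you nothing on the $\dot\cB$ side, and on the Triebel--Lizorkin side you could equally well set all $x_j=0$: then $\big(\sum_k 2^{ksr}|L_kf(x)|^r\big)^{1/r}=\big(\sum_j\epsilon_j^rN_j\big)^{1/r}|\theta(x)|$ and $\|f\|_{\dot F^s_{p,r}}\asymp\big(\sum_j\epsilon_j^rN_j\big)^{1/r}$, which for your choices again converges iff $r>p$ in case (i) and diverges for $r<p$ in case (ii). In short, the cross-term analysis you anticipate (super-polynomially separated centers, tail estimates) can be bypassed entirely; a single Schwartz bump suffices, which is precisely how the paper organizes the argument. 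Your treatment of part (ii) via the split at $j_0\approx\log_2(1/\la)$ and the condition $\delta>p-1$ is a perfectly good alternative to the paper's dyadic-box majorant $\sum_{\ell,m}2^{-\ell(N-d/p)}m^{-2}\bbone_{E_{\ell,m}}\mu_0(E_{\ell,m})^{-1/p}$; both give $P^sg\in L^{p,1}(\mu_0)$. Minor cleanups you should add: take integer parts of $N_j$, and record that the series defining $f$ converges in $\cS'_\infty$.
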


\begin{remark}
By  part (ii) of Theorem \ref{thm:non-emb} we know 
that for $0<s<M$ and $\gamma_1\neq \gamma_2$ the seminorms 
$\|\cQ_{M,s+\gamma_i/p} f\|_{L^{p,\infty}(\nu_{\ga_i})}$, $i=1,2$ are not equivalent on the space of Schwartz functions.
This is in striking contrast with the  limiting result for 
$\mathscr D_{1+\gamma/p}$,
by Brezis and three of the authors  \cite{bsvy},  where it is shown that for $1<p<\infty$, and all $\gamma\neq 0$ the semi-norms $\|\mathscr D_{1+\gamma/p} f\|_{L^{p,\infty}(\nu_\ga)}$ are equivalent with the Gagliardo 
semi-norm $\|\nabla f\|_p$. Moreover, for  $p=1$ one has 
$\|f\|_{\dot{BV}}\approx \|\mathscr D_{ 1+\gamma} f\|_{L^{1,\infty}(\nu_\ga)} $ provided that $\gamma\in \bbR\setminus[-1,0]$ (and this additional assumption is necessary).  These equivalences hold under the a-priori assumption that
$f$ is locally integrable.  
\end{remark} 
\subsubsection*{An embedding result involving $\dot {BV}$}

Denote by  $V^\infty=V^\infty(\bbR^d)$ the quotient space of $L^{\infty}$ by additive constants, with norm 
\[\|f\|_{V^\infty}= \inf_{c\in \bbC}\|f-c\|_\infty.\] 
 Denote by $[\cdot,\cdot]_{\theta,r}$ the real interpolation spaces 
for the Peetre $K_{\theta,r}$ method \cite[Section 3.1]{BL}.
The following embedding result involves 
a real interpolation space between $\dot{BV}$ and $V^\infty$. It will be used below  to study solutions of harmonic and caloric functions on $\bbR^{d+1}_+$.

\begin{theorem} \label{thm:BVemb} Let $\gamma \in \bbR\setminus [-1,0]$ and $1<p<\infty$.  Then 
\[ [ V^\infty, \dot {BV}]_{\frac 1p,1} \hookrightarrow \dot \cB^{1/p}_p(\gamma,\infty).
\]
\end{theorem}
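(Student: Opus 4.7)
I will combine the Littlewood--Paley description of $\dot\cB^{1/p}_p(\gamma,\infty)$ in \eqref{eq:concretedef2} with the Peetre $K$-functional description of $[V^\infty,\dot{BV}]_{1/p,1}$. The two endpoint estimates driving the argument are
\[
\|L_k g\|_{L^\infty(\R^d)}\lesssim\|g\|_{V^\infty},\qquad \|L_k h\|_{L^1(\R^d)}\lesssim 2^{-k}\|h\|_{\dot{BV}}\qquad(k\in\Z).
\]
The first holds because $\varphi(0)=0$ in \eqref{eq:varphiassumptions} forces $L_k$ to annihilate constants, so $\|L_k g\|_\infty\leq\|\psi\|_1\inf_{c\in\mathbb C}\|g-c\|_\infty=\|\psi\|_1\|g\|_{V^\infty}$, where $\psi$ is the inverse Fourier transform of $\varphi$. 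The second follows by writing the convolution kernel $\psi_k$ of $L_k$ as $\operatorname{div}\vec\eta_k$ for a Schwartz vector field $\vec\eta_k$ with $\|\vec\eta_k\|_1\lesssim 2^{-k}$, yielding $\|L_k h\|_1=\|\vec\eta_k*\nabla h\|_1\lesssim 2^{-k}\|h\|_{\dot{BV}}$.

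Given $\lambda>0$ and $k\in\Z$, set $\mu_k:=\lambda 2^{-k(1+\gamma)/p}$ and let $t_k$ be the largest $t>0$ with $K(t,f;V^\infty,\dot{BV})\leq c\mu_k$, with $c$ a small absolute constant. Choose a near-optimal decomposition $f=g_{t_k}+h_{t_k}$ satisfying $\|g_{t_k}\|_{V^\infty}+t_k\|h_{t_k}\|_{\dot{BV}}\leq 2K(t_k,f)\leq 2c\mu_k$. With $c$ small enough the first endpoint estimate gives $\|L_k g_{t_k}\|_\infty<\mu_k/2$, so $\{|L_k f|>\mu_k\}\subseteq\{|L_k h_{t_k}|>\mu_k/2\}$, and Chebyshev's inequality combined with the second endpoint estimate yields
\[
\bigl|\{x\in\R^d:|L_k f(x)|>\mu_k\}\bigr|\lesssim 2^{-k}/t_k.
\]

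Summing over $k$ and using $K(t_k,f)=c\lambda 2^{-k(1+\gamma)/p}$ to convert powers of $2$ into $K$-functional values,
\[
\lambda\Big(\sum_{k\in\Z}2^{-k\gamma}\bigl|\{|L_k f|>\mu_k\}\bigr|\Big)^{1/p}\lesssim\Big(\sum_{k\in\Z}\frac{K(t_k,f)^p}{t_k}\Big)^{1/p}.
\]
Since $\{K(t_k,f)\}_k$ is geometric with ratio $2^{-(1+\gamma)/p}$ and $K(\cdot,f)$ is concave on $(0,\infty)$ with $K(t,f)\to 0$ as $t\to 0^+$, the sequence $\{t_k\}$ is essentially geometrically spaced as well, and a standard sum-to-integral comparison gives
\[
\sum_{k\in\Z}\frac{K(t_k,f)^p}{t_k}\leq C_\gamma\int_0^\infty K(t,f)^p\,\frac{dt}{t^2}=C_\gamma\|f\|_{[V^\infty,\dot{BV}]_{1/p,p}}^p.
\]
Taking the supremum over $\lambda>0$ and invoking the standard inclusion $[V^\infty,\dot{BV}]_{1/p,1}\hookrightarrow[V^\infty,\dot{BV}]_{1/p,p}$ finishes the argument.

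The main obstacle is the sum-to-integral comparison above; the constant $C_\gamma$ depends on $|1+\gamma|$ and the estimate requires careful use of the concavity of $K(\cdot,f)$. An alternative route more in line with the language of the paper is to use Theorem~\ref{differences-thm-intro} (with $M=1$) to replace $\|f\|_{\dot\cB^{1/p}_p(\gamma,\infty)}$ by $\|\mathscr D_{(1+\gamma)/p} f\|_{L^{p,\infty}(\nu_\gamma)}$ and then to interpolate the linear map $f\mapsto(x,y)\mapsto f(x)-f(y)$ between the trivial $L^\infty(\R^{2d})$ bound from $V^\infty$ and the BSVY-type estimate $\|\mathscr D_{1+\gamma} f\|_{L^{1,\infty}(\nu_\gamma)}\lesssim\|f\|_{\dot{BV}}$, which holds precisely for $\gamma\in\R\setminus[-1,0]$.
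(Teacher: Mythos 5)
Your primary argument takes a genuinely different route from the paper. The paper works directly on the difference side: it combines the pointwise inequality $|\cQ_{1,(1+\gamma)/p}f|^p\le|\cQ_{1,1+\gamma}f|\,(2\|f\|_{V^\infty})^{p-1}$ with the BSVY bound $[\cQ_{1,1+\gamma}f]_{L^{1,\infty}(\nu_\gamma)}\lesssim\|f\|_{\dot{BV}}$ to obtain the multiplicative estimate $\|f\|_{\dot\cB^{1/p}_p(\gamma,\infty)}\lesssim\|f\|_{V^\infty}^{1-1/p}\|f\|_{\dot{BV}}^{1/p}$, and then converts this to the embedding via the $J_{\theta,1}$-method and the $K$--$J$ equivalence. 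You instead work on the Fourier/Littlewood--Paley side, with the two endpoint bounds $\|L_kg\|_\infty\lesssim\|g\|_{V^\infty}$ and $\|L_kh\|_1\lesssim 2^{-k}\|h\|_{\dot{BV}}$, a $\lambda$- and $k$-dependent choice of splitting scale $t_k$, and a sum-to-integral comparison for $\sum_k K(t_k,f)^p/t_k$. I have checked the argument and it is correct, though two remarks are in order. First, your heuristic that ``$\{t_k\}$ is essentially geometrically spaced'' and your appeal to concavity are unnecessary and somewhat misleading; what actually makes the comparison work is Abel summation together with the mere monotonicity of $K$: writing $a_k=K(t_k,f)^p$, $b_k=1/t_k$, one has $a_{k+1}=\rho^p a_k$ with $\rho=2^{-(1+\gamma)/p}$, so $\sum_k a_k b_k=\frac{1}{|1-\rho^p|}\sum_k a_k|b_{k+1}-b_k|$, and each term $a_k|b_{k+1}-b_k|$ is dominated by $\int_{t_{k+1}}^{t_k}K(t,f)^pt^{-2}\,dt$ (or the reflected interval when $\gamma<-1$) using only monotonicity of $K$. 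Second, and more interestingly, your argument only uses $\gamma\neq -1$ (so that $\rho\neq 1$) and yields the $[V^\infty,\dot{BV}]_{1/p,p}$ norm rather than the $[V^\infty,\dot{BV}]_{1/p,1}$ norm; both are strictly stronger than what the paper's proof gives, since the paper's restriction $\gamma\in\R\setminus[-1,0]$ is inherited entirely from BSVY and their $J_{\theta,1}$ argument cannot reach $q=p$ because $L^{p,\infty}$ is only $1$-normable, not $p$-normable. This is worth a second careful look (especially the range $\gamma\in(-1,0]$, which the authors do not claim), but I do not see a flaw. Finally, the ``alternative route'' you sketch at the end is in the spirit of the paper's proof but your phrase ``interpolate the linear map $f\mapsto f(x)-f(y)$'' is not accurate: there is no single linear operator being interpolated, because the target quantities are $\cQ_{1,0}f$ at the $V^\infty$ endpoint and $\cQ_{1,1+\gamma}f$ at the $\dot{BV}$ endpoint, which carry different powers of $|h|$. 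The paper sidesteps this by the pointwise multiplicative inequality above, and then passes to the interpolation space via the $J$-method rather than by interpolating an operator.
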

The case $\ga = d$ of Theorem \ref{thm:BVemb} has its roots in \cite[Theorem 1.4]{br-sch-yung-lor}. Its full generality is based on an estimate in \cite{bsvy}. It    complements  interpolation results in   \cite[Theorem 1.4]{cohen-et-al}, and extends an embedding theorem by Greco and Schiattarella \cite{greco-schiattarella} for functions of bounded variation on the unit circle. 

\subsection*{\it Harmonic and caloric functions in the upper half space} We now formulate some consequences of the embedding in Theorem \ref{thm:BVemb}. 
The original motivation of the space \(\dot\cB^{1/2}_2(1,\infty)\),  defined in terms of difference operators, came from the study of harmonic extension of functions of bounded variation in  \cite{greco-schiattarella}
(see also an earlier result by Iwaniec-Martin-Sbordone \cite{iwaniec-martin-sbordone}  for circle homeomorphisms). 
For a function 
such that 
\(\int_{\R^{d}} \abs{f (x)}(1 + \abs{x})^{-d-1} \dif x< \infty\),
the harmonic extension to the upper half space  $\bbR^{d+1}_+$ through the Poisson kernel is given 
by 
\[
 \poi f (x, t) = \frac{\Gamma(\frac{d + 1}{2})}{\pi^{\frac{d + 1}{2}}}
 \int_{\R^d} \frac{t}{(\abs{x - y}^2 + t^2)^{\frac{d + 1}{2}}} f (y) \dif y.
\]

 In order to state our result let  \Be\label{eq:Kb}\napoi^b f(x,t) = t^{1-b} \nabla\poi f(x,t) \Ee  where  $\nabla \poi$ denotes the $(x,t)$-gradient, for $t>0$,
 i.e.
 \[
\begin{split}
 \nabla \poi f (x, t)
 =
 &
 \frac{\Gamma(\frac{d + 1}{2})}{\pi^{\frac{d + 1}{2}} }
 \int_{\R^d} \frac{((d+1)t (x -y), \abs{x - y}^2 - d t^2)}{(\abs{x - y}^2 + t^2)^{\frac{d + 3}{2}}} (f (y) - f(x)) \dif y.
\end{split}
\]
This last expression makes sense for $f \in V^{\infty}+ \dot{BV}$. Define the measure  \(\lambda_\gamma\) on Lebesgue measurable sets of \(\R^{d+1}_+\) by \Be\label{eq:lagameas}
  \lambda_\gamma(E) = \iint_E \dif x \frac{\dif t}{t^{1-\gamma}}.
\Ee


\begin{corollary} \label{cor:Poisson-emb} Let $1<p<\infty$, $\gamma\in \bbR\setminus [-1,0] $. Then
\[\napoi^{\frac{\ga+1}{p}} : \, [ V^\infty, \dot {BV}]_{\frac 1p,1}\to L^{p,\infty}(\la_\ga)\] 
is bounded. In particular
\[\nabla\mathscr P  : \, [ V^\infty, \dot {BV}]_{\frac 12,1}\to L^{2,\infty}(\dif x\dif t)\] is bounded.
\end{corollary}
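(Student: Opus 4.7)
The proof strategy is to combine the embedding of Theorem \ref{thm:BVemb} with the Poisson-gradient bound
\begin{equation}\label{eq:cor-PoissonEst}
\|\napoi^{(\ga+1)/p} f\|_{L^{p,\infty}(\la_\ga)} \lesssim \|f\|_{\dot\cB^{1/p}_p(\ga,\infty)}, \qquad f\in\dot\cB^{1/p}_p(\ga,\infty),
\end{equation}
which I claim to hold for every $1<p<\infty$ and every $\ga\in\R$. Once \eqref{eq:cor-PoissonEst} is available, the first assertion of the corollary follows by composition with Theorem \ref{thm:BVemb}, while the second is the specialization $p=2$, $\ga=1$, in which $(\ga+1)/p=1$, so $\napoi^1 f=\nabla\mathscr P f$, and $d\la_1=dx\,dt$.

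For \eqref{eq:cor-PoissonEst}, the plan is to first invoke Theorem \ref{differences-thm-intro}(i) with $M=1$ and $s=1/p$ (legitimate because $p>1$) to select a tempered-function representative $f_\circ$ of $f$ modulo constants, satisfying
\[
\|\cQ_{1,(\ga+1)/p} f_\circ\|_{L^{p,\infty}(\nu_\ga)} \lesssim \|f\|_{\dot\cB^{1/p}_p(\ga,\infty)}.
\]
The mean-zero properties $\int_{\R^d}\nabla_z P_t(z)\,dz=0$ (by odd symmetry) and $\int_{\R^d}\partial_t P_t(z)\,dz=\partial_t\int_{\R^d} P_t=0$, combined with the elementary bound $|\nabla_z P_t(z)|+|\partial_t P_t(z)|\lesssim (|z|+t)^{-(d+1)}$, yield
\[
|\nabla \mathscr P f_\circ(x,t)| \lesssim \int_{\R^d} \frac{|\Delta_h f_\circ(x)|}{(|h|+t)^{d+1}}\,dh.
\]
Writing $b=(\ga+1)/p$ and $G(x,h)=|\Delta_h f_\circ(x)|/|h|^b$, inequality \eqref{eq:cor-PoissonEst} is reduced to the weak-type operator bound
\begin{equation}\label{eq:cor-OpBd}
\Bigl\|\int_{\R^d}\frac{t^{1-b}\,|h|^b}{(|h|+t)^{d+1}}\,G(x,h)\,dh\Bigr\|_{L^{p,\infty}(\la_\ga)} \lesssim \|G\|_{L^{p,\infty}(\nu_\ga)}.
\end{equation}

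To prove \eqref{eq:cor-OpBd}, the plan is to establish first the strong-type analogues $L^q(\nu_\ga)\to L^q(\la_\ga)$ for $q$ in an open interval around $p$, and then recover \eqref{eq:cor-OpBd} by real interpolation (Marcinkiewicz--Hunt). For each admissible $q$, I would pass to polar coordinates $h=r\omega$ and to the logarithmic substitutions $u=\log t$, $v=\log r$; after absorbing the weights arising from $\nu_\ga$ and $\la_\ga$ into the transformed integrand, the kernel becomes a $v$-convolution on $\R$ whose $L^1$-mass is a Beta-type integral convergent for all $q$ sufficiently close to $p$ and every $\ga\in\R$, the $\ga$-dependence collapsing at $q=p$ to leave the $\ga$-independent quantity $\int_0^\infty w^{-1/p}(1+w)^{-d-1}\,dw$. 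Young's convolution inequality in $u$, Minkowski's inequality in $\omega\in S^{d-1}$, and integration in $x$ then deliver the strong-type bound. The main obstacle lies in the bookkeeping of the various $\ga$-dependent weights in \eqref{eq:cor-OpBd}, and in verifying that this collapse of the $\ga$-dependence at the endpoint $q=p$ is what enables the estimate to hold for all $\ga\in\R$, and in particular for the range $\ga\in\R\setminus[-1,0]$ required by Theorem \ref{thm:BVemb}.
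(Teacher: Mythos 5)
Your proof is correct, and it takes a genuinely different route from the paper's. The paper first proves a general result (Proposition \ref{proposition_space-time_extension}): for a fixed moment-zero kernel $\psi$, the operator $\cK^{s+\ga/p}f(x,t)=t^{-(s+\ga/p)}\psi_t*f_\circ(x)$ is bounded from $\dot\cB^s_p(\ga,r)$ to $L^{p,r}(\la_\ga)$. This is obtained by quoting the classical characterization of $\dot W^{\sigma,q}$ via square-type integrals of $\psi_t*f$, which gives the bound $\dot W^{s_i,p_i}\to L^{p_i}(\la_\ga)$ on the two endpoints, and then real-interpolating via Theorem \ref{interpol-thm-Fourier}. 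Composing with Theorem \ref{thm:BVemb} gives Corollary \ref{cor:CK-BV}, and the paper concludes by observing that $\napoi^b$ is the vector of operators $\cK^b$ associated with the kernels $\psi_{(1)},\psi_{(2,1)},\dots,\psi_{(2,d)}$ of \eqref{eq:choices-of-psi}. Your approach instead proves the needed inequality $\|\napoi^{(\ga+1)/p}f\|_{L^{p,\infty}(\la_\ga)}\lesssim\|f\|_{\dot\cB^{1/p}_p(\ga,\infty)}$ directly: pass to a representative $f_\circ$, use the vanishing-moment/decay bounds on the gradient of the Poisson kernel to get a pointwise majorant $\int|\Delta_hf_\circ(x)|(|h|+t)^{-(d+1)}\dif h$, and then show the resulting positive linear operator $L^q(\nu_\ga)\to L^q(\la_\ga)$ is bounded by logarithmic change of variables and Young's inequality, for $q$ in an open interval containing $p$; real interpolation then gives the weak-type endpoint. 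Your computation is correct (the exponential kernel in $w=\log t-\log r$ is integrable, with its $L^1$-mass at $q=p$ becoming the $\ga$-independent Beta integral $\int_0^\infty s^{1/p'-1}(1+s)^{-(d+1)}\dif s$, and the integrability conditions are strict at $q=p$ so persist in an open interval). What the two approaches buy: the paper's is shorter given the machinery already developed (and Proposition \ref{proposition_space-time_extension} is reused verbatim for the caloric Corollary \ref{cor:heat}), while yours is self-contained and avoids both the quoted classical Besov characterization and Theorem \ref{interpol-thm-Fourier}. One small caveat worth flagging: as stated, \eqref{eq:cor-PoissonEst} for \emph{all} $f\in\dot\cB^{1/p}_p(\ga,\infty)$ requires care about the absolute convergence of the defining integral for $\nabla\mathscr P f_\circ$ when $f_\circ$ is merely a tempered function from Theorem \ref{differences-thm-intro}(i) (the Poisson-gradient kernel decays only at rate $|h|^{-d-1}$); but for the corollary itself one only needs it for $f\in V^\infty+\dot{BV}$, whose canonical representative lies in $L^\infty+L^1$, so both the majorant integral and $\mathscr P f_\circ$ are absolutely convergent and no gap remains.
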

\begin{remark}When \(d = 1\) we have  \(\dot{BV} (\R) \hookrightarrow V^\infty (\R)\) 
and thus
we recover the upper half plane analogue of Theorem 4.2 of \cite{greco-schiattarella}, saying that  $\nabla \mathscr Pf\in L^{2,\infty}(\bbR^2_+)$ for $f\in \dot{BV}(\bbR)$.
\end{remark}

Another corollary is about  
  solutions $u(x,t)=Uf(x,t) =e^{t\Delta}f(x)$ of  the initial value problem for the heat equation in the upper half space,  \Be\label{eq:heat} \frac{\partial u}{\partial t} =\Delta u , \quad  u|_{t=0}=f. \Ee
For $b\in \bbR$, $t>0$, define  $\mathscr H^b =(\mathscr H_1^b,\dots,\mathscr H_{d+1}^b)$ by 
\begin{align*}
    \mathscr H_j^b f(x,t) &= t^{\frac{1}{2}-b} \frac{\partial}{\partial x_j }U\!f(x,t), \quad j=1,\dots, d
\\
\mathscr H_{d+1}^b f(x,t) &= t^{1-b} \frac{\partial}{\partial t}  U\!f (x,t) .
\end{align*}
\begin{corollary} \label{cor:heat}
Let $\beta\in \bbR\setminus [-\frac 12,0]$, and $1<p<\infty$.  
Then 

(i)
\(\mathscr H^{\frac{2\beta+1}{2p} } : [V^\infty,\dot{BV}]_{\frac 1p,1} \to L^{p,\infty} (\la_{\beta}) \) is bounded.

(ii) 
Let $u=Uf$ solve the problem \eqref{eq:heat} for $t>0$.
Then  
\begin{align*}
    f\in  [V^\infty, \dot{BV} ]_{\frac 23,1}  &\implies 
    \frac{\partial u}{\partial t} =\Delta_xu \in L^{\frac32,\infty} (\bbR^{d+1}_+, \dif x\dif t) ,
    \\
    f\in  [V^\infty, \dot{BV} ]_{\frac 13,1}  &\implies 
    \nabla_x u  \in L^{3,\infty} (\bbR^{d+1}_+, \dif x\dif t) .
\end{align*}
\end{corollary}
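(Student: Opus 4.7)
The plan is to reduce Corollary \ref{cor:heat} to Theorem \ref{thm:BVemb} via a caloric analogue of Corollary \ref{cor:Poisson-emb}. More precisely, I expect to establish that for every $s>0$, $1<p<\infty$ and $\gamma\in \bbR$, with $b=\tfrac{1}{2}(s+\tfrac{\gamma}{p})$, one has
\[
 \|\mathscr H^{b} f\|_{L^{p,\infty}(\la_{\gamma/2})}\lesssim \|f\|_{\dot\cB^{s}_p(\gamma,\infty)}.
\]
Specializing $s=1/p$ and $\gamma=2\beta$ gives $b=(2\beta+1)/(2p)$; the hypothesis $\beta\in \bbR\setminus[-1/2,0]$ is equivalent to $2\beta\in \bbR\setminus[-1,0]$, so applying Theorem \ref{thm:BVemb} with parameter $\gamma=2\beta$ together with the displayed inequality proves (i). Part (ii) is then an immediate specialization: with $(p,\beta)=(3/2,1)$ one has $b=1$ and $\mathscr H_{d+1}^{1}f=\partial_t U\! f$, while with $(p,\beta)=(3,1)$ one has $b=1/2$ and $\mathscr H_j^{1/2}f=\partial_{x_j}U\! f$ for $j\le d$. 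In both cases $\la_1=\dif x\,\dif t$ is Lebesgue measure on $\bbR^{d+1}_+$.

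To prove the displayed inequality, the plan is to apply a Littlewood-Paley decomposition to $f$ and use the representation $\widehat{U L_k f}(\xi,t)=e^{-t|\xi|^2}\varphi(2^{-k}\xi)\hat f(\xi)$ together with routine Gaussian kernel estimates to obtain the pointwise bound
\[
 |\mathscr H^b L_k f(x,t)|\lesssim 2^{2kb}\,(2^{2k}t)^{\alpha_b}(1+2^{2k}t)^{-N}\,\cM L_kf(x)
\]
for any $N\in\N$, where $\cM$ is the Hardy-Littlewood maximal operator and $\alpha_b\in\{\tfrac12-b,1-b\}$ depending on which component of $\mathscr H^b$ is considered. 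In particular, $\mathscr H^b L_k f$ is essentially localized to the time scale $t\sim 2^{-2k}$ with peak size $\lesssim 2^{2kb}\cM L_k f(x)$, and the contribution at $t\sim 2^{-2k}$ from pieces $L_j f$ with $j\ne k$ decays rapidly in $|j-k|$ (super-exponentially when $j>k$ and like $2^{j-k}$ when $j<k$, which suffices after summation).

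The change of variable $t=2^{-2k}\tau$ on the dyadic annulus $t\in[2^{-2k-1},2^{-2k+1}]$ converts the weight $\dif t/t^{1-\gamma/2}$ into $2^{-k\gamma}\dif\tau/\tau^{1-\gamma/2}$, identifying $\la_{\gamma/2}$ on such annuli in $\bbR^{d+1}_+$ with $\mu_\gamma$ on $\bbR^d\times\bbZ$ from Definition \ref{first-definition}. The $\la_{\gamma/2}$-distribution function of $\mathscr H^b f$ is then controlled by the $\mu_\gamma$-distribution function of $(x,k)\mapsto 2^{2kb}\cM L_kf(x)$; absorbing $\cM$ via its Lorentz-space mapping properties, and using $2b=s+\gamma/p$, the right-hand side becomes exactly $\|f\|_{\dot\cB^s_p(\gamma,\infty)}$. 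The main obstacle will be the distribution-function comparison: since $L^{p,\infty}$ is not $\sigma$-subadditive across scales, one cannot merely sum single-scale weak bounds. I expect to handle this either by a direct level-set decomposition in the spirit of \cite{bsvy}, or by real interpolation between strong-type bounds in $\dot\cB^s_p(\gamma,q)$ for $q$ on either side of $p$, exploiting that $\dot\cB^s_p(\gamma,\infty)$ itself arises as a real interpolation space, as announced in the introduction.
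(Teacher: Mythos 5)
Your proposal is correct and, once the fallback is invoked, ends up taking essentially the paper's route. You correctly reduce Corollary~\ref{cor:heat} to the weak-type bound $\|\mathscr H^{b} f\|_{L^{p,\infty}(\la_{\gamma/2})}\lesssim \|f\|_{\dot\cB^{s}_p(\gamma,\infty)}$ (with $b=\tfrac12(s+\gamma/p)$) plus Theorem~\ref{thm:BVemb}, and your parameter bookkeeping for part (ii) ($p=3/2,\,\beta=1,\,b=1$ and $p=3,\,\beta=1,\,b=1/2$) matches. The important observation you make at the end is right on target: the scale-by-scale pointwise/Gaussian-kernel estimates cannot simply be summed to a weak-type bound, because $L^{p,\infty}(\la_{\gamma/2})$ is not $\sigma$-subadditive across scales. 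Your option~(b), real interpolation exploiting that $\dot\cB^s_p(\gamma,\infty)$ is itself a real interpolation space, is exactly what the paper does: Proposition~\ref{proposition_space-time_extension} proves strong-type bounds from the diagonal Besov spaces $\dot B^{s_i}_{p_i,p_i}$ into $L^{p_i}(\la_\gamma)$ using the classical estimate~\eqref{eq:classical}, then interpolates via Theorem~\ref{interpol-thm-Fourier}. (Note that for the final step one must interpolate to $r=\infty$; this is available because one interpolates between two genuinely different couples $\dot B^{s_i}_{p_i,p_i}$ and $L^{p_i}(\la_\gamma)$, giving $\dot\cB^s_p(\gamma,\infty)$ and $L^{p,\infty}(\la_\gamma)$ in the $[\cdot,\cdot]_{\theta,\infty}$ position, not by varying only the secondary Lorentz index with $p$ fixed.) The only cosmetic difference from the paper is packaging: the paper introduces the abstract operator $\cK^b f=t^{-b}\psi_t*f_\circ$ with $\widehat\psi(\xi)=-|\xi|^2e^{-|\xi|^2}$ or $i\xi_je^{-|\xi|^2}$ and the change of variable $t=r^2$ to write $\cK^{b}_{j}f(x,r)=\mathscr H_j^{b/2}f(x,r^2)$, which lets a single Proposition~\ref{proposition_space-time_extension} handle both the Poisson and the caloric extension, whereas you work directly with $\mathscr H^b$ and would re-derive the strong-type endpoint estimate via Littlewood--Paley and kernel bounds rather than citing the classical Besov-space characterization; the two are equivalent.
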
 
When $d=1$  we 
obtain a  caloric analogue of the result in \cite{greco-schiattarella}, for boundary values in $\dot {BV}(\bbR)$.
\begin{corollary} Let $f\in \dot{BV}(\bbR)$ and let $u$ solve the
initial value problem $\frac{\partial u}{\partial t}=\frac{\partial^2 u}{\partial x^2}$, $u(x,0)=f(x)$. Then
$\frac{\partial u}{\partial t} \in L^{\frac32,\infty}(\bbR^2_+)$ and
$\frac{\partial u}{\partial x} \in L^{3,\infty}(\bbR^2_+)$.
\end{corollary}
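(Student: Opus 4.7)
The plan is to deduce this as a direct consequence of Corollary~\ref{cor:heat}(ii) once the interpolation hypothesis is verified. The key observation is that in dimension $d=1$ the embedding $\dot{BV}(\bbR) \hookrightarrow V^\infty(\bbR)$ holds (this is the content of the remark following Corollary~\ref{cor:Poisson-emb}): any $f\in \dot{BV}(\bbR)$ has finite total variation, which bounds the oscillation, so $\inf_{c\in\bbC}\|f-c\|_{\infty}\lesssim \|f\|_{\dot{BV}}$.

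Given this embedding, I would upgrade it to the interpolation embedding $\dot{BV}(\bbR) \hookrightarrow [V^\infty,\dot{BV}]_{\theta,1}$ for every $\theta\in(0,1)$ by a standard $K$-functional estimate. For $f\in \dot{BV}$, taking either the decomposition $(f,0)$ or $(0,f)$ in the definition of the Peetre $K$-functional yields
\[
K(t,f;V^\infty,\dot{BV}) \le \min\bigl(\|f\|_{V^\infty},\, t\|f\|_{\dot{BV}}\bigr) \le C\|f\|_{\dot{BV}}\min(1,t),
\]
and therefore
\[
\|f\|_{[V^\infty,\dot{BV}]_{\theta,1}} \approx \int_0^\infty t^{-\theta} K(t,f)\,\frac{\dif t}{t}
\le C\|f\|_{\dot{BV}}\Bigl(\int_0^1 t^{-\theta}\,\dif t +\int_1^\infty t^{-\theta-1}\,\dif t\Bigr) = \frac{C}{\theta(1-\theta)}\|f\|_{\dot{BV}}.
\]

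With this embedding in place, the corollary is immediate from Corollary~\ref{cor:heat}(ii): choosing $\theta=2/3$ gives $f\in[V^\infty,\dot{BV}]_{2/3,1}$ and hence $\partial_t u=\partial_{xx}^2 u\in L^{3/2,\infty}(\bbR^2_+)$, while choosing $\theta=1/3$ gives $f\in [V^\infty,\dot{BV}]_{1/3,1}$ and hence $\partial_x u\in L^{3,\infty}(\bbR^2_+)$. No real obstacle arises in this reduction, since the analytic content—the mapping properties of the caloric semigroup on the interpolation spaces between $V^\infty$ and $\dot{BV}$—has already been supplied by Corollary~\ref{cor:heat}; the present statement is merely the specialization to boundary data in $\dot{BV}(\bbR)$ enabled by the one-dimensional embedding $\dot{BV}\hookrightarrow V^\infty$.
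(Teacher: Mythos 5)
Your proof is correct and follows exactly the route the paper intends: use the one-dimensional embedding $\dot{BV}(\bbR)\hookrightarrow V^\infty(\bbR)$ (noted after Corollary~\ref{cor:Poisson-emb}), conclude $\dot{BV}\subset V^\infty\cap\dot{BV}\hookrightarrow [V^\infty,\dot{BV}]_{\theta,1}$ for $0<\theta<1$ via the elementary $K$-functional bound, and then apply Corollary~\ref{cor:heat}(ii) with $\theta=2/3$ and $\theta=1/3$. The paper leaves these steps implicit; your write-up simply supplies the standard details.
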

 
\begin{remark} \label{rem:Vpinterpol} It would be interesting to upgrade the results of Theorem \ref{thm:BVemb} and/or the corollaries to  other interpolation spaces of $V^\infty$ and $\dot{BV}$. A related question in dimension $d=1$ is whether such  inequalities  can be proved for functions in the Wiener spaces $V^p$ of bounded $p$-variation. 
 Note that $V^1=\dot{BV} $ and that for $1<p<\infty$ we have  $[V^\infty,V^1]_{\frac 1p,p} \subset V^p $, see \cite{bergh-peetre}.
 If $V^{p,\infty}$ denotes the space of $f$ for which the numbers $\fN(f,\alpha)$ of $\alpha$-jumps satisfy $\sup_{\alpha>0}\alpha \fN(f,\alpha)^{1/p} <\infty$ then, by \cite{mirek-stein-zorin},  $V^p\subset V^{p,\infty} =  [V^\infty, V^1]_{1/p,\infty}$.
See also  \cite{cobos-kruglyak} for a related result on the $K$-functional for the couple $(V^\infty, V^1) $.
\end{remark}

\subsection*{Interpolation} We review the problem of interpolation of Besov spaces.
Recall 
 the definition of the  homogeneous Besov space $\dot B^s_{p,q}\equiv \dot B^{s}_{q} (L^p)$ as the subspace of 
 $f\in \cS_\infty'(\bbR^d) $ 
for which  \(\|f\|_{\dot B^s_{p,q}} \coloneqq(\sum_{k\in \bbZ} \|2^{ks}L_k f\|_p^q)^{1/q} \) is finite. 
Regarding real interpolation, the case for fixed $p$ and varying $s$ is well known.
Suppose $s_0, s_1 \in \R$ with $s_0\neq s_1$. If $1 \leq p, r \leq \infty$, one   has \cite[Theorem 6.4.5(i)]{BL}
\( [\dot B^{s_0}_{p,p},\dot  B^{s_1}_{p,p} ]_{\theta,r} = \dot B^{s}_{p,r}\) if 
$s=(1-\theta)s_0+\theta s_1,$ $\theta \in (0,1)$; see also \cite{devore-popov}.
For the case  $p_0\neq p_1$ the spaces $\dot \cB^{s}_{p}(\ga,r)$ arise as  interpolation spaces for the $K_{\theta,r}$-method.
The following theorem and corollary were already known to Krepkogorski\u\i{} \cite{krepkogorskii} who considered the inhomogeneous variants. For an extension to the quasi-Banach range see 
\cite{krepkogorskii2}. For a description of the interpolation spaces via wavelet coefficients see  also  the recent work by Besoy, Haroske, and  Triebel    \cite{besoy-haroske-triebel}. 

\begin{theorem}\label{interpol-thm-Fourier} 
Let $1\le p_0,p_1, r\le \infty $, $p_0\neq p_1$, $s_0,s_1\in \bbR$.  
Let \begin{equation}
\label{eq_gamma}
\ga=- \frac{s_0-s_1} 
{\tfrac 1{p_0}-\tfrac 1{p_1}} \,,
\end{equation}
  let $0<\theta<1$ and 
\begin{equation}\label{eq_theta} (\tfrac  1p,s)=(1-\theta) (\tfrac 1{p_0},s_0)+\theta(\tfrac 1{p_1},s_1).
\end{equation}
Then
 \begin{align} \label{eq:interpol_nice}
  [ \dot B^{s_0}_{p_0,p_0}, \dot B^{s_1}_{p_1,p_1}]_{\theta,r} &=
 \dot \cB^s_p(\ga,r), 
 \end{align}
 with equivalence of (quasi-)norms. \qed
 \end{theorem}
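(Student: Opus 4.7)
The plan is to reduce this interpolation identity to the classical real interpolation of weighted $L^{p}$ spaces on a fixed measure space, via a standard retraction-coretraction argument.

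\textbf{Step 1 (Key algebraic identity).} A short computation using \eqref{eq_gamma} and \eqref{eq_theta} shows that
\begin{equation*}
\sigma := s_0+\tfrac{\gamma}{p_0}=s_1+\tfrac{\gamma}{p_1}=s+\tfrac{\gamma}{p}.
\end{equation*}
Thus $P^{\sigma}$ (in the sense of Definition \ref{first-definition}(ii)) is the \emph{same} operator at both endpoints and at the interpolation point. Unravelling the norms and using the algebraic identity $s_i p_i - \sigma p_i = -\gamma$, one obtains
\begin{equation*}
\|f\|_{\dot B^{s_i}_{p_i,p_i}} = \|P^{\sigma}f\|_{L^{p_i}(\mu_\gamma)}, \qquad \|f\|_{\dot\cB^s_p(\gamma,r)} = \|P^{\sigma}f\|_{L^{p,r}(\mu_\gamma)}.
\end{equation*}
So the theorem reduces to identifying the interpolation space of the couple $(L^{p_0}(\mu_\gamma),L^{p_1}(\mu_\gamma))$, provided one can transfer through $P^\sigma$.

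\textbf{Step 2 (Retraction and coretraction).} Choose a radial $\widetilde \varphi\in C^\infty_c(\R^d)$ with $\widetilde\varphi\equiv 1$ on $\mathrm{supp}\,\varphi$ and $\mathrm{supp}\,\widetilde\varphi\subset\{1/2<|\xi|<2\}$, and let $\widetilde L_k$ be the Fourier multiplier with symbol $\widetilde\varphi(2^{-k}\cdot)$. Define
\begin{equation*}
J f(x,k) = 2^{k\sigma} L_k f(x), \qquad R g(x) = \sum_{k\in\Z} 2^{-k\sigma}\,\widetilde L_k[g(\cdot,k)](x).
\end{equation*}
Since $\widetilde L_k L_k = L_k$ and $\sum_k L_k = \mathrm{Id}$ on $\cS_\infty'$, we have $R\circ J=\mathrm{Id}$.

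\textbf{Step 3 (Boundedness of the maps on endpoints).} The identity $J: \dot B^{s_i}_{p_i,p_i}\to L^{p_i}(\mu_\gamma)$ is an isometry by Step 1. For $R$, the support condition yields $L_j\widetilde L_k=0$ when $|j-k|>C$, so
\begin{equation*}
L_j Rg \;=\;\sum_{|k-j|\leq C}2^{-k\sigma}\,L_j\widetilde L_k g(\cdot,k),
\end{equation*}
and Young's inequality gives $\|L_j Rg\|_{p_i}\lesssim \sum_{|k-j|\leq C}2^{-k\sigma}\|g(\cdot,k)\|_{p_i}$. Raising to the $p_i$-power and summing $2^{j s_i p_i}$, the relation $-\sigma p_i+s_i p_i=-\gamma$ together with $|j-k|\leq C$ produces $\|Rg\|_{\dot B^{s_i}_{p_i,p_i}}\lesssim \|g\|_{L^{p_i}(\mu_\gamma)}$.

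\textbf{Step 4 (Conclusion).} By the retraction-coretraction principle \cite[Theorem 6.4.2]{BL} applied to $(J,R)$,
\begin{equation*}
[\dot B^{s_0}_{p_0,p_0},\dot B^{s_1}_{p_1,p_1}]_{\theta,r}\;\cong\; J^{-1}\Big([L^{p_0}(\mu_\gamma),L^{p_1}(\mu_\gamma)]_{\theta,r}\Big)
\end{equation*}
with equivalent quasi-norms. The classical interpolation identity for $L^p$ spaces with a fixed measure (see \cite[Theorem 5.3.1]{BL}) yields $[L^{p_0}(\mu_\gamma),L^{p_1}(\mu_\gamma)]_{\theta,r}=L^{p,r}(\mu_\gamma)$. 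Combining this with Step 1 gives \eqref{eq:interpol_nice}.

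\textbf{Expected main obstacle.} The principal subtlety is the homogeneous framework: one must justify that the couple $(\dot B^{s_0}_{p_0,p_0},\dot B^{s_1}_{p_1,p_1})$ is genuinely compatible inside $\cS_\infty'(\R^d)$, that the Littlewood-Paley reconstruction $\sum_k L_k f=f$ holds in $\cS_\infty'$ for every element of the sum space, and that the series defining $Rg$ converges in $\cS_\infty'$ for $g\in L^{p_0}(\mu_\gamma)+L^{p_1}(\mu_\gamma)$. These convergence issues, rather than any deep analytic estimate, are what require care; once settled the argument is essentially formal bookkeeping built upon the scalar Lorentz interpolation theorem.
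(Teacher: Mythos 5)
Your proposal follows essentially the same route as the paper: it constructs the coretraction $P^\sigma$ (the paper's $P^b$ with $b=\sigma=s_i+\gamma/p_i$) and the retraction $R$ (the paper's $R_b$), verifies boundedness on the endpoints exactly as in Lemma~\ref{lem:retract}, and then invokes the retraction–coretraction principle together with the classical Lorentz interpolation $[L^{p_0}(\mu_\gamma),L^{p_1}(\mu_\gamma)]_{\theta,r}=L^{p,r}(\mu_\gamma)$. The argument is correct and matches the paper's proof in \S\ref{sec:interpolation}; your closing caveat about convergence of $\sum_k L_k f$ in $\cS_\infty'$ is a legitimate point of care that the paper also leaves implicit in the homogeneous framework.
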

\medskip

\begin{corollary}\label{cor:TLinterpol}
Let $1<p_0,p_1< \infty $, $p_0\neq p_1$, $s_0,s_1\in \bbR$, $1\le q_0, q_1,r_0, r_1\le \infty$ and $1\le r\le\infty$. Suppose that \eqref{eq_gamma} and \eqref{eq_theta} hold with $0<\theta<1$.  Then  
\begin{subequations}
\Be \label{eq:cBinterpol}
  [ \dot\cB^{s_0}_{p_0} (\gamma,r_0), \dot \cB^{s_1}_{p_1} (\gamma, r_1)]_{\theta,r} =
 \dot \cB^s_p(\ga,r).
\Ee
Moreover, if $s_0\neq s_1$,
\Be \label{eq:TLinterpol}
    [ \dot F^{s_0}_{p_0,q_0}, \dot F^{s_1}_{p_1,q_1}]_{\theta,r} =
 \dot \cB^s_p(\ga,r). \Ee
  \end{subequations}
\end{corollary}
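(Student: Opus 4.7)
The plan is to deduce both identifications in Corollary \ref{cor:TLinterpol} from the ``diagonal'' statement in Theorem \ref{interpol-thm-Fourier}. I would first establish \eqref{eq:cBinterpol} via the reiteration theorem for the real $K$-method, and then derive \eqref{eq:TLinterpol} by sandwiching the Triebel-Lizorkin spaces between two $\dot\cB$-spaces using Theorem \ref{thm:embeddings}\,(ii).

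For \eqref{eq:cBinterpol}, I would observe that the points $(1/p_0, s_0)$ and $(1/p_1, s_1)$ lie on the affine line $L = \{(1/q,\sigma) : \sigma + \gamma/q = c\}$, where $c := s_0 + \gamma/p_0 = s_1 + \gamma/p_1$. The auxiliary endpoints $(Q_0,\Sigma_0) = (1, c-\gamma)$ and $(Q_1,\Sigma_1) = (\infty, c)$ also lie on $L$, and the value of $\gamma$ computed from them via \eqref{eq_gamma} agrees with the original $\gamma$. Applying Theorem \ref{interpol-thm-Fourier} with these endpoints and interpolation parameter $\eta_i := 1 - 1/p_i \in (0,1)$ gives
\[
\dot\cB^{s_i}_{p_i}(\gamma,r_i) \;=\; [\dot B^{c-\gamma}_{1,1},\, \dot B^{c}_{\infty,\infty}]_{\eta_i,\,r_i}, \qquad i=0,1.
\]
Since $p_0 \neq p_1$, also $\eta_0 \neq \eta_1$, and the standard reiteration theorem for the real $K$-method (\cite[Theorem 3.5.3]{BL}) yields
\[
[\dot\cB^{s_0}_{p_0}(\gamma,r_0),\,\dot\cB^{s_1}_{p_1}(\gamma,r_1)]_{\theta,r} \;=\; [\dot B^{c-\gamma}_{1,1},\,\dot B^{c}_{\infty,\infty}]_{\eta,r}
\]
with $\eta = (1-\theta)\eta_0 + \theta\eta_1 = 1 - 1/p$. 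A final appeal to Theorem \ref{interpol-thm-Fourier} identifies the right-hand side as $\dot\cB^{s}_{p}(\gamma,r)$.

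For \eqref{eq:TLinterpol}, the assumption $s_0 \neq s_1$ forces $\gamma \neq 0$, so Theorem \ref{thm:embeddings}\,(ii) is available at the endpoints $r=1\le p_i$ and $r=\infty\ge p_i$. Together with the standard monotonicity of Triebel-Lizorkin microlocal indices this yields, for $i=0,1$,
\[
\dot\cB^{s_i}_{p_i}(\gamma,1) \hookrightarrow \dot F^{s_i}_{p_i,1} \hookrightarrow \dot F^{s_i}_{p_i,q_i} \hookrightarrow \dot F^{s_i}_{p_i,\infty} \hookrightarrow \dot\cB^{s_i}_{p_i}(\gamma,\infty).
\]
Monotonicity of the real $K$-method within the ambient space $\cS_\infty'(\bbR^d)$, combined with \eqref{eq:cBinterpol} applied with $(r_0,r_1)=(1,1)$ and with $(r_0,r_1)=(\infty,\infty)$, sandwiches $[\dot F^{s_0}_{p_0,q_0},\,\dot F^{s_1}_{p_1,q_1}]_{\theta,r}$ between two copies of $\dot\cB^{s}_{p}(\gamma,r)$ and so identifies them.

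The only mildly delicate point is the choice of auxiliary endpoints in the reiteration step: taking $Q_0 = 1$ and $Q_1 = \infty$ covers the entire range $p_0, p_1 \in (1,\infty)$ uniformly and bypasses any case analysis. Everything else reduces to the usual monotonicity and reiteration principles for the Peetre $K$-method.
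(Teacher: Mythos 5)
Your proof is correct and follows essentially the same route as the paper: (1.28) by Theorem \ref{interpol-thm-Fourier} plus reiteration, and (1.29) by sandwiching $\dot F^{s_i}_{p_i,q_i}$ between $\dot\cB^{s_i}_{p_i}(\gamma,1)$ and $\dot\cB^{s_i}_{p_i}(\gamma,\infty)$ via Theorem \ref{thm:embeddings}(ii) and invoking the case $(r_0,r_1)=(1,1)$ and $(\infty,\infty)$ of (1.28). The only difference is cosmetic: you spell out the reiteration step by choosing the explicit anchor pair $\dot B^{c-\gamma}_{1,1},\dot B^{c}_{\infty,\infty}$ and verifying $\eta_i=1-1/p_i$, whereas the paper leaves that setup implicit.
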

Note that for $s\in \bbN\cup \{0\}$ and $1<p<\infty$, the space $\dot F^s_{p,2}$ is identified with the Sobolev space $\dot{W}^{s,p}$. Thus, if $s_0,s_1$ are non-negative integers, 
 $s_0 \ne s_1$, and $1 < p_0, p_1 < \infty$ with $p_0 \ne p_1$, then for $0 < \theta < 1$ and $1 \leq r \leq \infty$ we get in particular $$[\dot{W}^{s_0,p_0},\dot{W}^{s_1,p_1}]_{\theta,r} = \dot \cB^s_p(\ga,r)$$ where $(\tfrac{1}{p},s)$ and $\ga$ are given by  \eqref{eq_theta} and \eqref{eq_gamma}. 

For completeness we shall sketch in  \S\ref{sec:interpolation}
the standard proofs based on the Fourier analytic definition which are very much  analogous to \cite{krepkogorskii}. More interestingly, for  $M=1$ and   $s_0,s_1\in (0,1)$, an alternative approach to the interpolation result \eqref{eq:interpol_nice} will be given in \S\ref{sec:interpol-diff}, based directly on the characterization via first order differences. 


\subsection*{Nonlinear wavelet approximation} 
Our results  can be obtained to prove new results on best approximation via $n$ terms in a wavelet basis, relating it to suitable regularity properties of the given function.

To fix ideas we first recall basic notation in wavelet theory.
Let $u\in \bbN$, $\phi\in C^u(\bbR)$ be a univariate scaling function associated with the  univariate wavelet $\psi\in C^u(\bbR)$.
Let $\psi^0 := \phi$ and $\psi^1 := \psi$. If $E$ denote the set of the $2^d-1$ non-zero vertices of $[0,1]^d$, given $e = (e_1, \ldots, e_d) \in E$, we define the $d$-variate wavelets $\psi^{e}(x)=\prod_{i=1}^d \psi^{e_i} (x_i)$.
As in \cite{kyriazis} 
we assume certain decay and nonvanishing moment conditions on the $\psi^{e}$, namely 
\begin{subequations}
\begin{equation}\label{Cond1}
	\sup_{x \in \R^d} (1 + |x|)^M |D^\alpha \psi^{e} (x)| < \infty, \qquad |\alpha| \leq u,  \qquad e \in E,
\end{equation}
and 
\begin{equation}\label{Cond2}
	\int_{\R^d} x^\alpha \psi^{e}(x)  \dif x = 0, \qquad |\alpha| < u, \qquad e \in E,
\end{equation} for $u$, $M$ satisfying 
\Be\label{eq:u-M-choice}
u>|s|, \quad M>d+u.
\Ee If one works with $L^p$-based Besov spaces and allows the parameter range to be $p>0$, then one needs to require
	$u > \max \{\frac{d}{\min\{1,p\}}-d-s, s\}$, $M > \max\{ \frac{d}{\min\{1, p\}}, d + u\}$.
	\end{subequations}

Let, for $j\in \bbZ$ and $m\in \bbZ^d$, \begin{equation}\label{Tensor}
	\psi^{e}_{j, m}(x) := 2^{\frac{j d}{2}} \psi^{e}(2^j x - m).
\end{equation}
We assume that  the system 
\begin{equation}\label{WaveletSystem}
	\Psi = \{\psi^{e}_{j, m} : \, j \in \Z, \, m \in \Z^d, \, e \in E\}
\end{equation}
forms an {\it orthonormal basis in $L^2(\R^d)$}, see e.g.\ \cite{daubechies} for an introduction to wavelet theory.

Let $1<q<\infty$. Consider now the \emph{best $n$-term approximation of $f\in L^q(\bbR^d)$,  with respect to $\Psi$}, measured in the $L^q(\bbR^d)$ norm;  i.e.,
$$
	\sigma_n(f)_q = \inf \Big\{\Big\|f- \sum_{\psi_\nu \in \Lambda\subset \Psi} c_\nu \psi_\nu \Big\|_{L^q(\R^d)} :  \#(\Lambda)  \leq n, \, c_\nu  \in \mathbb{C} \Big\}.  
$$ 
Let $\alpha > 0$
and $0 < r \leq \infty$. The related \emph{approximation space} $\mathcal{A}^\alpha_r(L^q, \Psi)$ is defined as the set of functions  $f\in L^q(\bbR^d)$   for which 
$$
	\|f\|_{\mathcal{A}^\alpha_r(L^q, \Psi)} =\begin{cases}  \Big(\sum_{n=1}^\infty\big [n^{\alpha} \sigma_n(f)_q\big]^r \frac{1}{n} \Big)^{\frac{1}{r}}  &\text{if $r<\infty$}
	\\
	\sup_n n^\alpha \sigma_n(f)_q    &\text{if $r=\infty$}
	\end{cases}
$$
is finite.

It is well known that $\mathcal{A}^\alpha_r(L^q(\R^d), \Psi)$ can be characterized in terms of a certain interpolation space between $L^q(\R^d)$ and Besov spaces. Specifically, let  $1<q<\infty$, $0 < r \leq \infty$,  and $0 < s < \sigma.$ Then 
\Be\label{eq:DeVore-interpol}
	 \mathcal{A}^{s/d}_r (L^q, \Psi) = [L^q, \dot{B}^{\sigma}_{u, u}]_{\theta, r} \, \quad \text{ if  } \theta=\frac s\sigma \text{ and } \,\,\frac{1}{u} = \frac{1}{q}+ \frac{\sigma}{d} ;
\Ee
see DeVore's survey
\cite[(7.41)]{DeVore} and also  \cite[page 223]{Petrushev} for related results on spline approximation with $d=1$. 
We specialize  \eqref{eq:TLinterpol}    with $s_0 = 0$, $p_0= q$, $q_0= 2$,  $q_1 = p_1 = u$, $s_1=\sigma$, hence  $ \gamma= -\frac{s_1-s_0}{p_1^{-1}-p_0^{-1}}=-d $.
We thus see  that  for $\theta$, $u$ as in \eqref{eq:DeVore-interpol}  the space 
$[L^q, \dot{B}^{\sigma}_{u, u}]_{\theta, r}$ coincides with 
$\dot \cB_p^s(-d,r)$ if $\frac 1p=\frac 1q+\frac sd$.
Combining this with
\eqref{eq:DeVore-interpol}, 
we have verified
\begin{theorem} \label{thm:appr}
Let $1<q<\infty$, $0<s<d(1-\frac 1q)$, and  let 
$\frac 1p=\frac 1q+\frac sd$.
Then,  for $1\le r\le \infty$,
\[ \cA^{s/d}_r(L^q,\Psi) = \dot\cB^s_p(-d, r).\]
\end{theorem}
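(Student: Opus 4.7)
The plan is to combine two ingredients: first, DeVore's classical identification of the wavelet approximation space with a real interpolation space between $L^q$ and a Besov space, and second, the interpolation identity for the scale $\dot\cB^s_p(\gamma, r)$ supplied by Corollary~\ref{cor:TLinterpol}.

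For the first step I pick an auxiliary smoothness parameter $\sigma$ with $s < \sigma < d(1 - 1/q)$ (the range is nonempty by the hypothesis $s < d(1-1/q)$) and define the Besov exponent $u$ by $\tfrac{1}{u} = \tfrac{1}{q} + \tfrac{\sigma}{d}$, which forces $1 < u < q < \infty$. The classical identity \eqref{eq:DeVore-interpol} then reads
\[\cA^{s/d}_r(L^q, \Psi) = [L^q, \dot B^{\sigma}_{u,u}]_{\theta, r}, \qquad \theta = s/\sigma,\]
and I would invoke it as a known consequence of \cite{DeVore} under the wavelet axioms imposed on $\Psi$ (possibly after enlarging the wavelet smoothness/moment parameters in \eqref{eq:u-M-choice} to accommodate smoothness $\sigma$).

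For the second step I rewrite $L^q = \dot F^0_{q,2}$ (valid for $1<q<\infty$) and $\dot B^{\sigma}_{u,u} = \dot F^{\sigma}_{u,u}$, and apply \eqref{eq:TLinterpol} with $(s_0, p_0, q_0) = (0, q, 2)$ and $(s_1, p_1, q_1) = (\sigma, u, u)$. A short computation yields
\[\gamma = -\frac{s_0 - s_1}{\tfrac{1}{p_0} - \tfrac{1}{p_1}} = -\frac{-\sigma}{-\sigma/d} = -d,\]
while the convex combination $(\tfrac{1}{p}, s) = (1-\theta)(\tfrac{1}{q}, 0) + \theta(\tfrac{1}{u}, \sigma)$ with $\theta = s/\sigma$ reproduces the relation $\tfrac{1}{p} = \tfrac{1}{q} + \tfrac{s}{d}$ required by the theorem. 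Corollary~\ref{cor:TLinterpol} therefore delivers
\[[L^q, \dot B^{\sigma}_{u,u}]_{\theta, r} = \dot\cB^s_p(-d, r),\]
and chaining with the DeVore identification closes the proof.

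I do not anticipate any serious obstacle: once Corollary~\ref{cor:TLinterpol} is in hand, Theorem~\ref{thm:appr} is essentially a diagram chase. The only points worth monitoring are the choice of the auxiliary $\sigma$ discussed above, which also guarantees the corollary's hypotheses $p_0 \ne p_1$, $\gamma \neq 0$, $0<\theta<1$, and the fact that the output $\dot\cB^s_p(-d, r)$ is manifestly independent of $\sigma$, which serves as a reassuring consistency check via the reiteration theorem for the real method.
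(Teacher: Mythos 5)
Your proposal is correct and follows essentially the same route as the paper: specialize the DeVore identity \eqref{eq:DeVore-interpol} and then plug $(s_0,p_0,q_0)=(0,q,2)$, $(s_1,p_1,q_1)=(\sigma,u,u)$ into \eqref{eq:TLinterpol}, observing $\gamma=-d$. Your explicit discussion of the constraint $s<\sigma<d(1-1/q)$ (needed to keep $u>1$ so that Corollary~\ref{cor:TLinterpol} applies) is left implicit in the paper but is indeed the point of the hypothesis $s<d(1-1/q)$.
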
 
For $r=p$, $0<s<d (1-\frac 1q)$    we recover 
$\cA^{s/d}_p(L^q,\Psi) = \dot B^{s}_{p,p}$ for $\frac 1p=\frac 1q+\frac sd$, which is a result proved by 
DeVore, Jawerth and Popov \cite{DJP}.
Together with our characterization in Theorem  \ref{differences-thm-intro} we achieve a new  interpretation via difference operators   of some results 
in \cite{DeVore, DeVorePetrovaTemlyakov, Gribonval, Kerkyacharian} where the spaces  $\mathcal{A}^\alpha_r(L^q, \Psi)$ are  characterized in terms of wavelet coefficients.

 For $r=\infty$ the spaces  $\mathcal{A}^{s/d}_\infty(L^q, \Psi)$ are  of special interest in applications, see 
 for example \cite{HS,CDD-elliptic, hansen}.
 In the statistics literature these spaces are sometimes referred to as `weak-Besov spaces' (see \cite{ACF, Rivoirard} and the references within). In view of Theorem  \ref{thm:appr}, these weak-Besov spaces coincide with $\mathcal{\dot{B}}^s_p(-d, \infty)$, with $s=d(1/p-1/q)$, i.e. $p=\frac{dq}{d+sq}$.
Putting $\alpha=s/d$ and combining  
Theorem \ref{differences-thm-intro} and Theorem \ref{thm:appr} we obtain 
\begin{corollary} \label{cor:appr-diff} Let $1<q<\infty$, $0<\alpha<1-\frac 1q$.
Then, for $M>\alpha d$,
\[\sup_{n\ge 1} n^{\alpha} \sigma_n(f)_q 
\approx \sup_{\la>0}\la\Big( \int \meas\Big(\Big\{x: |h|^{\frac dq}  |\Delta_h^M f(x)| >\lambda \Big\}\Big) \frac{\dif h}{|h|^{2d}}\Big) ^{\frac 1q+\alpha} \,.\]
\end{corollary}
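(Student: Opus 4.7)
The plan is to chain Theorem~\ref{thm:appr} with the difference-operator characterization from Theorem~\ref{differences-thm-intro}, leaving only an unpacking of the $L^{p,\infty}$ quasi-norm. I would set $s=\alpha d$ and $\tfrac{1}{p}=\tfrac{1}{q}+\alpha$; the hypothesis $0<\alpha<1-\tfrac{1}{q}$ translates directly to $0<s<d(1-\tfrac{1}{q})$, so Theorem~\ref{thm:appr} is applicable at $r=\infty$ and yields
\[
\sup_{n\ge 1} n^{\alpha}\sigma_n(f)_q \;\approx\; \|f\|_{\dot\cB^{\alpha d}_p(-d,\infty)}.
\]

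Next I would apply Theorem~\ref{differences-thm-intro} with $\gamma=-d$ and $r=\infty$. The condition $M>\alpha d=s$ is precisely the required $s<M$, and $f\in L^q(\R^d)$ is automatically a tempered function in the sense of \eqref{eq:tempered}, so the equivalence in that theorem gives
\[
\|f\|_{\dot\cB^{\alpha d}_p(-d,\infty)}\;\approx\;\|\cQ_{M,\,s+\gamma/p}\,f\|_{L^{p,\infty}(\nu_{-d})}.
\]
A short arithmetic check produces $s+\tfrac{\gamma}{p}=\alpha d-\tfrac{d}{p}=-\tfrac{d}{q}$, so $\cQ_{M,s+\gamma/p}f(x,h)=|h|^{d/q}\Delta_h^M f(x)$, while the measure $\nu_{-d}$ has density $|h|^{-2d}\,\dif h\,\dif x$ on $\R^d\times(\R^d\setminus\{0\})$.

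It then remains only to rewrite the Lorentz quasi-norm in the explicit distribution-function form $[g]_{L^{p,\infty}(\Omega,\mu)}=\sup_{\lambda>0}\lambda\,\mu(\{|g|>\lambda\})^{1/p}$, which is equivalent (for $1<p<\infty$) to the norm used in the paper. Applied to $g(x,h)=|h|^{d/q}|\Delta_h^M f(x)|$ with $\mu=\nu_{-d}$, Fubini turns the level-set measure into
\[
\int_{\R^d}\meas\bigl(\{x:|h|^{d/q}|\Delta_h^M f(x)|>\lambda\}\bigr)\,\frac{\dif h}{|h|^{2d}},
\]
and the identity $\tfrac{1}{p}=\tfrac{1}{q}+\alpha$ converts the exponent $1/p$ into $1/q+\alpha$, reproducing exactly the right-hand side of the stated equivalence.

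No substantial obstacle arises: the argument is pure bookkeeping on top of Theorems~\ref{thm:appr} and~\ref{differences-thm-intro}. The one point to watch is the distinction, in Theorem~\ref{differences-thm-intro}, between an element of $\dot\cB^{\alpha d}_p(-d,\infty)\subset\cS_\infty'$ and its representative modulo $\cP_{M-1}$; here this causes no trouble, since $L^q(\R^d)$ contains no non-zero polynomial and one may take the representative $f_\circ$ equal to $f$ on the nose.
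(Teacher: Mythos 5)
Your proof is correct and matches the paper's own (implicit) argument: the paper states the corollary follows by setting $\alpha=s/d$ and combining Theorems~\ref{thm:appr} and~\ref{differences-thm-intro}, which is exactly the chain you carry out, and your arithmetic ($s+\gamma/p=-d/q$, $\nu_{-d}$ with density $|h|^{-2d}\,\dif h\,\dif x$, exponent $1/p=1/q+\alpha$) is accurate. Your remark about representatives is also sound, since $\Delta_h^M$ annihilates $\cP_{M-1}$ and hence $\cQ_{M,s+\gamma/p}f_\circ=\cQ_{M,s+\gamma/p}f$ regardless of the polynomial ambiguity.
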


\begin{remarka}\label{rem:appr-diff}  There are suitable extensions of the definitions of this paper,  and many  of the results,   to certain   parameter ranges  in   the quasi-Banach setting (that is, to the cases $r<1$ and $p\le 1$);  we intend to pursue these elsewhere. In particular it is interesting to extend Theorem \ref{thm:appr} to values of  $s\ge d(1-1/q)$ and  $r>0$;  this requires  consideration of  the spaces $\dot \cB^s_p(-d,r)$  in the range $p\le 1$. 
\end{remarka}

\subsubsection*{Notation}  We denote by $\mathscr L^d(E)$ the Lebesgue measure of a Lebesgue measurable set in $\bbR^d$, and also write 
$\meas\,E$ for $\mathscr L^d(E)$ when the dimension is clear from the context. A measurable function $f:\bbR^d\to \bbC$ will always be assumed to be defined almost everywhere. We use $\widehat f(\xi)= \int f(y) e^{-i\inn{y}{\xi}} \dif y$ as definition of the Fourier transform. For a function $m$ on $\widehat \bbR^d$ we define $m(D)$ to be the convolution operator with Fourier multiplier $m$, i.e. it is given by $\widehat{m(D)f}(\xi)=m(\xi)\widehat f(\xi)$.
We let $C^\infty_c$ be the space of compactly supported $C^\infty$-functions, $\cS$ be the space of Schwartz functions, and $\cS_M$ be the subspace of $\cS$ consisting of those Schwartz functions whose moments up to order $M-1$ vanish. Also let $\cS_\infty=\bigcap_{M\in \bbN} \cS_M$. We denote by $\cS'$ the space of tempered distributions and by $\cS_M'$, $\cS_\infty'$  the dual spaces of $\cS_M$ and $\cS_\infty$, respectively.
We let, for $k\in \bbZ$,  $L_k=\varphi(2^{-k}D)$  and $\widetilde L_k=\widetilde \varphi(2^{-k}D)$ be operators in frequency localizing  Littlewood-Paley decompositions, satisfying $\widetilde L_k L_k=L_k$. The functions $\varphi$, $\widetilde \varphi$ are radial and the relevant properties are defined in  \eqref{eq:varphiassumptions} and \eqref{eq:Tildephi-assu}, respectively. For a set $E$ with positive measure, the slashed integral $\fint_E f$ is used to denote the average of $f$ over $E$.

\subsubsection*{Structure of the paper} In \S\ref{sec:norm-equivalence} we prove a rudimentary form of the characterization in Theorem \ref{differences-thm-intro} just for $\cS_\infty$ functions. The full proof of Theorem \ref{differences-thm-intro} will be given in \S\ref{sec:Proof-of-equiv}. 
The embedding results in Theorem \ref{thm:embeddings}
are proved in \S\ref{sec:embeddings}. Various counterexamples establishing Theorem \ref{thm:non-emb} are discussed in \S\ref{sec:non-embeddings}. In \S\ref{sec:BV-interpol} 
we give the proof of Theorem \ref{thm:BVemb}  and in
\S\ref{sec:harmonic-caloric}
the proof of Corollaries \ref{cor:Poisson-emb} and \ref{cor:heat}.
In \S\ref{sec:interpolation} we include a  proof of Theorem \ref{interpol-thm-Fourier} based only on the Fourier analytic definition of $\dot\cB_p^s(\ga,r)$. A different proof of the interpolation result, just for parameters $s_i\in (0,1)$ and based on a retraction argument using first order differences  is given in \S\ref{sec:interpol-diff}. 

\subsubsection*{Acknowledgements} 
A.S., J.V.S. and P.-L.Y.  would like to thank Ha\"im Brezis for  collaboration on related topics.  A.S. is also grateful to him for a kind exchange in which he raised one of the   questions addressed in this paper (prompted by 
his online  distinguished lecture 
at the University of Connecticut in April 2021). 
 A.S. and P.-L.Y.    thank
the  Hausdorff Research Institute of Mathematics and the organizers of the trimester program ``Harmonic Analysis and Analytic Number Theory'' for a pleasant working environment in  the summer of  2021. 
The  research was  supported in part by  the French National Research Agency (ANR-10-LABX-0070), (ANR-11-IDEX-0007)    and by MTM2017-84058-P (AEI/FEDER, UE)   (\'O.D.), 
by 
  NSF grants  DMS-2054220 (A.S.) and   DMS-1764265 (B.S.), and by a Future Fellowship FT200100399 from the Australian Research Council (P.-L.Y.).  We thank an unknown referee for a thoughtful report.

\section{ Norm equivalences \texorpdfstring{for  $\cS_\infty$-functions}{}}
\label{sec:norm-equivalence}
Before giving the full proof of Theorem  \ref{differences-thm-intro} we give a proof of the norm equivalence  for functions in the class $\cS_\infty (\bbR^d)$.
Note that for $f\in \cS_\infty(\bbR^d)$ we have $f=\sum_{k\in \bbZ} L_k f$ with convergence in the  topology of $\cS(\bbR^d)$.
\begin{proposition}
\label{prop:equiv-Schw}
Let $M \in \N$, $1<p<\infty$, $1\le r\le \infty$, $\ga \in \R$ and $0<s<M$. 
For $f\in \cS_\infty(\bbR^d)$, 
\Be \label{eq:equiv-onS0} \|f\|_{\dot\cB^s_p(\ga,r)} \approx \|\cQ_{M, s+\ga/p} f\|_{L^{p,r}(\nu_\gamma)}.
\Ee
\end{proposition}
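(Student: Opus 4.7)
The plan is to prove the two inequalities separately, exploiting the matching between the measures $\nu_\gamma$ and $\mu_\gamma$ built into the definitions. Under the dyadic identification $h \leftrightarrow 2^{-k}$, with $A_k := \{h : 2^{-k-1} < |h| \le 2^{-k}\}$, one has $\nu_\gamma(E \times A_k) \approx 2^{-k\gamma} \mathscr{L}^d(E) = \mu_\gamma(E \times \{k\})$, so that the two Lorentz spaces are linked by a simple sampling/averaging correspondence on $A_k$.

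For the direction $\|f\|_{\dot\cB^s_p(\gamma,r)} \lesssim \|\cQ_{M, s+\gamma/p} f\|_{L^{p,r}(\nu_\gamma)}$, I would construct a reproducing formula expressing $L_k f$ as an integral of $\Delta_h^M f$ with kernel concentrated on $A_k$. Concretely, I would produce a Schwartz function $K$ essentially supported in a shell $\{|y| \approx 1\}$ such that
\[
\int K(y) \, (e^{iy\cdot\xi}-1)^M \, dy = \varphi(\xi),
\]
which is possible by Fourier-side division, using that $\varphi$ is smooth and supported away from $\xi = 0$ where $(e^{iy\cdot\xi}-1)^M$ vanishes. Rescaling yields $L_k f(x) = \int K_k(h) \Delta_h^M f(x) \, dh$ with $K_k(h) := 2^{kd} K(2^k h)$; taking absolute values and using $|h| \approx 2^{-k}$ on the support gives the pointwise bound
\[
2^{k(s+\gamma/p)} |L_k f(x)| \lesssim \fint_{A_k} \cQ_{M, s+\gamma/p} f(x, h) \, dh.
\]
The averaging operator $Tg(x,k) := \fint_{A_k} g(x, h) \, dh$ is bounded $L^{p,r}(\nu_\gamma) \to L^{p,r}(\mu_\gamma)$ by a duality argument: its adjoint $T^*$ is, up to a bounded factor, the constant-extension $\phi(x,k) \mapsto \phi(x, k(h))$ (where $h \in A_{k(h)}$), which preserves Lorentz quasi-norms by the measure-matching above.

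For the reverse direction, I would use the Littlewood--Paley decomposition $f = \sum_k L_k f$, which converges rapidly in $\cS(\R^d)$ since $f \in \cS_\infty$, together with the standard pointwise bound
\[
|\Delta_h^M L_k f(x)| \lesssim \min\bigl((|h|2^k)^M,\, 1\bigr) \cdot \cM L_k f(x),
\]
derived from $\|\Delta_h^M \widetilde\varphi_k\|_{L^1} \lesssim \min((|h|2^k)^M, 1)$ (via Bernstein's inequality and a trivial bound) combined with the Hardy--Littlewood maximal function $\cM$. For $|h| \in A_j$ this produces a discrete convolution inequality
\[
\cQ_{M, s+\gamma/p} f(x,h) \lesssim \sum_k W(j-k) \cdot a_k(x), \qquad a_k(x) := 2^{k(s+\gamma/p)} \cM L_k f(x),
\]
with $W(n) = 2^{-n(M-s-\gamma/p)}$ for $n \ge 0$ and $W(n) = 2^{n(s+\gamma/p)}$ for $n < 0$. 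This $W$ is $\ell^1(\Z)$-summable in the ``nice range'' $0 < s+\gamma/p < M$, and combining Young's inequality on Lorentz sequence spaces with the Fefferman--Stein vector-valued maximal inequality yields the desired bound, after transferring between $\nu_\gamma$ and $\mu_\gamma$ via the pushforward $(x,h) \mapsto (x, k(h))$.

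The main obstacle is handling the full range $\gamma \in \R$ in the reverse direction: when $s+\gamma/p$ lies outside the interval $(0,M)$, the sequence $W$ fails to be $\ell^1$-summable at one end, even though both sides of the inequality remain finite by the Schwartz hypothesis. To close the gap I would refine the pointwise estimate on $|\Delta_h^M L_k f(x)|$ by incorporating a Peetre-type maximal function tailored to the regime $|h|2^k \gtreqless 1$, and exploit the infinite-order vanishing of $\widehat{\widetilde\varphi}$ at the origin to obtain sharper decay in $k-j$. An alternative that I would keep in reserve is a Marchaud-type comparison argument, combined with the dyadic splitting of the range of integration in $h$, designed to reduce the extreme cases to the nice range by an integration against a logarithmic weight.
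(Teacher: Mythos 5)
Your proposal has a genuine gap in each direction.

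For the inequality $\|f\|_{\dot\cB^s_p(\ga,r)}\lesssim\|\cQ_{M,s+\ga/p}f\|_{L^{p,r}(\nu_\gamma)}$, the single reproducing kernel you propose does not exist with the support property you need. Expanding the binomial, the relation $\int K(y)(e^{iy\cdot\xi}-1)^M\dif y=\varphi(\xi)$ reads $\sum_{\ell=0}^M\binom{M}{\ell}(-1)^{M-\ell}\widehat K(-\ell\xi)=\varphi(\xi)$; this is a functional equation coupling $\widehat K$ at the dilates $\ell\xi$, not a pointwise product, so ``Fourier-side division'' does not apply. Also $(e^{iy\cdot\xi}-1)^M$ vanishes on the whole hyperplane $\{\xi: y\cdot\xi=0\}$, which cuts across $\supp\varphi$ for every fixed $y$, not merely at $\xi=0$. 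Concretely, for $M=1$ the unique solution is $\widehat K=\varphi$, so $K$ is the inverse Fourier transform of $\varphi$ --- a Schwartz function peaked at the origin, generically with $K(0)\neq 0$, and not concentrated in a shell. Once $K$ fails to vanish near $0$, the pointwise bound $2^{k(s+\ga/p)}|L_kf(x)|\lesssim\fint_{|h|\approx 2^{-k}}\cQ_{M,s+\ga/p}f(x,h)\,\dif h$ is false, and the contribution from scales $|h|\ll 2^{-k}$ destroys summability precisely when $\ga$ is very negative. This is why the paper instead uses a finite partition $\{\chi_\kappa\}$ of $\supp\varphi$ and, for each $\kappa$, a direction $w_\kappa$ with $\inn{\xi}{w_\kappa}\approx\pi/2$ on $\supp\chi_\kappa$, giving the formula \eqref{eq-resolution-of-f}: an $L^1$-bounded multiplier $m_\kappa(2^{-j}D)$ composed with an average of $\Delta_h^Mf$ over a ball contained in $\{|h|\approx 2^{-j}\}$. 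That structure (multiplier on top, $h$-integral over a ball) is essentially different from a single convolution kernel in $h$ and is what avoids the tails.

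For the reverse inequality, the ``obstacle'' you flag --- that $W$ fails to be $\ell^1$ when $s+\ga/p\notin(0,M)$ --- is a bookkeeping artifact, not a real constraint, and neither Peetre maximal functions nor extra vanishing of $\widehat{\widetilde\varphi}$ at the origin would remove it. After identifying $\{h:|h|\approx 2^{-j}\}$ with $j\in\Z$, the $\nu_\ga$-Lorentz norm becomes a $\mu_\ga$-Lorentz norm carrying the weight $2^{-j\ga}$; if you carry the factor $2^{-j\ga/p}$ into the sequence before invoking Young's inequality, the kernel that actually has to be summable is $\widetilde W(n)=W(n)\,2^{-n\ga/p}=2^{ns}\min\{2^{-nM},1\}$, which lies in $\ell^1(\Z)$ exactly when $0<s<M$, for every $\ga\in\R$. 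This is precisely what the paper's Lemma~\ref{lem:2.2} does: the factor $2^{-k\ga/p}$ is bundled with $\|g(\cdot,k)\|_p$ and the resulting kernel is $\min\{(2^{k-j})^{-s},(2^{k-j})^{M-s}\}$. With that correction your plan for this direction goes through (reducing to $r=p$ by real interpolation, as you would anyway); using the Hardy--Littlewood maximal function is an acceptable variant of the paper's direct use of $\|\Delta_h^M\widetilde L_k\|_{L^p\to L^p}\lesssim\min\{1,(2^k|h|)^M\}$.
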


Let $\Tilde \varphi \in C^{\infty}_c(\R^d)$ be such that
\Be\label{eq:Tildephi-assu} 
\supp(\Tilde \varphi) \subset \{\xi: 1/2<|\xi|< 2\}  \text {  and } \Tilde \varphi(\xi)=1 \text{ for $3/4\le |\xi|\le 7/4$}. 
\Ee
This implies $\Tilde \varphi \varphi=\varphi$.
Let $\widetilde L_k=\Tilde \varphi(2^{-k}D) $ so that 
    $L_k=\widetilde L_k L_k =L_k\widetilde L_k$.
    
    To bound  \(\|\cQ_{M, s+\ga/p} f\|_{L^{p,r}(\nu_\gamma) } \) in terms of \(\|f\|_{\dot \cB^s_p(\gamma,r)}\), 
we use the following lemma. 
\begin{lemma} \label{lem:2.2}
Let $M \in \N$, $1 < p < \infty$, $1 \leq r \leq \infty$, $b, \ga \in \R$ with $0 < b-\frac{\gamma}{p} < M$. Then the operator
\[
T_b g(x,h) \coloneqq \sum_{k \in \Z} \frac{\Delta_h^M \tilde{L}_k g(x,k)}{(2^k|h|)^b}, \quad (x,h) \in \R^d \times (\R^d \setminus \{0\})
\]
defines a bounded linear map from $L^{p,r}(\mu_{\gamma})$ to $L^{p,r}(\nu_{\ga})$. 
\end{lemma}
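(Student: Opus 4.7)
My plan is to first prove the strong-type estimate $T_b \colon L^p(\mu_\gamma) \to L^p(\nu_\gamma)$ under the hypothesis $0 < b - \gamma/p < M$, and then upgrade to the Lorentz spaces via real interpolation. Since the condition $0 < b - \gamma/p < M$ is open in $1/p$, for the given $p$ I can pick $p_0 < p < p_1$ in $(1,\infty)$ that both satisfy it; the identifications $(L^{p_0}(\mu_\gamma), L^{p_1}(\mu_\gamma))_{\theta,r} = L^{p,r}(\mu_\gamma)$ and $(L^{p_0}(\nu_\gamma), L^{p_1}(\nu_\gamma))_{\theta,r} = L^{p,r}(\nu_\gamma)$ for the appropriate $\theta \in (0,1)$ will then immediately give the claim.

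For the $L^p$ bound, I will realize $\widetilde L_k$ as convolution with $\widetilde\psi_k(y) = 2^{kd}\widetilde\psi(2^k y)$ for a Schwartz function $\widetilde\psi$. Since differences commute with convolution,
$$\Delta_h^M \widetilde L_k g(\cdot,k)(x) = (\Delta_h^M \widetilde\psi_k) * g(\cdot,k)(x).$$
The crucial quantitative input is
$$\|\Delta_h^M \widetilde\psi_k\|_{L^1(\R^d)} \leq C \min\bigl((|h|2^k)^M,\, 1\bigr),$$
which I will derive by combining the trivial bound $\|\Delta_h^M \widetilde\psi_k\|_1 \leq 2^M \|\widetilde\psi_k\|_1$ with the Taylor estimate $|\Delta_h^M \widetilde\psi_k(y)| \leq |h|^M \sup_{|z|\leq M|h|} |D^M \widetilde\psi_k(y+z)|$ and the Schwartz decay of $\widetilde\psi$. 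Young's and Minkowski's inequalities then yield
$$\|T_b g(\cdot, h)\|_{L^p(\R^d)} \leq C \sum_{k\in\Z} \lambda_k(|h|)\, a_k, \qquad \lambda_k(r) := \frac{\min((r2^k)^M,1)}{(r2^k)^b}, \qquad a_k := \|g(\cdot,k)\|_p.$$

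The heart of the argument will be a weighted Hölder estimate. Setting $u_k = 2^{-k\gamma/p} a_k$, so that $\|g\|_{L^p(\mu_\gamma)}^p = \sum_k u_k^p$, Hölder's inequality gives
$$\Bigl(\sum_k \lambda_k(r) 2^{k\gamma/p} u_k\Bigr)^p \leq S(r)^{p-1} \sum_k \lambda_k(r) 2^{k\gamma/p} u_k^p, \qquad S(r) := \sum_k \lambda_k(r) 2^{k\gamma/p}.$$
The substitution $t = r 2^k$ will show $S(r) \leq C r^{-\gamma/p}$ with a finite $r$-independent constant given by the one-variable integral $\int_0^\infty t^{\gamma/p - b - 1}\min(t^M, 1)\,dt$, whose convergence at $0$ and $\infty$ corresponds precisely to the two halves of the hypothesis $0 < b - \gamma/p < M$. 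Using the same substitution together with polar coordinates in $h$, I will then verify that
$$\int_{\R^d \setminus \{0\}} S(|h|)^{p-1} \lambda_k(|h|) 2^{k\gamma/p} \frac{dh}{|h|^{d-\gamma}} \leq C$$
uniformly in $k$, as the powers of $2^k$ cancel exactly and only the same one-variable integral in $t$ survives. Integrating the Hölder bound over $h$ against $dh/|h|^{d-\gamma}$ and summing in $k$ will then give $\|T_b g\|_{L^p(\nu_\gamma)}^p \lesssim \sum_k u_k^p = \|g\|_{L^p(\mu_\gamma)}^p$.

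The main obstacle will be the bookkeeping in these weighted integrals: one must carefully arrange the scaling so that after the application of Hölder and the change of variables $t = r 2^k$, the residual integral in $t$ is exactly the one whose finiteness is equivalent to the open hypothesis $0 < b - \gamma/p < M$. Once this is in place, the passage to $L^{p,r}$ follows at once by real interpolation between the two endpoints.
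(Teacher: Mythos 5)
Your proposal is correct and takes essentially the same approach as the paper: you reduce to the strong‐type $L^p$ case by real interpolation (picking nearby exponents $p_0<p<p_1$ both satisfying the open hypothesis), you use the same elementary kernel bound $\|\Delta_h^M \widetilde L_k\|_{L^p\to L^p}\lesssim\min\{1,(2^k|h|)^M\}$, and you apply Minkowski in $x$. The only real difference is in the final bookkeeping: the paper discretizes $|h|\sim 2^{-j}$ and recognizes a discrete convolution on $\Z$ with an $\ell^1$ kernel, while you stay continuous in $h$ and run a weighted Hölder (Schur) estimate. Both hinge on the same one‐variable integral $\int_0^\infty t^{\gamma/p-b-1}\min(t^M,1)\,dt$ being finite exactly when $0<b-\gamma/p<M$, so this is a minor computational variant rather than a different route; your verification that all powers of $2^k$ cancel in the uniform‐in‐$k$ bound is the precise analogue of the paper's translation invariance of the convolution kernel.
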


\begin{proof}
By real interpolation it suffices to consider the case $ r=p$. From the elementary inequality
\[
\|\Delta_h^M \tilde{L}_k\|_{L^p \to L^p} \lesssim \min\{1,(2^k |h|)^M\},
\]
we obtain
\begin{align*}
&\|T_b g\|_{L^p(\nu_{\ga})}
\lesssim 
\Big (\int \Big[ \sum_{k \in \Z} \frac{\|\Delta_h^M \tilde{L}_k g(\cdot,k) \|_{L^p(\dif x)}}{(2^k |h|)^b} \Big ]^p \frac{\dif h}{|h|^{d-\ga}} \Big)^{1/p} \\
&\lesssim \Big (\int\Big[  \sum_{k \in \Z} \min\{(2^k|h|)^{-(b-\frac{\ga}{p})},(2^k |h|)^{M-(b-\frac{\ga}{p})}\} 2^{-k\frac{\ga}{p}} \|g(\cdot,k)\|_{p}\Big ]^p\frac{\dif h}{|h|^d} \Big)^{1/p}  \\
&\simeq \Big(\sum_{j\in \bbZ} \Big[ \sum_{k \in \Z} \min\{(2^{k-j})^{-(b-\frac{\ga}{p})},(2^{k-j})^{M-(b-\frac{\ga}{p})}\} 2^{-k\frac{\ga}{p}} \|g(\cdot,k)\|_p\Big]^p \Big)^{1/p}  
\end{align*}
and the desired conclusion follows since if $\alpha, \beta > 0$ then the convolution on $\Z$ with the sequence $\{\min\{2^{-k \alpha},2^{k\beta}\}\}_{k \in \Z} \in \ell^1(\Z)$ is bounded on $\ell^p(\Z)$. 
\end{proof}

To apply the lemma note that under the hypotheses of Proposition~\ref{prop:equiv-Schw} we have
    \Be \label{eq:representation}
    f(x) = \sum_{k \in \Z} 2^{-k(s+\frac{\gamma}{p})} \tilde{L}_k P^{s+ \frac{\gamma}{p}} f(x,k)
    \Ee
    with the convergence in $\cS_{\infty}(\R^d)$ (in particular pointwise for every $x \in \R^d$). Hence for every $(x,h) \in \R^d \times (\R^d \setminus \{0\})$ we have
\[
\frac{\Delta_h^M f(x)}{|h|^{s+\frac{\gamma}{p}}} = T_{s+\frac{\gamma}{p}} P^{s+\frac{\gamma}{p}} f(x,h).
\]
Lemma \ref{lem:2.2} with $b \coloneqq s + \frac{\gamma}{p}$ (which satisfies $0 < b-\frac{\gamma}{p} < M$) and $g \coloneqq P^{s+\frac{\gamma}{p}}f$ yields the  inequality
    \Be
    \label{eq:goal2.2}\Big\|\Big\{\frac{\Delta_h^M f(x)}{|h|^{s+\frac{\gamma}{p}}} \Big\}\Big\|_{L^{p,r}(\nu_{\gamma})} \lesssim \|P^{s+\frac{\gamma}{p}} f\|_{L^{p,r}(\mu_{\gamma})}
\Ee
and thus the following corollary.

\begin{corollary} \label{cor:direct-ineq}
Let $M \in \N$, $1<p<\infty$, $1\le r\le\infty$, $\ga\in \bbR$ and $0<s<M$.  Then for $f\in \cS_\infty(\bbR^d)$ 
    \[\Big\|\Big\{\frac{\Delta_h^M f(x)}{|h|^{s+\frac{\gamma}{p}}} \Big\}\Big\|_{L^{p,r}(\nu_{\gamma})} \lesssim
    \|f\|_{\dot\cB^s_p(\ga,r) } .\]
\end{corollary}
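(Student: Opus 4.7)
The plan is to represent an arbitrary $f \in \cS_\infty(\bbR^d)$ via its Littlewood--Paley pieces and then recognize the difference quotient as an instance of the operator $T_b$ already controlled by Lemma \ref{lem:2.2}. The key observation is that the right-hand side of the desired inequality is, by Definition \ref{first-definition}, nothing other than $\|P^{s+\gamma/p} f\|_{L^{p,r}(\mu_\gamma)}$, so the bound to be established is of the form $\|T_{s+\gamma/p} g\|_{L^{p,r}(\nu_\gamma)} \lesssim \|g\|_{L^{p,r}(\mu_\gamma)}$ for $g = P^{s+\gamma/p} f$. This is exactly the content of Lemma \ref{lem:2.2}.

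To carry this out, I would first note that for $f\in \cS_\infty(\bbR^d)$ the Littlewood--Paley decomposition
\[
f(x) = \sum_{k\in \bbZ} L_k f(x) = \sum_{k\in \bbZ} 2^{-k(s+\gamma/p)} \widetilde L_k \bigl(P^{s+\gamma/p}f\bigr)(x,k)
\]
converges in the Schwartz topology (the high-frequency tail is controlled by the smoothness of $\widehat f$ and the low-frequency tail by its vanishing to infinite order at the origin). Consequently the series may be differenced termwise and the sum commutes with $\Delta_h^M$, yielding pointwise for every $(x,h)\in \bbR^d\times (\bbR^d\setminus\{0\})$ the identity
\[
\frac{\Delta_h^M f(x)}{|h|^{s+\gamma/p}} \;=\; \sum_{k\in \bbZ} \frac{\Delta_h^M \widetilde L_k g(\cdot,k)(x)}{(2^k|h|)^{s+\gamma/p}} \;=\; T_{s+\gamma/p}\, g\,(x,h),
\]
where $g = P^{s+\gamma/p} f$ and $T_b$ is the operator from Lemma \ref{lem:2.2}.

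Next, I would verify that the hypothesis $0 < b-\gamma/p < M$ of Lemma \ref{lem:2.2} is satisfied with $b = s+\gamma/p$: this reduces to $0 < s < M$, which is given. Applying the lemma then gives
\[
\Big\|\frac{\Delta_h^M f(x)}{|h|^{s+\gamma/p}}\Big\|_{L^{p,r}(\nu_\gamma)} \;=\; \|T_{s+\gamma/p} g\|_{L^{p,r}(\nu_\gamma)} \;\lesssim\; \|g\|_{L^{p,r}(\mu_\gamma)} \;=\; \|P^{s+\gamma/p} f\|_{L^{p,r}(\mu_\gamma)} \;=\; \|f\|_{\dot\cB^s_p(\gamma,r)},
\]
where the last equality is the definition \eqref{Bspgar-norm} of the Besov--Sobolev quasi-norm.

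I do not foresee a substantial obstacle: the heavy lifting (a weighted convolution estimate on $\bbZ$ combined with the elementary bound $\|\Delta_h^M \widetilde L_k\|_{L^p\to L^p}\lesssim \min\{1,(2^k|h|)^M\}$) is already packaged inside Lemma \ref{lem:2.2}. The only point requiring mild care is justifying the termwise differencing of the series for $f$, which is immediate from membership in $\cS_\infty(\bbR^d)$.
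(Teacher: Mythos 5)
Your proof is correct and follows the same route as the paper: decompose $f$ via its Littlewood--Paley pieces, rewrite the difference quotient as $T_{s+\gamma/p}$ applied to $P^{s+\gamma/p}f$, and invoke Lemma \ref{lem:2.2} with $b = s+\gamma/p$ (so that $b - \gamma/p = s \in (0,M)$). The only detail to streamline: the convergence in $\cS_\infty$ of the series $\sum_k L_k f$ immediately gives pointwise convergence and justifies the termwise differencing, exactly as you note.
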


For the converse inequality we like to consider an operator acting on $F(x,h)=|h|^{-b}\Delta_h^M f$, for $b=s+\ga/p$, and then we are faced with the task of ``dividing out" the difference operator. To achieve this we work with the partition of unity of the annulus $\{\xi\in \bbR^d: 1/2<|\xi|<2\}$.
Alternative Fourier arguments can be found e.g. in 
\cite[5.2.1]{nikolskii}.

Let $\eps<(10 M)^{-1}$.
We use a finite partition $\{\chi_\ka\}_{\ka=1}^N$ of unity on the support   of $\varphi$, so that 
$\chi_\ka\in C^\infty_c$ is supported on the ball $B^d(u_\ka,\eps) $. Let $w_\ka  =
\frac{\pi u_\ka}{2 |u_\ka|^2}$ and then  we have, for 
$\xi\in \supp(\chi_\ka)$ and $ |w-  w_\ka|\le \eps $,
\begin{align*} |\inn{\xi}{ w}-\frac \pi 2|\le |\inn{\xi}{w-w_\ka} |+ |\inn{\xi-u_\ka}{w_\ka}|+ |\inn{u_\ka}{w_\ka}-\tfrac \pi 2|\le 2\eps+2\eps+0.
\end{align*} 
We may then write
\begin{subequations}\label{eq:varphi-expression}
\Be \label{eq:decomp-of-phi}\varphi(\xi)= \sum_{\ka=1}^N
m_\ka(\xi) {\int_{|h-w_\ka|\le \eps} (e^{i\inn{\xi}{h}} -1)^M \dif h} \Ee
where
\Be \label{eq:def-of-mkappa}m_\ka(\xi)= \varphi(\xi) 
\frac{\chi_\ka(\xi) }{\int_{|h-w_\ka|\le \eps} (e^{i\inn{\xi}{h}} -1)^M \dif h}.
\Ee 
\end{subequations}
Since the denominator is bounded away from $0$ on the support of $\chi_\ka$ we get  $|\partial^\alpha m_\ka(\xi)|\le C_\alpha$ for all multiindices $\alpha$, and thus 
the $L^1$ norms of the Fourier inverse transforms of the $m_\ka$ are finite. 
We then get 
\Be\label{eq-resolution-of-f}
 L_jf= \sum_{\ka=1}^N   m_\ka(2^{-j}D) 
\int_{| h-2^{-j}w_\ka|\le \eps 2^{-j}} \Delta_h^M \!f \, \frac{\dif h }{2^{-jd} } .
\Ee
\begin{lemma} \label{lem:V-lemma} 
Let $m$ be the Fourier transform of a bounded Borel measure, with $L^1\to L^1$ multiplier norm $\|m\|_{M_1}$. Let $w\in  \bbR^d$ such that $1/2\le |w|\le 2$ and $\eps\in (0,\tfrac 12)$.
For $b, \ga \in \R$, $1 < p < \infty$, $1 \leq r \leq \infty$, and $F\in L^{p,r}(\bbR^d\times(\bbR^d\setminus\{0\}), \nu_\gamma) $ define 
$ V^b_{m,w,\eps}F$  by
\begin{multline}\label{eq:Vbka-def} 
V^b_{m,w,\eps} F(\cdot ,k) \equiv V^{b,k}_{m,w,\eps}F\\= m(2^{-k}D) \int_{\substack {|h-2^{-k}w| \le \eps 2^{-k} }} (2^k|h|)^b F(\cdot ,h) \frac{\dif h}{2^{-kd}}.
\end{multline}
Then $V^b_{m,w,\eps} $  maps $L^{p,r}(\nu_\gamma)$ to $L^{p,r}(\mu_\gamma)$ and we have 
\Be \| V^b_{m,w,\eps} F\|_{L^{p,r}(\mu_\ga)} \le C \|m\|_{M^1} \|F\|_{L^{p,r} (\nu_\ga)}\Ee
where $C$ only depends on $p$, $r$, $b$, $\ga$.
\end{lemma}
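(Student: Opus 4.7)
The plan is to first prove the diagonal bound ($r=q$) for $q$ in an open neighbourhood of $p$ inside $(1,\infty)$, and then deduce the $L^{p,r}$ estimate by real interpolation on the couples $(L^{q_0}(\nu_\gamma), L^{q_1}(\nu_\gamma))$ and $(L^{q_0}(\mu_\gamma), L^{q_1}(\mu_\gamma))$, using the identity $[L^{q_0}, L^{q_1}]_{\theta,r} = L^{p,r}$ on the $\sigma$-finite measure spaces at hand.

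For the diagonal bound I would factor $V^{b,k}_{m,w,\eps} = m(2^{-k}D) \circ G^k$, where
\[
G^k F(x) := 2^{kd} \int_{|h - 2^{-k}w|\le \eps 2^{-k}} (2^k|h|)^b F(x,h)\, \dif h.
\]
Since $m$ is the Fourier transform of a bounded Borel measure $\sigma$, the operator $m(2^{-k}D)$ is convolution with a rescaling of $\sigma$, whose total variation is still $\|m\|_{M^1}$ independently of $k$; thus $\|m(2^{-k}D)\|_{L^q\to L^q} \lesssim \|m\|_{M^1}$ for all $q \in [1,\infty]$, uniformly in $k$. It therefore suffices to bound $G^k F$ in $L^q(\mu_\gamma)$.

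On the ball $B_k := \{h : |h - 2^{-k}w|\le \eps 2^{-k}\}$ one has $|h| \approx 2^{-k}$ (since $1/2 \le |w| \le 2$ and $\eps < 1/2$), so $(2^k|h|)^b \approx 1$. Applying Minkowski's inequality in $L^q(\dif x)$ followed by Hölder's inequality in $\dif h$ over $B_k$ (of volume $\approx 2^{-kd}$) yields
\[
\|G^k F\|_{L^q(\R^d)}^q \lesssim 2^{kd}\int_{B_k}\|F(\cdot,h)\|_{L^q(\R^d)}^q \dif h.
\]
Multiplying by $2^{-k\gamma}$ and using $|h|^{\gamma - d}\approx 2^{k(d-\gamma)}$ on $B_k$ converts the factor $2^{k(d-\gamma)}\dif h$ into the measure $\dif h/|h|^{d-\gamma}$. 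Summing over $k \in \bbZ$, and invoking the bounded-overlap of the annuli $\{B_k\}_k$ (which live at the distinct scales $|h| \approx 2^{-k}$), gives
\[
\|V^b_{m,w,\eps}F\|_{L^q(\mu_\gamma)} \lesssim \|m\|_{M^1} \|F\|_{L^q(\nu_\gamma)}.
\]
Real interpolation between two such bounds with $q_0 < p < q_1$ in $(1,\infty)$ then yields the lemma.

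I do not foresee a real obstacle: the estimate is essentially a clean combination of Minkowski, Hölder, and the harmless trade between the weight $2^{-k\gamma}$ on the $k$-side and $|h|^{\gamma-d}$ on the $h$-side, enabled by the fact that $h$ is localized to a single dyadic scale $2^{-k}$. The only step warranting care is ensuring that the real interpolation identity $[L^{q_0},L^{q_1}]_{\theta,r} = L^{p,r}$ applies cleanly on both $\sigma$-finite measure spaces $(\R^d\times\bbZ,\mu_\gamma)$ and $(\R^d\times(\R^d\setminus\{0\}),\nu_\gamma)$, which is standard once $\sigma$-finiteness of the underlying measures is observed.
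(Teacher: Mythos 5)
Your proof is correct and takes essentially the same route as the paper: normalize by $\|m\|_{M^1}$, reduce to the diagonal case $r=q$ by real interpolation of the $L^q(\nu_\gamma)\to L^q(\mu_\gamma)$ bounds, then use the uniform $L^q$ boundedness of $m(2^{-k}D)$ together with Minkowski/H\"older on the single-scale ball $B_k$ and the bounded overlap of the associated dyadic annuli. The paper's proof is merely phrased more tersely (passing through the annulus $\{2^{-k-1}\le|h|\le 2^{-k+1}\}$ rather than introducing $G^k$ explicitly), but the estimates are identical.
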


\begin{proof}
Since $(F,m)\mapsto V^b_{m,w,\eps}F$ is bilinear we may normalize and assume that $\|m\|_{M_1}=1$.
Again by real interpolation it suffices to prove the theorem for $p=r$, $1\le p\le \infty$.
Since  $\|m(2^{-k} D)\|_{L^p\to L^p} \le 1$ for $1 \leq p \leq \infty$ we  obtain 
\begin{align*}
    \|V_{m,w,\eps}^b F\|_{L^p(\mu_\gamma)}&\lc 
    \Big(\sum_{k\in \bbZ}  2^{-k\ga} \Big\|
    \int_{\substack {|h-2^{-k}w| \le \eps 2^{-k} }} (2^k|h|)^b F(\cdot,h) \frac{\dif h}{2^{-kd}}\Big\|_p^p\Big)^{1/p}
    \\
    &\lc 
    \Big(\sum_{k\in \bbZ}  2^{-k\ga} \Big\|\int_{2^{-k-1}\le|h|\le 2^{-k+1} }| F(\cdot,h) |\frac{\dif h}{|h|^d}\Big\|_p^p\Big)^{1/p}
    \\
    &\lc 
    \Big(\int_{\R^d} \| F(\cdot,h) \|_p^p \frac{\dif h}{|h|^{d-\gamma} }\Big)^{1/p} 
= \|F\|_{L^p(\nu_\gamma)}
\end{align*}
which completes the proof of the lemma. 
\end{proof}

To apply the lemma it is  beneficial to express $P^bf(\cdot,k)=2^{kb}L_kf $  as \begin{subequations} \label{eq:Pb-formulas}
\Be\label{eq:Pb-decomp}  P^bf(x,k)=\sum_{\ka=1}^N 
m_\ka(2^{-k} D) \int_{| h-2^{-k}w_\ka|\le \eps 2^{-k}} (2^k |h|)^b\frac{\Delta_h^M f (x)}{|h|^b} \frac{\dif h }{2^{-kd} } 
\Ee
and thus we get 
\Be\label{eq:expandPb} P^b f =\sum_{\ka=1}^N V^b_{m_\ka, w_\ka,\eps} F,  \text{ with }  F(x,h) = \frac{\Delta_h^Mf(x)}{|h|^b}. \Ee
\end{subequations}
Now, setting $b=s+\ga/p$, Lemma \ref{lem:V-lemma} yields 
\begin{corollary} \label{cor:Besovupperbound}Let  $M \in \N$, $1<p<\infty$, $1\le r\le \infty$, $s , \ga \in \bbR$.
For  $f\in \cS_\infty (\bbR^d)$,
\[ \|f\|_{\dot \cB^s_p(\gamma,r)} \lc \|\cQ_{M, s+\frac \ga p} f\|_{L^{p,r}(\nu_\gamma) }. \] 
\end{corollary}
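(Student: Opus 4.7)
The plan is to assemble the tools already at hand: the partition-of-unity identity \eqref{eq:varphi-expression}, the resulting representation of $P^b f$ in \eqref{eq:Pb-formulas}, and the operator bound of Lemma \ref{lem:V-lemma}. The core idea is that \eqref{eq:Pb-decomp} effectively \emph{inverts} the difference operator $\Delta_h^M$ when the frequency is restricted to a dyadic annulus, so that the Besov-type quantity $P^b f$ can be reconstructed from $\cQ_{M,b} f$ via a bounded averaging operator of the type analysed in Lemma \ref{lem:V-lemma}.

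Set $b = s + \gamma/p$. By Definition \ref{first-definition}(iii), $\|f\|_{\dot\cB^s_p(\gamma,r)} = \|P^b f\|_{L^{p,r}(\mu_\gamma)}$. The first step is to invoke the identity \eqref{eq:expandPb}: with $F(x,h) \coloneqq \cQ_{M,b} f(x,h) = |h|^{-b}\Delta_h^M f(x)$, one has $P^b f = \sum_{\kappa=1}^{N} V^b_{m_\kappa, w_\kappa, \varepsilon} F$. Since $1 < p < \infty$ and $1 \le r \le \infty$, $L^{p,r}(\mu_\gamma)$ is a genuine normed space, and the triangle inequality yields
\[
\|P^b f\|_{L^{p,r}(\mu_\gamma)} \le \sum_{\kappa=1}^{N} \|V^b_{m_\kappa, w_\kappa, \varepsilon} F\|_{L^{p,r}(\mu_\gamma)}.
\]

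The second step is to apply Lemma \ref{lem:V-lemma} to each of the finitely many terms. The functions $m_\kappa$ defined in \eqref{eq:def-of-mkappa} are smooth, compactly supported, and bounded below on the support of $\chi_\kappa$, so all of their derivatives are bounded; hence their inverse Fourier transforms lie in $L^1$, and in particular $\|m_\kappa\|_{M_1} \le C$ uniformly in $\kappa$. Lemma \ref{lem:V-lemma} therefore gives
\[
\|V^b_{m_\kappa, w_\kappa, \varepsilon} F\|_{L^{p,r}(\mu_\gamma)} \lesssim \|F\|_{L^{p,r}(\nu_\gamma)} = \|\cQ_{M,\, s+\gamma/p} f\|_{L^{p,r}(\nu_\gamma)},
\]
and summing the $N$ terms completes the proof.

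There is no real obstacle at this point, because all of the work has been done in the two preceding ingredients: constructing the partition of unity \eqref{eq:varphi-expression} so that the Littlewood--Paley multiplier $\varphi$ factors through $(e^{i\langle\xi,h\rangle}-1)^M$ (this is what lets us recover $L_k f$ from averages of $\Delta_h^M f$ against $h$ in a shell of radius $\approx 2^{-k}$), and establishing in Lemma \ref{lem:V-lemma} that the resulting reconstruction operator is bounded between the appropriate Lorentz spaces. The only subtlety worth flagging is the need for the class $\cS_\infty$ a priori: it is used to guarantee that the inversion formula is valid in a classical sense and that all the manipulations above (pointwise evaluation, Fourier multiplier action, convergence of the Littlewood--Paley decomposition) are justified. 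The extension to general $f \in \dot\cB^s_p(\gamma,r)$, which requires a proper choice of representative, is deferred to Section~\ref{sec:Proof-of-equiv}.
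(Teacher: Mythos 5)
Your proof is correct and follows exactly the paper's intended route: it reads off $\|f\|_{\dot\cB^s_p(\gamma,r)} = \|P^b f\|_{L^{p,r}(\mu_\gamma)}$, expands $P^b f$ via \eqref{eq:expandPb} as a finite sum $\sum_\kappa V^b_{m_\kappa,w_\kappa,\eps} F$ acting on $F = \cQ_{M,b} f$, applies the triangle inequality, and then invokes Lemma \ref{lem:V-lemma} with the observation (already made just after \eqref{eq:def-of-mkappa}) that each $m_\kappa$ has bounded multiplier norm. The paper compresses this to a single line ("Now, setting $b=s+\ga/p$, Lemma \ref{lem:V-lemma} yields"), so you are merely supplying the obvious details, and your remark about why the a priori class $\cS_\infty$ is needed matches the paper's organization.
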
 
Proposition \ref{prop:equiv-Schw} is just the combination of Corollaries \ref{cor:direct-ineq} and 
\ref{cor:Besovupperbound}. 

\section{Norm equivalences for all measurable functions}
\label{sec:Proof-of-equiv}
We give the proof of Theorem \ref{differences-thm-intro}. We begin by rephrasing it in a more abstract way which allows us to keep in mind the distinction between equivalence classes modulo all polynomials and modulo polynomials of degree $<M$.
Let $\cM$ denote the space of (Lebesgue almost everywhere equivalence classes of) measurable functions on $\bbR^d$
and let $\cP_{M}$  denote the space of (almost everywhere equivalence classes of) functions which are almost everywhere equal to a polynomial
of degree  at most $M$.
Let $\cM_M\coloneqq\cM/\cP_{M-1}$  and let $\pi_M:\cM\rightarrow \cM_M$ denote the projection map. Since the operators $\cQ_{M,s+\ga /p} $ annihilate polynomials of degree $\le M-1$ we can make the following definition.
\begin{defn}\label{def:fBspace}
    For $M\in \N$, $s\in \R$,
    $1<p<\infty$, and $1\le  r\leq \infty$, we define an extended
    norm\footnote{A priori, \(\| \cdot\|_{\fB_{M,s,p} (\ga,r)}\) is merely an extended semi-norm.  Lemma \ref{Lemma::Diff::AlmostPoly} below shows
    that \(\| \pi_M f\|_{\fB_{M,s,p} (\ga,r)}=0\Leftrightarrow \pi_M f=0\).} on $\cM_{M}$  by
    \begin{equation*}
   \| \pi_{M}f\|_{\fB_{M,s,p} (\ga,r)} \coloneqq \|\cQ_{M,s+\frac \ga p} f\|_{L^{p,r}(\nu_\ga)}
            \end{equation*}
    and let $\fB_{M,s,p}(\ga,r)$ be the subspace of  $\cM_{M}$ for which  $\| \pi_{M}f\|_{\fB_{M,s,p}(\ga,r)}$ is finite.
\end{defn}

Recall that $\cT\subseteq \cM$  denotes 
the space of (Lebesgue almost everywhere equivalence classes of) tempered functions on \(\R^d\), and let   $\cT_M\coloneqq \cT/\cP_{M-1} $. 
$\pi_M \colon \cM \to \cM_M$ restricts to a map $\pi_M \colon \cT \to \cT_M$.
We let $\iota_M$ denote the natural map
$\cT_M \rightarrow  \cS_\infty'(\bbR^d)$ which assigns to $\pi_M(f)$ (with $f \in \cT$) the linear functional $\iota_M(\pi_M(f)):\phi\mapsto \int_{\R^d} f(x)\phi(x) \dif x$.
We rephrase Theorem \ref{differences-thm-intro} in the following, equivalent, form:

\begin{theorem}\label{Thm::Diff::MainDiffThm}
    Fix \(M\in \N\), \(M\geq 1\).
    For  $0<s<M$, $p\in (1,\infty)$, $r\in [1,\infty]$, \(\gamma\in \R\), we have
\begin{enumerate}[(i)]
    \item\label{Item::Diff::IncTM} \(\fB_{M,s,p} (\ga,r) \subseteq \cT_M\);
    \item\label{Item::Diff::InIota} \(\dot\cB^s_p(\ga,r) = \iota_M\mleft( \fB_{M,s,p} (\ga,r) \mright)\); 
    \item\label{Item::Diff::Bijection} The map
        \begin{equation*}
            \iota_M\big|_{\fB_{M,s,p} (\ga,r)} : \fB_{M,s,p} (\ga,r) \rightarrow \dot\cB^s_p(\ga,r)
        \end{equation*}
        is an isomorphism of normed vector spaces;
        i.e., it is 
         a bounded, bijective linear map with bounded inverse. 
\end{enumerate}
\end{theorem}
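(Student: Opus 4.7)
The plan is to reduce the three parts to (a) a two-sided bound between $\|\iota_M(\pi_M f)\|_{\dot\cB^s_p(\gamma,r)}$ and $\|\pi_M f\|_{\fB_{M,s,p}(\gamma,r)}$ for tempered $f$, (b) the construction of a tempered representative for every element of $\dot\cB^s_p(\gamma,r)$, and (c) a lemma asserting injectivity of $\iota_M$ on $\fB_{M,s,p}(\gamma,r)$.  Proposition \ref{prop:equiv-Schw} already handles $f \in \cS_\infty$, so the task is to upgrade to arbitrary measurable $f$ (for part (i)) and to produce a tempered representative for every $\dot\cB^s_p(\gamma,r)$-distribution.

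I would start by proving part (i) together with the upper bound $\|\iota_M(\pi_M f)\|_{\dot\cB^s_p(\gamma,r)} \lesssim \|\pi_M f\|_{\fB_{M,s,p}(\gamma,r)}$. Given measurable $f$ with $F(x,h) := \Delta_h^M f(x)/|h|^{s+\gamma/p}$ in $L^{p,r}(\nu_\gamma)$, define, in analogy with \eqref{eq-resolution-of-f},
\[
g_k(x) := \sum_{\kappa=1}^N m_\kappa(2^{-k}D)\Biggl[\int_{|h-2^{-k}w_\kappa|\le \epsilon 2^{-k}} (2^k|h|)^{s+\gamma/p} F(x,h) \frac{\dif h}{2^{-kd}}\Biggr], \quad k\in\Z.
\]
The inner integral is well defined for a.e.\ $x$ by Fubini, and Lemma \ref{lem:V-lemma} with $b = s + \gamma/p$ gives $\|(g_k)_k\|_{L^{p,r}(\mu_\gamma)} \lesssim \|F\|_{L^{p,r}(\nu_\gamma)}$. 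To produce a tempered representative, let $P_{M-1}[g;0]$ denote the Taylor polynomial of $g$ at the origin of degree $M-1$, and set
\[
f_\circ(x) := \sum_{k\ge 0} 2^{-k(s+\gamma/p)} g_k(x) + \sum_{k<0} 2^{-k(s+\gamma/p)}\bigl(g_k(x) - P_{M-1}[g_k;0](x)\bigr).
\]
The low-frequency Taylor subtraction combined with the smoothing of the multipliers $m_\kappa(2^{-k}D)$ places $f_\circ$ in $\cT$. Testing against $\phi \in \cS_\infty$ and running the partition-of-unity identity \eqref{eq:decomp-of-phi} on the Fourier side of $\phi$ gives $\int f_\circ \phi = \int f \phi$, so $f_\circ$ represents $f$ modulo $\cP_{M-1}$; this yields (i) together with the upper bound in (iii). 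Surjectivity in (ii) is handled in the same spirit: for $f \in \dot\cB^s_p(\gamma,r) \subset \cS_\infty'$, each $L_k f$ is smooth, and the candidate $f_\circ := \sum_{k\ge 0} L_k f + \sum_{k<0}\bigl(L_k f - P_{M-1}[L_k f;0]\bigr)$ converges to a tempered function by the $L^{p,r}(\mu_\gamma)$ control on $(L_k f)_k$ and standard Taylor-remainder estimates. The reverse bound $\|\cQ_{M,s+\gamma/p} f_\circ\|_{L^{p,r}(\nu_\gamma)} \lesssim \|f\|_{\dot\cB^s_p(\gamma,r)}$ then follows verbatim from Corollary \ref{cor:direct-ineq}, whose proof only requires $\Delta_h^M f_\circ = \sum_k \Delta_h^M L_k f$, valid because $\Delta_h^M$ annihilates the polynomials subtracted at low frequency.

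Injectivity of $\iota_M|_{\fB_{M,s,p}(\gamma,r)}$ corresponds to the lemma flagged in the footnote: if $\Delta_h^M f = 0$ for $\nu_\gamma$-almost every $(x,h)$, then $f$ coincides a.e.\ with a polynomial of degree $< M$, because a polynomial of degree $\ge M$ has nonzero $\Delta_h^M$ with polynomial growth in $x$, incompatible with $F \in L^{p,r}(\nu_\gamma)$. The main obstacle throughout is the Fubini-and-multiplier bookkeeping needed to legitimize the reconstruction when $f$ is merely measurable: commuting each multiplier $m_\kappa(2^{-k}D)$ past the $h$-integral defining $g_k$, verifying convergence of the series for $f_\circ$ in the tempered sense after Taylor subtractions, and confirming that pairing $f_\circ$ with $\phi \in \cS_\infty$ reproduces $f$ modulo $\cP_{M-1}$. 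All three steps reduce to moving $\phi$ through the Fourier-side partition-of-unity identity \eqref{eq:decomp-of-phi}, which is clean because $\phi$ annihilates polynomials of every order and thereby removes the representative ambiguity.
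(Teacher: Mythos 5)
Your construction of $f_\circ$ from $F$ via the partition-of-unity formula \eqref{eq-resolution-of-f} with Taylor subtraction at low frequencies tracks the paper's strategy (the paper uses Hahn--Banach plus Paley--Wiener in place of Taylor subtraction, but this is a cosmetic difference). The critical gap is in the step where you assert $\int f_\circ\phi=\int f\phi$. For $f$ merely measurable with $\cQ_{M,s+\ga/p}f\in L^{p,r}(\nu_\ga)$, the integral $\int f\phi$ has no a priori meaning: the whole point of part (i) is that local integrability of $f$ is \emph{not} assumed. You cannot pair $f$ against $\phi\in\cS_\infty$ until you already know $f\in\cT$, which is exactly what you are trying to prove; this makes the argument circular. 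The paper avoids this by defining a map $\JMap$ dually through the absolutely convergent expression $\sum_\kappa\sum_j\iint \Delta_h^M f(x)\,\Gamma^j_{m_\kappa,w_\kappa,\eps}\psi(x,h)\,\dif x\,\dif h$ (Lemmas \ref{Lemma::Diff::BoundGamma} and \ref{Lemma::Diff::IntLpr} make this legitimate even when $f$ is only measurable), proving that $\JMap$ is injective on $\fB_{M,s,p}(\ga,r)$ (Lemma \ref{Lemma::Diff::JInjective}, which uses Lemma \ref{zeropol}), and then deducing $\fB_{M,s,p}(\ga,r)\subseteq\cT_M$ \emph{a posteriori} from the identity $\JMap\EMap=\mathrm{id}$ and surjectivity of $\JMap$ restricted to $\cT_M$. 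Your proposal has no substitute for this duality/injectivity step, so part (i) is not established.

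A second, smaller issue: you treat the statement ``$\Delta_h^M f=0$ a.e. $\Rightarrow$ $f$ is a.e.\ a polynomial of degree $<M$'' as if the only content were ruling out degree $\geq M$ by the $L^{p,r}(\nu_\ga)$ bound. But the hard part is showing that a \emph{measurable} function with vanishing $M$-th differences is a polynomial at all; for smooth $f$ this is elementary, but for merely measurable $f$ it is a genuine theorem in the theory of functional equations (the paper's Lemma \ref{Lemma::Diff::AlmostPoly}, built on results of Ger and Ciesielski). Your justification skips this entirely, and without it the footnoted nondegeneracy of $\|\cdot\|_{\fB_{M,s,p}(\ga,r)}$ --- and hence the injectivity in part (iii) --- is not proved.
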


The rest of this section is devoted to the proof of Theorem \ref{Thm::Diff::MainDiffThm}. In what follows 
we denote, for  $M\in \bbN$, by  $\cS_M(\bbR^d)$  the closed subspace of $\cS(\bbR^d)$ which consists of all $f\in \cS(\bbR^d)$ with $\int p(x)f(x) \dif x=0$ for all polynomials of degree $\le M-1$. Then clearly
$\cS_\infty=\bigcap_{M\in \bbN} \cS_M$ (and  $\cS_\infty\equiv\cZ$ in the notation of \cite{triebel}).
We denote by $\cS_M'$ the dual space of $\cS_M$.
To prove the theorem, we introduce two maps:  \(\dot\cB^s_p(\ga,r)\leftrightarrow \fB_{M,s,p} (\ga,r)\),
which will turn out to be inverses to each other.
We begin with the map \(\dot\cB^s_p(\ga,r)\rightarrow \fB_{M,s,p} (\ga,r)\). The following proposition
is similar  to results  of Bourdaud \cite{bourdaud-besov} and Moussai  \cite{madani} for the so-called realized  Besov spaces, and, in fact, could be deduced from their results by interpolation arguments.

\begin{proposition}\label{Prop::Diff::PropG}
    Fix \(M\in \N\), \(M\geq 1\).  For \(0<s<M\), \(1<p<\infty\), \(\gamma\in \R\), and \(1\leq r\leq \infty\), there is a bounded linear map
    \begin{equation*}
        \EMap:\dot\cB^s_p(\ga,r)\rightarrow \fB_{M,s,p} (\ga,r)
    \end{equation*}
    such that \(\EMap(\dot\cB^s_p(\ga,r))\subseteq \cT_M\) and \(\iota_M\) is a left inverse to \(\EMap\); i.e.,
    \(\iota_M\EMap\) is the identity map \(\dot\cB^s_p(\ga,r)\rightarrow \dot\cB^s_p(\ga,r)\).
\end{proposition}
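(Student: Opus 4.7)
The plan is to produce a representative $f_\circ$ of the prospective value of $\EMap f$ by Taylor-regularized Littlewood--Paley synthesis, and then set $\EMap f\coloneqq \pi_M(f_\circ)$. For $f\in\dot\cB^s_p(\gamma,r)\subseteq\cS_\infty'$, each piece $L_k f=\varphi(2^{-k}D)f$ is a well-defined smooth function of at most polynomial growth (since $\varphi$ vanishes near the origin, $L_k$ annihilates polynomials). I would set
\[
f_\circ \;\coloneqq\; \sum_{k>0} L_k f \;+\; \sum_{k\leq 0}\bigl(L_k f - T_0^{M-1}[L_k f]\bigr),
\]
where $T_0^{M-1}[g](x)=\sum_{|\alpha|<M}\tfrac{\partial^\alpha g(0)}{\alpha!}x^\alpha$ is the degree $M-1$ Taylor polynomial of $g$ at $0$; the degree $M-1$ matches the equivalence class modulo $\cP_{M-1}$ in the target space $\cT_M$.

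To verify convergence in $\cT$, a slice-wise estimate using the definition of $\mu_\gamma$ gives $\|L_k f\|_{L^{p,r}(\R^d)}\lesssim 2^{-ks}\|f\|_{\dot\cB^s_p(\gamma,r)}$, and combined with the Fourier localization of $L_k f$ and Bernstein's inequality this implies $\|D^\beta L_k f\|_\infty\lesssim 2^{k(|\beta|+d/p-s)}\|f\|_{\dot\cB^s_p(\gamma,r)}$ for every multi-index $\beta$. For $k\leq 0$, Taylor's theorem yields $|L_k f(x)-T_0^{M-1}[L_k f](x)|\lesssim 2^{k(M+d/p-s)}|x|^M\|f\|_{\dot\cB^s_p(\gamma,r)}$; this is summable since $M>s$, so the low-frequency part of $f_\circ$ is a continuous function majorized by a constant multiple of $|x|^M$. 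The high-frequency sum converges in $L^1_{\loc}$ by standard Littlewood--Paley arguments in the Lorentz setting. Altering the partition point $k=0$ or the base point of the Taylor expansion modifies $f_\circ$ only by a polynomial in $\cP_{M-1}$, so $\EMap f=\pi_M(f_\circ)\in\cT_M$ is unambiguously defined and linear in $f$.

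Because every $\phi\in\cS_\infty$ has all moments vanishing, $\int T_0^{M-1}[L_k f]\,\phi\,\dif x=0$; hence
\[
\int f_\circ\cdot\phi\,\dif x=\sum_{k\in\Z}\inn{L_k f}{\phi}=\sum_{k\in\Z}\inn{f}{L_k\phi}=\inn{f}{\phi}
\]
via $\sum_k L_k\phi=\phi$ in $\cS_\infty$, which yields $\iota_M\EMap f = f$ in $\cS_\infty'$. For the norm bound, $T_0^{M-1}[L_k f]\in\cP_{M-1}$ is killed by $\Delta_h^M$, and $L_k=\widetilde L_k L_k$; setting $g\coloneqq P^{s+\gamma/p}f$, so that $L_k f=2^{-k(s+\gamma/p)}g(\cdot,k)$, we obtain
\[
\frac{\Delta_h^M f_\circ(x)}{|h|^{s+\gamma/p}}=\sum_{k\in\Z}\frac{\Delta_h^M\widetilde L_k g(\cdot,k)(x)}{(2^k|h|)^{s+\gamma/p}}=T_{s+\gamma/p}g(x,h),
\]
with $T_b$ the operator of Lemma \ref{lem:2.2}. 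Since $0<s<M$ corresponds precisely to $0<b-\gamma/p<M$ with $b=s+\gamma/p$, that lemma gives $\|\cQ_{M,s+\gamma/p}f_\circ\|_{L^{p,r}(\nu_\gamma)}\lesssim\|g\|_{L^{p,r}(\mu_\gamma)}=\|f\|_{\dot\cB^s_p(\gamma,r)}$, as required.

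The main obstacle I expect is the convergence step: extracting slice-wise and pointwise bounds on each individual $L_k f$ solely from the joint Lorentz norm on the family $\{2^{k(s+\gamma/p)}L_k f\}_{k\in\Z}$ (particularly delicate when $r=\infty$), and then organizing the two partial sums in a function-theoretic sense strong enough (not merely in $\cS_\infty'$) to justify the interchange of sum and integral in the identification step. A secondary technical point is the well-definedness of $\EMap$ as a map into $\cT_M$ independently of the cosmetic choices (partition point, base point of the Taylor polynomial), resolved by observing that such modifications only shift $f_\circ$ by $\cP_{M-1}$-valued polynomials and hence are invisible after applying $\pi_M$.
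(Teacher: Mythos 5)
Your proposal is correct and reaches the same conclusion, but it takes a genuinely different route for the crucial low-frequency step. The paper's proof splits $U=\sum_k \Biginclude_M\pi_M L_kf$ into high and low parts; the high part is summed in $L^{p,\infty}$ using $s>0$, while the low part is handled abstractly: one extends $U_{\low}\in\cS_M'$ to $\cS'$ via Hahn--Banach, observes that the extension has compactly supported Fourier transform, and invokes the Schwartz Paley--Wiener theorem to conclude it is a tempered function. You instead construct an explicit representative by Taylor subtraction: $f_\circ=\sum_{k>0}L_kf+\sum_{k\le 0}(L_kf-T_0^{M-1}[L_kf])$, and prove convergence through the slice bound $2^{ks}\Norm{L_kf}_{L^{p,\infty}}\lesssim\Norm{f}_{\dot\cB^s_p(\gamma,\infty)}$ combined with Bernstein estimates and the remainder estimate from Taylor's theorem, which is summable precisely when $M>s-d/p$ (and a fortiori under the hypothesis $s<M$). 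This is the classical Bourdaud ``realization'' argument, which the paper itself flags as an alternative in the remark preceding the proposition. Your approach buys concreteness and is essentially self-contained; the paper's abstract argument avoids tracking explicit pointwise bounds and sidesteps the dominated-convergence bookkeeping needed to justify $\int f_\circ\phi=\inn{f}{\phi}$. Both proofs then establish the norm estimate by the same reduction to Lemma \ref{lem:2.2} applied to $g=P^{s+\gamma/p}f$, using that $\Delta_h^M$ annihilates $T_0^{M-1}[L_kf]\in\cP_{M-1}$. One small inaccuracy in your write-up: the pointwise Bernstein estimate $\Norm{D^\beta L_kf}_\infty\lesssim 2^{k(|\beta|+d/p-s)}$ is deduced most directly from the weak-type slice bound together with the convolution estimate $\Norm{\widetilde L_k}_{L^{p,\infty}\to L^\infty}=O(2^{kd/p})$ (as in the paper's Lemma \ref{lem:LP-SMdistr}); stating it simply as ``Bernstein's inequality'' glosses over the need to pass from $L^{p,\infty}$ to $L^\infty$, which is the nontrivial step given that the a priori information is only in a Lorentz norm.
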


 We need a lemma  about the Littlewood-Paley decomposition 
 for $f\in \dot \cB^s_p(\ga,\infty)\subseteq \cS_\infty'(\bbR^d)$, for $p\in (1,\infty). $
  Note that $L_k f$  is a convolution of an element of \( \cS_\infty'(\bbR^d)\) and an element of \(\cS_\infty(\bbR^d)\), and thus a $C^\infty$-function. 
    By the definition of \(\dot \cB^s_p(\ga,\infty)\), \(L_k f\in L^{p,\infty}(\bbR^d)\) with $\|2^{ks} L_k f\|_{L^{p,\infty}(\R^d)} \lesssim \|f\|_{\dot \cB^s_p(\ga,\infty)}$ uniformly in $k \in \Z$.

    By Young's convolution inequality 
    \[\|\widetilde L_k\|_{L^{p,\infty} \to L^\infty} =O(2^{kd/p}) \] and from $\widetilde L_kL_k=L_k$ 
    we obtain 
    $\|L_k f\|_\infty \lc 2^{kd/p} \|L_k f\|_{L^{p,\infty}}$.
    We use this to establish convergence of the Littlewood-Paley decomposition in $\cS_M$, under the additional condition  $M>s-d/p$.

\begin{lemma}\label{lem:LP-SMdistr}
Let $M$ be a nonnegative integer, $1<p<\infty$, $N\in \bbN$. Then the following holds.

(i) For $f\in \dot \cB_p^s(\ga,\infty)$ and $\psi\in \cS_M$,
\[|\inn{L_jf}{\psi}| \le C_{N,M,\psi} 2^{j(\frac dp-s)}\min\{2^{-jN}, 2^{jM}\} \|f\|_{\dot\cB^s_p(\ga,\infty)}.
\]

(ii) Let 
$M>s-d/p$, and $f\in \dot\cB_p^s(\ga,r)$. Then $\sum _{j\in\bbZ}L_j f$ converges in $\cS_M'$.
\end{lemma}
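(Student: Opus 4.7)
The plan is to reduce both parts to a single Hölder estimate together with two standard ``cancellation'' bounds on \(\|\widetilde L_j\psi\|_1\). The key identity is \(\widetilde L_j L_j = L_j\); since \(\widetilde\varphi\) is real and radial, \(\widetilde L_j\) is self-adjoint on the relevant pairings, so
\[
\inn{L_j f}{\psi} = \inn{\widetilde L_j L_j f}{\psi} = \inn{L_j f}{\widetilde L_j \psi},
\]
and Hölder's inequality bounds this by \(\|L_j f\|_\infty\|\widetilde L_j\psi\|_1\). The factor \(2^{j(d/p - s)}\|f\|_{\dot\cB^s_p(\gamma,\infty)}\) then comes directly from the chain \(\|L_j f\|_\infty \lesssim 2^{jd/p}\|L_j f\|_{L^{p,\infty}} \lesssim 2^{j(d/p-s)}\|f\|_{\dot\cB^s_p(\gamma,\infty)}\) recorded in the paragraph preceding the lemma. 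The entire content of part (i) thus reduces to proving
\[
\|\widetilde L_j \psi\|_1 \;\lesssim_{N,M,\psi}\; \min\{2^{-jN},\, 2^{jM}\}.
\]

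For the bound \(2^{-jN}\) (the useful estimate when \(j \ge 0\)), write
\(\widetilde L_j \psi(x) = \int \widetilde{\check\varphi}(z)\,\psi(x - 2^{-j}z)\,\dif z\)
and Taylor-expand \(\psi\) around \(x\) to order \(N\). Because \(\widetilde\varphi\) vanishes on a neighborhood of the origin, every moment \(\int z^\alpha \widetilde{\check\varphi}(z)\,\dif z\) equals zero, so all polynomial terms drop out and only the Taylor remainder survives; controlling it by \(2^{-jN}|z|^N\|\nabla^N\psi\|_\infty\) and using Schwartz decay to integrate in \(x\) gives the desired \(2^{-jN}\). For the bound \(2^{jM}\) (the useful estimate when \(j \le 0\)), write instead
\(\widetilde L_j\psi(x) = \int \widetilde{\check\varphi}_j(x-y)\psi(y)\,\dif y\)
with \(\widetilde{\check\varphi}_j(w) = 2^{jd}\widetilde{\check\varphi}(2^jw)\), and now Taylor-expand the kernel in the \(y\)-variable to order \(M\). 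The assumption \(\psi\in\cS_M\) (vanishing moments up to order \(M-1\)) kills the polynomial part, while the remainder, combined with \(\|\partial^\alpha\widetilde{\check\varphi}_j\|_1 = 2^{j|\alpha|}\|\partial^\alpha\widetilde{\check\varphi}\|_1\) and the finiteness of \(\int |y|^M|\psi(y)|\,\dif y\), yields the factor \(2^{jM}\).

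Part (ii) follows immediately from part (i) after recalling the trivial embedding \(\dot\cB^s_p(\gamma,r) \hookrightarrow \dot\cB^s_p(\gamma,\infty)\). Given \(\psi\in\cS_M\), pick any \(N\in\bbN\) with \(N > \tfrac{d}{p} - s\); part (i) then gives
\[
\sum_{j\ge 0}|\inn{L_j f}{\psi}| \lesssim \sum_{j\ge 0} 2^{j(\frac{d}{p}-s-N)} < \infty,
\qquad
\sum_{j<0}|\inn{L_j f}{\psi}| \lesssim \sum_{j<0} 2^{j(\frac{d}{p}-s+M)} < \infty,
\]
the second series being summable precisely because of the hypothesis \(M > s - d/p\). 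Thus \(\sum_j L_j f\) converges in the weak-\(\ast\) topology of \(\cS_M'\).

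The main technical hurdle is the pair of cancellation estimates on \(\|\widetilde L_j\psi\|_1\); everything else is bookkeeping. Both are routine manifestations of the general principle that the pairing of a frequency-localized bump with a Schwartz function decays rapidly either when one side has vanishing moments (low-frequency decay) or when the bump itself has all moments vanishing (high-frequency decay), but some care is needed to keep the \(L^1(\dif x)\) norms finite by exploiting Schwartz decay in an appropriate dilation-invariant way.
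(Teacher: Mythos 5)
Your proof is correct and follows essentially the same route as the paper's: duality through $\widetilde L_j$ (using $\widetilde L_j L_j = L_j$ and the symmetry of $\widetilde L_j$), the $L^\infty$ bound on $L_j f$ recorded just before the lemma, and the two cancellation bounds $\|\widetilde L_j\psi\|_1\lesssim\min\{2^{-jN},2^{jM}\}$. The only difference is that the paper states these two $L^1$ bounds without proof, whereas you spell out the Taylor-expansion arguments (vanishing moments of $\widetilde{\check\varphi}$ for $j\ge 0$, vanishing moments of $\psi\in\cS_M$ for $j\le 0$); that is a matter of level of detail, not of approach. One small imprecision: for the $j\ge0$ estimate you write the remainder as $2^{-jN}|z|^N\|\nabla^N\psi\|_\infty$, but to land in $L^1(\dif x)$ you should instead integrate the $x$-dependence of the remainder, yielding $2^{-jN}|z|^N\sum_{|\alpha|=N}\|\partial^\alpha\psi\|_1$; you gesture at this with ``Schwartz decay to integrate in $x$'', so the idea is clearly there, but as written the intermediate bound is not the one you actually use.
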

\begin{proof}  
Since $\psi \in \cS$ we get $\|\widetilde L_j\psi\|_1\lc C_{N,\psi} 2^{-jN} $ for $j\ge 0$. Using the $M-1$ vanishing moment conditions we get $\|\widetilde L_j \psi\|_1 \lc 2^{jM}$ for $j\le 0$.

We have 
\begin{align*} |\inn{L_jf}{\psi}|&=|\inn{\widetilde L_jL_jf}{\widetilde L_j\psi}| \le \|\widetilde L_j L_j f\|_\infty \|\widetilde L_j\psi\|_1
\\
&\lc 2^{j\frac dp} \|L_j f\|_{L^{p,\infty}  } \min\{2^{-jN}, 2^{jM}\} 
\\&\lc  \|f\|_{\dot\cB_p^s(\ga,\infty)}
2^{j(\frac dp-s)}\min\{2^{-jN}, 2^{jM}\}
\end{align*}  where the implicit constants depend on $M,N,\psi$. Choosing $N$ large enough we see that $\sum_{j>0}|\inn{L_j f}{\psi}|<\infty$. Moreover 
$\sum_{j\le 0}|\inn{L_j f}{\psi}|<\infty$ if $M>s-d/p$ and thus $\sum_{j\in \bbZ} L_j f$ converges in $\cS_M'$.
\end{proof}

\begin{proof}[Proof of Proposition \ref{Prop::Diff::PropG}]
As already mentioned this could be skipped by citing  \cite{bourdaud-besov, madani}, which proves more, but in order to be self-contained we include a direct  proof of this somewhat easier result.
    We first define the  map
    \[\Biginclude_M:\cT_M\hookrightarrow \cS_{M}'(\bbR^d)\] as taking  \(\pi_Mu\) (with $u \in \cT$) to the distribution $\Biginclude_M\pi_Mu$ defined by 
    \[\inn{\Biginclude_M \pi_Mu}{\phi} = \int_{\R^d} u \phi\] and observe that $\Biginclude_M$ is injective. Also let $f \in \cup_{r \in [1,\infty]} \dot \cB^s_p(\ga,r) = \dot \cB^s_p(\ga,\infty)$.
By Lemma \ref{lem:LP-SMdistr}
    \(\sum_{k\in \Z} \Biginclude_M\pi_ML_kf\) converges in \(\cS_M'(\bbR^d)\) to some
    \(U\in \cSt_M'(\bbR^d)\). 
    
    We claim  \(U\in \Biginclude_M(\cT_M)\). To see this, decompose 
    \begin{equation*}
        U=U_{\high}+U_{\low}\coloneqq\sum_{k\geq 0} \Biginclude_M\pi_ML_k f +\sum_{k<0} \Biginclude_M\pi_ML_k f,
    \end{equation*}
    where the above sums converge in \(\cS_M'(\bbR^d)\).
    Since \(f\in\dot \cB^{s}_{p}(\gamma,\infty) \), we have \(\| L_k f\|_{L^{p,\infty}}\lesssim 2^{-ks}\),
    and since $s>0$ we see that  \(\sum_{k\geq 0} L_k f\) converges in \(L^{p,\infty}(\bbR^d)\) and 
    \begin{equation*}
        U_{\high}=\sum_{k\geq 0} \Biginclude_M\pi_M L_k f= \Biginclude_M  \pi_M \big[ \sum_{k\geq 0} L_k f \big]\in \Biginclude_M(\cT_M).
    \end{equation*}
    Since \(U_{\low}\in \cS_M'(\bbR^d)\), we can use  the Hahn-Banach Theorem to establish the existence of an extension  \(U_{\low}^{\mathrm{ext}}\in \cS'(\bbR^d)\)
    such that
    \begin{equation*}
        \inn{U_{\low}^{\mathrm{ext}}}{\psi}=\inn{U_{\low}}{\psi},\quad \forall \psi\in \cS_M(\bbR^d).
    \end{equation*}
    In particular, by the definition of \(U_{\low}\), we see that the Fourier transform of \(U_{\low}^{\mathrm{ext}}\) is supported in \(\{ |\xi|\leq 2\}\).
    Schwartz's Paley-Wiener Theorem implies there exists \(G\in \cT\) with \(\inn{U_{\low}^{\mathrm{ext}}}{\psi}=\int_{\R^d} G\psi\), for all \( \psi\in \cS(\R^d)\).
    It follows that \(U_{\low}=U_{\low}^{\mathrm{ext}}\big|_{\cS_M} = \Biginclude_M  \pi_M G\in \Biginclude_M(\cT_M)\).
    This completes the proof that \(U\in \Biginclude_M(\cT_M)\).
    
    We now can 
    define  \(\EMap f\);  because by injectivity of  $\Biginclude_M$  we have 
    \[U=\Biginclude_M(\EMap f),\]  for a unique \(\EMap f \in \cT_M\).  The map \(\EMap:f\mapsto \EMap f\) is then clearly linear.
    Also  \(U\big|_{\cS_\infty(\bbR^d)}=f\) and therefore \(\iota_M \EMap f=\Biginclude_M\EMap f\big|_{\cS_\infty(\bbR^d)}=U\big|_{\cS_\infty(\bbR^d)}=f\);
    that is, \(\iota_M\EMap\) is the identity.

We still need to establish the estimate
\Be \label{eq:BtofB}\| \EMap f \|_{\fB_{M,s,p}(\ga,r)}\lc \|f\|_{\dot \cB_p^s(\gamma,r)};\Ee
this is done using   the arguments in \S\ref{sec:norm-equivalence}. 
Define $g(x,k)\coloneqq2^{k(s+\frac{\ga}{p})} \EMap L_k f(x)$ so that $f = \sum_{k \in \Z} \cI_M 2^{-k(s+\frac{\ga}{p})} \widetilde L_k g(\cdot,k)$ with convergence in $\cS_M'$. By definition of $\dot \cB^s_p(\ga,r)$ we have $g \in L^{p,r}(\mu_{\ga})$. For all $h$ and a.e. $x$, 
\[
\frac{\Delta_h^M \EMap f(x)}{|h|^{s+\frac{\ga}{p}}} = T_{s+\frac{\ga}{p}} g(x,h)
\]
where $T_{s+\frac{\ga}{p}}$ is as in Lemma~\ref{lem:2.2}. Then we get \eqref{eq:BtofB} from Lemma~\ref{lem:2.2}.
\end{proof}

We turn to the map \(\fB_{M,s,p} (\ga,r)\rightarrow \dot\cB^s_p(\ga,r)\).

\begin{proposition}\label{Prop::Diff::PropF}
    For \(M\in \N\), \(s\in \R\), \(p\in (1,\infty)\), \(r\in [1,\infty]\), \(\gamma\in \R\), there is an injective bounded linear map
    \begin{equation*}
        \JMap: \fB_{M,s,p} (\ga,r)\rightarrow \dot\cB^s_p(\ga,r)
    \end{equation*}
    such that 
    \begin{equation}\label{Eqn::Diff::RestrictionEq}
        \JMap\big|_{\fB_{M,s,p} (\ga,r)\cap \cT_M}= \iota_M\big|_{\fB_{M,s,p} (\ga,r)\cap \cT_M}.
    \end{equation}
\end{proposition}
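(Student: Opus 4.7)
The plan is to run the calculations of \S\ref{sec:norm-equivalence} backwards. Given $\pi_M f \in \fB_{M,s,p}(\ga,r)$, set $b = s + \ga/p$ and $F(x,h) \coloneqq \Delta_h^M f(x)/|h|^b$. Because $\Delta_h^M$ annihilates $\cP_{M-1}$, the function $F$ depends only on $\pi_M f$, and by definition it lies in $L^{p,r}(\nu_\ga)$ with $\|F\|_{L^{p,r}(\nu_\ga)} = \|\pi_M f\|_{\fB_{M,s,p}(\ga,r)}$. Mimicking \eqref{eq:expandPb}, I then set
\[
g(\cdot,k) \coloneqq \sum_{\ka=1}^N V^b_{m_\ka, w_\ka, \eps} F(\cdot,k), \qquad k \in \Z;
\]
by Lemma \ref{lem:V-lemma}, $g \in L^{p,r}(\mu_\ga)$ with norm $\lesssim \|F\|_{L^{p,r}(\nu_\ga)}$. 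Since $m_\ka$ is supported in $\supp\varphi$, $g(\cdot,k)$ is a $C^\infty$ function with Fourier support in $\{3/4\cdot 2^k<|\xi|<7/4\cdot 2^k\}$. Define
\[
\JMap(\pi_M f) \coloneqq \sum_{k\in\Z} 2^{-kb}\, g(\cdot,k) \in \cS_\infty'(\R^d),
\]
the series converging in $\cS_\infty'$ because for $\phi \in \cS_\infty$, $\langle g(\cdot,k),\phi\rangle = \langle g(\cdot,k),\widetilde L_k\phi\rangle$ decays rapidly in $|k|$ from the Schwartz decay and infinite-order vanishing moments of $\phi$.

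For the norm bound, the dyadic Fourier supports force $L_j g(\cdot,k) = 0$ whenever $|j-k|\geq 2$, so $L_j \JMap(\pi_M f)$ is a finite sum in $k$ near $j$, yielding
\[
\|\JMap(\pi_M f)\|_{\dot\cB^s_p(\ga,r)} = \|P^b \JMap(\pi_M f)\|_{L^{p,r}(\mu_\ga)} \lesssim \|g\|_{L^{p,r}(\mu_\ga)} \lesssim \|\pi_M f\|_{\fB_{M,s,p}(\ga,r)}.
\]
The restriction identity \eqref{Eqn::Diff::RestrictionEq} follows from the pointwise formula \eqref{eq:Pb-decomp}: when $\pi_M f$ admits a tempered representative $f \in \cT$, we have $g(\cdot,k) = 2^{kb} L_k f$ exactly, so $\JMap(\pi_M f) = \sum_k L_k f$; pairing against $\phi \in \cS_\infty$ reproduces $\langle f,\phi\rangle = \iota_M(\pi_M f)(\phi)$.

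The main obstacle is injectivity of $\JMap$, which I will deduce by showing that $\EMap\circ\JMap$ is the identity on all of $\fB_{M,s,p}(\ga,r)$ (as a bonus, this also yields Part \ref{Item::Diff::IncTM} of Theorem \ref{Thm::Diff::MainDiffThm}). Let $\tilde f \coloneqq \EMap\JMap(\pi_M f)$ and fix a tempered representative in $\cT$. The identity established in the proof of Proposition \ref{Prop::Diff::PropG} gives
\[
\Delta_h^M \tilde f(x) = |h|^b\, T_b P^b \JMap(\pi_M f)(x,h) \qquad \text{for a.e. } (x,h).
\]
Using $\widetilde L_k L_k = L_k$ together with the dyadic Fourier supports of $g(\cdot,k)$ (which ensure $(L_{j-1}+L_j+L_{j+1})g(\cdot,j) = g(\cdot,j)$), the right-hand side telescopes to $\sum_j 2^{-jb}\Delta_h^M g(\cdot,j)$. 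On the other hand, reading \eqref{eq:varphi-expression} as the operator identity $\sum_\ka m_\ka(2^{-j}D)\,2^{jd}\int \Delta_w^M U\,\dif w = L_j U$ and applying it with $U = \Delta_h^M f$, one finds $2^{-jb}\Delta_h^M g(\cdot,j) = L_j(\Delta_h^M f)$. Summing over $j$ and invoking Littlewood-Paley convergence $\sum_j L_j U = U$ for $U = \Delta_h^M f(\cdot) \in L^{p,r}(\R^d)$ (valid for a.e.~$h$ via a Fubini-type argument once one reduces to the $r=p$ case and interpolates), I conclude that
\[
\Delta_h^M \tilde f(x) = \Delta_h^M f(x) \quad \text{for a.e. } (x,h) \in \R^d \times \R^d.
\]
A classical Fr\'echet-type functional equation (the measurable version of: $\Delta_h^M G \equiv 0$ implies $G \in \cP_{M-1}$) then forces $\tilde f - f \in \cP_{M-1}$ a.e., so $\EMap\JMap(\pi_M f) = \pi_M f$ in $\cM_M$. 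This gives injectivity of $\JMap$, and incidentally shows $\fB_{M,s,p}(\ga,r) \subseteq \cT_M$. The hardest technical point is justifying the pointwise Littlewood-Paley recovery $\sum_j L_j(\Delta_h^M f) = \Delta_h^M f$ for generic $h$ without an a priori temperedness assumption on $f$; this is where the reduction from $L^{p,r}(\nu_\ga)$ to a setting where Fubini is available is essential.
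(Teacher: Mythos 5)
The core construction is the same: you build $\JMap$ from the operators $V^b_{m_\ka,w_\ka,\eps}$, obtain the norm bound from Lemma \ref{lem:V-lemma}, verify the restriction identity \eqref{Eqn::Diff::RestrictionEq} by reducing to $f\in\cT$ and matching with the $L_k$ decomposition, and invoke the functional-equation result (Lemma \ref{Lemma::Diff::AlmostPoly}) at the very end. Two points differ. First, a minor one: for the norm bound you route through $\|g\|_{L^{p,r}(\mu_\ga)}$ and then apply $L_j$ slicewise plus a shift in $j$; the paper instead absorbs the outer $L_j$ into a modified multiplier $\widetilde m_n = \varphi(2^n\cdot)m$ and applies Lemma \ref{lem:V-lemma} once (see Lemma \ref{Lemma::Diff::BoundT}). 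Your version requires an extra interpolation step to justify that $G\mapsto L_j G(\cdot,j)$ is bounded on $L^{p,r}(\mu_\ga)$, which is not stated anywhere in the paper.

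The genuine divergence is injectivity. The paper proves it directly (Lemma \ref{Lemma::Diff::JInjective}): if $\JMap\pi_Mf = 0$, pairing against $\int\eta(h')\Delta_{-h'}^M\psi\,\dif h'$ and using Lemma \ref{zeropol} yields $\int\Delta_{h'}^Mf\,\eta(h')\,\dif h' = 0$ a.e.\ for every $\eta\in C_c^\infty(\R^d\setminus\{0\})$, hence $\Delta_h^M f = 0$ a.e., and one concludes with Lemma \ref{Lemma::Diff::AlmostPoly}. You instead try to prove the stronger identity $\EMap\JMap = \mathrm{Id}$ on all of $\fB_{M,s,p}(\ga,r)$. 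This is a legitimate alternative and, if completed, it delivers both injectivity of $\JMap$ and part \ref{Item::Diff::IncTM} of Theorem \ref{Thm::Diff::MainDiffThm} in a single stroke; but it makes Proposition \ref{Prop::Diff::PropF} depend on Proposition \ref{Prop::Diff::PropG}, whereas the paper keeps them independent and only combines them in the theorem's conclusion (where it is $\JMap\EMap=\mathrm{Id}$ that is used, together with the injectivity from Prop.\ F). The gap you flag at the end is real and is where the work is: you need, for a.e.\ $h$, that $\Delta_h^M f \in L^{p_0}(\R^d) + L^{p_1}(\R^d)$ for some $1<p_0<p<p_1<\infty$ (this follows from writing $F = F_0 + F_1$ with $F_i \in L^{p_i}(\nu_\ga)$ and applying Fubini slicewise to each $F_i$), and then invoke Littlewood--Paley reproduction on each summand. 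You should also say in what sense the two series (the one from Lemma \ref{lem:2.2} defining $\Delta_h^M\tilde f$, and $\sum_j L_j(\Delta_h^M f)$) converge and why they coincide a.e.; they converge in different topologies, so you need to pass through local convergence in measure, say. None of this is fatal, but the paper's direct duality route avoids it entirely, at the small price of introducing Lemma \ref{zeropol}.
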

The main difficulty we must contend with in Proposition \ref{Prop::Diff::PropF} is that elements of \(\fB_{M,s,p} (\ga,r)\)
are a priori only equivalence classes of measurable functions (not necessarily locally integrable), and so we cannot  directly use any tools from distribution theory to study them. The following lemma appears to be well-known but we include a proof since we have not been  able to locate a precise reference.

\begin{lemma}\label{Lemma::Diff::AlmostPoly}
    Let $M\ge 1$, and  \(f:\bbR^d\rightarrow \bbC\) be  measurable  
    with  \(\Delta_h^M f(x)=0\) for $\mathscr {L}^{2d}$-almost every 
    \((h,x)\in \R^{2d}\).  
    Then there is a polynomial \(P\) of degree at most \(M-1\) such that \(f(x)=P(x)\)  almost everywhere.
\end{lemma}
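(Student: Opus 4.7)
The strategy is to mollify $f$ and invoke the classical fact that a $C^\infty$ function $g$ on an open set satisfying $\Delta_h^M g = 0$ for all sufficiently small $h$ must be a polynomial of degree at most $M-1$: Taylor expansion gives $\Delta_h^M g(x) = (h \cdot \nabla)^M g(x) + O(|h|^{M+1})$ as $h \to 0$, and the vanishing of this expression as a polynomial in $h$ forces every $M$-th partial derivative of $g$ to vanish. Since mollification presupposes $f \in L^1_{\mathrm{loc}}$, the principal obstacle is to establish this regularity for the a priori merely measurable $f$; once it is in hand, the rest is comparatively routine.

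\textbf{Local boundedness.} Rewriting the hypothesis as the functional equation
\begin{equation*}
f(y) = \sum_{k=1}^M (-1)^{k+1} \binom{M}{k} f(y-kh), \qquad \text{for a.e. } (y,h) \in \R^{2d},
\end{equation*}
I would use Lusin's theorem to select a measurable set $E$ of positive measure on which $f$ is essentially bounded, and let $y_0$ be a Lebesgue density point of $E$. For $y$ in a small ball $B$ around $y_0$, a Lebesgue-density estimate shows that the set of small $h$ with $y - kh \in E$ simultaneously for all $k = 1, \ldots, M$ has positive measure; intersecting with the full-measure set of $h$ for which the functional equation holds (by Fubini) yields a nonempty set, and the equation then bounds $|f(y)|$ by a constant times $\|f\|_{L^\infty(E)}$ for a.e.\ $y \in B$. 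Hence $f \in L^\infty(B)$.

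\textbf{Identification and global extension.} On a slightly smaller ball $B'$, the mollification $f_\varepsilon \coloneqq f * \psi_\varepsilon$ is smooth, and since $\Delta_h^M f_\varepsilon = (\Delta_h^M f) * \psi_\varepsilon = 0$ holds for a.e.\ $h$ (by the hypothesis on $f$) and $h \mapsto \Delta_h^M f_\varepsilon(x)$ is continuous, in fact $\Delta_h^M f_\varepsilon \equiv 0$ for all small $h$. The classical result then provides a polynomial $P_\varepsilon$ of degree at most $M-1$ with $f_\varepsilon = P_\varepsilon$ on $B'$; since polynomials of degree at most $M-1$ form a finite-dimensional space and $f_\varepsilon \to f$ a.e.\ on $B'$, the $P_\varepsilon$ converge coefficient-wise to some polynomial $P$ of degree at most $M-1$, giving $f = P$ a.e.\ on $B'$. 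To extend globally: for any $y \in \R^d$, choose $h$ in a positive-measure set of values near $(y - y_0)/(M+1)$ such that $y - kh \in B'$ for each $k = 1, \ldots, M$, which is possible whenever $\|y - y_0\|$ is less than $(M+1)/M$ times the radius of $B'$; the functional equation then yields $f(y) = \sum_{k=1}^M (-1)^{k+1} \binom{M}{k} P(y - kh) = P(y)$, the final identity being the relation $\sum_{k=0}^M (-1)^k \binom{M}{k} P(y - kh) = 0$ for any polynomial $P$ of degree at most $M-1$ (since $t \mapsto P(y - th)$ has degree at most $M-1$ in $t$, so its $M$-th forward difference vanishes). A standard iteration in the radius then extends $f = P$ a.e.\ to all of $\R^d$.
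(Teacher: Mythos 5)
Your proof is correct, but it takes a genuinely different route from the paper's. The paper invokes specialized results from the theory of functional equations: Ger's theorem (to replace $f$ by a representative $g$ with $\Delta_h^M g(x)=0$ for \emph{every} $(x,h)$, not merely almost every), Ciesielski's theorem (that a measurable $g\colon\R\to\C$ with $\Delta_h^M g\geq 0$ everywhere is continuous, whence polynomial functions on $\R$ are genuine polynomials), and Kemperman's identity for iterated differences, all combined in an induction on the dimension $d$. You avoid all of these citations: instead you establish local essential boundedness by a Steinhaus-type argument at a Lebesgue density point of a set where $f$ is bounded, then mollify to transfer the a.e.\ hypothesis into the genuinely smooth setting (where the conclusion follows from Taylor expansion), identify the limit via a.e.\ convergence of polynomials of bounded degree (coefficient convergence on a positive-measure set), and finally propagate the identity $f=P$ globally by the difference recursion. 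Your version is more self-contained and uses only standard real analysis --- mollification effectively plays the role that Ger and Ciesielski play in the paper --- at the cost of the density-point estimate needed to get $f\in L^\infty_{\mathrm{loc}}$ off the ground; the paper's version is shorter once one grants the cited theorems. One small wording caveat: in the global-extension step you say ``for any $y$'' but should say ``for a.e.\ $y$'' (and note that the $h$ must also be chosen so that all $y-kh$ avoid the null set where $f\neq P$ --- which is automatic since you are already choosing $h$ from a positive-measure set), but these are easily repaired and do not constitute gaps.
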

Before proving the lemma we recall   some basic facts from  the theory of functional equations \cite{kuczma-book} which are needed in the proof.  
First,  we  need  a formula about iterated differences, attributed to Kemperman in \cite[Theorem 15.1.2]{kuczma-book}, see  also Djokovi\'c  \cite{djokovic} for related results. Namely, for all dimensions $d$, for all   $N\in \bbN$,   if $v^{(1)}$, ..., $v^{(N)} $  are vectors in $\bbR^d$ then
\Be\label{eq:iterated-differences}
\begin{split}&\Delta_{v^{(1)}}\dots\Delta _{v^{(N)}}f(x)= \sum_{(\ep_1,\dotsc,\ep_N)\in \{0,1\}^N}  (-1)^{\ep_1+\dots\ep_N}
\Delta_{h(\ep)}^M f(x+ \Tilde h(\ep)),
\\
&\text{ where } h(\ep)=- \sum_{j=1}^N j^{-1} \ep_jv^{(j)} ,\qquad 
\Tilde h(\ep)= \sum_{j=1}^N \ep_j v^{(j)} .
\end{split}
\Ee

Next we  recall that a $\mathscr{L}^d$-measurable function $f:\bbR^d\to \bbC$ is called {almost polynomial of order $M-1$} if $\Delta_h^Mf(x)=0$ for  $\mathscr{L}^{2d}$-a.e.\ $(x,h)\in \bbR^{2d}$.  
It is a result of Ger \cite{ger}, which we use in its form presented in \cite[Theorem 17.7.2]{kuczma-book},  that there exists a measurable function
 \(P \colon \bbR^d \to \bbC\) such that \(f(x)=P(x)\) for  $\mathscr{L}^d$-a.e.\ $x$ and $P$ is  a function satisfying 
    \(\Delta_h^M P(x) =0\), \emph{for all}  \( (h,x)\in \R^{d}\times \R^d\); such functions are called ``polynomial functions'' in \cite{ger}, \cite{kuczma-book}.
    
    We also use a result by Ciesielski \cite{ciesielski} (see  also \cite[Theorem 15.5.2]{kuczma-book}) 
    which states that if a measurable function $g:\bbR\to \bbC$  satisfies $\Delta_h^M g(x)\ge 0$ for all $x\in \bbR$ and all $h\in \bbR$ then $g$ is continuous; by an argument using weak derivatives this implies 
    that a polynomial function of order $M-1$ on the real line is actually a polynomial of degree at most $M-1$. In proving Lemma~\ref{Lemma::Diff::AlmostPoly} we could have used a $d$-dimensional version of this fact which could  be derived  from an abstract result by Kuczma \cite[Theorem 3]{kuczma}.
  However we  prefer to give a more direct argument 
  based on induction on $d$. 
   
   \begin{proof}[Proof of Lemma \ref{Lemma::Diff::AlmostPoly}] For $d=1$ Lemma~\ref{Lemma::Diff::AlmostPoly} is an immediate consequence of  the above mentioned theorems by Ciesielski and Ger. Let $d\ge 2$ and as induction hypothesis, assume Lemma~\ref{Lemma::Diff::AlmostPoly} in dimension $d-1$. 
We split variables as $x=(x',x_d)$.

Let $f:\bbR^d\to \bbC$  be almost  polynomial  of order $M-1$. By Ger's theorem there is  a measurable function  $g:\bbR^d\mapsto \bbC$ such that $f=g$ $\cL^d$-a.e.\ and $g$ is a polynomial function of order $M-1$. We therefore get $\Delta^M_{se_d} g(x)=0$ for all $x\in \bbR^d$ and all $s\in \bbR$. 
 Thus, for all   $x'\in \bbR^{d-1} $
we get from Ciesielski's theorem  that  the function $t\mapsto g(x',t)$ is a polynomial of degree at most $M-1$, i.e.\ we have 
\[g(x',x_d)= \sum_{j=0}^{M-1} a_j(x') x_d^j\]
for all  $x'\in \bbR^{d-1}$ and every $x_d\in \bbR$. 
The coefficient functions can be expressed via divided differences  in the $x_d$-variables (alternatively via derivatives)
and thus it is easily seen that each $a_j$ is $\mathscr{L}^{d-1}$-measurable. 
Since $\Delta_h^M g(x)=0$ for all $(x,h)$ we also have by
\eqref{eq:iterated-differences}
that $\Delta^{M-k}_{(u,0)}\Delta^k_{se_d} g(x)=0$, for all $x\in \bbR^d$, $u\in \bbR^{d-1} $ and  $s\in \bbR$. Letting $s\to 0$ (and using that $x_d\mapsto g(x',x_d)$ is polynomial) this implies that for $k=0,\dotsc,M$, 
\[ 0= \Delta^{M-k}_{(u,0)} \big(\frac{\partial }{\partial x_d})^k  g(x',x_d)= \sum_{j=k}^{M-1} \Delta_u^{M-k} a_j(x') c_{j,k}   x_d^{j-k},
\]
with $c_{j,k} =\prod_{i=1}^k (j-i+1).$
This  in turn implies 
$\Delta_u^{M-k} a_k(x')=0$ for $k=0,\dotsc, M$, and all $u\in \bbR^{d-1}$. Thus, by the induction hypothesis $a_k(x')$ is almost everywhere equal to a polynomial of degree at most $M-k-1$, and we deduce that $g$ and thus $f$ is $\mathscr {L}^d$-a.e.\ equal to a polynomial of degree at most $M-1$.
\end{proof}

\begin{lemma}\label{Lemma::Diff::IntLpr}
    Fix \(\gamma\in \R\), \(p\in (1,\infty)\), \(r\in [1,\infty]\).  Then, if \(K,L\in \N\) are sufficiently large, we have
    \begin{equation*}
        \iint | F(x,h) | \min \{  |h|^K, |h|^{-K} \} ( 1+|x| )^{-L} \dif x \dif h \lesssim \| F \|_{L^{p,r}(\nu_\gamma)},
    \end{equation*}
for all $F\in L^{p,r}(\nu_\gamma)$.
\end{lemma}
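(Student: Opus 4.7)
The plan is to rewrite the left-hand side as an integral against the measure $\nu_\gamma$ and then invoke H\"older's inequality for Lorentz spaces. Setting
\[
W(x,h) \coloneqq \min\{|h|^K, |h|^{-K}\}\,(1+|x|)^{-L}\,|h|^{d-\gamma},
\]
the integral in question equals $\iint |F|\,W\,\dif\nu_\gamma$, and H\"older's inequality for Lorentz spaces on the measure space $(\bbR^d\times(\bbR^d\setminus\{0\}),\nu_\gamma)$ gives
\[
\iint |F|\,W\,\dif\nu_\gamma \lesssim \|F\|_{L^{p,r}(\nu_\gamma)}\,\|W\|_{L^{p',r'}(\nu_\gamma)},
\]
where $p'$ and $r'$ are the conjugate exponents (with the usual conventions $1'=\infty$, $\infty'=1$). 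The task therefore reduces to verifying $\|W\|_{L^{p',r'}(\nu_\gamma)}<\infty$ for $K,L$ sufficiently large.

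Next I would exploit the product structure $W(x,h)=w_1(h)\,w_2(x)$ with
\[
w_1(h)=\min\{|h|^K,|h|^{-K}\}\,|h|^{d-\gamma},\qquad w_2(x)=(1+|x|)^{-L}.
\]
For any fixed $q\in(1,\infty)$ the $L^q(\nu_\gamma)$ norm of $W$ factors as
\[
\|W\|_{L^q(\nu_\gamma)}^q = \Big(\int w_1(h)^q\,\frac{\dif h}{|h|^{d-\gamma}}\Big)\Big(\int w_2(x)^q\,\dif x\Big).
\]
The $x$-integral converges when $Lq>d$, while the $h$-integral equals $\int \min\{|h|^{Kq},|h|^{-Kq}\}\,|h|^{(d-\gamma)(q-1)}\,\dif h$; splitting it into the regions $|h|\le 1$ and $|h|\ge 1$ shows convergence once $K$ exceeds a constant depending on $q,d,\gamma$. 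Hence for any prescribed $q$, by taking $K,L$ sufficiently large we have $W\in L^q(\nu_\gamma)$.

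To promote this to membership in $L^{p',r'}(\nu_\gamma)$, I would fix two exponents $q_0<p'<q_1$ and take $K,L$ large enough that $W\in L^{q_0}(\nu_\gamma)\cap L^{q_1}(\nu_\gamma)$ simultaneously. Chebyshev's inequality then yields
\[
\nu_\gamma(\{W>\lambda\})\le\min\bigl\{\lambda^{-q_0}\|W\|_{L^{q_0}}^{q_0},\;\lambda^{-q_1}\|W\|_{L^{q_1}}^{q_1}\bigr\},
\]
so $\lambda\mapsto\lambda\,\nu_\gamma(\{W>\lambda\})^{1/p'}$ has polynomial decay both as $\lambda\to 0^+$ (from the $q_0$-bound, since $q_0<p'$) and as $\lambda\to\infty$ (from the $q_1$-bound, since $q_1>p'$). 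This decay is more than sufficient to conclude $\|W\|_{L^{p',r'}(\nu_\gamma)}<\infty$ for every $r'\in[1,\infty]$. The only mild bookkeeping, and the sole possible obstacle, consists in tracking the explicit lower bounds on $K$ and $L$ in terms of $p,d,\gamma$; all such constraints are elementary and can be absorbed into a single qualitative statement ``$K,L$ sufficiently large''.
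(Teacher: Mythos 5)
Your proof is correct, and it reorganizes the argument relative to the paper's. The paper first invokes real interpolation (over the $L^{p}(\nu_\gamma)$ scale, adjusting $K$) to reduce to $r=p$, then applies ordinary H\"older with exponent $p'$ and observes that $\min\{|h|^{K_1},|h|^{-K_1}\}(1+|x|)^{-L}\in L^{p'}(\nu_\gamma)$. You instead dualize directly in the Lorentz scale: apply Lorentz--H\"older to reduce to $\|W\|_{L^{p',r'}(\nu_\gamma)}<\infty$, compute $\|W\|_{L^{q}(\nu_\gamma)}$ for nearby $q$ using the product structure of $\nu_\gamma$ and $W$, and upgrade to $L^{p',r'}$ via Chebyshev bounds on the distribution function. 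Both routes are sound and roughly comparable in length; yours has the mild advantage of being fully self-contained (it does not cite real interpolation of the $L^p(\nu_\gamma)$ scale), at the cost of needing the slightly less elementary Lorentz--H\"older inequality and a short Chebyshev calculation. One small remark: you correctly absorb the Jacobian factor $|h|^{d-\gamma}$ into the weight $W$ when passing from $\dif x\,\dif h$ to $\dif\nu_\gamma$; the paper instead absorbs that factor into $\min\{|h|^{K_1},|h|^{-K_1}\}$ by slightly decreasing the exponent (which is what the parenthetical ``possibly, after increasing $K$'' accommodates). Either bookkeeping is fine.
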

\begin{proof}
   By interpolation it  suffices to show this for $r=p$ (possibly, after increasing $K$). 
    The desired bound follows if we can show  \(K_1,L\in \N\) sufficiently large, that
    \begin{equation*}\label{Eqn::Diff::ToShowInDualLpr}
        \min \{  |h|^{K_1}, |h|^{-K_1} \} ( 1+|x| )^{-L}\in L^{p'}(\nu_\gamma),
    \end{equation*}
    where $p'=\frac{p}{p-1}$. This however is elementary.
\end{proof}

Before we define the operator \(\JMap\) from Proposition \ref{Prop::Diff::PropF}, we introduce some auxiliary operators.
Let \(j\in \Z\), \(m\in C_0^{\infty}(\bbR^d\setminus\{0\})\), \(w\in \bbR^{d}\setminus\{0\}\), and \(\eps>0\)
be such that \(\overline{\Ball{w}{\eps}}\subset \{\xi:1/2<|\xi|<2\} \).
For \(\psi\in \cS_\infty(\bbR^d)\), define
\(\Gamma^j_{m,w,\eps}\psi(x,h)\) by
\begin{equation}\label{Eqn::Diff::DefnGamma}
    \big[\Gamma^j_{m,w,\epsilon} \psi  \big]^{\wedge} (\xi, h)\coloneqq 2^{jd} m(-2^{-j}\xi) \widehat{\psi}(\xi) \, \Indicator_{\Ball{2^{-j}w}{2^{-j}\epsilon}}(h),
\end{equation}
where \(\wedge\) denotes the Fourier transform in the \(x\rightarrow \xi\) variable.

\begin{lemma}\label{Lemma::Diff::BoundGamma}
    Let \(\Omega\subset \cS_\infty(\bbR^d)\) be a bounded set.
    Then for all \( K,L\in \N\) and \(\alpha\in \N^{d}\), there exists \(C_{K,L,\alpha,\Omega}\geq 0\), which may depend on the fixed $j,m,w$ and $\epsilon$,
    such that
    \begin{equation*}
        | \partial_x^{\alpha} \Gamma^j_{m,w,\eps} \psi (x,h) |
        \leq C_{K,L, \alpha,\Omega} 2^{-|j|}\min \{ |h|^K, |h|^{-K} \} ( 1+|x| )^{-L} 
    \end{equation*}
    for all \( \psi\in \Omega\).
\end{lemma}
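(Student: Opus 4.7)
The plan is to unravel the definition and reduce the bound to a classical almost-orthogonality estimate for two Schwartz functions with all vanishing moments, one at scale $2^{-j}$ and the other at scale $1$.

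Taking the inverse Fourier transform in $\xi$ of \eqref{Eqn::Diff::DefnGamma}, I would rewrite
\[
\Gamma^j_{m,w,\epsilon}\psi(x, h) = \Indicator_{\Ball{2^{-j}w}{2^{-j}\epsilon}}(h) \cdot (G_j \ast \psi)(x),
\]
where $G_j(y) = c_d\, 2^{2jd}\, H(2^j y)$ and $H \in \cS(\R^d)$ is, up to a constant and reflection, the inverse Fourier transform of $m$. Since $m\in C^\infty_c(\R^d\setminus\{0\})$ vanishes to infinite order at the origin, the moment identity $\int y^\beta H(y)\,\dif y = c_\beta\,\partial^\beta m(0)$ yields that $H$ has all vanishing moments; its Schwartz seminorms depend only on $m$, while bounded sets $\Omega\subset \cS_\infty(\R^d)$ have uniformly bounded Schwartz seminorms. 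Since $\overline{\Ball{w}{\epsilon}} \subset \{1/2 < |\xi| < 2\}$, on the support of the indicator one has $|h|\sim 2^{-j}$, giving $\min\{|h|^K, |h|^{-K}\}\le C_K 2^{-K|j|}$. It therefore suffices to prove that for every $K', L \in \N$ and $\alpha\in \N^d$,
\[
|\partial_x^\alpha(G_j \ast \psi)(x)| \le C_{K',L,\alpha,\Omega}\, 2^{-K'|j|}(1+|x|)^{-L}
\]
uniformly in $j \in \Z$ and $\psi \in \Omega$, which I would apply with $K' = K+1$ to absorb the extra $2^{-|j|}$ factor.

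The reduced estimate is proved by Taylor expansion, after absorbing $\partial_x^\alpha$ into $\psi$ via $\partial_x^\alpha(G_j\ast\psi) = G_j\ast \partial^\alpha\psi$. When $j \ge 0$, I change variables to write $(G_j \ast \psi)(x) = c\, 2^{jd}\int H(u)\psi(x - 2^{-j}u)\,\dif u$, Taylor expand $\psi$ about $x$ to order $N$, and use the vanishing moments of $H$ to annihilate the polynomial part. The remainder carries a factor $|2^{-j}u|^N$, and splitting the $u$-integration into $|u|\le 2^j|x|/2$ (where $|x - \tau 2^{-j}u| \ge |x|/2$, so Schwartz decay of $\partial^N\psi$ furnishes $(1+|x|)^{-L}$) versus its complement (where rapid decay of $H$ in $|u|$ takes over) yields a bound of the form $2^{jd}\cdot 2^{-jN}(1+|x|)^{-L}$, which dominates $2^{-K'j}(1+|x|)^{-L}$ for $N$ sufficiently large. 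The case $j\le 0$ is dual: write $(G_j \ast \psi)(x) = \int G_j(x-z)\psi(z)\,\dif z$, Taylor expand $G_j(x-z)$ about $x$ to order $N$, and use the vanishing moments of $\psi$ to kill the polynomial piece; derivatives of $G_j$ at scale $2^{-j}$ pick up the factor $2^{j(2d+N)} = 2^{-|j|(2d+N)}$.

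The main obstacle is managing the interplay between $|j|$-decay and $|x|$-decay in the $j\le 0$ regime when $|x|$ is large. I would split the $z$-integral into $|z|\le |x|/2$ and $|z| > |x|/2$. On the first region $|x - \tau z| \ge |x|/2$, so Schwartz decay of $H$ yields $(1 + 2^j|x - \tau z|)^{-L} \le (1 + 2^j|x|/2)^{-L}$, and the elementary inequality $(1 + 2^j|x|)^{-L} \le 2^{L|j|}(1+|x|)^{-L}$ (valid for $j\le 0$) converts slow spatial decay into a controlled loss of $|j|$-decay; the surplus $2^{-|j|(2d+N)}$ absorbs this loss for $N$ large enough. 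On $|z| > |x|/2$, Schwartz decay of $\psi$ directly supplies $(1+|x|)^{-L}$. Uniformity in $\psi\in \Omega$ follows because only finitely many Schwartz seminorms of $\psi$ enter the computation, and these are uniformly bounded on bounded subsets of $\cS_\infty(\R^d)$.
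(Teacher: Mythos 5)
Your proof is correct, but it takes a genuinely different route from the paper's. The paper never leaves the Fourier side: after the same reduction (boundedness in $\cS$ of $2^{|j|(K+1)}\Gamma^j_{m,w,\eps}\psi(\cdot,h)$, using $|h|\approx 2^{-j}$ on the support of the indicator so that $\max\{|h|^K,|h|^{-K}\}\approx 2^{K|j|}$), the authors simply observe that the family $\{2^{|j|(K+1)}\,2^{jd}\,m(-2^{-j}\xi)\widehat\psi(\xi):\psi\in\Omega\}$ is bounded in $\cS(\R^d)$ because $m(-2^{-j}\cdot)$ is supported in an annulus of scale $2^j$ and $\widehat\psi$ decays super-polynomially both at $0$ and at $\infty$ (uniformly over the bounded set $\Omega\subset\cS_\infty$), after which the spatial bound is immediate via the continuity of the inverse Fourier transform on $\cS$. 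You instead pass to the spatial side, realize $\Gamma^j_{m,w,\eps}\psi(x,h)=\Indicator_{\Ball{2^{-j}w}{2^{-j}\eps}}(h)\,(G_j*\psi)(x)$ with $G_j$ a rescaled Schwartz kernel with all vanishing moments, and run the standard almost-orthogonality estimate via Taylor expansion and moment cancellation, with the cases $j\ge 0$ and $j\le 0$ treated dually. Both arguments establish the required uniformity in $j$ (which, despite the hedging in the statement, is essential for the absolute convergence in Lemma~\ref{Lemma::Diff::Converge}); the paper's is shorter since it exploits the clean Fourier-side support/decay dichotomy, while yours is more elementary and makes the mechanism (moment cancellation at scale $2^{-j}$) explicit. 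One tiny slip: you wrote $\min\{|h|^K,|h|^{-K}\}\le C_K 2^{-K|j|}$, but the reduction actually needs the reverse inequality $\min\{|h|^K,|h|^{-K}\}\ge c_K 2^{-K|j|}$ on $\supp\Indicator_{\Ball{2^{-j}w}{2^{-j}\eps}}$; since $|h|\approx 2^{-j}$ there, the comparability is two-sided, so the argument goes through unchanged.
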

\begin{proof}
    Equivalently, we wish to show that the 
    set 
    \begin{equation*}
        \Big\{  \max\{ |h|^K, |h|^{-K} \}  2^{|j|}\Gamma^j_{m,w,\eps} \psi(\cdot, h) : h\in \R^{d}\setminus\{0\}, \psi\in \Omega  \Big\}
    \end{equation*} 
    is bounded in $\cS(\bbR^d)$.
    Since \(\Gamma^j_{m,w,\epsilon} \psi(x, h)=0\) unless \(|h|\approx 2^{-j}\), it suffices to show that
    \begin{equation*}
        \big\{  2^{|j|(K+1)}  \Gamma^{j}_{m,w,\epsilon} \psi(\cdot, h) :  \psi\in \Omega, h\in \R^d\setminus \{0\}  \big\}
    \end{equation*}
    is  bounded in $\cS(\bbR^d)$. 
    Taking the Fourier transform, this follows if we show that 
    \begin{equation*}
        \big\{  2^{|j|(K+1)}  2^{jd} m(-2^{-j}\xi) \widehat{\psi}(\xi) : \psi\in \Omega  \big\} 
    \end{equation*} is bounded in $\cS(\R^d)$.
    Using that \(\supp\{ m(2^{-j}\cdot)\}\subset \{|\xi|\approx 2^j\}\), for \(m\in C_0^{\infty}(\bbR^{d}\setminus\{0\})\),
    and \(\Omega\subset \cS_\infty(\bbR^d)\) is a bounded set, this follows, completing the proof.
\end{proof}

For \(b\in \R\), \(p\in (1,\infty)\), \(r\in [1,\infty]\), \(\gamma\in \R\), \(F\in L^{p,r}(\nu_\gamma)\), and \(\psi\in \cS_\infty(\bbR^d)\), set
\begin{subequations}\label{eq:defU}
\begin{align}\label{Eqn::Diff::DefineTj}
    \inn{U^{b,j}_{m,w,\eps }F}{\psi}&\coloneqq 
    \iint |h|^b F(x,h) \, \Gamma^j_{m,w,\eps}\psi(x,h)\dif x \dif h\\ \label{Eqn::Diff::DefineT}
     \inn{U^b_{m,w,\eps }F}{\psi}&\coloneqq 
    \sum_{j\in \Z}  \inn{U^{b,j}_{m,w,\eps }F}{\psi}.
    \end{align}
\end{subequations}
\begin{lemma}\label{Lemma::Diff::Converge}
    For $F\in L^{p,r}(\nu_\ga)$, the sums and integrals in \eqref{eq:defU} converge absolutely and \eqref{Eqn::Diff::DefineT} defines
    \(U^b_{m,w,\eps }F\in \cS_\infty'(\bbR^d)\).
\end{lemma}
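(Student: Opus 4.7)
The plan is to combine the pointwise bound on $\Gamma^j_{m,w,\epsilon}\psi$ from Lemma \ref{Lemma::Diff::BoundGamma} with the integrability estimate of Lemma \ref{Lemma::Diff::IntLpr}, and to exploit the fact that the former carries a decaying factor $2^{-|j|}$ which guarantees summability in $j$. The basic observation is that the definition \eqref{Eqn::Diff::DefineTj} is a pairing of $F$ against a function in $\cS(\bbR^d \times \bbR^d)$ which, after taking into account the weight $|h|^b$, is dominated by the type of envelope controlled by Lemma \ref{Lemma::Diff::IntLpr}.

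More concretely, the first step is the pointwise estimate for fixed $j$. Let $\Omega \subset \cS_\infty(\bbR^d)$ be a bounded set. Lemma \ref{Lemma::Diff::BoundGamma} with $\alpha = 0$ gives, for any $K, L \in \N$ and any $\psi \in \Omega$,
\[
|\Gamma^j_{m,w,\eps}\psi(x,h)| \le C_{K,L,\Omega}\, 2^{-|j|}\min\{|h|^K,|h|^{-K}\}(1+|x|)^{-L}.
\]
Since $|h|^b\min\{|h|^K,|h|^{-K}\} \le \min\{|h|^{K-|b|},|h|^{-(K-|b|)}\}$, choosing $K$ large enough that $K - |b|$ exceeds the threshold in Lemma \ref{Lemma::Diff::IntLpr} and $L$ correspondingly large, that lemma yields
\[
\iint |F(x,h)|\,|h|^b\,|\Gamma^j_{m,w,\eps}\psi(x,h)|\,\dif x\,\dif h \le C_{\Omega}\, 2^{-|j|}\,\|F\|_{L^{p,r}(\nu_\gamma)}.
\]
This establishes absolute convergence of the double integral in \eqref{Eqn::Diff::DefineTj} for every $j \in \Z$ and every $\psi \in \cS_\infty(\bbR^d)$, and it shows that $\psi \mapsto \inn{U^{b,j}_{m,w,\eps}F}{\psi}$ is a continuous linear functional on $\cS_\infty(\bbR^d)$.

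The second step is summability in $j$ and the conclusion. Summing the above inequality over $j \in \Z$ using the geometric factor $2^{-|j|}$ gives
\[
\sum_{j\in\Z}|\inn{U^{b,j}_{m,w,\eps}F}{\psi}| \le C'_{\Omega}\,\|F\|_{L^{p,r}(\nu_\gamma)},
\]
which is the claimed absolute convergence of the series in \eqref{Eqn::Diff::DefineT}, uniformly in $\psi \in \Omega$. Since the map $\psi \mapsto \Gamma^j_{m,w,\eps}\psi$ is linear in $\psi$, the sum defines a linear functional on $\cS_\infty(\bbR^d)$, and the uniform bound over bounded sets $\Omega$ is exactly the continuity required for an element of $\cS_\infty'(\bbR^d)$. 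The only mildly nontrivial point in this plan is verifying that the $|h|^b$ weight can be absorbed into the envelope of Lemma \ref{Lemma::Diff::IntLpr} by enlarging $K$; everything else is a direct combination of the preceding two lemmas.
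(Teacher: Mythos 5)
Your argument is correct. For the absolute convergence, you combine Lemmas \ref{Lemma::Diff::BoundGamma} and \ref{Lemma::Diff::IntLpr} in exactly the way the paper does, including the (correct, and worth stating explicitly as you do) observation that the $|h|^b$ weight can be absorbed into the envelope $\min\{|h|^K,|h|^{-K}\}$ by enlarging $K$.

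The one place where your route differs from the paper's is the final step, deducing that $U^b_{m,w,\eps}F$ lies in $\cS_\infty'(\bbR^d)$ rather than merely in the algebraic dual. You argue that the estimate is uniform over every bounded set $\Omega\subset\cS_\infty(\bbR^d)$, so the functional is bounded, hence continuous; this uses the (standard) fact that a linear functional on a Fr\'echet space is continuous iff it maps bounded sets to bounded sets, i.e.\ that Fr\'echet spaces are bornological. The paper instead takes a convergent sequence $\psi_k\to\psi$ in $\cS_\infty$, notes that $\{\psi_k\}$ is a bounded set, applies Lemma \ref{Lemma::Diff::BoundGamma} to get a uniform dominating envelope, and invokes dominated convergence together with sequential continuity (which suffices on a metrizable space). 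Both arguments are correct; yours is marginally slicker since it avoids dominated convergence, at the price of invoking the bornological property of Fr\'echet spaces, which the paper's sequential argument circumvents.
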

\begin{proof} 
    By Lemmas \ref{Lemma::Diff::BoundGamma} and \ref{Lemma::Diff::IntLpr}, we have for any \(K,L\in \N\) sufficiently large,
    \begin{equation*}
    \begin{split}
         &\sum_{j\in \Z} \iint |h|^b |F(x,h)| | \Gamma^{j}_{m,w,\eps}\psi(x,h) | \dif x \dif h
         \\&\lesssim_{K,L} \sum_{j\in \Z} 2^{-|j|} \iint |F(x,h)|  \min \{ |h|^K, |h|^{-K} \} ( 1+|x| )^{-L} \dif x \dif h
         \\&\lesssim \sum_{j\in \Z} 2^{-|j|} \| F\|_{L^{p,r}(\nu_\gamma)}
         \lesssim \| F\|_{L^{p,r}(\nu_\gamma)}.
    \end{split}
    \end{equation*}
    This shows the absolute convergence and defines \(U^b_{m,w,\eps }F\) in the algebraic dual of
    \(\cS_\infty(\bbR^d)\).

    To see that \(U^b_{m,w,\eps }F\in \cS_\infty'(\bbR^d)\), let \(\psi_k\in \cS_\infty(\bbR^d)\) be such that
    \(\psi_k\rightarrow \psi\) in \(\cS_\infty(\bbR^d)\).
    In particular, \(\{ \psi_k: k\in \N\} \)\ is a bounded set in \(\cS_\infty(\R^d)\) and therefore by 
    Lemma \ref{Lemma::Diff::BoundGamma},
    \begin{equation*}
        |h|^b \mleft| \Gamma^{j}_{m,w,\eps}\psi_k(x,h) \mright| \lesssim 2^{-|j|}\min \mleft\{ |h|^K, |h|^{-K} \mright\} \mleft( 1+|x| \mright)^{-L},
    \end{equation*}
    with implicit constant independent of \(k\).  Combining this with Lemma \ref{Lemma::Diff::IntLpr},
    the dominated convergence theorem shows \(\inn{U^b_{m,w,\eps}F}{\psi_k}\rightarrow \inn{U^b_{m,w,\eps}F}{\psi}\),
    completing the proof.
\end{proof}

\begin{lemma}\label{Lemma::Diff::BoundT}
    For \(b,\gamma\in \R\), \(p\in (1,\infty)\),  \(r\in [1,\infty]\), 
    \[U^b_{m,w,\eps}:L^{p,r}(\nu_\gamma)\rightarrow \dot\cB^{b-\gamma/p}_{p}(\ga,r)\]  is a bounded linear transformation.
\end{lemma}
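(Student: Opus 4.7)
My plan is to unravel the definition of $\|\cdot\|_{\dot\cB^{b-\ga/p}_p(\ga,r)}$, reducing the claim to the estimate
\[
\|P^b U^b_{m,w,\eps}F\|_{L^{p,r}(\mu_\ga)}\lesssim \|F\|_{L^{p,r}(\nu_\ga)},
\]
where $P^b g(x,k)=2^{kb}L_k g(x)$. The key idea is to perform a Fourier localization that collapses the sum in \eqref{Eqn::Diff::DefineT} to $O(1)$ terms after application of $L_k$, and then to reduce the individual terms to operators of the form $V^b_{\tilde m, w', \eps'}$ so that Lemma \ref{lem:V-lemma}, which already drove the analogous estimate for Schwartz functions in \S\ref{sec:norm-equivalence}, can be invoked.

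The first step is a Fourier support observation. Since $m\in C_0^\infty(\R^d\setminus\{0\})$ is supported in an annulus $\{c_1<|\xi|<c_2\}$ and $\varphi$ is supported in $\{3/4<|\xi|<7/4\}$, the definition \eqref{Eqn::Diff::DefnGamma} of $\Gamma^j_{m,w,\eps}$ shows that the $x$-Fourier transform of $U^{b,j}_{m,w,\eps}F$ is supported in $\{|\xi|\approx 2^j\}$. Hence $L_k U^{b,j}_{m,w,\eps}F=0$ unless $|j-k|\le C_0$ for some constant $C_0$ depending only on the supports of $m$ and $\varphi$, and therefore $L_k U^b_{m,w,\eps}F=\sum_{|j-k|\le C_0} L_k U^{b,j}_{m,w,\eps}F$ is a sum of only $O(1)$ nonzero terms.

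For each pair $(j,k)$ with $|j-k|\le C_0$, a direct computation using \eqref{Eqn::Diff::DefineTj}, commuting $L_k$ through the $h$-integration, and taking the adjoint of $m(-2^{-j}D)$ in the $x$-variable, yields an identity of the shape
\[
2^{kb} L_k U^{b,j}_{m,w,\eps}F(x)=2^{(j-k)d}\,V^b_{\tilde\mu_{k-j},\,2^{k-j}w,\,2^{k-j}\eps}F(x,k),
\]
where the rescaled symbol $\tilde\mu_\ell(\xi)=\varphi(\xi)\,m^\sharp(2^\ell\xi)$ arises by rewriting the composed multipliers $\varphi(2^{-k}\xi)$ and a reflection $m^\sharp$ of $m(-2^{-j}\xi)$ in terms of $2^{-k}\xi$. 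For $|\ell|\le C_0$ each $\tilde\mu_\ell$ is smooth and compactly supported with uniformly bounded $L^1\to L^1$-multiplier norm, and the dilated parameters $2^\ell w$ and $2^\ell\eps$ remain in a bounded range. Although these may leave the ranges $[1/2,2]$ and $(0,1/2)$ stipulated in Lemma \ref{lem:V-lemma}, a rereading of its proof shows the conclusion persists for any fixed bounded parameters, with the implicit constant depending on the range. Summing the $O(1)$ relevant values of $\ell=k-j$ and applying Lemma \ref{lem:V-lemma} then delivers the desired estimate.

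The principal technical obstacle lies in justifying the identification in the second step when $F$ is merely a measurable function in $L^{p,r}(\nu_\ga)$. Because $F(\cdot,h)$ need not be integrable in any uniform way, the formula has to be derived from the distributional pairing \eqref{Eqn::Diff::DefineTj} with $\psi\in \cS_\infty$, and one must rigorously justify the Fubini-type interchanges between integration in $h$ and in $\xi$ together with the action of the Fourier multiplier $\varphi(2^{-k}D)m^\sharp(2^{-j}D)$ on $F(\cdot,h)$. The absolute-convergence majorants furnished by Lemmas \ref{Lemma::Diff::IntLpr} and \ref{Lemma::Diff::BoundGamma}, which were the inputs to Lemma \ref{Lemma::Diff::Converge}, supply exactly what is required to carry out these manipulations.
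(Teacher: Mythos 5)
Your proof is correct and follows essentially the same route as the paper: observe that $L_k U^{b,j}_{m,w,\eps}$ vanishes unless $|j-k|$ is bounded, identify the surviving terms with $V$-operators, and invoke Lemma \ref{lem:V-lemma}. The paper's bookkeeping is slightly cleaner — it writes $L_k U^{b,k+n}_{m,w,\eps}F = 2^{-(k+n)b}V^{b,k+n}_{\widetilde m_n,w,\eps}F$ with $\widetilde m_n=\varphi(2^n\cdot)m$ (evaluating the $V$-operator at index $k+n$ rather than $k$, the index shift costing only a harmless factor $2^{\pm n\ga}$ in the $L^{p,r}(\mu_\ga)$-norm), which keeps $w$ and $\eps$ undilated and so avoids the need to argue that Lemma \ref{lem:V-lemma} extends beyond the stated parameter range.
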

\begin{proof} This is  an application of Lemma  \ref{lem:V-lemma}.
From the definitions 
\eqref{Eqn::Diff::DefnGamma} and \eqref{Eqn::Diff::DefineTj} we get 
\begin{align*}
L_k U^{b,j}_{m,w,\eps} F(x) &= 2^{-jb} \int\limits_{|h-2^{-j}w| \leq \eps 2^{-j}} (2^j|h|)^b \varphi(2^{-k}D) m(2^{-j}D) [F(\cdot,h)] (x) \frac{\dif h}{2^{-jd}} 
\end{align*} 
and thus $L_k U^{b,j}_{m,w,\eps}=0$ when $|k-j|\ge 2$.
Then with $V^{b,j}_{m,w,\eps}$
as in \eqref{eq:Vbka-def}, we get for $n=-1,0,1$, 
\Be \label{eq:UversusV}
L_k U^{b, k+n}_{m,w,\eps} F
= 2^{-(k+n) b} V^{b,k+n}_{\widetilde m_n,w,\eps} F, \,\,\text{ with } \widetilde m_n= \varphi (2^{n}\cdot ) m, 
\Ee
and
\[ P^b U^{b}_{m,w,\eps} F(\cdot,k) = 
2^{kb} L_k U^{b}_{m,w,\eps} F = 
\sum_{n=-1}^{1} 2^{-nb}V^{b,k+n}_{\widetilde m_n,w,\eps} F. \] Hence 
\begin{align*}  \| U^b_{m,w,\eps} F\|_{\dot \cB^{b-\ga/p}_p(\ga,r) } &\le \sum_{n=-1}^1 2^{-nb} 
\|V^b_{\widetilde m_n,w,\eps}F (\cdot, \cdot+n)\|_{L^{p,r}(\mu_\ga)}
\\
&\lc_{b,\ga} \sum_{n=-1}^1 \|V^b_{\widetilde m_n,w,\eps}F\|_{L^{p,r}(\mu_\ga)}
\end{align*}
and since by Lemma \ref{lem:V-lemma} we have 
\(\|V^b_{\widetilde m_n,w,\eps}F \|_{L^{p,r}(\mu_\ga)} \lc 
\|F\|_{L^{p,r} (\nu_\gamma)}\) the proof is complete.
\end{proof}

 The following  lemma has  a dual version   of formula \eqref{eq-resolution-of-f} and an extension to tempered functions. 
\begin{lemma}\label{Lemma::Diff::IdentOnsT} Let $\psi\in \cS_\infty(\bbR^d)$. Then 
\begin{equation}\label{Eqn::Diff::SumToGetLPScale}
        \sum_{\kappa=1}^N  \int \Delta_{-h}^M  \Gamma^{j}_{m_{\kappa}, w_{\kappa}, \eps} 
        \psi (x,h)\dif h = L_j \psi(x).
    \end{equation}
    Moreover, for \(f\in \cT\) and \(\psi\in \cS_\infty(\bbR^d)\),
    \begin{equation}\label{eq:identity-expansion}
        \sum_{\kappa=1}^N \sum_{j\in \Z}  \iint  \Delta_{h}^M f (x) \, \Gamma^{j}_{m_{\kappa}, w_{\kappa},\eps} \psi(x,h)\,\dif h \dif x = \int f(x) \psi(x)\: \dif x.
    \end{equation}
\end{lemma}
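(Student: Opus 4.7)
I would treat \eqref{Eqn::Diff::SumToGetLPScale} first as a pure Fourier-side computation and then derive \eqref{eq:identity-expansion} from it by duality.

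\textbf{Step 1 (The identity \eqref{Eqn::Diff::SumToGetLPScale}).} Since $\psi\in\cS_\infty$, all functions in sight are Schwartz with vanishing moments of all orders, so convergence of the $h$-integral and taking a Fourier transform in $x$ are unproblematic. Using $\widehat{\Delta_{-h}^M g}(\xi)=(e^{-i\inn{\xi}{h}}-1)^M\widehat g(\xi)$ and the definition \eqref{Eqn::Diff::DefnGamma}, the Fourier transform in $x$ of the left-hand side of \eqref{Eqn::Diff::SumToGetLPScale} equals
\begin{equation*}
2^{jd}\,\widehat\psi(\xi)\sum_{\kappa=1}^{N} m_\kappa(-2^{-j}\xi)\int_{|h-2^{-j}w_\kappa|\le 2^{-j}\eps}(e^{-i\inn{\xi}{h}}-1)^{M}\,\dif h.
\end{equation*}
I would then apply the partition-of-unity formula \eqref{eq:varphi-expression} with $\xi$ replaced by $-2^{-j}\xi$, substitute $h\mapsto 2^{j}h$ in its integral, and invoke the radiality (hence evenness) of $\varphi$ to identify the above with $\varphi(2^{-j}\xi)\widehat\psi(\xi)=\widehat{L_j\psi}(\xi)$. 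Fourier inversion completes Step 1.

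\textbf{Step 2 (Discrete integration by parts).} For each fixed $h$, $\Delta_h^{M}$ is a finite linear combination of translations, so for any $g\in\cT$ and $\eta\in\cS(\bbR^d)$,
\begin{equation*}
\int_{\bbR^d} (\Delta_h^{M} g)(x)\,\eta(x)\,\dif x = \int_{\bbR^d} g(x)\,(\Delta_{-h}^{M}\eta)(x)\,\dif x;
\end{equation*}
this is a classical translation-by-translation identity, valid for $f\in\cT$ and $\eta(x)=\Gamma^{j}_{m_\kappa,w_\kappa,\eps}\psi(x,h)$ (which lies in $\cS$ in $x$ by Lemma~\ref{Lemma::Diff::BoundGamma}). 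Applying this to $g=f$ and $\eta=\Gamma^{j}_{m_\kappa,w_\kappa,\eps}\psi(\cdot,h)$ turns the left-hand side of \eqref{eq:identity-expansion} into
\begin{equation*}
\sum_{\kappa=1}^{N}\sum_{j\in\Z}\iint f(x)\,\Delta_{-h}^{M}\Gamma^{j}_{m_\kappa,w_\kappa,\eps}\psi(x,h)\,\dif h\,\dif x.
\end{equation*}

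\textbf{Step 3 (Interchanges and conclusion).} By Lemma~\ref{Lemma::Diff::BoundGamma} (with $\Omega=\{\psi\}$), for every $K,L\in\N$,
\begin{equation*}
|\Delta_{-h}^{M}\Gamma^{j}_{m_\kappa,w_\kappa,\eps}\psi(x,h)|\lesssim 2^{-|j|}\min\{|h|^{K},|h|^{-K}\}(1+|x|)^{-L},
\end{equation*}
and combining this with the tempered growth $\int|f(x)|(1+|x|)^{-N}\dif x<\infty$ from \eqref{eq:tempered}, choosing $L>N$, one sees that the triple sum/integral is absolutely convergent. This legitimizes Fubini's theorem and the interchange of $\sum_j$ and $\int$ against $f$. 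Pulling the sum over $\kappa$ and the $h$-integral inside the $x$-integral and applying Step 1, the expression becomes $\sum_{j\in\Z}\int f(x)\,L_j\psi(x)\,\dif x$. Because $\psi\in\cS_\infty$, $\sum_{j\in\Z}L_j\psi=\psi$ converges in $\cS(\bbR^d)$, so $\sum_j L_j\psi$ converges in the tempered-function pairing with $f$ to give $\int f\psi\,\dif x$, proving \eqref{eq:identity-expansion}.

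The main obstacle is the bookkeeping in Step 3: one must verify absolute convergence uniformly in all four summations/integrations before moving them around. The decay in $\min\{|h|^{K},|h|^{-K}\}$ (handling the cases $|h|\approx 2^{-j}$ large or small) together with the factor $2^{-|j|}$ from Lemma~\ref{Lemma::Diff::BoundGamma} is precisely what makes this work, and it is also what allows the interchange of the $j$-sum with the $f$-pairing to avoid appealing to $\cS_\infty'$-convergence of $\sum_j L_j\psi$, which would be problematic since $f$ is only a tempered function, not an element of $\cS_\infty'$ in a canonical way.
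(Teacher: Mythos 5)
Your proposal is correct and follows essentially the same route as the paper: a Fourier-side verification of \eqref{Eqn::Diff::SumToGetLPScale} using the partition-of-unity identity \eqref{eq:decomp-of-phi} (after rescaling and using radiality of $\varphi$), followed by a discrete integration by parts to transfer $\Delta_h^M$ off $f$, absolute-convergence bookkeeping from Lemma~\ref{Lemma::Diff::BoundGamma}, and the $\cS$-convergence of $\sum_j L_j\psi=\psi$. The only cosmetic difference is that you spell out the convergence estimates for the $j$-sum/$h$-integral/$x$-integral explicitly, whereas the paper dispatches them with a reference to Lemmas~\ref{Lemma::Diff::IntLpr} and~\ref{Lemma::Diff::BoundGamma}.
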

\begin{proof} We first check  \eqref{Eqn::Diff::SumToGetLPScale},
which, after taking the Fourier transform,  is equivalent with 
\begin{equation}\label{Eqn::Diff::SumToGetLPScale::ToShow}
        \sum_{\kappa=1}^N  \int ( e^{i \inn{\xi}{-h}}-1) ^M \mleft[ \Gamma^{j}_{m_{\kappa}, w_{\kappa},\eps} \psi \mright]^{\wedge}(\xi,h)\: \dif h = \varphi(2^{-j} \xi)\widehat{\psi}(\xi).
    \end{equation}
    Using \eqref{Eqn::Diff::DefnGamma} and 
    \eqref{eq:decomp-of-phi} we have
    \begin{equation*}
    \begin{split}
         &\sum_{\kappa=1}^N \int \big( e^{i \inn{\xi}{-h}}-1 \big)^M \mleft[ \Gamma^j_{m_{\kappa}, w_{\kappa}, \eps}\psi \mright]^{\wedge}(\xi,h)\: \dif h
         \\&= \sum_{\kappa=1}^N 2^{jd} \int_{|h-2^{-j}w_{\kappa}|<2^{-j}\eps} ( e^{i \inn{-\xi}{h}}-1 )^M m_{\kappa}(-2^{-j}\xi) \widehat{\psi}(\xi)\dif h
         \\&=\varphi(-2^{-j} \xi) \widehat{\psi}(\xi) =\varphi(2^{-j} \xi)\widehat \psi(\xi),
    \end{split}
    \end{equation*}
    here we used that $\varphi$ is radial. This establishes  \eqref{Eqn::Diff::SumToGetLPScale::ToShow} and thus 
\eqref{Eqn::Diff::SumToGetLPScale}.

We now prove \eqref{eq:identity-expansion}. In the argument that follows  all integrals and sums converge absolutely by Lemmas \ref{Lemma::Diff::IntLpr} and  \ref{Lemma::Diff::BoundGamma}.
    Using \eqref{Eqn::Diff::SumToGetLPScale} we have 
    \begin{equation*}
    \begin{split}
         &\sum_{\kappa=1}^N \sum_{j\in \Z}  \iint  \Delta_{h}^M f (x) \, \Gamma^{j}_{m_{\kappa}, w_{\kappa},\epsilon} \psi (x,h)\dif h \dif x \\&=\sum_{\kappa=1}^N \sum_{j\in \Z}  \int f(x) \int \Delta_{-h}^M \Gamma^j_{m_{\kappa}, w_{\kappa}, \epsilon}\psi (x,h)\dif h \dif x
         \\&= \sum_{j\in \Z} \int f(x) \, L_j \psi (x) \dif x
         =\int f(x)\psi(x)\dif x,
    \end{split}
    \end{equation*}
    where the final equality uses $f \in \cT$ and that \(\sum_{j\in \Z} L_j \psi =\psi\), with convergence in \(\cS_\infty(\bbR^d)\), since \(\psi\in \cS_\infty(\bbR^d)\).
\end{proof}

We are prepared to define \(\JMap\). For $f\in \cM$ with $\pi_M f \in \fB_{M,s,p} (\ga,r)$ we set 
\begin{subequations}
\begin{align}\label{eq:JM-dualdef} 
\inn{\mathscr J_M(\pi_Mf) }{\psi} &\coloneqq \sum_{\ka=1}^N \sum_{j\in \bbZ} \iint \Delta_h^M f(x) \Gamma^j_{m_\ka,w_\ka,\eps} \psi(x,h) \dif x\dif h
\\ \label{Eqn::Diff::JDefn}
&= \sum_{\ka=1}^N \inn{U^b_{m_\ka,w_\ka,\eps} F_b}{\psi} \text{ with } F_b(x,h)=\frac{\Delta^M_h f(x)}{|h|^b}
\end{align}
\end{subequations}
where by Lemma \ref{Lemma::Diff::Converge} the sums and integrals in \eqref{eq:JM-dualdef} 
converge absolutely. 
Note that the definition of $\mathscr J_M$ depends on $M$, but not on  $s,\ga,p, r$, and that \eqref{Eqn::Diff::JDefn} holds for all $b\in \bbR$. We shall later use this formula with $b=s+\ga/p$. 
When \(f\in \cT\),  Lemma 
\ref{Lemma::Diff::IdentOnsT} shows that 
$\inn{\mathscr J_M(\pi_Mf) }{\psi}$ is the standard pairing of $f\in \cT$ with a Schwartz function in $\cS_\infty$, i.e.
\begin{equation} \label{eq:Tid} \inn{\mathscr J_M(\pi_Mf) }{\psi}
= \int f(x)\psi(x) \dif x, \quad \forall f \in \cT.\end{equation}

We need to show  that 
 \(\JMap\) is injective on \(\fB_{M,s,p} (\ga,r)\). For this, we will need the following auxiliary lemma.
\begin{lemma}\label{zeropol}
Let $p\in (1,\infty)$, $r\in [1,\infty]$, and $\gamma\in \bbR$.
Suppose that $F\in L^{p,r}(\nu_\ga)$ and $\eta\in C^\infty_c(\bbR^d\setminus \{0\})$ are such that
\[x\mapsto Q(x)\coloneqq\int F(x,h) \eta(h) \dif h\]
is almost everywhere equal to a polynomial. 
Then $Q(x)=0$ almost everywhere.
\end{lemma}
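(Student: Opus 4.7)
The plan is to show that $Q$ itself must lie in $L^{p,r}(\R^d, dx)$, which will force $Q = 0$ a.e.\ since no nonzero polynomial can belong to a Lorentz space $L^{p,r}(\R^d)$ with $1 < p < \infty$.

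The first observation is that $\eta \in C_c^\infty(\R^d \setminus \{0\})$ is supported in some compact set $K \subseteq \{h : a \leq |h| \leq b\}$ with $0 < a < b$. On $K$ the density $|h|^{-d+\gamma}$ is bounded above and below by positive constants, so on $\R^d \times K$ the measure $\nu_\gamma$ is equivalent to Lebesgue measure; in particular $L^{p,r}(\R^d \times K, \nu_\gamma)$ coincides with $L^{p,r}(\R^d \times K, dx\, dh)$ up to equivalent (quasi-)norms. Thus I can regard $T_\eta F(x) \coloneqq \int F(x,h)\eta(h)\,dh$ as a linear operator defined on the ordinary Lorentz space over $\R^d \times K$.

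The second step is to verify that $T_\eta$ maps $L^{p,r}(\R^d \times K, dx\, dh)$ boundedly into $L^{p,r}(\R^d, dx)$. I would first check the simpler $L^q$ mapping property for every $q \in [1, \infty)$: Jensen's inequality applied with the probability measure $\bbone_K\, dh/\mathscr L^d(K)$ gives a pointwise bound $|T_\eta F(x)|^q \lesssim_{K, \eta} \int_K |F(x,h)|^q \, dh$, and integrating in $x$ yields $\|T_\eta F\|_{L^q(\R^d)} \lesssim \|F\|_{L^q(\R^d \times K)}$. Real interpolation (Lions--Peetre, with $(L^{q_0},L^{q_1})_{\theta,r} = L^{p,r}$) between two such estimates at exponents $q_0 < p < q_1$ then produces the bound $\|T_\eta F\|_{L^{p,r}(\R^d)} \lesssim \|F\|_{L^{p,r}(\R^d \times K)}$ for all $1 < p < \infty$ and $1 \leq r \leq \infty$. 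In particular $Q = T_\eta F \in L^{p,r}(\R^d)$.

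Finally I would verify that no nonzero polynomial can lie in $L^{p,r}(\R^d)$ when $1 < p < \infty$. Using the continuous embedding $L^{p,r} \hookrightarrow L^{p,\infty}$, this reduces to showing $\mathscr L^d(\{|P| > \lambda\}) = +\infty$ for some $\lambda > 0$. If $P$ is a nonzero constant $c$, then $\{|P| > \lambda\} = \R^d$ for every $\lambda < |c|$. If $P$ is non-constant, pick an affine line along which $P$ is a non-constant univariate polynomial; then $|P|$ tends to $+\infty$ in both directions along that line, so by continuity $\{|P| > \lambda\}$ contains a tubular neighborhood of a pair of half-lines and hence has infinite Lebesgue measure for every $\lambda > 0$. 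Combined with the a.e.\ equality of $Q$ with a polynomial, this forces $Q = 0$ a.e. The main technical obstacle is the Lorentz boundedness of $T_\eta$; the interpolation route just described is the cleanest, though a direct argument via a Minkowski-type inequality for Lorentz norms would also be feasible.
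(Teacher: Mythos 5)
Your proof is correct, but it follows a genuinely different route from the paper's.  You show that $Q = T_\eta F$ actually belongs to $L^{p,r}(\R^d, \dif x)$: on the strip $\R^d\times K$ (with $K = \supp\eta$) the measure $\nu_\gamma$ is comparable to Lebesgue, you get $L^q\to L^q$ bounds for the averaging operator for every finite $q$, and real interpolation (Lions--Peetre) delivers the $L^{p,r}$ bound.  Then you conclude because no nonzero polynomial lies in a Lorentz space $L^{p,r}(\R^d)$ for $1<p<\infty$.  The paper argues more softly: it decomposes $L^{p,r}(\nu_\gamma) \subset L^{p_1}(\nu_\gamma) + L^{p_2}(\nu_\gamma)$ and shows that the translated averages $\int Q(x)\,\phi(x-a)\dif x$ tend to $0$ as $|a|\to\infty$, whereas for a nonzero polynomial these averages are constant (nonzero) or tend to infinity.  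Your version proves the stronger structural statement $Q\in L^{p,r}$ at the cost of invoking the Lorentz interpolation machinery; the paper's version needs only Hölder on a sum decomposition and is slightly more elementary.  Both are complete and correct.
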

\begin{proof} Let $\phi \in C^\infty_c(\bbR^d)$ be nonnegative and  $\int\phi=1$. We claim that, for all $G\in L^{p,r}(\nu_\ga)$,
\Be \label{eq:atoinfty}
\lim_{|a|\to\infty} \iint G(x,h) \eta(h) \phi(x-a) \dif h\dif x  =0
\Ee  
Observe that \eqref{eq:atoinfty} follows by standard estimates whenever $G\in L^q(\nu_\gamma)$ for any $q\in (1,\infty)$. It then also holds for  $G\in L^{p,r}(\nu_\gamma)$ since 
$L^{p,r}(\nu_\ga) \subset L^{p_1}(\nu_\ga)+L^{p_2}(\nu_\ga)$, with $p_1<p<p_2$.

By \eqref{eq:atoinfty} we have
\begin{align*}
    0&= \varlimsup_{|a|\to \infty}\Big|
    \iint F(x,h) \eta(h) \phi(x-a) \dif h\dif x \Big|
    = \varlimsup_{|a|\to \infty}\Big|
    \int Q(x) \phi(x-a) \dif x \Big|
\end{align*}
and the last expression is equal to $|c|$ if $Q(x)=c$ almost everywhere, 
and equal to $\infty$ if $Q$ is almost everywhere   equal to a nonconstant polynomial. We conclude that $Q(x)=0 $ almost everywhere.
\end{proof}
\begin{lemma}\label{Lemma::Diff::JInjective}
    For \(M\in \N\), \(s,\gamma\in \R\), \(p\in (1,\infty)\), and \(r\in [1,\infty]\),
    \(\JMap\) is injective on \(\fB_{M,s,p} (\ga,r)\).
\end{lemma}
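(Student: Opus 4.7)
Suppose \(\pi_M f \in \fB_{M,s,p}(\gamma,r)\) satisfies \(\JMap(\pi_M f) = 0\) in \(\cS_\infty'(\R^d)\), and set \(b := s + \gamma/p\), so \(F_b := \Delta_h^M f/|h|^b \in L^{p,r}(\nu_\gamma)\). The conclusion \(\pi_M f = 0\) is, by Lemma \ref{Lemma::Diff::AlmostPoly}, equivalent to \(\Delta_h^M f(x) = 0\) for \(\mathscr{L}^{2d}\)-a.e.\ \((x,h) \in \R^{2d}\), i.e.\ to \(F_b = 0\) a.e. I would derive this vanishing via Lemma \ref{zeropol}, which is positioned immediately before and appears tailor-made for this step: for each \(\eta \in C_c^\infty(\R^d \setminus \{0\})\) set
\[
Q_\eta(x) := \int_{\R^d} F_b(x,h)\eta(h)\dif h.
\]
The compactness of \(\supp\eta\) in \(\R^d\setminus\{0\}\), combined with Lemma \ref{Lemma::Diff::IntLpr}, shows that \(Q_\eta \in \cT \subseteq \cS'\). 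Once one verifies that \(Q_\eta\) is a.e.\ equal to a polynomial, Lemma \ref{zeropol} forces \(Q_\eta = 0\) a.e., and letting \(\eta\) range over a countable dense family in \(C_c^\infty(\R^d\setminus\{0\})\) yields \(F_b = 0\) a.e., as required.

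Since a tempered distribution annihilating \(\cS_\infty\) has Fourier transform supported at the origin and so coincides with a polynomial, showing that \(Q_\eta\) is a.e.\ a polynomial reduces to verifying
\[
\iint F_b(x,h)\eta(h)\phi(x)\dif x\dif h = 0 \qquad \forall\,\phi \in \cS_\infty(\R^d).
\]
The hypothesis \(\JMap(\pi_M f) = 0\) gives, via \eqref{eq:JM-dualdef}, the identity
\[
\sum_{\ka,j}\iint F_b(x,h)\,|h|^b\,\Gamma^j_{m_\ka,w_\ka,\eps}\Psi(x,h)\dif h\dif x = 0 \qquad \forall\,\Psi \in \cS_\infty,
\]
and the task becomes representing the tensor \(\eta(h)\phi(x)\) as a (finite, or suitably convergent) linear combination of tensors of the form \(|h|^b\Gamma^j_{m_\ka,w_\ka,\eps}\Psi(x,h)\) with \(\Psi \in \cS_\infty\), at least on \(\supp F_b\).

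Because \(\eta\) is supported in some fixed annulus \(\{|h| \in [2^{-J_1},2^{-J_0}]\}\), only finitely many \((\ka,j)\) pairs are relevant. Using an angular partition of unity matched to \(\{\chi_\ka\}\) on each relevant dyadic annulus, I would decompose \(\eta(h) = \sum_{(\ka,j) \in S}\eta_{\ka,j}(h)\) with \(\supp \eta_{\ka,j} \subseteq B(2^{-j}w_\ka,\eps 2^{-j})\), exactly matching the indicator function in the definition \eqref{Eqn::Diff::DefnGamma} of \(\Gamma^j_{m_\ka,w_\ka,\eps}\). For each \((\ka,j) \in S\) the remaining task is the inversion problem of producing \(\Psi_{\ka,j} \in \cS_\infty\) for which \(2^{jd}m_\ka(-2^{-j}D)\Psi_{\ka,j}(x)\) equals (an appropriate multiple of) \(\phi(x)\). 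Since \(m_\ka\) is supported in the small ball determined by \(\chi_\ka\) and vanishes outside, this inversion is not global; instead one Littlewood--Paley decomposes \(\phi = \sum_k L_k\phi\) and inverts \(m_\ka(-2^{-j}D)\) at each frequency scale \(k\) with \(|k-j| \leq 1\), exploiting that \(m_\ka\) is bounded away from zero on \(\supp \chi_\ka\). Summing the resulting identities \(\inn{\JMap(\pi_M f)}{\Psi_{\ka,j}} = 0\) and invoking absolute convergence via Lemmas \ref{Lemma::Diff::IntLpr} and \ref{Lemma::Diff::BoundGamma} recovers \(\iint F_b\eta\phi \dif x\dif h = 0\). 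The principal technical obstacle is this multiplier inversion while keeping every \(\Psi_{\ka,j}\) in \(\cS_\infty(\R^d)\) rather than merely in \(\cS(\R^d)\); this can be arranged via suitable smooth frequency cut-offs annihilating all moments, completing the argument.
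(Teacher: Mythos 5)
Your plan correctly identifies the endgame: show that $Q_\eta(x) := \int F_b(x,h)\eta(h)\dif h$ pairs to zero against every $\phi\in\cS_\infty$ (hence is a.e.\ a polynomial), invoke Lemma \ref{zeropol} to get $Q_\eta = 0$, deduce $\Delta_h^M f = 0$ a.e., and conclude via Lemma \ref{Lemma::Diff::AlmostPoly}. That is exactly the architecture of the paper's argument. The gap is in how you try to establish $\iint F_b\,\eta\,\phi\,\dif x\dif h = 0$. You propose to realize $\eta(h)\phi(x)$ as $\sum_{\kappa,j}|h|^b\,\Gamma^j_{m_\kappa,w_\kappa,\eps}\Psi_{\kappa,j}(x,h)$, but this representation is impossible. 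By \eqref{Eqn::Diff::DefnGamma}, the $h$-dependence of $\Gamma^j_{m,w,\eps}\Psi(x,h)$ is the sharp indicator $\bbone_{B(2^{-j}w,\,2^{-j}\eps)}(h)$, independent of $\Psi$. So any linear combination over $(\kappa,j)$ has $h$-support inside $\bigcup_{j,\kappa}B(2^{-j}w_\kappa, 2^{-j}\eps)$; with $\eps < (10M)^{-1}$ fixed and only $N$ values of $\kappa$, this is a sparse union of small balls that does \emph{not} cover the annulus containing $\supp\eta$, and even on those balls the $h$-dependence is locally constant, so it cannot match a smooth $\eta_{\kappa,j}$. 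No multiplier inversion on the $x$-side can repair this, because the obstruction lives in the $h$-variable. The reproducing identity \eqref{eq:decomp-of-phi} reconstructs $\varphi(\xi)$ from averages over a few small $h$-balls; it is tailored for the forward bound, not for putting an arbitrary $\eta\otimes\phi$ in the range of $\sum_{\kappa,j}\Gamma^j_{m_\kappa,w_\kappa,\eps}$.

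The paper sidesteps the covering/inversion problem entirely through the choice of test function: pair $\JMap\pi_M f = 0$ with $\int\eta(h')\Delta_{-h'}^M\psi\,\dif h'$, which belongs to $\cS_\infty$ whenever $\psi\in\cS_\infty$ and $\eta\in C_c^\infty(\R^d\setminus\{0\})$. Using that $\Gamma^j$ is translation invariant in $x$ and the adjoint relation $\int\Delta_h^M u\cdot v = \int u\cdot\Delta_{-h}^M v$ (applied twice, together with commutativity of differences), the $h'$-difference passes from $\psi$ onto $f$ while the $h$-difference passes from $f$ back onto $\Gamma^j\psi$. The resolution identity \eqref{Eqn::Diff::SumToGetLPScale} then collapses $\sum_\kappa\int\Delta_{-h}^M\Gamma^j_{m_\kappa,w_\kappa,\eps}\psi\,\dif h$ to $L_j\psi$, and summing over $j$ gives exactly $\iint\bigl(\int\Delta_{h'}^M f(x)\,\eta(h')\dif h'\bigr)\psi(x)\dif x = 0$ for all $\psi\in\cS_\infty$. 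That is the identity you want, obtained with no attempt to write $\eta\otimes\phi$ in the range of the $\Gamma^j$'s.
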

\begin{proof}
    Suppose $f \in \cM$ is such that \(\pi_Mf\in \fB_{M,s,p} (\ga,r)\) and \(\JMap\pi_M f=0\)
    as an element of \(\cS_\infty'(\bbR^d)\).  We wish to show \(f(x)=P(x)\), almost everywhere,
    for some polynomial \(P(x)\) of degree \(\leq M-1\).
    In this proof, all sums and integrals converge absolutely by Lemmas
    \ref{Lemma::Diff::IntLpr} and \ref{Lemma::Diff::BoundGamma}.

    Take \(\psi\in \cS_\infty(\bbR^d)\) and \(\eta\in C_c^\infty(\bbR^d\setminus\{0\})\).
    Then,
    \begin{equation*}
        \int \eta(h') \Delta_{h'}^M \psi(x)\dif h' \in \cS_\infty(\bbR^d).
    \end{equation*}
    Thus, we have, using \eqref{eq:JM-dualdef} and the definition  of the translation invariant operator  \(\Gamma^{j}_{m_{\kappa}, w_{\kappa}, \epsilon}\)  (see \eqref{Eqn::Diff::DefnGamma}),
    \begin{equation*}
    \begin{split}
         &0=\Big \langle\JMap\pi_M f, \int \eta(h') \, \Delta_{-h'}^M \psi \dif h'\Big \rangle
         \\&=\sum_{\kappa=1}^N \sum_{j\in \Z} \iiint  \Delta_{h}^M f (x) \, \eta(h') \, \Delta_{-h'}^M \Gamma^j_{m_{\kappa},w_{\kappa},\eps} \psi (x,h)\dif x \dif h \dif h'
         \\&=\sum_{j\in \Z} \iint  \Delta_{h'}^M f (x) \eta(h') \sum_{\kappa=1}^N \int   \Delta_{-h}^M \Gamma^{j}_{m_{\kappa}, w_{\kappa}, \eps} \psi (x,h)\dif h\dif x\dif h'
         \\&=\sum_{j\in \Z}\iint \Delta_{h'}^M f (x) \eta(h') \dif h' \, L_j\psi (x) \dif x,
    \end{split}
    \end{equation*}
    where the last equality uses \eqref{Eqn::Diff::SumToGetLPScale}.
    It follows from Lemma \ref{Lemma::Diff::IntLpr} and the fact that \(\eta\in C_c^\infty(\bbR^d\setminus\{0\})\)
    that \(\int \Delta_{h'}^M f (\cdot)  \eta(h') \dif h' \in \cT\).
    Since \(\psi\in \cS_\infty(\bbR^d)\), we have \(\sum_{j\in \Z} L_j \psi =\psi\) with convergence in \(\cS_\infty(\bbR^d)\).
    Thus,
    \begin{equation}\label{Eqn::Diff::First0Int}
        \begin{split}
            &0=\sum_{j\in \Z}\iint  \Delta_{h'}^M f (x) \eta(h') \dif h'  L_j\psi (x) \dif x
            \\&=\iint  \Delta_{h'}^M f( x) \,\eta(h') \dif h' \psi(x) \dif x, 
        \end{split}
    \end{equation}
for arbitrary \(\psi\in \cS_\infty(\bbR^d)\) and  we can  conclude that \[\int  \Delta_{h'}^M f (x)\eta(h')\dif h'=Q(x), \text{ a.e. } 
    \] 
    for some polynomial \(Q\).  By Lemma \ref{zeropol} it follows that $Q=0$, hence  \[\int  \Delta_{h'}^M f (x)\eta(h')\dif h'=0, \text{  a.e. }\]
    Since \(\eta\in C_0^\infty(\bbR^{d}\setminus\{0\})\) was arbitrary, this implies
    \(\Delta_{h}^M f (x) =0\) for almost every \((x,h)\in \bbR^{2d}\).
    Lemma \ref{Lemma::Diff::AlmostPoly} shows \(f(x)=P(x)\), almost everywhere, for some polynomial \(P(x)\)
    of degree \(\leq M-1\), completing the proof.
\end{proof}

\begin{proof}[Proof of Proposition \ref{Prop::Diff::PropF}]
    It follows immediately from the definitions that
    \begin{equation*}\begin{cases}
        &\pi_Mf\mapsto \Big( (x,h)\mapsto \frac{\Delta_{h}^M f(x)}{|h|^{s+\gamma/p}}\Big) 
        \\
        &\fB_{M,s,p} (\ga,r)\rightarrow L^{p,r}(\nu_\gamma)
        \end{cases}
    \end{equation*}
    is  bounded. 
    Lemma \ref{Lemma::Diff::BoundT} shows that
    \(U_{m_{\kappa}, w_{\kappa}, \eps}^{ s+\gamma/p} : L^{p,r}(\nu_\gamma)\rightarrow \dot\cB^s_p(\ga,r)\)
    is bounded.  Composing these maps and using \eqref{Eqn::Diff::JDefn} shows that 
    \[\JMap:\fB_{M,s,p} (\ga,r)\rightarrow \dot\cB^s_p(\ga,r) \] is bounded.
    \(\JMap\) is injective by  Lemma \ref{Lemma::Diff::JInjective}.
    Finally, \eqref{Eqn::Diff::RestrictionEq} follows from \eqref{eq:Tid}. 
\end{proof}

\begin{proof}[Proof of Theorem \ref{Thm::Diff::MainDiffThm}, conclusion]
    By Proposition \ref{Prop::Diff::PropG}, \[\EMap(\dot\cB^s_p(\ga,r))\subseteq \cT_M\cap \fB_{M,s,p} (\ga,r),\]
    and so by \eqref{Eqn::Diff::RestrictionEq}, \(\JMap\big|_{\EMap(\dot\cB^s_p(\ga,r))} = \iota_M\big|_{\EMap(\dot\cB^s_p(\ga,r))}\).
    By Proposition \ref{Prop::Diff::PropG}, \(\iota_M\) is a left inverse to \(\EMap\),
    and we conclude \(\JMap\EMap \) is the identity map on  \(\dot\cB^s_p(\ga,r)\).
    In particular, \(\JMap\big|_{\cT_M\cap \fB_{M,s,p} (\ga,r)}:\cT_M\cap \fB_{M,s,p} (\ga,r) \rightarrow  \dot\cB^s_p(\ga,r)\)
    is surjective.
    Proposition \ref{Prop::Diff::PropF} shows \(\JMap: \fB_{M,s,p}(\ga, r)\rightarrow \dot\cB^s_p(\ga,r)\) is injective.
    We conclude \Be \label{eq:locally-integrable}\cT_M\cap \fB_{M,s,p} (\ga,r)=\fB_{M,s,p}(\ga,r),\Ee establishing part  \ref{Item::Diff::IncTM} of the theorem,
    and moreover that 
    \(\JMap:\fB_{M,s,p}(\ga, r)\rightarrow \dot\cB^s_p(\ga,r)\) is bijective with
    two-sided inverse \(\EMap\).
From \eqref{eq:locally-integrable}
    and \eqref{Eqn::Diff::RestrictionEq}
    we see that  \[\JMap:\fB_{M,s,p}(\ga, r)\rightarrow \dot\cB^s_p(\ga,r)\] agrees with \(\iota_M\) on all of \(\fB_{M,s,p} (\ga,r)\).
    Thus, \[ \iota_M\big|_{\fB_{M,s,p} (\ga,r)} : \fB_{M,s,p} (\ga,r) \rightarrow \dot\cB^s_p(\ga,r)\]
    is a bounded bijective map with bounded inverse \(\EMap\).  This establishes parts \ref{Item::Diff::InIota} and \ref{Item::Diff::Bijection} of the theorem,
    completing the proof.
\end{proof}

\section{Embeddings} \label{sec:embeddings}
The proof of the embeddings in Theorem \ref{thm:embeddings} is reduced to inequalities for the operator $T_a$
defined  on functions $F: \bbR^d\times \bbZ\to \bbC$  by
\Be\label{Ta} T_a  F(x,j) = 2^{ja} F_j(x),\Ee
with  the parameters  $a=\pm\ga/p$.

\begin{lemma} \label{lem:cB-B-rev-emb}
The following hold for all  $\ga\in \bbR$, $1<p<\infty$.

(i) For $p\le r\le \infty  $, 
\[ \|T_{-\ga/p}  G\|_{\ell^r(L^{p,r})} \lc \|G\|_{L^{p,r} (\mu_\ga)}.
\]

(ii) For $1\le r\le p$
\[
\|T_{\ga/p}  F\|_{L^{p,r} (\mu_\ga)}\lc \|F\|_{\ell^r(L^{p,r})}.\]
\end{lemma}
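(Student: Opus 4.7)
The plan is to reduce both statements to the elementary comparison between $\sum_j a_j^{\alpha}$ and $(\sum_j a_j)^{\alpha}$ for non-negative sequences, applied with $\alpha=r/p$. The starting observation is that for any measurable $H\colon \bbR^d\times\bbZ\to\bbC$, since $\mu_\ga$ is a disjoint sum over $j\in\bbZ$, the distribution function with respect to $\mu_\ga$ splits additively:
\[
\mu_\ga(\{|H|>\la\})=\sum_{j\in\bbZ} 2^{-j\ga}\meas\{x : |H(x,j)|>\la\}.
\]
By the standard distribution-function representation of the Lorentz quasinorm (equivalent to the $g^{**}$-norm of \cite{hunt}),
\[
\|H\|_{L^{p,r}(\mu_\ga)}^r \approx r\int_0^\infty \la^r\Big[\sum_{j\in\bbZ} 2^{-j\ga}\meas\{|H_j|>\la\}\Big]^{r/p}\frac{\dif\la}{\la},
\]
with the obvious supremum replacement when $r=\infty$.

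For part (i) I would unfold the $\ell^r(L^{p,r})$-norm on the left as
\[
\|T_{-\ga/p} G\|_{\ell^r(L^{p,r})}^r=\sum_j 2^{-j\ga r/p}\|G_j\|_{L^{p,r}}^r\approx r\int_0^\infty \la^r\sum_j \bigl[2^{-j\ga}\meas\{|G_j|>\la\}\bigr]^{r/p}\frac{\dif\la}{\la},
\]
and then exploit $r/p\geq 1$ in the elementary inequality $\sum_j a_j^{r/p}\leq(\sum_j a_j)^{r/p}$ for non-negative sequences, applied pointwise in $\la$. This yields exactly the upper bound by $\|G\|_{L^{p,r}(\mu_\ga)}^r$. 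The endpoint $r=\infty$ follows in one line from the same splitting, since $\la\sup_j [2^{-j\ga}\meas\{|G_j|>\la\}]^{1/p}\leq \la[\sum_j 2^{-j\ga}\meas\{|G_j|>\la\}]^{1/p}$.

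For part (ii) I would run the argument in the reverse direction, starting from the $L^{p,r}(\mu_\ga)$-representation of $T_{\ga/p}F$. Since $|T_{\ga/p}F(x,j)|>\la$ iff $|F_j(x)|>2^{-j\ga/p}\la$, the bracketed distribution function becomes $\sum_j 2^{-j\ga}\meas\{|F_j|>2^{-j\ga/p}\la\}$. The condition $r\leq p$ gives $r/p\leq 1$, so the reverse elementary inequality $(\sum_j a_j)^{r/p}\leq\sum_j a_j^{r/p}$ applies, yielding
\[
\|T_{\ga/p} F\|_{L^{p,r}(\mu_\ga)}^r \lc r\int_0^\infty \la^r\sum_j \bigl[2^{-j\ga}\meas\{|F_j|>2^{-j\ga/p}\la\}\bigr]^{r/p}\frac{\dif\la}{\la}.
\]
The substitution $\la=2^{j\ga/p}\tau$ in the $j$th summand then collapses the integral into $\sum_j 2^{-j\ga r/p}\cdot 2^{j\ga r/p}\|F_j\|_{L^{p,r}}^r=\|F\|_{\ell^r(L^{p,r})}^r$.

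There is no serious obstacle; the only technical care needed is that the distribution-function integral is equivalent to, rather than equal to, the quasinorm via $g^{**}$ chosen in the paper, so the conclusion holds with implicit constants depending only on $p$ and $r$ (and, through the change of variables, on $\ga$).
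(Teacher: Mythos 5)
Your part (i) is essentially the paper's argument: unfold $\ell^r(L^{p,r})$ via the distribution function, observe that the scaling $2^{-j\ga/p}$ passes inside as $2^{-j\ga}$ raised to the power $r/p$, and conclude using the embedding $\ell^1\hookrightarrow\ell^{r/p}$ (equivalently, $\sum a_j^{r/p}\le(\sum a_j)^{r/p}$ for $r/p\ge 1$). This is the same computation, modulo whether one frames the exponent shift as a change of variable in $\la$ or as absorbing $2^{-j\ga r/p}$ into the bracket.

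For part (ii), however, you take a genuinely different route. The paper deduces (ii) from (i) by duality: it pairs $T_{\ga/p}F$ against a test function $G\in L^{p',r'}(\mu_\ga)$, rewrites the pairing so that $T_{-\ga/p'}G$ appears, applies (i) with the conjugate exponents $p'\le r'$, and concludes from the Lorentz duality $(L^{p,r})^*\cong L^{p',r'}$ and $(\ell^r(L^{p,r}))^*\cong\ell^{r'}(L^{p',r'})$. Your argument instead runs the distribution-function computation directly in the forward direction: $r/p\le 1$ lets you invoke $(\sum a_j)^{r/p}\le\sum a_j^{r/p}$, after which the change of variables $\la=2^{j\ga/p}\tau$ in each $j$-th term causes the $\ga$-dependent factors to cancel and yields $\sum_j\|F_j\|_{L^{p,r}}^r$. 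Both arguments are correct. Yours is more elementary and self-contained (no appeal to Köthe duality for Lorentz spaces, which also requires knowing that the dual pairing recovers the norm up to constants), and in fact it would survive unchanged in the quasi-Banach range $0<r<1$ where duality fails; the paper's argument has the virtue of being shorter and of reusing (i) without a second computation. Either way the only care needed, as you note, is the (standard, $p>1$) equivalence between the $g^*$- and $g^{**}$-based expressions for the $L^{p,r}$ norm, with constants depending only on $p,r$.
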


\begin{proof} 
 Part (i)  follows from the definitions of Lorentz spaces via the distribution function. We use a change of variable with subsequent interchange of sum and integral to write
 \begin{align*}
 \|T_{-\ga/p} G\|_{\ell^r(L^{p,r})} &\lc \Big(\sum_j \int_0^\infty \la^r \big[\meas\{x:\, 2^{-j\frac {\ga}{p}}|G(x,j)|>\la\}\big]^{r/p} \frac{\dif \la}{\la}\Big)^{1/r}
 \\
 &= \Big(\int_0^\infty \beta^r \sum_j \big [2^{- j\ga } 
 \meas\{ x: |G(x,j)|>\beta\} \big]^{r/p } \frac{\dif \beta}{\beta} \Big)^{1/r}
 \end{align*} and since $r\ge p$  we  estimate an $\ell^{r/p}$-norm by an $\ell^1$-norm and see that the last displayed expression  is dominated by 
 \begin{align*}
&\Big(\int_0^\infty \beta^r \Big[ \sum_j 2^{-j\ga }
 \meas\{ x: \,|G(x,j)|>\beta\}\Big ]^{r/p} \frac{\dif \beta}{\beta}  \Big)^{1/r}
\\&= \Big(\int_0^\infty \beta^r \big[\mu_\ga 
\{(x,j):\,|G(x,j)|>\beta\}\big]^{r/p} \frac {\dif \beta}{\beta} 
\Big)^{\frac 1r} \lc \|G\|_{L^{p,r}(\mu_\ga)} . 
\end{align*}

For part (ii) we use that $(L^{p,r}(\mu_\ga))^*= L^{p',r'}(\mu_\ga)$, (\cite{hunt})  and $(\ell^r(L^{p,r}))^*= \ell^{r'}(L^{p',r'})   $.
Observe  that for $1\le r\le p$ 
\begin{align*}
    &\Big|\int \sum_j T_{\ga/p}F(x,j)  G(x,j) \frac{\dif x}{2^{j\gamma}}\Big|= 
    \Big| \sum_j \int F(x,j) 2^{-j\ga/p'} G(x,j) \dif x \Big|
    \\&\lc \|F\|_{\ell^r(L^{p,r})} \|T_{-\ga/p'} G\|_{\ell^{r'} (L^{p',r'})} \lc  
     \|F\|_{\ell^r(L^{p,r})} \| G\|_{ L^{p',r'} (\mu_\gamma) } 
    \end{align*}
    where we have used part (i) for the exponents $p'\le r'$. The proof is completed by taking the supremum over all $G$ with 
    $\| G\|_{ L^{p',r'} (\mu_\gamma) } \le 1$.
\end{proof}

\begin{lemma} \label{lem:cB-F-rev-emb}
Let  $1<p<\infty$,   $\ga \neq 0$.

(i) For $p\le r\le \infty$,
\[\|T_{\ga/p} F\|_{L^{p, r} (\mu_\ga )} \lc \|F\|_{L^p(\ell^r)}.\]

(ii)  For $1\le r\le p$,
\[ \|T_{-\ga/p} G\|_{L^{p}(\ell^r)} \lc \|G\|_{L^{p, r} (\mu_\ga )}\]
\end{lemma}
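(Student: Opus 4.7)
Parts (i) and (ii) are dual to each other. Under the canonical pairing $\langle F,H\rangle = \sum_j\int F\,H\,dx$ we have
\begin{equation*}
\sum_j\int T_{-\ga/p}G(x,j)\,H(x,j)\,dx
 = \sum_j 2^{-j\ga}\int G(x,j)\,T_{\ga/p'}H(x,j)\,dx
 = \langle G,T_{\ga/p'}H\rangle_{\mu_\ga},
\end{equation*}
which, combined with the duality identifications $(L^{p,r}(\mu_\ga))^* = L^{p',r'}(\mu_\ga)$ (\cite{hunt}) and $(L^p(\ell^r))^* = L^{p'}(\ell^{r'})$, shows that (ii) (whose hypothesis $r\le p$ is equivalent to $r'\ge p'$) follows from (i) applied with $(p',r')$ in place of $(p,r)$. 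This mirrors the move in the proof of Lemma~\ref{lem:cB-B-rev-emb} and reduces the lemma to proving (i).

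For (i), my plan is to decouple the $x$- and $j$-variables, proving first a pointwise-in-$x$ estimate on the $j$-axis and then integrating. Write $\varpi_\ga$ for the atomic measure on $\Z$ with $\varpi_\ga(\{j\}) = 2^{-j\ga}$, so that $\mu_\ga = dx\otimes\varpi_\ga$. For each fixed $x$, the multiplication operator $S_x\colon h\mapsto (2^{j\ga/p}h(j))_{j\in\Z}$ satisfies the endpoint equality $\|S_xh\|_{L^p(\varpi_\ga)} = \|h\|_{\ell^p}$ by direct calculation, while from $|S_xh(j)|\le 2^{j\ga/p}\|h\|_{\ell^\infty}$ one computes
\begin{equation*}
\varpi_\ga\bigl(\{j:2^{j\ga/p}\|h\|_{\ell^\infty}>\la\}\bigr) = \sum_{j:\,2^{j\ga/p}>\la/\|h\|_{\ell^\infty}} 2^{-j\ga} \lesssim (\|h\|_{\ell^\infty}/\la)^p,
\end{equation*}
the sum being a one-sided geometric series in $j$; it is precisely here that the hypothesis $\ga\neq 0$ is essential to ensure convergence. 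This yields $\|S_xh\|_{L^{p,\infty}(\varpi_\ga)}\lesssim \|h\|_{\ell^\infty}$. Real interpolation of the Lorentz scales at $\theta = 1-p/r$ then gives $(\ell^p,\ell^\infty)_{\theta,r} = \ell^r$ and $(L^p(\varpi_\ga),L^{p,\infty}(\varpi_\ga))_{\theta,r} = L^{p,r}(\varpi_\ga)$, so
\begin{equation*}
\|T_{\ga/p}F(x,\cdot)\|_{L^{p,r}(\varpi_\ga)} \lesssim \|F(x,\cdot)\|_{\ell^r}\quad\text{uniformly in }x.
\end{equation*}

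To convert this pointwise-in-$x$ bound into the $L^{p,r}(\mu_\ga)$-estimate required by (i), I would use the Minkowski-type inequality for product-measure Lorentz spaces: for $r\ge p$,
\begin{equation*}
\|g\|_{L^{p,r}(\mu_\ga)} \lesssim \Bigl(\int_{\R^d}\|g(x,\cdot)\|_{L^{p,r}(\varpi_\ga)}^p\,dx\Bigr)^{1/p}.
\end{equation*}
This is proved directly by writing the left side as $r\int_0^\infty \la^{r-1}\bigl(\int\varpi_\ga(\{|g(x,\cdot)|>\la\})\,dx\bigr)^{r/p}d\la$ and applying Minkowski's integral inequality in $L^{r/p}(\la^{r-1}d\la)$ (valid since $r/p\ge 1$) to interchange the $\la$- and $x$-integrals. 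Taking $g = T_{\ga/p}F$ and chaining with the pointwise-in-$x$ bound from the previous paragraph yields (i).

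The main technical point is the weak-type endpoint bound at $r=\infty$ for $S_x$, which relies critically on the assumption $\ga\neq 0$; the necessity of this hypothesis is consistent with Theorem~\ref{thm:non-emb-gamma}. Everything else—standard real interpolation of Lorentz spaces on a one-dimensional atomic measure, Minkowski's integral inequality, and formal duality—is routine.
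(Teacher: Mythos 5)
Your proposal is correct. Part (ii) is handled exactly as in the paper: both reduce (ii) to (i) via the pairing identity
\[
\sum_j\int T_{-\ga/p}G(x,j)\,H(x,j)\,\dif x=\sum_j 2^{-j\ga}\int G(x,j)\,T_{\ga/p'}H(x,j)\,\dif x
\]
and the duality $(L^{p,r}(\mu_\ga))^*=L^{p',r'}(\mu_\ga)$, $(L^p(\ell^r))^*=L^{p'}(\ell^{r'})$.

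For part (i) you use the same two endpoint estimates as the paper --- the isometry at $r=p$ and the weak-type bound at $r=\infty$, whose geometric series is precisely where $\ga\neq 0$ enters --- but your interpolation mechanism is different. The paper interpolates the full vector-valued operator $T_{\ga/p}$ in one stroke via complex interpolation, using the identities $[L^p(\ell^p),L^p(\ell^\infty)]_\theta=L^p(\ell^r)$ and $[L^p(\mu_\ga),L^{p,\infty}(\mu_\ga)]_\theta=L^{p,r}(\mu_\ga)$ (following \cite[Lemma 2.4]{lrs-stein}). You instead decouple the $x$- and $j$-variables: you interpolate fiberwise (the multiplication operator $h\mapsto(2^{j\ga/p}h(j))_j$ is independent of $x$, so uniformity is automatic) using ordinary real interpolation $(\ell^p,\ell^\infty)_{\theta,r}=\ell^r$ and $(L^p(\varpi_\ga),L^{p,\infty}(\varpi_\ga))_{\theta,r}=L^{p,r}(\varpi_\ga)$, and then recombine with the product-measure Minkowski inequality $\|g\|_{L^{p,r}(\mu_\ga)}\le(\int\|g(x,\cdot)\|_{L^{p,r}(\varpi_\ga)}^p\,\dif x)^{1/p}$, which is valid precisely when $r\ge p$ (and indeed holds as an exact inequality, with the $r=\infty$ case checked directly). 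What your route buys is elementarity --- it avoids the complex interpolation of vector-valued $L^p$-spaces and of Lorentz spaces in favor of the one-dimensional real interpolation plus Minkowski, both of which are more transparent. What the paper's route buys is brevity and alignment with the existing reference. Both are complete and correct.
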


\begin{proof} The argument for part (i) has been used in proofs for endpoint multiplier theorems, our proof is essential the  one from \cite[Lemma 2.4]{lrs-stein} (see also \cite{lrs-stein} for further references).

Let $0\leq \theta \leq 1$ and $1/r=(1-\theta)/p$. We use the complex interpolation formulas
\begin{align*}
[ L^p(\ell^p), L^p(\ell^\infty)]_\theta= L^p(\ell^r),
\quad 
[L^p(\mu_\gamma), L^{p,\infty}(\mu_\gamma) ]_\theta= L^{p,r}(\mu_\gamma) .
\end{align*}
These imply that it suffices to prove the assertion for $r=p$ and $r=\infty$.
For $r=p$ we have
$\|T_{\ga/p}  F\|_{L^{p} (\mu_\ga)} = \|F\|_{L^p(\ell^p)}$.

For $r=\infty$ the  conclusion $T_{\ga/p} : L^p(\ell^\infty)\to L^{p,\infty} (\mu_\ga)$ follows from \begin{align*}
&\mu_\ga\{(x,j): |T_{\ga/p} F(x,j)|>\la\} = \int_{\R^d} \sum_{\substack{j : 
\\2^{j\ga/p }|F_j(x)|>\la}}  2^{-j\ga} \dif x 
\\
&\le \int_{\R^d} \sum_{\substack{j : \\2^{-j\ga/p} <\la^{-1} \sup_k |F_k(x)|} }  2^{-j\ga} \dif x  
\lc \int_{\R^d} \frac{(\sup_k|F_k(x)|)^p} {\la^p} \dif x;
\end{align*} here we used $\gamma \neq 0$.

For part (ii) we use that $(L^{p,r}(\mu_\ga))^*= L^{p',r'}(\mu_\ga)$, see \cite{hunt} and $(L^p(\ell^r))^*= L^{p'}(\ell^{r'})$.
Observe that for $F\in L^{p'}(\ell^{r'})$
\begin{align*}
   & \int \sum_j 2^{-j\ga/p}G_j(x) F_j(x)\dif x=\int \sum_j  G_j(x)\, T_{\ga/p'}F(x,j)  2^{-j\gamma} \dif x 
   \\&\lc \|G\|_{L^{p,r}(\mu_\gamma)}  \|T_{\ga/p'} F\|_{L^{p',r'} (\mu_\gamma)}  \lc 
   \|G\|_{L^{p,r}(\mu_\gamma)}  \|F\|_{L^{p'}(\ell^{r'} )} 
   \end{align*}
where we have used part (i). Now part (ii) follows by taking the $\sup$ over all $F$ with  $\|F\|_{L^{p'}(\ell^{r'} )} \le 1$.
\end{proof} 
\begin{proof}[Proof of Theorem \ref{thm:embeddings}]
Apply Lemma \ref{lem:cB-B-rev-emb} and Lemma \ref{lem:cB-F-rev-emb}
with $F(x,j)=2^{js} L_j f(x)$ and $G(x,j)=2^{j(s+\frac \ga p)} L_j f(x)$. 
\end{proof}

\section{Non-embeddings}\label{sec:non-embeddings}
We prove Theorem \ref{thm:non-emb}. Proposition \ref{prop:counterex-ga>-d} covers part (i) and (ii) of the theorem, in the range $\ga\ge -d$, and Proposition \ref{prop:counterex-ga<-d} covers the same parts for the range $\ga<-d$. Proposition \ref{prop:F-B} covers part (iii) of Theorem \ref{thm:non-emb}. We begin with some definitions to build the examples.

If $\ga \ge -d$ and $k > 0$, or if $\ga < -d$ and $k < 0$, define
\[ 
\fN_\ga(k) \coloneqq \lfloor 2^{k (d+\gamma)} \rfloor. 
\]
Let $\{n_{i,k} \}$ be a double indexed set in $\bbZ$, with $1\le i\le \fN_\gamma(k)$,   which is  separated in the sense that for every $k$
\[ i_1\neq i_2 
\implies |n_{i_1,k}-n_{i_2,k} | \ge 
2^ {10|k|} .
\]

Let $\eta \in \cS$ such that \begin{subequations}
\begin{align} \label{eq:etaapprox}
    &|\eta(x)|\approx 1 \text{  for $|x|\le 1$.}
    \\ \label{eq:etaFTsupport}
    &\supp(\widehat \eta) \subset \big\{\xi\in \widehat \bbR^d: \frac{15}{16}\le |\xi|\le \frac{17}{16} \big\}
\end{align} 
\end{subequations} and let
\Be\eta_{i,k}(x)= \eta(2^k (x-n_{i,k}e_1)).\Ee
By \eqref{eq:varphiassumptions} we have 
\begin{subequations}
\Be \label{eq:LP-repr} 
\eta_{i,k}=L_k\eta_{i,k}
\Ee and 
\Be 
L_\ell \eta_{i,k} = 0 \text {  if  } \ell \ne k.
\Ee
\end{subequations}

Define for $k\in \bbZ$ 
\Be \label{eq:fgak} f_{\ga,k} (x) = 2^{-k\ga/p} \sum_{i=1}^{\fN_\ga(k) }\eta_{i,k} .
\Ee 
\begin{proposition} \label{prop:counterex-ga>-d} Let $f_{\ga,k}$ be as in \eqref{eq:fgak}. Let $s\in \bbR$.
Assume $\gamma\ge -d$, and define\footnote{The definitions in \eqref{eq:FgaN-kpos}, \eqref{eq:FG-defs} depend on $s$ but we do not include the subscript $s$ to keep the notation as simple as possible.} 
\Be \label{eq:FgaN-kpos} 
F_{ \ga,N} (x) = 
 \sum_{k=N+1}^{2N} 2^{-ks} f_{\ga,k}(x) .
\Ee 
(i) Then 
$1<p\le r\le\infty$
\begin{align}
\label{eq:Bspgarbelow}
&\|F_{\ga, N} \|_{\dot \cB^s_p(\gamma,\infty)}  =\|F_{\ga,N} \|_{ \cB^s_p (\gamma,\infty)}
\gc N^{1/p}
\\ \label{eq:Bspbetarabove} &\|F_{\ga,N} \|_{\dot \cB^s_p (\beta,r)}  =\|F_{\ga,N} \|_{ \cB^s_p (\beta,r)}  
        \lc N^{1/r}, \quad  \text{for  $\beta\neq \ga$,}
\\  \label{eq:Bsprabove}&\|F_{\ga,N} \|_{\dot B^{s}_{p,r} } =\|F_{\ga,N} \|_{ B^{s}_{p,r} } \lc N^{1/r}
\end{align}

(ii) If $p<\infty$ then $F_\ga= \sum_{\ell \ge 1} \ell 2^{-\ell/p}  F_{\ga,2^\ell}$ belongs to
$\bigcap_{r>p} \dot B^s_{p,r}$ and to $\bigcap_{\substack{r>p\\ \beta\neq \ga}} \dot \cB^s_p(\beta,r)$, but not to $\dot \cB^s_p(\gamma,\infty)$.

Also $F_\ga $ belongs to
$\bigcap_{r>p}    B^s_{p,r}$ and to $\bigcap_{\substack{r>p\\ \beta\neq \ga} }   \cB^s_p(\beta,r)$,  but not to $\cB^s_p(\gamma,\infty)$.
\end{proposition}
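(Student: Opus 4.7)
The approach will be direct computation exploiting the clean Littlewood--Paley structure of the building blocks. By \eqref{eq:LP-repr}, $L_k F_{\ga,N} = 2^{-ks} f_{\ga,k}$ for $N+1 \le k \le 2N$ and vanishes otherwise; only indices $k\ge 1$ occur, so $\text{\L}_k = L_k$ throughout and $\text{\L}_0 F_{\ga,N}=0$, justifying the equality of inhomogeneous and homogeneous norms. The separation $|n_{i_1,k}-n_{i_2,k}|\ge 2^{10|k|}$ combined with Schwartz decay of $\eta$ makes the translates $\eta_{i,k}$ essentially disjointly supported in $L^p$; since $\|\eta_{i,k}\|_p \approx 2^{-kd/p}$ and $\fN_\ga(k) \approx 2^{k(d+\ga)}$, this yields $\|f_{\ga,k}\|_p \approx 1$, hence $\|L_k F_{\ga,N}\|_p \approx 2^{-ks}$, and summation in $k$ gives \eqref{eq:Bsprabove}.

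For the $\dot\cB^s_p(\beta,r)$ estimates in part (i), the central object is
\[
P^{s+\beta/p} F_{\ga,N}(x,k) = 2^{k(\beta-\ga)/p} \sum_{i=1}^{\fN_\ga(k)} \eta_{i,k}(x), \qquad N+1\le k\le 2N,
\]
of magnitude $\approx 2^{k(\beta-\ga)/p}$ on a set $E_k$ of Lebesgue measure $\approx 2^{k\ga}$. The slice $k$ thus contributes $\mu_\beta$-mass $\approx 2^{k(\ga-\beta)}$ to $\{|P^{s+\beta/p}F_{\ga,N}|>\la\}$ when $\la \lesssim 2^{k(\beta-\ga)/p}$, and nothing otherwise. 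For $\beta=\ga$, every one of the $N$ slices contributes mass $\approx 1$ at any threshold $\la\le c$, so $\mu_\ga\{|P^{s+\ga/p}F_{\ga,N}|>c\}\gtrsim N$ and the lower bound \eqref{eq:Bspgarbelow} follows. For $\beta\neq\ga$, geometric summation in $k$ shows $\mu_\beta\{|P^{s+\beta/p}F_{\ga,N}|>\la\}\approx\la^{-p}$ throughout a $\log$-interval of length $\approx N$ (the one between the smallest and largest slice heights) and is $O(1)$ outside it; substituting into $\int_0^\infty \la^r \mu_\beta^{r/p}\,\dif\la/\la$ gives the upper bound $N^{1/r}$ in \eqref{eq:Bspbetarabove}.

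Part (ii) exploits that the blocks $F_{\ga,2^\ell}$ have pairwise disjoint Littlewood--Paley supports, living in bands $k\in(2^\ell,2^{\ell+1}]$, so the corresponding slices decouple. For $\dot B^s_{p,r}$ the $r$-th power Besov norm is additive over these bands, yielding $\|F_\ga\|_{\dot B^s_{p,r}}^r \approx \sum_\ell \ell^r\, 2^{\ell(1-r/p)}$, which converges exactly when $r>p$. For the $\dot\cB^s_p(\beta,r)$ norm with $\beta\neq\ga$ and $r>p$, a parallel computation based on the disjoint slice supports and the per-block distribution function analysis from part (i) gives comparable bounds for each block, yielding a convergent series of the same shape; alternatively, Theorem \ref{thm:embeddings} can be invoked (in the range $r>p$) to reduce the question to the Besov bound just established.

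The main obstacle is proving $F_\ga\notin \dot\cB^s_p(\ga,\infty)$. A first-order slice-by-slice count gives only $\mu_\ga\{|P^{s+\ga/p}F_\ga|>\la\}\approx 2^{\ell_+(\la)}\approx \la^{-p}$, where $\ell_+(\la)$ is the largest integer with $\ell 2^{-\ell/p}>\la$; this alone would keep the $L^{p,\infty}$-norm finite. The needed logarithmic improvement comes from iterating the defining equation $\ell_+/p = \log_2(1/\la)+\log_2 \ell_+$, which yields $\ell_+(\la) = p\log_2(1/\la) + p\log_2(p\log_2(1/\la)) + O(1)$, hence $2^{\ell_+(\la)}\approx \la^{-p}(\log(1/\la))^p$. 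Since each of the $2^\ell$ values of $k$ in the $\ell$-th block contributes $\mu_\ga$-mass $\approx 1$ to the level set whenever $\ell 2^{-\ell/p}>\la$, summing produces $\mu_\ga\{|P^{s+\ga/p}F_\ga|>\la\}\gtrsim \la^{-p}(\log(1/\la))^p$, and therefore $\la\,\mu_\ga(\cdot)^{1/p}\gtrsim \log(1/\la)\to\infty$ as $\la\to 0^+$.
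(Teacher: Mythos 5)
Your proposal follows the same basic route as the paper: exploit the clean Littlewood--Paley structure of $F_{\gamma,N}$ (single frequency band per $k$), compute the distribution function of $P^{s+\beta/p}F_{\gamma,N}$ slice by slice, and assemble $F_\gamma$ from dyadically-indexed blocks with disjoint spectral support. The level-set heuristic you describe (``magnitude $\approx 2^{k(\beta-\gamma)/p}$ on a set of Lebesgue measure $\approx 2^{k\gamma}$'') is exactly the heart of the paper's computation; what you elide is the actual error analysis needed to turn this into a proof, namely the splitting $\eta_{i,k}=\vartheta_{i,k}+\eps_{i,k}$ and the use of the decay rate $|\eta(x)|\le C_M(1+|x|)^{-M}$ to bound the contribution of the Schwartz tails to each level set (see the paper's sets $E^{(1)}$, $E^{(2)}$ and the resulting estimate $\la^p 2^{-k\beta}\meas E^{(1)}_{\ga,\beta,k}(\la)\lesssim (\la^p 2^{k(\ga-\beta)})^{1-d/(Mp)}$). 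That step is routine but cannot be skipped, since the geometric-series summation over $k$ relies on the precise exponent.

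Two places where your argument is genuinely off or misleading. First, invoking Theorem~\ref{thm:embeddings} to deduce $F_\gamma\in\dot\cB^s_p(\beta,r)$ from $F_\gamma\in\dot B^s_{p,r}$ when $r>p$ cannot work: for $r>p$ the embedding $\dot B^s_{p,r}\hookrightarrow\dot\cB^s_p(\beta,r)$ fails --- this is exactly the content of Theorem~\ref{thm:non-emb}(i) specialised to $\gamma=\beta$ (and of the very proposition you are proving). Theorem~\ref{thm:embeddings}(i) only gives $\dot B^s_r[L^{p,r}]\hookrightarrow\dot\cB^s_p(\ga,r)$ for $r\le p$, which is the wrong range. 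You must instead use the per-block estimate \eqref{eq:Bspbetarabove} together with the triangle inequality, as the paper does: $\|F_\ga\|_{\dot\cB^s_p(\beta,r)}\le\sum_\ell\ell 2^{-\ell/p}\|F_{\ga,2^\ell}\|_{\dot\cB^s_p(\beta,r)}\lesssim\sum_\ell\ell 2^{\ell(1/r-1/p)}<\infty$ for $r>p$.

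Second, your narrative for $F_\gamma\notin\dot\cB^s_p(\ga,\infty)$ contains a self-contradiction and an unnecessary detour. You first assert ``$2^{\ell_+(\la)}\approx\la^{-p}$, which alone would keep the $L^{p,\infty}$-norm finite'', then correct it by iterating. In fact the defining relation $\ell_+ 2^{-\ell_+/p}\approx\la$ already yields $\la\, 2^{\ell_+/p}\approx\ell_+\to\infty$ directly, with no iterated logarithm needed. The paper avoids even this: it restricts the $\mu_\ga$-sum to a single block $\ell$, recognises that the resulting expression is exactly $\ell 2^{-\ell/p}\|F_{\ga,2^\ell}\|_{\dot\cB^s_p(\ga,\infty)}$, applies the lower bound \eqref{eq:Bspgarbelow} with $N=2^\ell$, and concludes $\|F_\ga\|_{\dot\cB^s_p(\ga,\infty)}\gtrsim\ell$. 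Your counting argument (with the internal confusion removed) is correct but more cumbersome; the block-by-block reduction to part (i) is the cleaner route.
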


\begin{proof} Let $r \ge p$. We begin with the upper bound for the $\dot \cB^s_p (\beta,r)$ quasi-norm of $F_{\ga,N}$ for $\beta \ne \ga$. 
Let \[
E_{\gamma, \beta,k} (\la) =\Big\{x\in \bbR^d: \Big|\sum_{i=1}^{\fN_\ga(k)}\eta_{i,k}(x) \Big|^p >\la^p 2^{k(\gamma-\beta) } \Big\}.
\]

Note that from  \eqref{eq:LP-repr} we get 
\[ \|F_{\ga,N}\|_{\dot \cB^s_p(\beta,r)}= 
\Big(r\int_0^\infty \Big[\sum_{k=N+1}^{2N} \la^p 2^{-k\beta} \meas \,E_{\ga,\beta,k}(\la)\Big] ^{r/p} \frac{\dif \la}{\la}
\Big)^{1/p}.
\]
In what follows we will use, for $M>d+|\ga|$, the estimate
\Be\label{eq:etabound} |\eta(x)|\le C_M (1+|x|)^{-M}.\Ee
Split $\eta_{i,k}= \vth_{i,k} + \eps_{i,k}$ where
\[ \vth_{i,k}= \eta_{i,k} \bbone_{\{|x-n_{i,k}e_1| \le 2^k\}},\qquad \eps_{i,k} =\eta_{i,k}-\vth_{i,k}.\]
Note for later reference $\|\eps_{i,k}\|_p \lc_M 2^{-k\frac dp} 2^{2k(\frac dp-M)}$ and therefore 
\Be \label{eq:sumepsik}\Big\| \sum_{i=1}^{\fN_\ga(k) } \eps_{i,k} \Big\|_p \lc 
2^{k(d+\ga)} 2^{-k\frac dp}  2^{2k(\frac dp-M)} .
\Ee
Let 
\begin{align*}
    E^{(1)}_{\gamma,\beta,k}(\la)&= \Big\{x\in \bbR^d: \Big|\sum_{i=1}^{\fN_\ga(k)}\vth_{i,k}(x)\Big|^p>
    (\frac \la 2)^p 2^{k(\ga-\beta)} \Big\}
    \\
    E^{(2)}_{\gamma,\beta,k}(\la)&= \Big\{x\in \bbR^d: \Big|\sum_{i=1}^{\fN_\ga(k)}\eps_{i,k}(x)\Big|^p>(\frac  \la 2)^p 2^{k(\ga-\beta)} \Big\}.
    \end{align*}
Then 
\begin{align}\label{eq:la-decr-inclusion}
    E_{\gamma,\beta,k}(\la) &\subset E^{(1)}_{\gamma,\beta,k}(\la) \cup E^{(2)}_{\gamma,\beta,k}(\la).
\end{align}
Finally set, for $i=1,\dotsc,\fN_\ga(k)$, 
\begin{align*} E^{(1,i)}_{\gamma,\beta,k}(\la)  &= \{x\in \bbR^d: |\vartheta_{i,k}(x)|^p > ( \frac \la 2)^p 2^{k(\ga-\beta)} \}.
\end{align*} Observe that  for fixed $k$ the sets $\supp(\vth_{i,k})$ are disjoint and therefore the sets
$E^{(1)}_{\gamma,\beta,k}(\la) $ are the disjoint union of the sets
$E^{(1,i)}_{\ga,\beta,k}(\la) $, $i=1,\dotsc,\fN_\ga(k)$.
Now from \eqref{eq:etabound} we get
\[
E^{(1,i)}_{\ga,\beta,k}(\la) \subset \Big\{x: |x-n_{i,k}e_1|\le 2^{-k} \Big( \frac{2^pC_M^p}{\la^p2^{k(\ga-\beta)}} \Big) ^{\frac{1}{Mp} } \Big\} \] and therefore we get for the Lebesgue measure 
\begin{align*} 
\meas\, E^{(1)}_{\ga,\beta,k}(\la)  \le c_d\fN_\ga(k) 2^{-kd} 
\Big( \frac{2^pC_M^p}{\la^p2^{k(\ga-\beta)}} \Big) ^{\frac{d}{Mp} } .
\end{align*}
Hence, using the definition of $\fN_\ga(k)$
\[\la^p 2^{-k\beta} \meas\,E^{(1)}_{\ga,\beta,k}(\la) \le c_d (2C_M)^{\frac{d}{M}} 
(\la^p 2^{k(\ga-\beta)})^{1-\frac d{Mp}}
\]
Using \eqref{eq:etabound} we also see that for $k=N+1,\dotsc, 2N$
\[
E^{(1)}_{\gamma,\beta,k}(\la)  =\emptyset \text{ for } 
(\la/2)^p 2^{k(\ga-\beta)}>2C_M^p.
\]

Hence we get, for $\ga>\beta$ and $r < \infty$,
\begin{align*}
    &\Big(\int_0^\infty\Big[\sum_{\substack {N+1\le k\le2 N\\ \la^p 2^{k(\gamma-\beta)}\le 2C_M^p}} \la^p 2^{-k\beta}\meas\, E^{(1)}_{\ga,\beta,k}(\la)\Big]^{r/p} \frac{\dif \la}{\la}\Big)^{1/r}\le I+II
    \end{align*}
    where
    \begin{align*}
        I&=\Big(\int_0^{C_M(2^{-2N(\ga-\beta)})^{1/p}}
    \Big[\sum_{N+1\le k\le2 N}  \widetilde C_M (\la^p  2^{k(\ga-\beta)})^{1-\frac{d}{Mp}} \Big]^{r/p} \frac{\dif\la}{\la} \Big)^{1/r}
        \\
        II&=\Big( \int_{C_M(2^{-2N(\ga-\beta)})^{1/p}}^{2C_M(2^{-N(\ga-\beta)})^{1/p}} \Big[\sum_{\substack {N+1\le k\le2 N\\ \la^p 2^{k(\gamma-\beta)}\le 2^pC_M^p}}  \widetilde C_M (\la^p  2^{k(\ga-\beta)})^{1-\frac{d}{Mp}} \Big]^{r/p} \frac{\dif\la}{\la} \Big)^{1/r}.
    \end{align*}
  We estimate
  \begin{align*}
      I&\lc \Big(\int_0^{C_M(2^{-2N(\ga-\beta)})^{1/p}}
     (\la^p  2^{2N(\ga-\beta)})^{(1-\frac{d}{Mp})\frac rp}  \frac{\dif\la}{\la} \Big)^{1/r}\lc 1 
     \\
     II &\lc \Big( \int_{C_M(2^{-2N(\ga-\beta)})^{1/p}}^{2C_M(2^{-N(\ga-\beta)})^{1/p}}  \frac{\dif\la}{\la} \Big)^{1/r} \lc N^{1/r}
  \end{align*}
  and it follows that $\|F_{N,\ga}\|_{\dot \cB^s_p(\beta,r)}\lc N^{1/r}$ provided that $\beta<\gamma$.
  
  The calculation for $\gamma<\beta$ is very similar, except  the integration is over  $\la\in [0,C_M (2^{1+2N(\beta-\gamma)})^{1/p} ]$ and the corresponding integrals for the parts $I$ and $II$ are extended from $0$ to $C_M (2^{N(\beta-\gamma)})^{1/p}$ and from 
  $C_M (2^{N(\beta-\gamma)})^{1/p}$ to $C_M (2^{1+2N(\beta-\gamma)})^{1/p}$, respectively.  Again the first term gives an $O(1)$ contribution and the second one an $O(N^{1/r})$ contribution. 
  Summarizing we get
  \begin{align}
    &\Big(\int_0^\infty\Big[\sum_{\substack {N< k\le 2N}} \la^p 2^{-k\beta} \meas\, E^{(1)}_{\ga,\beta,k}(\la)\Big]^{r/p} \frac{\dif \la}{\la}\Big)^{1/r}\lc  N^{1/r}.
    \end{align}
A similar (and easier) calculation shows that one has the corresponding bound when $r = \infty$ as long as $\beta \ne \ga$.
    We now estimate the error term; we show in fact the stronger inequality
    \begin{align}\label{eq:Egabetaerror}
    &\Big(\int_0^\infty \Big[\sum_{\substack {N < k\le 2N}} \la^p 2^{-k\beta}\meas\, E^{(2)}_{\ga,\beta,k}(\la)\Big]^{r/p} \frac{\dif \la}{\la}\Big)^{1/r}\lc 2^{-N} 
    \end{align}
    for $r\ge p$. We discretize the integral in $\lambda$, use the embedding $\ell^p\hookrightarrow \ell^r$,  then the change of variables  $\sigma=\la 2^{k(\ga-\beta)/p}$ and then the formula for the $L^p$-norm via the distribution function  to estimate the left hand side of \eqref{eq:Egabetaerror} by 
    \begin{align*}
    &\lc \Big(\sum_{N < k\le 2N} \int_0^\infty \la^p 2^{-k\beta} \meas\, E^{(2)}_{\ga,\beta,k}(\la) \frac{\dif \la}{\la}\Big)^{1/p}
    \\
            &\lc \Big( \sum_{N < k\le 2N} \int_0^\infty 2^{-k\gamma}\sigma^p \meas \big\{ x: |\sum_{i=1}^{\fN_\gamma(k)} \eps_{i,k}(x)|> \sigma\big\} \frac{\dif \sigma}{\sigma}\Big)^{1/p}
        \\&\lc\Big(\sum_{N < k\le 2N} 2^{-k\ga} \Big\| \sum_{i=1}^{\fN_\ga(k)} \eps_{i,k} \Big\|_p^p\Big)^{1/p}    
        \lc 2^{-N},
    \end{align*}
    here we used \eqref{eq:sumepsik} with $M$ large.
    This finishes the proof of $\|F_{\ga,N} \|_{\dot \cB^s_p(\ga,r) }\lc N^{1/r}$ and since the Fourier transform of $F_{\ga,N}$ is supported where $|\xi|\gg1 $ we may replace $\dot \cB^s_p(\beta,r)$ with $\cB^s_p(\ga,r)$.
    Thus
    \eqref{eq:Bspbetarabove} is now proved, and  this inequality also yields
   $ \|F_\gamma\|_{\dot \cB^s_p (\beta,r) }\lc 1$.

    We now give the proof of \eqref{eq:Bsprabove}.  The proof is similar to the above but simpler.
    We use \eqref{eq:LP-repr} to write
    \[\|F_{\ga,N}\|_{\dot B^s_{p,r}} =\Big(\sum_{k=N+1}^{2N} \|f_{\ga,k} \|_p^r\Big)^{1/r} =I_1+II_1\]
    where
    \begin{align*} 
    I_1&= \Big(\sum_{k=N+1}^{2N} \Big\|2^{-k\ga/p} \sum_{i=1}^{\fN_\ga (k)} \vth_{i, k} \Big \|_p^r\Big)^{1/r}
    \\
    II_1&\lc 
     \Big(\sum_{k=N+1}^{2N} \Big \|2^{-k\ga/p} \sum_{i=1}^{\fN_\ga (k)} \eps_{i, k} \Big\|_p^r\Big)^{1/r}
    \end{align*}
    Using the disjointness of support property of the $\vth_{i,k}$ we have
    \[ \|2^{-k\ga/p} \sum_{i=1}^{\fN_\ga (k)} \vth_{i, k} \|_p\approx 1, \quad N< k\le 2N\]
    and hence $I_1\approx N^{1/r}$ (with the obvious modification if $r = \infty$).
    For $II_1$ we use \eqref{eq:sumepsik} for sufficiently large $M$ and see that $|II_1|\lc 2^{-N}$, and \eqref{eq:Bsprabove} follows. We also have $\|F_\ga\|_{\dot B^s_{p,r}} \lesssim 1$.
    
    We conclude by proving the lower bound 
    \eqref{eq:Bspgarbelow}.
    We have for $\la\ll 1$
    \begin{align*} 
    \|F_{\ga,N} \|^p_{\dot \cB^s_p(\ga,\infty)} &\ge \sum_{N < k \le 2N}\la^p2^{-k\gamma} \meas\Big\{x: \big|\sum_{i=1}^{\fN_\ga(k)} \eta_{i,k}(x)\big|>\la\Big\} \ge I_2^p-II_2^p \end{align*}
    where
    \begin{align*}
    I_2^p&=\la^p\sum_{N < k \le 2N}2^{-k\gamma} \meas\Big\{x: \big|\sum_{i=1}^{\fN_\ga(k)} \vth_{i,k}(x)\big|>2\la\Big\}
    \\
    II_2^p&=\la^p\sum_{N < k \le 2N}2^{-k\gamma} \meas \Big\{x: \big|\sum_{i=1}^{\fN_\ga(k)} \eps_{i,k}(x)\big|>\la\Big\}.
    \end{align*}
By the support properties of the $\vth_{i,k}$ and by \eqref{eq:etaapprox} we have for sufficiently small $\la$ 
\[\meas \Big\{x: \big|\sum_{i=1}^{\fN_\ga(k)} \vth_{i,k}(x)\big|>2\la\Big\} \gc \fN_\ga(k) 2^{-kd}\approx 2^{k\ga} \] and hence $I_2\gc  N^{1/p}$.
 By \eqref{eq:sumepsik} and Chebyshev's inequality 
 \begin{align*}
     II_2^p\lc 
     \sum_{N < k \le 2N} 2^{-k\ga} \Big\|\sum_{i=1}^{\fN_\ga(k)} \eps_{i,k} \Big\|_p^p
     \lc 2^{-N} 
 \end{align*} and combining the two estimates we get for sufficiently large $N$ the desired lower bound $\|F_{\ga,N} \|_{\dot\cB^s_p(\ga,\infty)} \ge cN^{1/p}$ .
 
 Finally 
 \begin{align*}
 \|F_\ga\|_{\dot\cB^s_p(\ga,\infty)} &\gc \sup_{\la>0} \la\Big(
 \sum_{\ell \ge 1}  \sum_{2^{\ell} < k \le 2^{\ell+1}} 2^{-k\gamma}
 \meas\Big\{  \ell 2^{-\ell/p} \big|\sum_{i=1}^{\fN_\ga(k)} \eta_{i, k}\big |>\la \Big\} \Big)^{1/p}
 \\
 &\ge \sup_{\ell \ge 1} \ell 2^{-\ell/p} \sup_{\sigma>0} \sigma\Big(
  \sum_{2^{\ell} < k \le 2^{\ell+1}} 2^{-k\gamma}
 \meas\Big\{ \big|\sum_{i=1}^{\fN_\ga(k)} \eta_{i, k}\big |>\sigma \Big\} \Big)^{1/p}
 \\
 &\gtrsim \sup_{\ell \ge 1} \ell 2^{-\ell/p} 
 \sigma_0 (2^{\ell})^{1/p}  =\infty
 \end{align*}
for sufficiently small $\sigma_0\ll 1$,  and we see that $F_\ga\notin \dot\cB^s_p(\ga,\infty) $.
\end{proof}

The counterpart of Proposition \ref{prop:counterex-ga>-d} in the range  $\gamma<-d$ is as follows.
\begin{proposition} \label{prop:counterex-ga<-d}
 Let $f_{\ga,k}$ be as in \eqref{eq:fgak}. Let $s\in \bbR$.
 Assume $\gamma<-d$ and define
 \begin{subequations}\label{eq:FG-defs}
\Be \label{eq:FgaN-kneg} F_{ \ga,N} (x) = 
\sum_{-2N < k \le -N} 2^{-ks}f_{\ga,k}(x) 
\Ee  and
\Be\label{eq:GgaN} 
 G_{\ga,N}(x)= 2^{3N(\frac dp-s)} F_{\ga,N}(2^{3N} x). 
\Ee
\end{subequations}

(i) Then for $1 < p \leq r \leq \infty$,
\begin{align*}
&\|G_{\ga,N} \|_{ \cB^s_p (\gamma,\infty)} =\|F_{\ga,N} \|_{\dot \cB^s_p (\gamma,\infty)} 
\gc N^{1/p},
\\&\|G_{\ga,N} \|_{ \cB^s_p (\beta,r)} =\|F_{\ga,N} \|_{\dot \cB^s_p (\beta,r)}          \approx N^{1/r}  , \quad \text{for } \beta\neq \gamma,
\\  &\|G_{\ga,N} \|_{ B^{s}_{p,r} } =\|F_{\ga,N}\|_{\dot B^{s}_{p,r} }  \approx N^{1/r}.
\end{align*}

(ii) If $p < \infty$, then $F= \sum_{\ell\ge 1} \ell 2^{-\ell/p}   F_{\ga,2^\ell}$ belongs to
$\bigcap_{r>p} \dot B^s_{p,r}$ and to $\bigcap_{\substack{r>p\\ \beta\neq \ga}} \dot \cB^s_p(\beta,r)$ but not to $\dot \cB^s_p(\gamma,\infty)$.

(iii) If $p < \infty$, then
$G_\ga= \sum_{\ell\ge 1} \ell 2^{-\ell/p} G_{\ga, 2^\ell}$
belongs to
$\bigcap_{r>p}  B^s_{p,r}$ and to $\bigcap_{\substack{r>p\\ \beta\neq \ga}}  \cB^s_p(\beta,r)$ but not to $ \cB^s_p(\gamma,\infty)$.
\end{proposition}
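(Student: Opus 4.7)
The plan is to prove Proposition \ref{prop:counterex-ga<-d} in close analogy with Proposition \ref{prop:counterex-ga>-d}. First I would establish all the homogeneous estimates for \(F_{\gamma,N}\) by a decomposition argument parallel to the previous proof, and then transfer them to the inhomogeneous bounds for \(G_{\gamma,N}\) through a scaling-plus-frequency-support step. Parts (ii) and (iii) then follow from (i) by a lacunary superposition argument.

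For \(F_{\gamma,N}\) I would decompose \(\eta_{i,k}=\vartheta_{i,k}+\eps_{i,k}\), where \(\vartheta_{i,k}\) is the restriction of \(\eta_{i,k}\) to a neighborhood of \(n_{i,k}e_1\) whose radius is comfortably larger than the natural length scale \(2^{-k}\) of \(\eta_{i,k}\) but still much smaller than the separation \(2^{10|k|}\) between centers; this guarantees that for each fixed \(k\) the \(\vartheta_{i,k}\) are pairwise disjointly supported, while \(\eps_{i,k}\) decays rapidly by the analogue of \eqref{eq:etabound}. The key algebraic identity \(\fN_\gamma(k)\cdot 2^{-kd}\approx 2^{k\gamma}\) holds in the same form as in Proposition \ref{prop:counterex-ga>-d} (now with \(k<0\) and \(d+\gamma<0\)), so the distribution-function computations for the \(\vartheta\)-contribution and the error estimates for the \(\eps\)-contribution have the same algebraic structure and produce the upper bounds \(\lesssim N^{1/r}\) for \(\|F_{\gamma,N}\|_{\dot\cB^s_p(\beta,r)}\) and \(\|F_{\gamma,N}\|_{\dot B^s_{p,r}}\), as well as the lower bound \(\gtrsim N^{1/p}\) for \(\|F_{\gamma,N}\|_{\dot\cB^s_p(\gamma,\infty)}\). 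Moreover, by \eqref{eq:LP-repr} one has \(L_kF_{\gamma,N}=2^{-ks}f_{\gamma,k}\) for \(-2N<k\le -N\) and \(0\) otherwise, and the disjointness of the supports of the \(\vartheta_{i,k}\) gives \(\|f_{\gamma,k}\|_p\approx 1\) uniformly in \(k\); this yields the matching lower bounds \(\|F_{\gamma,N}\|_{\dot B^s_{p,r}}\gtrsim N^{1/r}\) and, by an analogous restriction of the defining \(\lambda\)-integral to a suitable regime, the same order lower bound for \(\|F_{\gamma,N}\|_{\dot\cB^s_p(\beta,r)}\) with \(\beta\neq \gamma\).

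To pass to \(G_{\gamma,N}\) I would observe that the exponent \(2^{3N(d/p-s)}\) is precisely the dilation normalization that preserves the relevant homogeneous norms: a direct computation using
\[L_kG_{\gamma,N}(x)=2^{3N(d/p-s)}L_{k-3N}F_{\gamma,N}(2^{3N}x)\]
yields \(\|G_{\gamma,N}\|_{\dot\cB^s_p(\beta,r)}=\|F_{\gamma,N}\|_{\dot\cB^s_p(\beta,r)}\) for every \(\beta\in\R\) and \(1\le r\le\infty\), and analogously for the \(\dot B^s_{p,r}\)-norm. Crucially, \(\widehat{G_{\gamma,N}}\) is supported in \(\{|\xi|\gtrsim 2^N\}\); hence the only Littlewood-Paley pieces of \(G_{\gamma,N}\) that do not vanish are indexed by \(k\in(N,2N]\), and therefore \(\text{\L}_k G_{\gamma,N}=L_k G_{\gamma,N}\) for all \(k\geq 0\). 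This identifies the inhomogeneous norm of \(G_{\gamma,N}\) with its homogeneous counterpart and gives the equalities stated in (i).

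For parts (ii) and (iii), the summands \(F_{\gamma,2^\ell}\) (resp.\ \(G_{\gamma,2^\ell}\)) have Littlewood-Paley pieces supported in pairwise disjoint dyadic frequency blocks \(k\in(-2^{\ell+1},-2^\ell]\) (resp.\ \(k\in(2^\ell,2^{\ell+1}]\)). The disjointness yields
\[\|F\|_{\dot B^s_{p,r}}^r\approx\sum_{\ell\ge 1}(\ell 2^{-\ell/p})^r\cdot 2^\ell=\sum_{\ell\ge 1}\ell^r\, 2^{\ell(1-r/p)},\]
which converges precisely when \(r>p\); the same bookkeeping gives membership in the \(\dot\cB^s_p(\beta,r)\) spaces for \(\beta\neq\gamma\). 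Failure of membership in \(\dot\cB^s_p(\gamma,\infty)\) is detected by restricting the defining supremum to a single frequency block indexed by \(\ell\): only the term \(\ell_0 2^{-\ell_0/p}L_k G_{\gamma,2^{\ell_0}}\) contributes to \(L_kG\) for \(k\) in that block, and the lower bound from Step 1 gives a contribution of at least \(\ell_0 2^{-\ell_0/p}\cdot(2^{\ell_0})^{1/p}=\ell_0\to\infty\); the inhomogeneous case (iii) follows verbatim via the identifications from the previous step. The principal technical obstacle is the careful choice of the threshold in the definition of \(\vartheta_{i,k}\) in the negative-\(k\) regime relative to \(2^{-k}\) and to the separation \(2^{10|k|}\); once that is calibrated, the algebraic structure is formally identical to Proposition \ref{prop:counterex-ga>-d} and the rest is bookkeeping together with the elementary scaling and frequency-support facts used in the passage from \(F_{\gamma,N}\) to \(G_{\gamma,N}\).
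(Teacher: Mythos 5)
Your proposal is correct and follows essentially the same route as the paper: carry over the \(\vartheta/\eps\) decomposition of Proposition \ref{prop:counterex-ga>-d} to the range \(-2N<k\le -N\), use dilation invariance of the homogeneous quasi-norms to transfer from \(F_{\ga,N}\) to \(G_{\ga,N}\), and use the high-frequency support of \(G_{\ga,N}\) to identify inhomogeneous with homogeneous norms; parts (ii) and (iii) then follow by the lacunary superposition argument as in the paper. You are in fact somewhat more explicit than the paper on two points that it leaves as ``exactly analogous'': the calibration of the cutoff radius in \(\vartheta_{i,k}\) (for \(k<0\) one must take a radius on the order of, say, \(2^{-2k}\), which is \(\gg 2^{-k}\) yet \(\ll 2^{10|k|}\), rather than the radius \(2^{k}\) used for \(k>0\)), and the verification of the scaling identity through the Littlewood--Paley pieces of \(G_{\ga,N}\). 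The only place your sketch stays a bit vague is the matching lower bound \(\gtrsim N^{1/r}\) for \(\|F_{\ga,N}\|_{\dot\cB^s_p(\beta,r)}\) with \(\beta\ne\gamma\), which you defer to ``an analogous restriction of the \(\lambda\)-integral''; the cleanest route is to note that for \(r\ge p\) one has the embedding \(\dot\cB^s_p(\beta,r)\hookrightarrow\dot B^s_r[L^{p,r}]\) from Theorem \ref{thm:embeddings}, and the lower bound in \(\dot B^s_r[L^{p,r}]\) then comes directly from the disjoint supports of the \(\vartheta_{i,k}\) exactly as in your Besov-norm computation.
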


\begin{proof}[Sketch of proof]
The proof of the bounds for $F_{\ga,N}$ is exactly analogous to the corresponding arguments in Proposition \ref{prop:counterex-ga>-d}.  Observe that the parameter $k$ now varies between $-2N$ and $-N$ and since $\gamma<-d$ we now have $\fN_\ga(k)= \lfloor 2^{k(d+\gamma)}\rfloor= \lfloor 2^{|k||d+\gamma|}\rfloor \ge 1$.
Also notice that the Fourier transform of $G_{\ga,N}$ is supported on large frequencies and therefore the homogeneous and inhomogeneous  Besov type norms for $G_{\ga,N}$ coincide. 

To pass from estimates for $F_{\ga,N}$ to estimates for $G_{\ga,N}$ we just use the dilation formulas
\begin{align*} &2^{n(\frac dp-s)}  \|f(2^n\cdot) \|_{\dot\cB^s_p (\ga,r)} = 
\|f\|_{\dot\cB^s_p (\ga,r)},
\\
&2^{n(\frac dp-s)}  \|f(2^n\cdot) \|_{\dot B^s_{p,r}} = 
\|f\|_{\dot B^s_{p,r}}.
\qedhere
\end{align*}
\end{proof}

The following  two lemmas show 
that the assumption  $\gamma\neq 0$ in part (ii) of Theorem
\ref{thm:embeddings} cannot be removed. A combination of these lemmas gives  a proof of  Theorem \ref{thm:non-emb-gamma}.

\begin{lemma}\label{prop:F-B} Let $s\in \bbR$, and $1<p<\infty$. 
There exists $f\in \bigcap_{r>p} \dot F^s_{p,r}$ which does not belong to $\dot \cB^s_p(0,\infty)$.
\end{lemma}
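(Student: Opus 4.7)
The plan is to construct $f = \sum_M a_M F^{(M)}_{N_M}$ as a superposition of tower blocks placed on widely separated spatial cubes and disjoint Littlewood--Paley scale ranges. The key point is that, for $r>p$, a building block concentrated on $N$ scales over a fixed unit cube has $\|\cdot\|_{\dot\cB^s_p(0,\infty)}$ growing like $N^{1/p}$ while $\|\cdot\|_{\dot F^s_{p,r}}$ grows only like $N^{1/r}$, so the ratio $N^{1/p-1/r}\to\infty$; summing countably many such blocks with decreasing weights then preserves $\dot F^s_{p,r}$-finiteness while forcing the weak-type $\dot\cB$-semi-norm to diverge.

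\emph{Building block.} Fix a real radial $\eta\in\cS$ with $\widehat\eta$ supported in $\{\xi:7/8\le|\xi|\le 9/8\}$ and $\widehat\eta(\pm e_j)>0$. With lattice spacing $R=2\pi$, Poisson summation yields the explicit trigonometric polynomial
\[
g(y)\coloneqq \sum_{i\in\Z^d}\eta(y-2\pi i)= C_0\sum_{j=1}^d \cos y_j,\qquad C_0>0,
\]
because among the frequencies $m\in\Z^d$ only those with $|m|=1$ lie in the annulus support of $\widehat\eta$. The set $\{y\in[0,2\pi]^d:|g(y)|>c\}$ has positive measure for $c>0$ sufficiently small. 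Put
\[
f_k(x)\coloneqq\sum_{i\in\Z^d\cap[0,2^k/(2\pi))^d}\eta(2^k x -2\pi i).
\]
Then $\widehat{f_k}$ is supported in the scale-$2^k$ annulus where $\varphi(2^{-k}\cdot)=1$, so $L_k f_k = f_k$ and $L_j f_k=0$ for $j\ne k$; moreover $\|f_k\|_\infty\le C$ uniformly, and $f_k(x)=g(2^k x)+O(2^{-kM})$ uniformly on $[0,1]^d$ (with error from truncating the Poisson series). By periodicity of $g$ and a change of variable $y=2^k x\bmod 2\pi$, the set $E_k\coloneqq\{x\in[0,1]^d:|f_k(x)|>c/2\}$ has measure $|E_k|\ge c_1>0$ uniformly for large $k$.

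Set $F_N(x)\coloneqq\sum_{k=N+1}^{2N} 2^{-ks} f_k(x)$, so that $P^{s+0/p}F_N(x,k)=f_k(x)\bbone_{N<k\le 2N}$. The uniform bound $|f_k|\le C$ together with the effective compact support of $f_k$ gives $\|F_N\|^p_{\dot F^s_{p,r}}\lesssim N^{p/r}$. For the matching lower bound, let $n(x)\coloneqq\#\{k\in(N,2N]:|f_k(x)|>c/2\}$; Fubini yields $\int n\,dx=\sum_k|E_k|\ge c_1 N$, and combined with the pointwise bound $n\le N$, a standard Chebyshev argument gives $|\{n\ge c_1 N/2\}|\ge c_1/2$, whence $\|F_N\|^p_{\dot F^s_{p,r}}\gtrsim N^{p/r}$. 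Meanwhile, for $\lambda\in(0,c/2)$,
\[
\mu_0(|P^s F_N|>\lambda)=\sum_{k=N+1}^{2N}|\{|f_k|>\lambda\}|\ge c_1 N,
\]
so $\|F_N\|_{\dot\cB^s_p(0,\infty)}\gtrsim N^{1/p}$.

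\emph{Superposition.} Choose pairwise disjoint scale ranges $R_M=(2^M,2^M+N_M]$ (e.g.\ with $N_M\le 2^{M-1}$) and widely separated spatial cubes $Q^{(M)}=y_M+[0,1]^d$, $|y_{M+1}-y_M|\ge 100$; let $F^{(M)}_{N_M}$ be the analog of $F_{N_M}$ translated to $Q^{(M)}$ and using only scales in $R_M$, so that Schwartz decay makes cross-contributions between distinct $M$'s negligible. Set
\[
f\coloneqq\sum_M a_M F^{(M)}_{N_M},\qquad a_M=M^{-\alpha},\quad N_M=M^\beta,
\]
with $\beta>0$ and $\alpha\in(\tfrac{1}{p}+\tfrac{\beta}{r},\,\tfrac{1}{p}+\tfrac{\beta}{p})$, a non-empty interval precisely because $r>p$. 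Frequency and spatial disjointness give
\[
\|f\|^p_{\dot F^s_{p,r}}\approx \sum_M a_M^p N_M^{p/r}=\sum_M M^{-\alpha p+\beta p/r}<\infty
\]
by the left-hand bound on $\alpha$, while for $\lambda=(c/2)\,a_{M_0}$,
\[
\mu_0(|P^s f|>\lambda)\gtrsim \sum_{M\le M_0} N_M\approx M_0^{\beta+1},\qquad \lambda\cdot\mu_0^{1/p}\gtrsim M_0^{(\beta+1)/p-\alpha}\to\infty
\]
as $M_0\to\infty$ by the right-hand bound, so $f\notin\dot\cB^s_p(0,\infty)$.

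\emph{Main obstacle.} The delicate point is securing the uniform-in-$k$ lower bound $|f_k|\gtrsim 1$ on a set of measure bounded away from zero. The naïve periodization $\sum_{i\in\Z^d}\eta(\cdot-i)$ vanishes identically, because all its Fourier coefficients $\widehat\eta(2\pi m)$ are zero (no $2\pi m$ with $m\in\Z^d$ lies in the annulus $\{7/8\le|\xi|\le 9/8\}$). Taking lattice spacing $R=2\pi$ is the precise choice that places the dual-lattice points $\pm e_j$ on the unit sphere, inside $\supp\widehat\eta$, turning the Poisson sum into the explicit trigonometric polynomial displayed above, whose sign and modulus are controlled by the conditions $\widehat\eta(\pm e_j)>0$. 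All remaining estimates, including the realization of the building blocks as genuine Littlewood--Paley pieces, become routine once this step is in place.
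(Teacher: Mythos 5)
Your approach is genuinely different from the paper's: you build a superposition of tower blocks, placed on disjoint Littlewood--Paley scale ranges and separated spatial cubes, so that the $\dot F^s_{p,r}$- and $\dot\cB^s_p(0,\infty)$-quasinorms of each block scale like $N^{1/r}$ and $N^{1/p}$ respectively. The paper instead uses a single family $f_k(x) = \eta_\circ(x)\,e^{i2^k x_1}\,k^{-1/p}\log k$ with an explicit weight on every scale; the $\log k$ factor simultaneously makes $\sum_k k^{-r/p}(\log k)^r$ converge for every $r>p$ and makes the $\dot\cB^s_p(0,\infty)$ quasinorm diverge, so a single choice of weight handles all $r>p$ at once. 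Your Poisson summation device to get $|f_k|\gtrsim 1$ on a set of fixed measure is correct but more elaborate than the paper's modulated bump (for which $|\eta_\circ(x)e^{i2^kx_1}| = |\eta_\circ(x)|$ trivially).

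There is, however, a genuine gap in the final step. You take $a_M = M^{-\alpha}$, $N_M = M^\beta$, and require
\[
\tfrac{1}{p} + \tfrac{\beta}{r} < \alpha < \tfrac{1}{p} + \tfrac{\beta}{p},
\]
which is nonempty only after fixing one $r>p$. For this choice, $\|f\|_{\dot F^s_{p,r'}}<\infty$ exactly when $r' > \beta/(\alpha-\tfrac1p)$, and since $\alpha < \tfrac1p + \tfrac\beta p$ the threshold $\beta/(\alpha-\tfrac1p)$ is strictly greater than $p$. So your $f$ lies in $\dot F^s_{p,r'}$ only for $r'$ above that threshold, not in $\bigcap_{r>p}\dot F^s_{p,r}$ as the lemma requires. (Your construction does prove the weaker per-$r$ statement that appears in Theorem \ref{thm:non-emb-gamma}(i), but not Lemma \ref{prop:F-B} itself.) The gap is repairable: take $\alpha = (\beta+1)/p$ exactly and insert a logarithmic factor, say $a_M = M^{-(\beta+1)/p}(\log M)^{\gamma}$ with $\gamma>0$; then $\sum_M a_M^p N_M^{p/r} \approx \sum_M M^{-1-\beta(1-p/r)}(\log M)^{\gamma p}$ converges for every $r>p$, while $a_{M_0}\bigl(\sum_{M\le M_0}N_M\bigr)^{1/p}\approx(\log M_0)^{\gamma}\to\infty$. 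This mirrors the role of $\log k$ in the paper's example, which is precisely what your polynomial weight scheme lacks.
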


\begin{proof}
Let $\eta_\circ$ be a Schwartz function such that $\eta_\circ(x)>1$ for $|x|<1$ and $\widehat \eta_\circ $ is supported in $\{\xi:|\xi|\le 2^{-5} \}$.
For $k>2$ let
\[ f_k(x)=\eta_\circ(x) e^{i2^k x_1} \frac{\log k}{k^{1/p}} \]
and $f(x)=\sum_{k>2} 2^{-ks} f_k(x)$. 
Then $L_k f=2^{-ks} f_k $  and thus 
\begin{align*}
    &\Big(\sum_{k>2} 2^{ksr} |L_k f(x)|^r\Big)^{1/r} = \Big(\sum_{k>2} |f_k(x)|^r\Big)^{1/r}
    \\
    &\lc |\eta_\circ(x)|  \big(\sum_{k>2} k^{-r/p} |\log k|^r\big)^{1/r} \lc C(p,r) |\eta_\circ(x)|
\end{align*}
with $C(p,r)<\infty$ for $r>p$. Hence $f\in \dot F^s_{p,r}$ for all $r>p$.

For $\la\ll1$ we have 
\begin{align*}
   & \la \mu_0\{(x,k): |P^sf(x,k)|>\la\}^{1/p}
    \\
    &\ge\la  \Big(\sum_{\substack{k>2\\ k^{-1/p}\log k>\la}} \meas\{x: |x|<1/4\} \Big)^{1/p} 
    \\
    &\ge c\la \Big( \sum_{2<k<c (\frac {\log \la^{-1}} \la)^p  }1 \Big)^{1/p} \ge c \log \la^{-1}
\end{align*}
so that $f$ does not belong to $\dot \cB^s_p(0,\infty)$.
\end{proof}




\begin{lemma}\label{prop:F-B-r=1} Let $s\in \bbR$, and $1<p<\infty$. 
There exists  $g\in \dot\cB^s_p(0,1)$ which does not belong to $ \bigcup_{r<p} \dot F^s_{p,r}$.
\end{lemma}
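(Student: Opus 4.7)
My plan is to construct $g$ by mirroring the example used in the proof of Lemma~\ref{prop:F-B}, tuning the coefficients so that the $\dot F^s_{p,r}$ norm blows up for every $r<p$ while the $\dot\cB^s_p(0,1)$ norm remains finite. First I will fix $\eta_\circ \in \cS(\R^d)$ satisfying $|\eta_\circ(x)|\ge 1$ for $|x|\le 1$ and $\supp(\widehat\eta_\circ)\subset \{|\xi|\le 2^{-5}\}$, so that for each $k\ge 2$ the function $\eta_\circ(x)e^{i2^kx_1}$ has Fourier support in the region where $\varphi(2^{-k}\cdot)=1$. This forces $L_k$ to act as the identity on it and $L_j$ to act as zero for $j\ne k$. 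Then, for a parameter $\alpha>1$, I will put $c_k=k^{-1/p}(\log k)^{-\alpha}$ for $k\ge 2$ and take
\[g(x)=\sum_{k\ge 2}c_k\,2^{-ks}\,\eta_\circ(x)\,e^{i2^kx_1},\]
so that $L_k g=c_k\,2^{-ks}\eta_\circ(x)\,e^{i 2^k x_1}$ for $k\ge 2$ and $L_k g=0$ otherwise.

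The $\dot F^s_{p,r}$ half is immediate: since $|e^{i2^kx_1}|=1$, the inner $\ell^r$-sum factors as $|\eta_\circ(x)|(\sum_k|c_k|^r)^{1/r}$, and therefore
\[\|g\|_{\dot F^s_{p,r}}\approx \|\eta_\circ\|_p\Big(\sum_{k\ge 2}|c_k|^r\Big)^{1/r}.\]
For $r<p$ the exponent $r/p$ is strictly less than $1$, and hence $\sum k^{-r/p}(\log k)^{-\alpha r}=\infty$ for every $\alpha$; so $g\notin \dot F^s_{p,r}$ for any $r<p$.

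The main work is to show $\|g\|_{\dot\cB^s_p(0,1)}<\infty$. Using \eqref{eq:concretedef} with $r=1$ and $\gamma=0$, this norm is comparable to $\int_0^\infty m(\lambda)^{1/p}\,d\lambda$, where $m(\lambda)=\sum_{k\ge 2}F(\lambda/|c_k|)$ and $F(t)=\meas\{|\eta_\circ|>t\}$. Since $\eta_\circ$ is Schwartz, Chebyshev's inequality combined with $|\eta_\circ(x)|\lesssim_N(1+|x|)^{-N}$ gives $F(t)\le C_q t^{-q}$ for every $q>0$ and every $t>0$. For large $\lambda$ the single choice $q>p$ yields $m(\lambda)\lesssim \lambda^{-q}$, so $m^{1/p}$ is integrable on $[1,\infty)$. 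For small $\lambda$ I would split the sum at the critical index $K(\lambda)\approx\lambda^{-p}(\log\lambda^{-1})^{-\alpha p}$ at which $|c_k|\approx\lambda$: on the head $k<K$ apply $F(t)\le C_\epsilon t^{-\epsilon}$ with a small $\epsilon\in(0,p)$, and on the tail $k\ge K$ apply $F(t)\le C_N t^{-N}$ with $N>p$. Careful accounting, which retains the $(\log k)^{-\alpha\epsilon}$ and $(\log k)^{-\alpha N}$ factors inside $\sum|c_k|^\epsilon$ and $\sum|c_k|^N$, shows that both the head and the tail are bounded by $\lambda^{-p}(\log\lambda^{-1})^{-\alpha p}$, so
\[m(\lambda)^{1/p}\lesssim \lambda^{-1}(\log\lambda^{-1})^{-\alpha}.\]
For $\alpha>1$ the right-hand side is integrable near $\lambda=0$, which completes the bound.

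The delicate point is the balancing of the head and tail sums in the estimate of $m(\lambda)$ near $0$: both contribute at the same critical scale $\lambda^{-p}$, and it is precisely the factor $(\log k)^{-\alpha}$ with $\alpha>1$ that converts the resulting $\lambda^{-1}$-singularity into an integrable one. Combining the two steps above produces a single $g\in \dot\cB^s_p(0,1)$ lying outside every $\dot F^s_{p,r}$ with $r<p$, proving the lemma.
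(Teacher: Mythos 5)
Your construction is the same as the paper's, with the cosmetic generalization $c_k=k^{-1/p}(\log k)^{-\alpha}$, $\alpha>1$, in place of the paper's $\alpha=2$; the argument that $g\notin\dot F^s_{p,r}$ for $r<p$ is identical. Where you genuinely diverge is in verifying $g\in\dot\cB^s_p(0,1)$. The paper tiles $\bbR^d\times\bbZ$ by dyadic boxes $E_{\ell,m}=\{2^{\ell-1}\le|x|<2^\ell,\ 2^{m-1}\le k<2^m\}$, bounds $|g_k|$ on each by a constant times the normalized indicator $\bbone_{E_{\ell,m}}/\mu_0(E_{\ell,m})^{1/p}$, and then sums these ``atoms'' in $L^{p,1}(\mu_0)$, relying on the fact that $L^{p,1}$ is normable for $p>1$ so the triangle inequality applies. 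You instead compute the distribution function directly: $m(\lambda)=\sum_k F(\lambda/|c_k|)$ with $F(t)=\meas\{|\eta_\circ|>t\}$, split the $k$-sum at the critical index $K(\lambda)\approx\lambda^{-p}(\log\lambda^{-1})^{-\alpha p}$ where $|c_k|\approx\lambda$, and feed $F(t)\lesssim_q t^{-q}$ with a small exponent on the head and a large one on the tail. Both approaches exploit the super-polynomial decay of $\eta_\circ$; yours is a more hands-on rearrangement-function calculation that does not need the normability of $L^{p,1}$ (just the definition of the norm), while the paper's atomic decomposition is shorter to write out but leans on that structural fact. I checked your head/tail balancing: the $(\log k)^{-\alpha\epsilon}$ and $(\log k)^{-\alpha N}$ factors do combine with $K\approx\lambda^{-p}(\log\lambda^{-1})^{-\alpha p}$ to give $m(\lambda)\lesssim\lambda^{-p}(\log\lambda^{-1})^{-\alpha p}$ at the critical scale, so the required integrability near $\lambda=0$ follows precisely from $\alpha>1$. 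The argument is correct.
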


\begin{proof} 
 As in the proof of Lemma \ref{prop:F-B} let $\eta_\circ$ be a Schwartz function such that $\eta_\circ(x)>1$ for $|x|<1$ and $\widehat \eta_\circ $ is supported in $\{\xi:|\xi|\le 2^{-5} \}$.
For $k>2$ let
\[ g_k(x)=\frac{\eta_\circ(x) e^{i2^k x_1} } {k^{1/p} [\log k] ^2} \]
and $g(x)=\sum_{k>2} 2^{-ks} g_k(x)$. 
Then $L_k g=2^{-ks} g_k $  and thus 
\begin{align*}
    &\Big(\sum_{2<k\le 2N}  2^{ksr} |L_k g(x)|^r\Big)^{1/r} = \Big(\sum_{2<k\le 2N}  |g_k(x)|^r\Big)^{1/r}
    \\
    &\ge |\eta_\circ(x)|  \big(\sum_{N\le k\le 2N} k^{-r/p} |\log k|^{-2r}\big)^{1/r} \ge  C(p,r) N^{1-r/p} (\log N)^{-2} |\eta_\circ(x)|
\end{align*}
with $C(p,r) >0$ and $1-\frac{r}{p} > 0$ for $r<p$. Integrating its $p$-th power over $\{x:|x|\le 1/2\}$ and letting $N\to\infty$ we see that  $f\notin \dot F^s_{p,r}$ for all $r<p$.

Now let,  
\begin{align*} E_{\ell,m} &=\{(x,k): 2^{\ell-1}\le |x|< 2^{\ell},\,\, 2^{m-1}\le k < 2^m\}, \text{ for $(\ell,m)\in \bbN^2$}, 
\\
E_{0,m}&=\{(x,k): |x|<1, \,\, 2^{m-1}\le k < 2^m\}, \text{ for  $m\in \bbN$}. \end{align*}
Then $\mu_0(E_{\ell,m})\approx  2^{m+\ell d}$ and 
\[|g_k(x) |\lc_N
2^{-m/p} m^{-2} 2^{-\ell N}  \text{ if }  (x,k)\in E_{\ell,m}, 
\] for any $(\ell,m)\in (\bbN\cup\{0\})\times \bbN$.  Therefore 
\[|P^s g(x,k) |=|g_k(x)| \lc_N \sum_{\ell=0}^\infty \sum_{m=1}^\infty 2^{-\ell (N-d/p)}  m^{-2} \frac{\bbone_{E_{\ell,m}}(x,k) } {\mu_0 (E_{\ell,m})^{1/p}}.\]
Choosing  $N>d/p$ we see that  $P^s g\in L^{p,1}(\mu_0)$ and since $\widehat g(\xi)=0$ for $|\xi|\le 1$ we obtain  $g\in \dot \cB^s_p(0,1) $.
\end{proof}
\section{Proof of Theorem  \ref{thm:BVemb} } \label{sec:BV-interpol}
We use a result in \cite{bsvy}, namely for \(\gamma \in \R \setminus [-1,0]\) 
\Be\label{eq:bsvy}
[ \cQ_{1,1+\gamma} f ]_{L^{1, \infty} (\nu_\gamma)} \lesssim \|f\|_{\dot {BV}(\R^d)}.
\Ee
Since \(\abs{\cQ_{1,(1+\ga)/p} f}^p \le \abs{\cQ_{1,1+\ga} f}\,  ( 2\Norm{f}_{V^\infty})^{p - 1},\) we have \[\|f\|_{\dot \cB^{1/p}_p(\gamma,\infty)} \simeq [\cQ_{1,\frac{1+\ga}{p}} f]_{L^{p,\infty}(\nu_{\ga})} \lesssim \|f\|_{V^{\infty}}^{1-\frac{1}{p}} [\cQ_{1, 1+\ga} f]_{L^{1,\infty}(\nu_{\ga})}^{\frac{1}{p}},\] which combined with \eqref{eq:bsvy}  gives
\Be \label{eq:BVembedding}\|f\|_{\dot \cB^{1/p}_p(\gamma,\infty)} \lc  \|f\|_{V^{\infty}}^{1-\frac 1p} \|f\|_{\dot {BV}}^{\frac 1p }\Ee
for every \(\gamma \in \R \setminus [-1,0]\) and \(1 < p <\infty\).

We can interpret   inequality \eqref{eq:BVembedding} as an imbedding result for the real interpolation space  $[V^\infty, \dot{BV}]_{\theta,1}$ with $\theta=\frac 1p$, and get,  
\Be \label{eq:BVembedding-alt}\|f\|_{\dot \cB^{1/p}_p(\gamma,\infty)} \lc \|f\|_{[V^\infty, \dot {BV} ]_{\frac 1p,1 }}.\Ee
For this argument, see \cite[p. 49]{BL} (specifically a combination of formula (1) and Theorem 3.5.2 (b)).
 \qed
 


\begin{remarka} Concerning  Remark \ref{rem:Vpinterpol}, by the reiteration theorem (\cite[Theorem 3.5.3]{BL}) we have 
$ \big [  [V^\infty,\dot{BV}]_{\frac 1{p_0},1},
 [V^\infty,\dot{BV}]_{\frac 1{p_1},1}
 \big]_{\theta,\infty} = [V^\infty, \dot{BV}]_{\frac 1p,\infty} $, provided that  $1<p_0<p<p_1<\infty$ and  $\frac 1p=\frac{1-\theta}{p_0} +\frac\theta{p_1}$.
 Hence $[V^\infty, \dot{BV}]_{\frac 1p,\infty}$ embeds only into $[\dot \cB_{p_0}^{1/p_0} (\ga,\infty) , \dot \cB_{p_1}^{1/p_1}(\ga,\infty)]_{\theta,\infty} $ 
 (a weaker conclusion than embedding into $\dot \cB^{1/p}_p(\ga,\infty)$).
\end{remarka} 
 
\section{Harmonic and caloric extensions}
\label{sec:harmonic-caloric}
In what follows let $\psi$ be a sufficiently well behaved integrable function with $\int \psi(x)\dif x=0$, specifically we will take $\psi$ as one of $\psi_{(1)}$, $\psi_{(2,j)}$, $\psi_{(3)}$, $\psi_{(4,j)}$ 
where
\Be\label{eq:choices-of-psi} 
\begin{aligned} &\widehat \psi_{(1)}(\xi)=|\xi|e^{-|\xi| }, \qquad \widehat \psi_{(2,j)} (\xi)= i\xi_j e^{-|\xi|}, 
\\
&\widehat \psi_{(3)}(\xi)= -|\xi|^2 e^{-|\xi|^2}, 
\qquad 
\widehat \psi_{(4,j)} (\xi) = i\xi_j e^{-|\xi|^2},
\end{aligned}\Ee or we could also take  $\psi=\psi_{(5)}=  \frac{\partial}{\partial s} [s^{-d} \phi(s^{-1} \cdot)] |_{s=1}$ for any  $\phi\in \cS(\bbR^d)$. 
Let  $\psi_t=t^{-d}\psi(t^{-1}\cdot) $.
Classical results on characterizations of Besov spaces
(\cite{taibleson}, \cite[Chapter V.5, Proposition 7']{stein-diff}, \cite[Chapter 1.8]{triebel2})  yield the inequality
\Be\label{eq:classical} 
\iint_{\R^{2}_+} t^{-sp} \abs{\psi_t*f(x)}^{p} \dif x \frac{\dif t}{t} \lc \|f\|_{\dot W^{s,p}}^p
\Ee for $f \in \dot W^{s,p}$, $1\le p<\infty$ and $0<s<1$.

With  $\psi$, $\psi_t$   as above,   define, for $f \in \dot \cB^s_p(\ga,r)$ with $0 < s < 1$,  \[\cK^b f(x,t) = t^{-b} \psi_t*f_{\circ}(x)\]
where $f_{\circ}$ is any representative of $f$ modulo constants. 
Recall from \eqref{eq:lagameas} that $\dif\la_\ga(x,t) = t^{\ga-1} \dif t\dif x$.

\begin{proposition}
\label{proposition_space-time_extension}
Let \(0 < s < 1\), \(1 < p, r < \infty\) and \(\gamma \in \R\).
The operator \(\cK^{s + \frac{\gamma}{p}}\) is bounded from \(\dot\cB^s_p(\ga,r)\) to \(L^{p, r} (\lambda_\gamma)\).
\end{proposition}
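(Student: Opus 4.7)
The plan is to express $\cK^{s+\gamma/p}$ as the composition of the map $P^{s+\gamma/p}$ from Definition~\ref{first-definition} with a convolution-type operator acting in the $(x,k) \leftrightarrow (x,t)$ variables, and then adapt the argument of Lemma~\ref{lem:2.2}. Set $b = s + \gamma/p$. Since $0 < s < 1$, Theorem~\ref{differences-thm-intro} (applied with $M = 1$) supplies, for each $f \in \dot\cB^s_p(\gamma, r)$, a tempered representative $f_\circ$ modulo constants. Each model $\psi$ listed in \eqref{eq:choices-of-psi} satisfies $\widehat\psi(0) = 0$, which makes $\psi_t * f_\circ$ independent of the choice of representative; moreover $\widehat\psi$ vanishes at the origin to some order $\alpha \geq 1$ and decays faster than any polynomial at infinity.

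Using the Littlewood--Paley identity $f_\circ = \sum_{k \in \Z} \widetilde L_k L_k f$ (the right-hand side is unambiguous because $L_k$ annihilates polynomials) and writing $g = P^b f$, one obtains
\begin{equation*}
\cK^b f(x, t) = T_b g(x, t) := \sum_{k \in \Z} \frac{(\psi_t * \widetilde L_k g(\cdot, k))(x)}{(2^k t)^b}.
\end{equation*}
It therefore suffices to show that $T_b$ is bounded from $L^{p,r}(\mu_\gamma)$ to $L^{p,r}(\lambda_\gamma)$. By real interpolation, one may replace $L^{p,r}$ by $L^q$ for two exponents $q$ on either side of $p$, chosen close enough that $s_q := b - \gamma/q$ still lies in $(0, 1)$.

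For such $q$, the kernel-level inequality
\begin{equation*}
\|\psi_t * \widetilde L_k\|_{L^q \to L^q} \lesssim_N \min\bigl((2^k t)^\alpha, (2^k t)^{-N}\bigr), \qquad N \in \N,
\end{equation*}
follows from the frequency localization of $\widetilde L_k$ at scale $2^k$ together with the stated behavior of $\widehat\psi$. Minkowski's inequality then yields
\begin{equation*}
\|T_b g\|_{L^q(\lambda_\gamma)} \lesssim \Bigl(\int_0^\infty \Bigl[\sum_k \frac{\min((2^k t)^\alpha, (2^k t)^{-N})}{(2^k t)^b}\|g(\cdot, k)\|_q\Bigr]^{q} t^{\gamma - 1} \dif t\Bigr)^{1/q}.
\end{equation*}
Discretizing $t \sim 2^{-j}$, substituting $c_k = 2^{-k\gamma/q} \|g(\cdot, k)\|_q$ (so that $\|g\|_{L^q(\mu_\gamma)}^q = \sum_k c_k^q$), and setting $\ell = k - j$, the bracketed sum reorganizes into $2^{j\gamma/q}(a * c)_j$ where
\begin{equation*}
a_\ell = \min\bigl(2^{\ell(\alpha - s_q)}, 2^{-\ell(N + s_q)}\bigr).
\end{equation*}
The conditions $\alpha - s_q > 0$ (from $\alpha \geq 1 > s_q$) and $N + s_q > 0$ (for $N$ large) ensure $a \in \ell^1(\Z)$. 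Upon raising to the $q$-th power, the factor $2^{j\gamma}$ cancels the dyadic weight $2^{-j\gamma}$ arising from $t^{\gamma - 1} \dif t$ on $[2^{-j-1}, 2^{-j}]$, so Young's inequality gives $\|T_b g\|_{L^q(\lambda_\gamma)} \lesssim \|g\|_{L^q(\mu_\gamma)}$. Real interpolation in $q$ then delivers the claimed Lorentz-space bound.

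The main technical obstacle is verifying the kernel bound uniformly across the five model $\psi$'s in \eqref{eq:choices-of-psi}; this is routine from the explicit Fourier descriptions, since each $\widehat\psi$ there is smooth away from the origin, vanishes at least linearly at $0$, and is rapidly decreasing at infinity. The only other subtlety is arranging the interpolation parameters so that $s_q$ persists in $(0, 1)$ on both sides of $p$, which is automatic by continuity since $s_p = s \in (0, 1)$.
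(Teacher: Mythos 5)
Your proof is correct, but it takes a genuinely different route from the paper's. The paper's proof is very short: it observes that when $f\in \dot B^{s_i}_{p_i,p_i}$ with $s_i+\gamma/p_i=s+\gamma/p$, the quantity $\|\cK^{s+\gamma/p}f\|_{L^{p_i}(\lambda_\gamma)}$ reduces exactly to the classical square-function expression in \eqref{eq:classical}, hence $\cK^{s+\gamma/p}$ maps $\dot B^{s_i}_{p_i,p_i}\to L^{p_i}(\lambda_\gamma)$; it then applies the (nontrivial) Besov interpolation identity $[\dot B^{s_0}_{p_0,p_0},\dot B^{s_1}_{p_1,p_1}]_{\theta,r}=\dot\cB^s_p(\gamma,r)$ from Theorem~\ref{interpol-thm-Fourier} together with the standard Lorentz-space interpolation on the target side. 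Your argument instead factors $\cK^{s+\gamma/p}=T_b\circ P^b$ and proves the boundedness of $T_b:L^{p,r}(\mu_\gamma)\to L^{p,r}(\lambda_\gamma)$ directly by a continuous-parameter analogue of Lemma~\ref{lem:2.2}: kernel estimate $\|\psi_t*\widetilde L_k\|_{L^q\to L^q}\lesssim\min\{(2^kt)^\alpha,(2^kt)^{-N}\}$, Minkowski, dyadic discretization, Young's convolution inequality on $\ell^q(\Z)$, and then real interpolation only at the level of the $L^q(\mu_\gamma)$, $L^q(\lambda_\gamma)$ scales (elementary Lorentz interpolation). What this buys is self-containment: you do not invoke Theorem~\ref{interpol-thm-Fourier} (a major result of the paper, proved in \S\ref{sec:interpolation}) nor the classical bound \eqref{eq:classical}; indeed your computation is essentially a reproof of \eqref{eq:classical} in the Lorentz setting. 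The paper's route is shorter because it reuses machinery already built; yours is more transparent about why the estimate holds and would survive even if Theorem~\ref{interpol-thm-Fourier} were not yet available. One small point worth making explicit: for $\psi_{(1)}$ and $\psi_{(2,j)}$ the multiplier $\widehat\psi$ is only Lipschitz at the origin, not smooth, but this does not affect the kernel bound since $\widehat\psi(t\cdot)\widetilde\varphi(2^{-k}\cdot)$ is supported away from $\xi=0$ where $\widehat\psi$ is $C^\infty$ and rapidly decaying, and the vanishing order $\alpha=1$ at the origin is exactly what drives the small-$(2^kt)$ estimate.
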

\begin{proof}
 We take \((s_0, p_0)\), \((s_1, p_1)\) and \(\theta\) such that \eqref{eq_gamma} and \eqref{eq_theta} holds, and \(0 < s_0 < 1\), \(0 < s_1 < 1\), \(1 < p_0 < \infty\) and \(1 < p_1 < \infty\).
 Recall $s+\frac{\ga}{p} = s_i + \frac{\ga}{p_i}$, and observe that for $f \in \dot B^{s_i}_{p_i,p_i}$, 
 \begin{equation*}
 \iint_{\R^{2}_+} \frac{\abs{\cK^{s+\frac{\gamma}{p}}f (x, t)}^{p_i}}{t^{1 - \gamma}} \dif x \dif t
 = \iint_{\R^{2}_+} t^{-s_i p_i} \abs{\psi_t*f_{\circ} (x)}^{p_i} \dif x \frac{\dif t}{t}.
\end{equation*}
where $f_{\circ} \in \dot W^{s_i,p_i}$ is a representative of $f$ modulo constants.
 It follows from \eqref{eq:classical} that  \(\cK^{s + \frac{\gamma}{p}}\) 
is bounded from \(\dot B^{s_i}_{p_i, p_i}\) to \(L^{p_i} (\lambda_\gamma)\).  
The conclusion then follows by interpolation, in view of Theorem~\ref{interpol-thm-Fourier} and of the  classical characterization of Lorentz spaces as interpolation spaces.
\end{proof}

\begin{corollary} \label{cor:CK-BV} Let $1<p<\infty$, $\gamma\in \bbR\setminus [-1,0] $. Then
\[\cK^{\frac{\ga+1}{p}} : \, [ V^\infty, \dot {BV}]_{\frac 1p,1}\to L^{p,\infty}(\la_\ga)\]
is bounded.
\end{corollary}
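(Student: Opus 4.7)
The plan is to compose two bounds. By Theorem \ref{thm:BVemb},
\[
[V^\infty, \dot{BV}]_{\frac{1}{p}, 1} \hookrightarrow \dot \cB^{1/p}_p(\ga, \infty),
\]
so it suffices to establish the endpoint mapping
\[
\cK^{(\ga+1)/p} : \dot \cB^{1/p}_p(\ga, \infty) \longrightarrow L^{p, \infty}(\la_\ga).
\]
This is exactly the $r = \infty$ case of Proposition \ref{proposition_space-time_extension} with $s = 1/p$, and I would derive it by repeating that proposition's argument verbatim, since nothing there actually uses $r < \infty$.

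Concretely, I pick $(s_0, p_0)$ and $(s_1, p_1)$ with $s_i \in (0,1)$, $p_i \in (1, \infty)$, $p_0 \ne p_1$, such that \eqref{eq_gamma} and \eqref{eq_theta} hold for $(1/p, 1/p) = (1 - \theta)(1/p_0, s_0) + \theta(1/p_1, s_1)$; this forces $s_i + \ga/p_i = (\ga + 1)/p$ for $i = 0, 1$. The classical characterization \eqref{eq:classical} of $\dot W^{s_i, p_i} = \dot B^{s_i}_{p_i, p_i}$ via the convolutions $\psi_t * f$ then gives
\[
\cK^{(\ga+1)/p} : \dot B^{s_i}_{p_i, p_i} \longrightarrow L^{p_i}(\la_\ga), \qquad i = 0, 1.
\]
Theorem \ref{interpol-thm-Fourier} identifies $[\dot B^{s_0}_{p_0, p_0}, \dot B^{s_1}_{p_1, p_1}]_{\theta, \infty}$ with $\dot \cB^{1/p}_p(\ga, \infty)$, while the standard fact $[L^{p_0}(\la_\ga), L^{p_1}(\la_\ga)]_{\theta, \infty} = L^{p, \infty}(\la_\ga)$ produces the target Lorentz space at level $(\theta, \infty)$. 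Real interpolation of the two bounds above therefore delivers the desired mapping.

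I do not expect a serious obstacle. Theorem \ref{thm:BVemb} carries the entire restriction $\ga \notin [-1, 0]$ through its use of the inequality from \cite{bsvy}; once that input is granted, what remains is the by-now routine combination of Besov real interpolation with Taibleson's characterization. The only small sanity check is that $\cK^{(\ga+1)/p} f$ is well defined on $V^\infty + \dot{BV}$: since every admissible $\psi$ in \eqref{eq:choices-of-psi} satisfies $\int \psi = 0$, one has $\psi_t * c = 0$ for every constant $c$, so $\psi_t * f_\circ$ depends only on the equivalence class of $f_\circ$ modulo constants.
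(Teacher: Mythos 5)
Your proof follows the paper's own route exactly: compose Theorem \ref{thm:BVemb} with Proposition \ref{proposition_space-time_extension} at $s=1/p$. The one subtlety you rightly flag — that the paper states Proposition \ref{proposition_space-time_extension} only for $1<r<\infty$, whereas the corollary needs the $r=\infty$ case — is actually glossed over in the paper's one-line proof, and your observation that the proposition's argument works verbatim for $r=\infty$ (since Theorem \ref{interpol-thm-Fourier} and the Lorentz interpolation identity both cover $r=\infty$) is correct and closes that small gap cleanly; the remark about $\psi_t*c=0$ being what makes $\cK^{(\ga+1)/p}$ well-defined modulo constants on $V^\infty+\dot{BV}$ is also a good sanity check.
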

\begin{proof}
Combine Proposition \ref{proposition_space-time_extension} for $s=1/p$ with Theorem \ref{thm:BVemb}.\end{proof}


\subsection{\it Harmonic extensions: Proof of Corollary \ref{cor:Poisson-emb}} From \cite[Lemma 1.17]{stein-weiss} we recall that $\widehat {\poi f}(\xi,t)=e^{-t|\xi|}\widehat f(\xi)$  and therefore we are led to use  
the function $\psi_{(1)} $ and $\psi_{(2,\nu)}$ for $\nu=1,\dots, d$ in \eqref{eq:choices-of-psi}, for formulas for $t\frac{\partial}{\partial t} \poi f$ and $t\frac{\partial}{\partial x_\nu} \poi f $, respectively. 
We let
$\mathscr K^b f(x,t)= t^{1-b}\nabla \mathscr P f(x,t)$ and  apply Corollary 
\ref{cor:CK-BV} to  obtain  
\[ \|\mathscr K^{\frac{\ga+1} p} f\|_{L^{p,\infty}(\la_\ga)} \lc \|f\|_{[ V^\infty, \dot {BV}]_{\frac 1p,1}}\]
and the proof of the first inequality in Corollary \ref{cor:Poisson-emb} is  complete. For the proof of the  second inequality choose $\gamma=1$ and $p=2$,  which is the unique choice of $p,\ga$ where $\mathscr K^{\frac{\ga+1}{p} }$  becomes $\nabla \mathscr P $ and $\la_\ga$ becomes Lebesgue measure on $\bbR^{d+1}_+$. \qed

\subsection{\it Caloric  extensions:  Proof  of Corollary \ref{cor:heat} }
Note  that \(r\frac{\partial}{\partial r}[ \widehat {Uf}(\xi,r^2)]\) equals \(  2 |r\xi|^2e^{-| r\xi|^2 } \widehat f(\xi),\)  and taking  $\psi=\psi_{(3)}$ in the definition of $\cK^b\equiv \cK_{d+1}^b$,  we get 
\[2\cK^b_{d+1} f(x,r)= r^{1-b} \frac{\partial}{\partial r}[Uf(x,r^2)]
=2 t^{1-\frac b2 }  \frac{\partial }{\partial t} Uf (x,t) \Big|_{t=r^2}
=2\mathscr H_{d+1}^{b/2} f (x,r^2).
\]  
We apply  Corollary \ref{cor:CK-BV}  with  $\ga=2\beta$ and observe that
 \[\la_{2\beta}(\{(x,r): |\cK_{d+1}^{\frac{2\beta+1}{p}} f(x,r)|>\alpha\})= \frac 12 \la_\beta(\{(x,t): \mathscr H_{d+1}^{\frac{2\beta+1}{2p}} f(x,t)>\alpha\}).
\] For $2\beta\notin [-1,0]$ the operator   $\cK^{\frac{2\beta+1}{p}} $ maps 
$[V^\infty,\dot{BV}]_{1/p,1} $ to $L^{p,\infty}(\la_{2\beta})$, and hence  
$\mathscr H_{d+1}^{\frac{2\beta+1}{2p} }$ maps $[V^\infty,\dot{BV}]_{1/p,1} $ to 
$L^{p,\infty}(\la_{\beta})$.

For $j=1,\dots, d$ we argue similar, taking $\psi=\psi_{(4,j)}$ in the definition of $\cK^b\equiv \cK^b_j$. We then have 
\(\cK^b_j f(x,r)= r^{1-b} \frac{\partial }{\partial x_j} Uf(x,r^2)\) and again apply Corollary \ref{cor:CK-BV} with $\ga=2\beta$. Now for $j=1,\dots, d$, 
\[
\cK_j^bf(x,r)=
t^{\frac{1-b}{2} }  \frac{\partial }{\partial x_j} Uf (x,t)\Big|_{t=r^2}
=\mathscr H_j^{b/2} f (x,r^2), \] and again we see  that
$\mathscr H_{j}^{\frac{2\beta+1}{2p} }$ maps $[V^\infty,\dot{BV}]_{1/p,1} $ to 
$L^{p,\infty}(\la_{\beta})$.
This finishes the proof of  part (i) of the corollary. For part (ii) we need $\dif \lambda_{\beta} = \dif x \dif t$ so that we put  $\beta=1$. We then apply  part (i), for the operator $\frac{\partial U}{\partial t }$ with  $p=3/2$ (so that  $\frac{2\beta+1}{2p} = 1$),  and for the operators $\frac{\partial U}{\partial x_j} $
with  $p=3$ (so that  $\frac{2\beta+1}{2p} = \frac 12$). \qed

\section{Interpolation: Proof of 
Theorem \ref{interpol-thm-Fourier}}\label{sec:interpolation}
We use the standard  retraction-coretraction argument (see \cite[\S6.4]{BL}).
Recall that if $\overline X=(X_0,X_1)$ and $\overline Y=(Y_0,Y_1)$ are  couples of compatible normed spaces  then 
 $P:\overline X\to \overline Y$  is a morphism of couples if $P$ is a linear operator mapping  $X_0+X_1$ to $Y_0+Y_1$, such that  $P:X_\nu\to Y_\nu$  is a bounded linear operator for $\nu=0$ and $\nu=1$. 
 
 If $ P: \overline X\to \overline Y$, $R:\overline Y\to \overline X$ are be morphisms of couples such that  $R\circ P: \overline X\to \overline X=Id$,  the identity operator on $\overline X$ then $\overline X$ is called a retract of $\overline Y$;  $R$  is a retraction and $P$ is a co-retraction.
  \begin{center}
\begin{tikzcd}
& \overline Y\arrow[dr,"R"] & \\ \overline X \arrow[hook,ur,"P"] \arrow[rr,"id"] && \overline X
\end{tikzcd}
\end{center}

\begin{lemma}  \label{lem:retract-formalism} \cite{BL}
Let $\overline X=(X_0,X_1)$, $\overline Y=(Y_0,Y_1)$ be a couples of compatible normed spaces such that $\overline X$ is a retract of $\overline Y$ with co-retraction $P:\overline X\to \overline Y$ and retraction $R$ then
\[ [X_0,X_1]_{\theta,r}= \{ f\in X_0+X_1:  Pf\in [Y_0,Y_1]_{\theta,r} \} \] 
and we have the equivalence of norms, 
$\|f\|_{[X_0, X_1]_{\theta,r} }\approx \|Pf\|_{[Y_0,Y_1]_{\theta,r} }  .$
\end{lemma}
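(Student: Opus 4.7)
The plan is to derive both the equality of sets and the equivalence of norms from the functoriality of the $K_{\theta,r}$-method with respect to morphisms of couples, applied to $P$ and $R$ separately. The essential ingredient is the standard observation that for any morphism of couples $T:\overline X\to \overline Y$ with component norms $M_\nu \coloneqq \|T\|_{X_\nu\to Y_\nu}$, $\nu=0,1$, the $K$-functional satisfies
\[ K(t, Tf; \overline Y)\le \max(M_0, M_1)\, K(t, f;\overline X), \qquad t>0,\ f\in X_0+X_1. \]
This is immediate from the definition: given any decomposition $f=f_0+f_1$ with $f_\nu\in X_\nu$, the induced decomposition $Tf=Tf_0+Tf_1$ satisfies $\|Tf_0\|_{Y_0}+t\|Tf_1\|_{Y_1}\le \max(M_0,M_1)(\|f_0\|_{X_0}+t\|f_1\|_{X_1})$, and taking the infimum over admissible decompositions gives the bound. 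Weighting by $t^{-\theta}$ in $L^r(dt/t)$ then shows $T:[X_0,X_1]_{\theta,r}\to [Y_0,Y_1]_{\theta,r}$ is bounded with norm controlled by $\max(M_0, M_1)$.

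Applying this principle to the co-retraction $P$ yields $\|Pf\|_{[Y_0,Y_1]_{\theta,r}}\lesssim \|f\|_{[X_0,X_1]_{\theta,r}}$, giving one half of the norm equivalence and the inclusion $\subseteq$ of the sets. For the reverse inclusion, apply the same principle to the retraction $R$ to conclude that $R:[Y_0,Y_1]_{\theta,r}\to [X_0,X_1]_{\theta,r}$ is bounded. Then, for any $f\in X_0+X_1$ such that $Pf\in [Y_0,Y_1]_{\theta,r}$, the identity $R\circ P=\mathrm{Id}_{\overline X}$ gives $f=R(Pf)\in [X_0,X_1]_{\theta,r}$ together with the reverse estimate
\[
\|f\|_{[X_0,X_1]_{\theta,r}}=\|R(Pf)\|_{[X_0,X_1]_{\theta,r}}\lesssim \|Pf\|_{[Y_0,Y_1]_{\theta,r}}.
\]

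There is no genuine obstacle to this argument; it is the classical retract-coretraction lemma from \cite[\S6.4]{BL}. The only minor point worth noting is that $Pf\in Y_0+Y_1$ automatically for every $f\in X_0+X_1$, so the condition $Pf\in [Y_0,Y_1]_{\theta,r}$ on the right-hand side of the claimed equality is meaningful without any a priori regularity on $f$ beyond membership in $X_0+X_1$, and the identity $f=R(Pf)$ then transfers this regularity back to $f$.
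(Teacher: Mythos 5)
Your proof is correct, and it is exactly the standard retract--coretraction argument from Bergh--L\"ofstr\"om; note that the paper does not give its own proof but simply cites \cite[\S 6.4]{BL}, so there is no alternative route in the paper to compare against. The two inputs you isolate --- functoriality of the $K_{\theta,r}$-method under morphisms of couples (via the elementary $K$-functional estimate $K(t,Tf;\overline Y)\le \max(M_0,M_1)K(t,f;\overline X)$) together with the identity $R\circ P=\mathrm{Id}_{\overline X}$ --- are precisely the ingredients of the cited proof, and your closing remark that $Pf\in Y_0+Y_1$ is automatic for $f\in X_0+X_1$ correctly addresses the only point that might otherwise seem to require comment.
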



\begin{lemma} \label{lem:retract}
Suppose $1 \le p_0<p_1\le \infty$, 
and $\ga, b \in \R$. 
Then  there are bounded morphisms of couples 
\begin{align*}
    P^b&:  (\dot B^{b-\frac \ga{p_0}} _{p_0,p_0} , \dot B^{b-\frac \ga {p_1}}_{p_1,p_1} ) \to (L^{p_0}(\mu_\gamma) , L^{p_1} (\mu_\gamma))
    \\
    R_b &: (L^{p_0} (\mu_\gamma) , L^{p_1} (\mu_\gamma))
    \to (\dot B^{b-\frac \ga{p_0}} _{p_0,p_0} , \dot B^{b-\frac \ga {p_1}}_{p_1,p_1} ) 
\end{align*}

\end{lemma}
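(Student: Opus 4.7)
The natural choice for the coretraction is the operator $P^b$ already introduced in Definition \ref{first-definition}, namely $P^b f(x,k) = 2^{kb} L_k f(x)$. For the retraction, I will set
\[
R_b g(x) = \sum_{k \in \Z} 2^{-kb}\, \widetilde L_k g(\cdot, k)
\]
where $\widetilde L_k = \widetilde\varphi(2^{-k}D)$ is the auxiliary Littlewood-Paley operator from \eqref{eq:Tildephi-assu}, satisfying $\widetilde L_k L_k = L_k$. The diagram I aim for is
\begin{center}
\begin{tikzcd}
& (L^{p_0}(\mu_\gamma),L^{p_1}(\mu_\gamma)) \arrow[dr,"R_b"] & \\
(\dot B^{b-\gamma/p_0}_{p_0,p_0},\dot B^{b-\gamma/p_1}_{p_1,p_1}) \arrow[ur,"P^b"] \arrow[rr,"\mathrm{id}"] && (\dot B^{b-\gamma/p_0}_{p_0,p_0},\dot B^{b-\gamma/p_1}_{p_1,p_1}).
\end{tikzcd}
\end{center}

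The boundedness of $P^b$ is immediate from the definitions. For $s_i = b - \gamma/p_i$ we have $s_i p_i = b p_i - \gamma$, so
\[
\|P^b f\|_{L^{p_i}(\mu_\gamma)}^{p_i} = \sum_{k\in\Z} 2^{-k\gamma}\int |2^{kb} L_k f|^{p_i}\,\dif x = \sum_{k\in\Z} 2^{k s_i p_i}\|L_k f\|_{p_i}^{p_i} = \|f\|_{\dot B^{s_i}_{p_i,p_i}}^{p_i},
\]
which gives the boundedness for $i=0,1$ (and interprets $p_1 = \infty$ by the usual supremum). For the boundedness of $R_b$, the crucial observation is Fourier support: since $\supp\widehat{\widetilde L_j g(\cdot,j)} \subset \{|\xi|\in[2^{j-1},2^{j+1}]\}$, we have $L_k \widetilde L_j = 0$ whenever $|k-j| \geq 2$. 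Hence $L_k R_b g = \sum_{|j-k|\leq 1} 2^{-jb} L_k \widetilde L_j g(\cdot,j)$, and by the uniform $L^{p_i}$-boundedness of $L_k$ and $\widetilde L_k$,
\[
\|R_b g\|_{\dot B^{s_i}_{p_i,p_i}}^{p_i} = \sum_k 2^{ks_i p_i}\|L_k R_b g\|_{p_i}^{p_i}
\lesssim \sum_k \sum_{|j-k|\leq 1} 2^{ks_i p_i} 2^{-jbp_i}\|g(\cdot,j)\|_{p_i}^{p_i}
\lesssim \sum_j 2^{-j\gamma}\|g(\cdot,j)\|_{p_i}^{p_i},
\]
since $(s_i - b)p_i = -\gamma$. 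This gives $\|R_b g\|_{\dot B^{s_i}_{p_i,p_i}} \lesssim \|g\|_{L^{p_i}(\mu_\gamma)}$ for $i=0,1$.

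The retraction identity $R_b \circ P^b = \mathrm{id}$ on $\dot B^{s_i}_{p_i,p_i}$ follows from $\widetilde L_k L_k = L_k$ together with the convergence of the Littlewood-Paley decomposition $\sum_k L_k f = f$ in $\cS_\infty'$ for $f$ in the relevant homogeneous Besov spaces. The step requiring the most care will be justifying this convergence (and the convergence of the series defining $R_b g$) in $\cS_\infty'$ uniformly in the couple structure — here I would invoke the Lemma \ref{lem:LP-SMdistr}-type arguments already developed in \S\ref{sec:Proof-of-equiv} to ensure that $R_b$ is well defined on $L^{p_0}(\mu_\gamma) + L^{p_1}(\mu_\gamma)$ as a map into $\cS_\infty'$, and symmetrically that $P^b$ lands in the sum couple. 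Once these domain issues are handled, the retract diagram is verified, and Lemma \ref{lem:retract-formalism} can then be applied (with Lemma \ref{lem:Lpmugainterpolate}-style interpolation of the Lebesgue-Lorentz spaces $L^{p_i}(\mu_\gamma)$) to conclude \eqref{eq:interpol_nice}.
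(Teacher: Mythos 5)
Your proof is correct and takes essentially the same route as the paper: the same operators $P^b$ and $R_b$ (with $\widetilde L_k$), the same isometry observation for $P^b$, the same Fourier-support argument $L_k\widetilde L_j=0$ for $|k-j|\ge 2$ to bound $R_b$, and the same use of $\widetilde L_k L_k = L_k$ plus Littlewood--Paley convergence in $\cS_\infty'$ for the retraction identity. (Minor note: the citation \texttt{lem:Lpmugainterpolate} in your final sentence does not correspond to any label in the paper; you likely mean the standard identity $[L^{p_0},L^{p_1}]_{\theta,r}=L^{p,r}$, which the paper simply cites from \cite{BL}.)
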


\begin{proof} 
The definitions of $P^b$, $R_b$ will be independent of $p_0, p_1$ and thus one can reduce to  checking  the boundedness of
\begin{align}  \label{eq:iso} 
P^b&: \dot B^{b-\frac \gamma p}_{p,p} \to L^p(\mu_\ga)
\\ \label{eq:Rbbound} 
 R_{b}&:  L^{p}(\mu_\ga)\to \dot B^{b-\frac \ga p} _{p,p} 
\end{align}for $1\le p\le \infty$.


Recall $L_k= \varphi(2^{-k}D)$, $\widetilde  L_k=\widetilde  \varphi(2^{-k}D) $ with $\varphi$ as in \eqref{eq:varphiassumptions} and $ \widetilde \varphi$  as
in \eqref{eq:Tildephi-assu}, satisfying  $\widetilde L_k=\widetilde L_k L_k= L_k\widetilde L_k$. Let $P^b$ be as in 
Definition \eqref{first-definition}. 
For $F\in L^{p,r} (\mu_\ga) $ define $F_k(x) \coloneqq F(x,k)$ and
$R_b F(x)= \sum_{k=0}^\infty 2^{-kb} \widetilde L_k F_k(x).$

Note that $
P^b: \dot B^{b-\gamma/p}_{p,p} \to L^p(\mu_\ga)$ is an 
 isometric embedding, for $1\le p\le \infty$;
moreover  
$ R_b  P^{b} $ is the identity on $\dot\cB^{b-\frac{\ga}{p}}_{ p,p} $.
It remain to show that   
$ R_{b}$ maps $  L^{p}(\mu_\ga)$ boundedly to $\dot B^{b-\frac \ga p} _{p,p} $.
Indeed we  have $L_k\widetilde L_{k+j}=0$ for $|j|>2$ and thus
\[ 
2^{kb} L_k R_bF(x)=2^{kb}\sum_{j=-1}^{1} 2^{-(k+j)b}  L_k\widetilde L_{k+j} F_{k+j}(x)
\]
and, defining 
$T_j F(x,k)= L_k\widetilde L_{k+j} F_{k+j}(x) $
for $j=-1,0,1$, 
we see that
\[\|R_b F\|_{\dot B^{b-\frac \ga p}_{p,p}}
\lc \sum_{j=-1}^{1} \|T_j F\|_{L^{p}(\mu_\ga) }.
\] 
and the boundedness  of $R_b$
follows from 
\begin{align*}&\|T_j F\|_{L^{q}(\mu_\ga) } = \Big(\sum_{k\in\bbZ} 
 \|L_k\widetilde L_{k+j}  F_{k+j} \|_{L^q}^q 2^{-k\ga} \Big)^{1/q} 
 \\&\lc 
 \Big(\sum_{k\in\bbZ} 
 \| F_{k+j} \|_{L^q}^q 2^{-(k+j)\ga} \Big)^{1/q} 
 \lc \|F\|_{L^{q} (\mu_\ga)}, \quad j=-1,0,1. \qedhere\end{align*}
 \end{proof}

\begin{proof} [Proof of Theorem \ref{interpol-thm-Fourier}, conclusion]
Our choice of $\ga$ allows us to define 
\[
b := s_0 + \frac{\ga}{p_0} = s_1 + \frac{\ga}{p_1}.
\]
We apply Lemma \ref {lem:retract-formalism} with $X_\nu=\dot B^{ b-\gamma/p_\nu} _{p_\nu,p_{\nu}}$, $Y_\nu=L^{p_\nu,r}(\mu_\gamma)$, 
$\nu=0,1$ and $P=P^b$, $R=R_b$, as in Lemma \ref{lem:retract}. We then  use the standard interpolation formula  $[L^{p_0}, L^{p_1}]_{\theta,r}=L^{p,r}$  for $(1-\theta)/p_0+\theta/p_1=1/p$, see \cite{BL}, and the definition $\dot B^s_p(\gamma, r)$ via the operator $P_b$.
\end{proof} 
\begin{proof}[Proof of Corollary \ref{cor:TLinterpol}]
\eqref{eq:cBinterpol}  
follows from Theorem \ref{interpol-thm-Fourier}
by the reiteration theorem  for the real method. 
\eqref{eq:TLinterpol}  for general $q_0,q_1$ 
follows since for 
$1\le r_i\le \infty$ and $\ga \ne 0$ given by \eqref{eq_gamma} we have by  part (ii) of Theorem \ref{thm:embeddings}  
\[ \dot \cB^{s_i}_{p_i}(\gamma,1)\hookrightarrow 
 \dot F^{s_i}_{p_i,1} \hookrightarrow
\dot F^{s_i}_{p_i,r_i} \hookrightarrow \dot F^{s_i}_{p_i,\infty}  \hookrightarrow \dot\cB^{s_i}_{p_i}(\gamma,\infty). \qedhere
\]
\end{proof}

\emph{Remark.} 
Asekritova and Kruglyak \cite{ak} obtained real interpolation results for triples of 
the Besov spaces $(B^{s_0}_{p_0,p_0}, B^{s_1}_{p_1,p_1}, B^{s_2}_{p_2,p_2})_{\vec \theta,r}$, with $\sum_{i=0}^2\theta_i=1$ (or more generally $(\ell+1)$-tuples of such spaces with $\ell\ge 2$).  Under the crucial additional assumption that the three points points 
$(\tfrac{1}{p_i},s_i)$, $i=0,1,2$ 
 \emph{do not lie on a line}  the interpolation spaces is identified with the {Besov-Lorentz space $B^s_r(L^{p,r})$}   where $(\frac 1p,s)=\sum_{i=0}^2\theta_i (\frac{1}{p_i},s_i) $. 
 The result for triples does not seem to have an implication on the interpolation of \emph{couples} of Besov spaces (see  also \cite{ak-proc}).

\medskip

\emph{Remark.} One could also use more directly  results on real interpolation of weighted spaces, namely the identification of $[L^{p_0}(w_0), L^{p_1}(w_1)]_{\theta,q} $ in work by 
Freitag \cite{freitag} and by   Lizorkin \cite{lizorkin}.

\section{Interpolating  Besov spaces through differences}
\label{sec:interpol-diff}
In this section  we provide a direct proof of
\eqref{eq:interpol_nice} in the case $M = 1$,
which is directly based on the characterization using first differences. Suppose 
$0<s<1$, $1<p<\infty$, $1\le r\le\infty$, for  $p_0<p<p_1$ so that 
$s_i\coloneqq s+\ga(\frac 1p-\frac 1{p_i})$ satisfy $0<s_i<1$ for  $i=0,1$, and $\theta\in (0,1)$ such that $\frac{1-\theta}{p_0}+\frac{\theta}{p_1}=\frac 1p$. We will prove that for all 
functions \(f : \R^d \to \bbC\) in
$\dot W^{s_0,p_0} +\dot W^{s_1,p_1} $, 
\Be \label{eq:interpolation-diff}
\|\cQ_{1,s+\frac \ga p} f\|_{L^{p,r} (\nu_\ga)} \approx  \|f\|_{[\dot
W^{s_0,p_0} ,
\dot W^{s_1,p_1} ]_{\theta,r} }\,
\Ee 
The alternative proof goes by  a retraction argument based on  differences. One uses Lemma \ref{lem:retract-formalism} once the  following proposition is established. 

\begin{proposition} \label{prop:retract-diff}
Let $b\in \bbR$ with $0<b-\ga/p<1$. 
There is a bounded operator $A_b: L^p(\nu_\gamma)\to\dot W^{b-\frac \ga p,p} $ such that  $A_b\cQ_{1,b} $ is the identity on $\dot W^{b-\frac \ga p,p}.
$ \end{proposition}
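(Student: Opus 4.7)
The strategy is to define $A_b$ as a summed Littlewood--Paley reconstruction formula inverting $\cQ_{1,b}$, directly in the spirit of the identity \eqref{eq-resolution-of-f}. Using the symbols $m_\kappa$, points $w_\kappa$, and $\eps$ from \eqref{eq:varphi-expression} with $M=1$, I would set
\begin{equation*}
A_b F(x) := \sum_{\kappa=1}^N \sum_{j\in \Z} m_\kappa(2^{-j}D)\Big[2^{jd}\int_{|h-2^{-j}w_\kappa|\le \eps 2^{-j}} |h|^b F(\cdot,h)\,\dif h\Big](x),
\end{equation*}
interpreted as an element of $\dot B^{b-\gamma/p}_{p,p} = \dot W^{b-\gamma/p,p}$ modulo constants. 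This is essentially the sum $\sum_\kappa U^b_{m_\kappa, w_\kappa, \eps} F$ from \eqref{eq:defU}, specialized to $M = 1$ so that the polynomial ambiguity reduces to additive constants.

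The retraction identity $A_b \cQ_{1,b} f = f$ is then obtained by direct substitution: with $F = \cQ_{1,b} f$, the factor $|h|^b$ cancels and the inner integral becomes $\int \Delta_h f \cdot 2^{jd}\,\dif h$ over the annular region, so the $\kappa$-sum collapses to $L_j f$ by \eqref{eq-resolution-of-f}; summing over $j$ yields $\sum_j L_j f = f$ in $\cS_\infty'$, hence in $\dot W^{b-\gamma/p,p}$ modulo constants (using $0 < b-\gamma/p < 1$, so $M=1$ suffices).

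The boundedness estimate proceeds by computing $P^b(A_b F)(\cdot,k) = 2^{kb} L_k(A_b F)$. The Fourier support of $m_\kappa(2^{-j}\cdot)$ lies in $\{|\xi|\approx 2^j\}$, which meets the support of $\varphi(2^{-k}\cdot)$ only for $|j-k|\le 1$, so only three indices $j$ contribute for each $k$. Collecting these and writing $\widetilde m_{\kappa,n}(\xi) := \varphi(2^n\xi) m_\kappa(\xi)$ for $n\in\{-1,0,1\}$, exactly as in the derivation of \eqref{eq:UversusV}, gives
\begin{equation*}
P^b(A_b F)(\cdot,k) = \sum_{\kappa=1}^N \sum_{n=-1}^1 2^{-nb}\, V^{b,\,k+n}_{\widetilde m_{\kappa,n},\,w_\kappa,\,\eps} F.
\end{equation*}
The symbols $\widetilde m_{\kappa,n}$ are smooth and compactly supported in $\R^d\setminus\{0\}$, so $\|\widetilde m_{\kappa,n}\|_{M_1} < \infty$. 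Lemma \ref{lem:V-lemma} applied at $r=p$ yields $\|P^b(A_b F)\|_{L^p(\mu_\gamma)} \lesssim \|F\|_{L^p(\nu_\gamma)}$, and the identifications $\dot \cB^{b-\gamma/p}_p(\gamma,p) = \dot B^{b-\gamma/p}_{p,p} = \dot W^{b-\gamma/p,p}$ (see \eqref{independence-of-gamma} together with the $r=p$ case of Theorem \ref{differences-thm-intro} for the range $0 < b-\gamma/p < 1$) deliver the required norm bound on $A_b F$.

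The main obstacle I anticipate is not the algebraic retraction identity but the rigorous interpretation of $A_b F$ as an element of the quotient space $\dot W^{b-\gamma/p,p}$ for an arbitrary $F \in L^p(\nu_\gamma)$, since a priori the inner $j$-sum is only a formal sum of distributions. The way around this is to observe that the estimate on $P^b(A_b F)$ already implies that the partial sums converge in $\cS_\infty'$; one can then invoke the representative construction from Proposition \ref{Prop::Diff::PropG} (or Theorem \ref{differences-thm-intro}(i) applied with $M=1$) to extract the canonical class in $\cT/\cP_0$ and thereby pin down $A_b F$ unambiguously as an element of $\dot W^{b-\gamma/p,p}$.
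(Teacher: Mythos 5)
Your proof is correct, but it takes a genuinely different route from the paper's. The paper's Section~\ref{sec:interpol-diff} deliberately avoids the Fourier-analytic machinery: the operator $A_b$ there is defined as a limit (in the strong operator topology) of the averaging operators
\begin{equation*}
A_{b,\eps} F(x)= \int_{\varepsilon}^{1/\varepsilon} \iint_{\R^{2d}}  F(y,h)\, |h|^b\, \vartheta_t(x-y-h, x-y)  \dif h \dif y  \,\frac{\dif t}{t},
\end{equation*}
with $\vartheta_t(z,y) = \phi_t(z)\psi_t(y)$ and $\psi_t = t\,\frac{d}{dt}\phi_t$ for a bump $\phi$, inspired by the Rodiac--Van Schaftingen metric characterization of sums of Sobolev--Slobodecki\u\i\ spaces. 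Boundedness (Lemma~\ref{lem:Acont}) is proved by estimating $\|\Delta_h A_{b,\eps}F\|_p$ directly in terms of the auxiliary quantity $J_p(t)$ from \eqref{eq:Jpt} and applying Hardy's inequalities; the retraction identity (Lemma~\ref{lem:Aretract}) comes from an integration by parts in $t$ against $\phi_t$, using $C^\infty_c$ density. No Littlewood--Paley operators or multiplier estimates appear. Your construction, by contrast, re-imports the Fourier-side operators $U^b_{m_\kappa,w_\kappa,\eps}$ (i.e.\ $\mathscr J_1$) from \S\ref{sec:Proof-of-equiv}, relies on the $\mu_\gamma$-to-$\nu_\gamma$ transference Lemma~\ref{lem:V-lemma}, and then translates back to $\dot W^{b-\gamma/p,p}$ via Theorem~\ref{differences-thm-intro}. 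This is logically valid and considerably shorter given the machinery already in place, but it undermines the stated purpose of \S\ref{sec:interpol-diff}, which is to give a proof ``directly based on the characterization using first differences'' as a self-contained alternative to the Fourier-analytic proof of Theorem~\ref{interpol-thm-Fourier} already supplied in \S\ref{sec:interpolation}. Your approach buys brevity and a cleaner structural picture (it makes transparent that $A_b$ is just the $r=p$ case of the $\mathscr J_1$ map); the paper's approach buys independence from the Fourier-analytic apparatus, which is the whole point of the section.
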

That is, we have the following retract diagram
\begin{center}
\begin{tikzcd}
& L^{p}(\nu_\ga) \arrow[dr,"A_b"] & \\ \dot W^{b-\frac \ga p, p}  \arrow[hook,ur,"\cQ_{1,b}"] \arrow[rr,"id"] && \dot W^{b-\frac \ga p, p}
\end{tikzcd}
\end{center}
The proof of the proposition is inspired by the metric characterization of sums $\dot W^{s_0,p_0}+\dot W^{s_1,p_1}$ due to Rodiac and the fourth named author \cite{Rodiac_VanSchaftingen_2021}.

Fix $\ga \in \R$ and $1 \leq p < \infty$. Fix $\phi \in C^{\infty}_c(\R^d)$ with $\int \phi = 1$ and support inside $B_{1/2}(0)$ and let
\[\psi(y) \coloneqq -\phi(y)d-\inn{y}{\nabla\phi(y)}.\]
Integration by parts shows that $\int\psi(y)\dif y=0$. 
For $t > 0$ define $\phi_t(y) \coloneqq \frac{1}{t^d} \phi(\frac{y}{t})$ and 
$\psi_t(y) \coloneqq \frac{1}{t^d} \psi(\frac{y}{t})$; one verifies that for all $t>0$
\begin{equation}\label{eq:Phiprop}
 \psi_t(y) = t \frac{d}{dt} \phi_t(y).
\end{equation}
In what follows we set \Be\label{eq:vthtdef} \vartheta_t(z,y) \coloneqq \phi_t(z) \psi_t(y).\Ee 
Suppose that  $F\in L^p(\nu_\gamma)$
is compactly supported in $\bbR^d\times(\bbR^d \setminus \{0\}) $. We then define 
\begin{multline}\label{eq:Abepsdef}A_{b,\eps} F(x)= \int_{\varepsilon}^{1/\varepsilon} \int_{\R^d} \int_{\R^d}  F(y,h) |h|^b \vartheta_t(x-y-h, x-y)  \dif h \dif y  \frac{\dif t}{t}.
\end{multline} 
Since $\vartheta$ is supported in $B_{1/2}(0)\times B_{1/2}(0)$ it is clear that for $\eps>0$  and
for $F$ with the above support property
the integral in \eqref{eq:Abepsdef} converges absolutely, and defines  $A_{b,\eps} F$ as a smooth function.  
Under the additional restriction $0<b-\frac \ga p<1$ 
the following result extends 
$A_{b,\eps}$ to all of $L^p(\nu_\gamma)$ and  establishes the existence of the limit $A_b=\lim_{\eps\to 0}  A_{b,\eps}$  in the strong operator topology.

\begin{lemma} \label{lem:Acont}
Let  $b \in \R$ with $0 < b-\frac{\ga}{p} < 1$. Then the following holds.

(i) For $\eps>0$, the maps $A_{b,\eps}$ extend to bounded operators \[A_{b,\eps}:  L^p(\nu_\ga)\to \dot W^{b-\frac \ga p,p}, \]   with operator norm uniformly bounded in $\eps$.

(ii) The operators $A_{b,\eps}$ converge to a bounded operator \[A_b:  L^p(\nu_\ga)\to \dot W^{b-\frac \ga p,p},\] in the sense that 
$\lim_{\eps\to 0} \| A_{b,\eps} F-A_b F\|_{\dot W^{b-\frac\ga p,p} }=0$ for all $F\in L^p(\nu_\gamma)$.
\end{lemma}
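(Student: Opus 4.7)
The operator $A_{b,\eps}$ is a regularized Calder\'on-type reproducing formula for the Sobolev-Slobodecki\u{\i} space $\dot W^{s,p} = \dot B^s_{p,p}$, where $s := b - \gamma/p \in (0,1)$. Substituting $F = \cQ_{1,b} f$ into the definition, changing variables $z = y+h$ in the contribution of $f(y+h)$, and using $\int\psi = 0$ together with $\psi_t = t\frac{d}{dt}\phi_t$, one computes
\[ A_{b,\eps}(\cQ_{1,b} f) = \phi_\eps * f - \phi_{1/\eps} * f, \]
which converges (modulo constants) to $f$ as $\eps\to 0$, giving the retract identity $A_b \cQ_{1,b} = \mathrm{id}$ on $\dot W^{s,p}$ that is used in Proposition~\ref{prop:retract-diff}. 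The role of the lemma is to promote this formal reproducing formula into a bounded operator from the larger space $L^p(\nu_\gamma)$ into $\dot W^{s,p}$.

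For part (i), I would write $A_{b,\eps} F = \int_\eps^{1/\eps} R_t F\,\frac{dt}{t}$ with
\[ R_t F(x) = \iint F(y,h)|h|^b \phi_t(x-y-h)\psi_t(x-y)\, dy\,dh, \]
noting that the integrand is supported in $\{(y,h): |h|\leq t,\; |x-y|\leq t/2\}$. Since $\dot W^{s,p} = \dot B^s_{p,p}$ for $0 < s < 1$ and $1 < p < \infty$, the target seminorm is equivalent to $\bigl(\sum_k 2^{ksp}\|L_k A_{b,\eps} F\|_p^p\bigr)^{1/p}$. The main estimate is an almost-orthogonality bound: combining a dyadic decomposition $F = \sum_j F_j$ with $F_j$ supported where $|h|\sim 2^{-j}$ (so $R_t F_j \equiv 0$ for $t \ll 2^{-j}$) together with the fact that $\phi_t\psi_t$ is essentially localized at Fourier scale $|\xi|\lesssim 1/t$, one expects a schematic estimate of the form
\[ \|L_k R_t F_j\|_p \lesssim \min\{(2^k t)^N,(2^k t)^{-N}\}\, 2^{-js}\,\|F_j\|_{L^p(\nu_\gamma)} \]
for arbitrary $N$: the low-frequency decay $(2^k t)^N$ uses the vanishing moment $\int\psi = 0$ via Taylor expansion, and the high-frequency decay $(2^k t)^{-N}$ uses the smoothness of $\phi, \psi$ via integration by parts. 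A Schur-type summation in $t$ and $j$ then yields $\|A_{b,\eps} F\|_{\dot W^{s,p}} \lesssim \|F\|_{L^p(\nu_\gamma)}$ uniformly in $\eps$, with convergence of the relevant $t$-integrals at both endpoints guaranteed by $0 < s < 1$.

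For part (ii), the uniform bound from (i) reduces the strong convergence to verifying $A_{b,\eps}F \to A_b F$ on a dense subclass, e.g.\ $F \in C_c^\infty(\R^d \times (\R^d \setminus \{0\}))$. For such $F$ the integrand defining $A_{b,\eps}F$ is compactly supported in $t$ (by the supports of $F$, $\phi_t$, and $\psi_t$), so $A_b F(x) := \int_0^\infty R_t F(x)\,\frac{dt}{t}$ is absolutely convergent, and dominated convergence yields $A_{b,\eps}F \to A_b F$ in $\dot W^{s,p}$. Extension to general $F$ then proceeds via a standard approximation and $3\varepsilon$-argument using the uniform bound of (i).

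\textbf{Main obstacle.} The technical heart lies in the almost-orthogonality estimate, which couples three scales --- the Littlewood-Paley frequency $2^k$, the dyadic scale $2^{-j}$ of $h$, and the continuous scale $t$ --- and crucially exploits $\int\psi = 0$ to gain the needed decay on the low-frequency side $2^k t \ll 1$. The strict inequality $0 < s < 1$ is precisely what makes the resulting double summation (or equivalently, the $t$-integral) converge at both endpoints; the borderline cases $s\in\{0,1\}$ would require a different formulation.
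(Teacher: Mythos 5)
Your approach is genuinely different from the paper's. The paper never uses a Littlewood--Paley decomposition here: instead it works with the difference characterization of $\dot W^{s,p}$ directly, estimating $\Delta_h A_{b,\eps}F$ by splitting the $t$-integral into $t>|h|$ and $t<|h|$, reducing to the averaged quantity $J_p(t)=\bigl(t^{-d}\int_{\bbR^d}\int_{|z|\le t}|F(y,z)|^p|z|^{bp}\,dz\,dy\bigr)^{1/p}$, and then applying Hardy's inequalities in $t$; the restriction $0<b-\ga/p<1$ enters precisely through the convergence of those Hardy integrals.

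Your proposed route has a concrete gap in the almost-orthogonality estimate, which you yourself flag as the technical heart. The low-frequency decay $\|L_k R_t F_j\|_p\lesssim (2^k t)^N\cdots$ (for $2^kt\ll1$, with $N$ arbitrary, or even with any $N>0$) does \emph{not} follow from $\int\psi=0$. Writing $R_t F(x)=\int_h |h|^b\,(k_{t,h}*F(\cdot,h))(x)\,dh$ with $k_{t,h}(u):=\phi_t(u-h)\psi_t(u)$, the convolution kernel $k_{t,h}$ has \emph{no} vanishing moments:
\[
\int_{\R^d}k_{t,h}(u)\,du = t^{-d}\int_{\R^d}\phi\bigl(w-\tfrac{h}{t}\bigr)\psi(w)\,dw,
\]
and at $h=0$ this equals $t^{-d}\int\phi\psi = -\tfrac{d}{2}\,t^{-d}\int\phi^2\neq 0$, using $\psi=-\nabla\cdot(y\phi(y))$ and integration by parts. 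The mean-zero property of $\psi_t$ in the $v=x-y$ slot is destroyed by the product with $\phi_t(u-h)$; it is only the $t$-integration (via $\psi_t = t\,\frac{d}{dt}\phi_t$, which telescopes) that produces the reproducing identity, not a per-$t$ cancellation. Consequently $R_t F_j$ generically has nonzero mean, so $\|L_k R_t F_j\|_p$ does not decay like a positive power of $2^kt$ as $k\to-\infty$ for fixed $t$, and your Schur summation breaks down in the regime $j>k$. (A secondary issue: the claimed bound is also missing a $t$-dependent size factor --- roughly $(t2^{j})^{-d}$ --- which is why the paper is forced to work with the averaged quantity $J_p(t)$ rather than a bare $\|F_j\|_{L^p(\nu_\gamma)}$.) To repair the argument along your lines you would need to exploit the telescoping structure of $\int R_t F\,\frac{dt}{t}$ before applying $L_k$, or, more simply, follow the paper's route and bound $\Delta_h A_{b,\eps}F$ directly, which sidesteps the low-frequency issue entirely.

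Part (ii) of your outline (uniform bound plus convergence on the dense subclass of compactly supported $F$) is fine in spirit and matches the paper's strategy; the paper deduces Cauchy-ness of $A_{b,\eps}F$ in $\eps$ directly from the same Hardy-type estimate, which is marginally cleaner than a separate dominated convergence step.
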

\begin{proof}
Let $F \in L^p(\nu_{\ga})$ and assume in addition that
assume that   $F\in L^p(\nu_\gamma)$
is compactly supported in $\bbR^d\times(\bbR^d \setminus \{0\} )$. 
Set $$\Delta_{h,h} \vartheta_t(u,v)= \vartheta_t(u+h,v+h)-\vartheta_t(u,v).$$
Then
\[\Delta_h A_{b,\eps}F (x)= 
\int_\eps^{1/\eps} \iint_{\bbR^{2d} } F(y,z)|z|^b \Delta_{h,h}\vartheta_t (x-y-z,x-y) \dif z\dif y\frac{\dif t}{t} \]
and we estimate \[|\Delta_h A_{b,\eps}F(x) | \le I(x,h)+II(x,h)+III(x,h)\] where
\begin{align*}
    I(x,h)&\coloneqq \int_{|h|}^\infty \iint_{\bbR^{2d} }|F(y,z) ||z|^b | \Delta_{h,h}\vartheta_t(x-y-z,x-y)|\dif z\dif y\frac{\dif t}{t} 
    \\
    II(x,h)&\coloneqq \int_0^{|h|} \iint_{\bbR^{2d} }|F(y,z)| |z|^b | \vartheta_t(x+h-y-z,x+h-y)|\dif z\dif y\frac{\dif t}{t} 
    \\
    III(x,h)&\coloneqq \int_0^{|h|} \iint_{\bbR^{2d} }|F(y,z)| |z|^b | \vartheta_t(x-y-z,x-y)|\dif z\dif y\frac{\dif t}{t}. 
\end{align*}
Setting
\Be\label{eq:Jpt} J_p(t)= \Big(\frac{1}{t^d} \int_{\bbR^d}\int_{|z|\le t} |F(y,z)|^p |z|^{bp} \dif z\dif y\Big)^{1/p}
\Ee
we estimate, using Minkowski's inequality,
\[
\begin{split}
\|I(\cdot,&h)\|_p  \leq \int_{|h|}^{\infty} \frac{|h|}{t^2} \Big\| \frac{1}{t^{2d}}\int_{ |x+h-y| \leq 2t} \int_{|z| \leq t} |F(y,z)| |z|^b \dif z \dif y \Big\|_{L^p(\dif x)} \dif t \\
&\leq \int_{|h|}^{\infty} \frac{|h|}{t^2} \Big\| \Big ( \frac{1}{t^{2d}} \int_{ |x+h-y| \leq 2t} \int_{|z| \leq t}  |F(y,z)|^p |z|^{bp} \dif z \dif y \Big)^{1/p} \Big \|_{L^p(\dif x)} \dif t \\&\lesssim \int_{|h|}^{\infty} \frac{|h|}{t^2} J_p(t) \dif t.
\end{split}
\]
Similarly we get
\[
\|II(\cdot,h)\|_p+ \|III(\cdot,h)\|_p  \lesssim \int_0^{|h|} \frac{1}{t} J_p(t)  \dif t.
\]
We then have, uniformly in $\eps\in (0,1)$, 
\[
\begin{split}
\|A_{b,\eps}  &F\|_{\dot{W}^{b-\frac{\ga}{p},p}}=\|\cQ_{1,b} A_{b,\eps} F\|_{L^p(\nu_\ga)} \\&
\leq \Big(\int
\Big[
|h|^{-b} \int_{|h|}^{\infty} \frac{|h|}{t^2} J_p(t)  \dif t +|h|^{-b}  \int_0^{|h|} \frac{1}{t} J_p(t)  \dif t \Big]^p \frac{\dif h}{|h|^{d-\ga}  }\Big)^{1/p} 
\end{split} 
\]
which we estimate (using Hardy's inequalities) by 
\begin{align*} &\Big( \int_0^{\infty} \Big( \int_r^{\infty} \frac{J_p(t)}{t^2} \dif t \Big)^p \frac{\dif r}{r^{1+(b-\frac{\ga}{p}-1)p}} \Big)^{\frac 1p} + \Big( \int_0^{\infty} \Big( \int_0^r \frac{J_p(t)}{t}  \dif t \Big)^p \frac{\dif r}{r^{1+(b-\frac{\ga}{p})p}} \Big)^{\frac 1p} \\
&\lesssim
\Big( \int_0^{\infty} \Big( \frac{J_p(t)}{t^2} \Big)^p t^p \frac{\dif t}{t^{1+(b-\frac{\ga}{p}-1)p}} \Big)^{\frac 1p} + 
\Big( \int_0^{\infty} \left( \frac{J_p(t)}{t} \Big)^p t^p \frac{\dif t}{t^{1+(b-\frac{\ga}{p})p} } \right)^{\frac 1p} 
\\
&\simeq \left( \int_0^{\infty} J_p(t)^p 
\frac{\dif t}{t^{1+(b-\frac{\ga}{p})p}} \right)^{\frac 1p}
\\
& = \left( \int_{\R^d} \int_{\R^d} |F(y,z)|^p |z|^{bp} \int_{|z|}^{\infty} \frac{\dif t}{t^{1+(b-\frac{\ga}{p})p+d}}  \dif z \dif y \right)^{\frac 1p} 
\simeq  \|F\|_{L^p(\nu_{\ga})}.
\end{align*}
This establishes part (i) of the lemma, first for 
$F$ compactly supported in $\bbR^d\times (\bbR^d\setminus \{0\})$ and then, by a density argument,  for general $F\in L^p(\nu_\ga)$.  
The above argument also shows 
that 
$\|A_{b,\eps_1}  F- A_{b,\eps_2} F\|_{\dot{W}^{b-\frac{\ga}{p},p}}\to 0$ as $\eps_1,\eps_2\to 0$  and thus $A_{b,\eps} F$ converges in $\dot W^{b-\frac{\ga}{p},p}$ to a limit $A_{b,0} F$; moreover  $A_{b}$ defines a bounded operator $L^p(\nu_\ga)\to \dot W^{b-\frac{\ga}{p} ,p} $. 
\end{proof}

The proof of Proposition \ref{prop:retract-diff} is now completed by the following lemma.

\begin{lemma} \label{lem:Aretract}
Let  $b \in \R$ with $0 < b-\frac{\ga}{p} < 1$. Then 
$A_b \cQ_{1,b} f = f$,  for  all $f \in \dot{W}^{b-\frac{\ga}{p},p}$.
\end{lemma}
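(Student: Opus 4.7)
The plan is to compute $A_{b,\eps}\cQ_{1,b}f$ in closed form via a telescoping identity in $\eps$, and then let $\eps \to 0$ using Lemma \ref{lem:Acont}\,(ii). Substituting the definition of $\cQ_{1,b}f$ into \eqref{eq:Abepsdef}, the factor $|h|^b$ cancels, yielding
\[
A_{b,\eps}\cQ_{1,b}f(x) = \int_\eps^{1/\eps}\!\iint_{\R^{2d}} [f(y+h)-f(y)]\,\phi_t(x-y-h)\,\psi_t(x-y)\,\dif h\,\dif y\,\tfrac{\dif t}{t}.
\]
Since $t$ is restricted to the compact interval $[\eps,1/\eps]$ and the supports of $\phi_t,\psi_t$ confine $|h|$ and $|x-y|$ to be $O(1/\eps)$, Fubini is justified by the membership $\cQ_{1,b}f\in L^p(\nu_\ga)$. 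In the $f(y+h)$-piece I would substitute $u=y+h$ and factor the double integral; the $y$-integral collapses to $\int\psi_t(x-y)\,\dif y = 0$ by the vanishing mean of $\psi$. In the $-f(y)$-piece, the $h$-integral collapses to $\int\phi_t(x-y-h)\,\dif h = 1$, leaving $-\int_\eps^{1/\eps}(\psi_t*f)(x)\,\tfrac{\dif t}{t}$. By the key identity \eqref{eq:Phiprop}, the integrand equals $-\partial_t(\phi_t*f)(x)$, so the fundamental theorem of calculus produces
\[
A_{b,\eps}\cQ_{1,b}f(x) \;=\; \phi_\eps*f(x) - \phi_{1/\eps}*f(x).
\]

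It then remains to pass to the limit $\eps \to 0$ in $\dot W^{b-\ga/p,p}$. Since convolution commutes with translation, $\cQ_{1,b}(\phi_t*f)(x,h) = (\phi_t*G_h)(x)$, where $G_h(x):=\cQ_{1,b}f(x,h)$ belongs to $L^p(\R^d)$ for a.e.\ $h$, with $\|G_{\cdot}\|_p^p$ integrable against $|h|^{\ga-d}\,\dif h$ by the hypothesis $f\in\dot W^{b-\ga/p,p}$. For each such $h$, standard mollifier theory yields $\|\phi_\eps*G_h-G_h\|_p\to 0$ as $\eps\to 0$; for the complementary piece I will use that $\|\phi_{1/\eps}*G_h\|_p\to 0$ as $\eps\to 0$, which follows by density from the bound $\|\phi_T*G\|_p^p\le \|\phi_T*G\|_\infty^{p-1}\|\phi_T*G\|_1\le C\, T^{-d(p-1)}\|G\|_1^p$ available for $G\in L^1\cap L^\infty$, together with the uniform bound $\|\phi_T*\,\cdot\|_{L^p\to L^p}\le\|\phi\|_1$. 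Both of these pointwise-in-$h$ limits are dominated by a constant multiple of $\|G_h\|_p$, so dominated convergence in $h$ against the weight $|h|^{\ga-d}\,\dif h$ yields
\[
\|\cQ_{1,b}(\phi_\eps*f-f)\|_{L^p(\nu_\ga)} \to 0, \qquad \|\cQ_{1,b}(\phi_{1/\eps}*f)\|_{L^p(\nu_\ga)}\to 0.
\]

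Combined with the telescoping identity, this gives $A_{b,\eps}\cQ_{1,b}f\to f$ in $\dot W^{b-\ga/p,p}$, while Lemma \ref{lem:Acont}\,(ii) provides $A_{b,\eps}\cQ_{1,b}f\to A_b\cQ_{1,b}f$ in the same space, whence $A_b\cQ_{1,b}f = f$. The main obstacle I anticipate is justifying the decay $\phi_{1/\eps}*f\to 0$ in the homogeneous seminorm: since $f\in\dot W^{b-\ga/p,p}$ is only an equivalence class modulo constants and need not lie in $L^p$, one cannot apply $L^p$-decay of mollification to $f$ directly; the argument must instead be carried out slicewise in $h$ on the finite differences $G_h$, which are genuine $L^p$-functions by the definition of the seminorm.
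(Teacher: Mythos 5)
Your core computation is the same as the paper's: both use $\psi_t=t\,\tfrac{d}{dt}\phi_t$, Fubini and the vanishing mean of $\psi$ to collapse the $t$-integral by the fundamental theorem of calculus to the telescoping identity $A_{b,\eps}\cQ_{1,b}f=\phi_\eps*f-\phi_{1/\eps}*f$. Where you diverge is in the order of the approximation arguments. The paper first reduces to $f\in C^\infty_c(\R^d)$, using that $\cQ_{1,b}$ is an isometry, $A_b$ is bounded (Lemma \ref{lem:Acont}), and $C^\infty_c$ is dense in $\dot W^{b-\ga/p,p}$; for smooth compactly supported $f$ the Fubini justification is immediate and the $\eps\to0$ limit of the telescoped expression is trivial pointwise. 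You instead keep a general $f\in\dot W^{b-\ga/p,p}$ throughout, establish the telescoping identity directly, and then verify the $\eps\to0$ limit in the Gagliardo seminorm by the slicewise argument on $G_h=\cQ_{1,b}f(\cdot,h)$. Your approach works and correctly identifies the subtlety that one must not mollify $f$ itself in $L^p$ (it need not be there) but the slices $G_h$. What your approach buys is avoiding the final density step; what it costs is that two technical points become nontrivial and are only partially addressed. First, you apply the pointwise integral formula \eqref{eq:Abepsdef} to $F=\cQ_{1,b}f$, but this $F$ is not compactly supported in $\R^d\times(\R^d\setminus\{0\})$, so strictly speaking $A_{b,\eps}F$ is defined only by the density extension in Lemma \ref{lem:Acont}(i); you should verify (e.g.\ by a Hölder estimate showing the pointwise integral is continuous $L^p(\nu_\ga)\to L^1_{\loc}$, using $b-\ga/p>0$) that the pointwise formula still gives a representative of the density extension. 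Second, your Fubini justification appeals only to ``$\cQ_{1,b}f\in L^p(\nu_\ga)$'', but absolute integrability of the triple integral genuinely uses the sign condition $b-\ga/p>0$ to tame $|h|\to 0$ against the weight $|h|^{b+(d-\ga)/p}$; this should be made explicit. Both gaps are fillable, but they are precisely the items the paper's reduction to $C^\infty_c$ is designed to sidestep.
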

\begin{proof}
Note that 
$\cQ_{1,b} \colon \dot{W}^{b-\frac{\ga}{p},p} \to L^p(\nu_{\ga})$ is an isometry. As $A_b: L^p(\nu_\ga) \to  \dot{W}^{b-\frac{\ga}{p},p}$ is bounded, by Lemma \ref{lem:Acont}, and since $C^{\infty}_c(\R^d)$ is dense in $\dot{W}^{b-\ga/p,p}$, it suffices to prove
$A_b \cQ_{1,b} f = f$,  for  all $f \in C^\infty_c(\bbR^d).$

By \eqref{eq:Phiprop} and  \eqref{eq:vthtdef} we get for each $x\in \bbR^d$
\[
\begin{split}
&A_{b,\eps}  \cQ_{1,b}f(x) = A_{0,\eps}  \cQ_{1,0} f(x)  \\
=& \int_{\varepsilon}^{1/\varepsilon} \int_{\R^d} \int_{\R^d}  (f(y+h)-f(y)) \phi_t(x-y-h) \frac{d}{dt} [ \phi_t(x-y) ]  \dif h \dif y \dif t \\
=&\int_{\varepsilon}^{1/\varepsilon} \int_{\R^d} \int_{\R^d}  (f(z)-f(y)) \phi_t(x-z) \frac{d}{dt} [ \phi_t(x-y) ]  \dif z \dif y \dif t \\
=&\,0- \int_{\varepsilon}^{1/\varepsilon} \int_{\R^d} \int_{\R^d} \int_{\R^d} f(y) \phi_t(x-z) \frac{d}{dt} [ \phi_t(x-y) ]  \dif z \dif y \dif t 
\end{split}
\]
where we used $\int_{\R^d} \frac{d}{dt} [ \phi_t(x-y) ] \dif y = \frac{d}{dt} \int_{\R^d} \phi_t(x-y) \dif y = 0$ to integrate the term involving $f(z)$. We may now integrate in $z$ and $t$ in the last display, using that $\int \phi_t =1$ to obtain for $f\in C^\infty_c(\bbR^d)$
\[
\begin{split}
 A_{b,\eps}  \cQ_{1,b}f(x) &= \int_{\R^d}  f(y) \left( \phi_{\varepsilon}(x-y) - \phi_{1/\varepsilon}(x-y) \right)  \dif y \, .
\end{split}
\]
Letting $\eps\to 0$ we obtain $A_b\cQ_{1,b} f=f$ for $f\in C^\infty_c(\bbR^d). $
\end{proof}


\end{document}